\documentclass[a4paper,11pt]{article}
\usepackage[utf8]{inputenc}
\usepackage[english]{babel}
\usepackage{amsmath}
\usepackage{amsthm}
\usepackage{amssymb}
\usepackage{mathtools}
\mathtoolsset{showonlyrefs}
\usepackage{tikz}
\usepackage{verbatim}
\usepackage{fancyvrb}
\usepackage{array}
\usepackage{adjustbox}
\usepackage{changepage}
\usepackage{subcaption}
\usepackage{enumerate}
\usepackage{wrapfig}
\usepackage{bbm}
\usepackage{bm}
\usepackage{enumitem}
\usepackage{csquotes}
\usepackage{hyperref}
\usepackage{framed}
\newtheorem{defi}{Definition}[section]
\newtheorem{thm}[defi]{Theorem}
\newtheorem{lem}[defi]{Lemma}

\newtheorem{prop}[defi]{Proposition}

\newtheorem{cor}[defi]{Corollary}
\newtheorem{rem}[defi]{Remark}

\newcommand{\N}{\mathbb{N}}
\newcommand{\Z}{\mathbb{Z}}

\newcommand{\C}{\mathbb{C}}
\newcommand{\compC}{\mathbb{C}}
\newcommand{\contC}{\mathcal{C}}

\newcommand{\gap}{S}
\newcommand{\Khat}{\widetilde{K}}
\newcommand{\Mconst}{\mathbb{M}}
\newcommand{\R}{\mathbb{R}}
\newcommand{\realR}{\mathbb{R}}

\newcommand{\eps}{\epsilon}

\newcommand{\om}{\omega}

\newcommand{\bigO}{\mathcal{O}}

\newcommand{\spn}[1]{\operatorname{span}\{#1\}}

\newcommand{\MF}{\mathcal{P}}

\newcommand{\E}{\mathbb E}

\DeclareMathOperator{\U}{U}
\DeclareMathOperator{\diag}{diag}

\newcommand{\lambdascaled}{\hat{\lambda}}
\newcommand{\lambdascaledM}{\tilde{\lambda}}
\newcommand{\etascaled}{\hat{\eta}}
\newcommand{\xiscaled}{\hat{\xi}}
\newcommand{\zetascaled}{\hat{\zeta}}

\newcommand{\gfrac}[2]{\genfrac{}{}{0pt}{}{#1}{#2}}

\newcommand{\hyperF}[5]{{}_{#1}F_{#2} \left( \gfrac{#3}{#4} ; {#5} \right)}

\DeclareMathOperator{\Ai}{Ai}
\DeclareMathOperator{\arcosh}{arcosh}
\DeclareMathOperator{\arccot}{arccot}
\DeclareMathOperator{\ballistic}{\mathrm{I}}
\DeclareMathOperator{\Bes}{Bes}
\DeclareMathOperator{\bulk}{bulk}
\DeclareMathOperator{\crit}{crit}

\DeclareMathOperator{\crossover}{\mathrm{II}}
\DeclareMathOperator{\diffusive}{\mathrm{met}}
\DeclareMathOperator{\hard}{hard}
\DeclareMathOperator{\Inner}{inn}
\DeclareMathOperator{\insulator}{\mathrm{ins}}
\DeclareMathOperator{\Left}{lef}
\DeclareMathOperator{\Local}{loc}
\DeclareMathOperator{\Lower}{low}
\DeclareMathOperator{\Middle}{mid}
\DeclareMathOperator{\Outer}{out}
\DeclareMathOperator{\Right}{rig}
\DeclareMathOperator{\PV}{P.V.}
\DeclareMathOperator{\soft}{soft}
\DeclareMathOperator{\Upper}{upp}
\DeclareMathOperator{\Var}{Var}

\title{Invariant pseudo-unitary matrix ensembles and local statistics and conductance fluctuations of the DMPK ensemble}
\author{Dong Wang\footnote{School of Mathematical Sciences, University of Chinese Academy of Sciences. \\ E-mail: \texttt{wangdong[at]wangd-math.xyz}.} \and Thomas Wolfs\footnote{Department of Mathematics, KTH Royal Institute of Technology. \\ E-mail: \texttt{wolfs[at]kth.se}.}} 
\date{}
\begin{document}
\maketitle

\begin{abstract}
We consider the DMPK ensemble, which models quantum transport in a disordered wire, and describe its limiting local statistics as the number of particles tends to infinity. In the metallic regime, as well as at crossover I, we obtain sine statistics in the bulk, Bessel statistics at the hard edge and Airy statistics at the soft edge. In the insulating regime, we prove localization around certain explicit deterministic positions with Gaussian fluctuations. This confirms and quantifies the predicted universal and localized behavior in the physics literature. At crossover II, we obtain explicit limiting kernels in the bulk and at both edges, which interpolate between these regimes. The hard edge kernel involves Legendre functions and, to the best of our knowledge, is new, while the bulk and soft edge kernels agree with those derived for the squared singular values of critically scaled products of independent Ginibre matrices. To explain these similarities, we construct a class of invariant pseudo-unitary matrices that is closed under taking products and use it to interpret the DMPK ensemble as the joint density of the spectral variables of a product of such random matrices. More generally, the latter takes the form of a polynomial ensemble of derivative type involving a second order differential operator compatible with the Legendre functions. Finally, we obtain uniform asymptotic expansions of the mean and variance of the conductance and prove that the DMPK ensemble admits universal conductance fluctuations, i.e. the variance of the conductance tends to $1/15$ throughout the whole metallic regime. This again confirms the predictions in the physics literature.
\end{abstract}

\tableofcontents

\section{Introduction}

Let $n\in\N$ and $M>0$. We will consider the probability density
\begin{equation}\label{eq:DMPK-density}
    \mathfrak{p}_{n,M}(x)
    =\frac{1}{Z_{n,M}}\prod_{1\leq j<k\leq n}(x_k-x_j)
    \det[w_{j,M}(x_k)]_{j,k=1}^n,
    \qquad x\in[1,\infty)^n,
\end{equation}
where $Z_{n,M}\neq0$ is the normalization constant and
\begin{equation}\label{w_j}
    w_{j,M}(x)
    =\int_0^\infty e^{-\frac{M}{n}s^2}s^{2j-1}
    \tanh(\pi s)P_{is-\frac12}(x)ds,
    \qquad x\in [1,\infty), \quad j=1,\dots,n,
\end{equation}
is defined in terms of the Legendre function of the first kind
\begin{equation}\label{eq:Leg}
    P_{is-\frac12}(x)
    =\hyperF{2}{1}{\frac12-is,\frac12+is}{1}{\frac{1-x}{2}},
    \qquad x\in [1,\infty),
\end{equation}
see \cite[Eq.~14.3.6]{Boisvert-Clark-Lozier-Olver10}. The density \eqref{eq:DMPK-density} arises in the theory of quantum transport in a disordered wire with $n$ propagating channels. We refer to \cite{Beenakker97} in the physics literature for an extensive review of the physical background. After the change of variables $x_k\mapsto \cosh \xi_k$ for $k=1,\dots,n$, the density is the exact solution of the Dorokhov-Mello-Pereyra-Kumar (DMPK) equation for $\beta=2$, with ballistic initial condition
\[
    \lim_{M\downarrow0}\mathfrak{p}_{n,M}(\xi)
    =\prod_{k=1}^n\delta(\xi_k),
\]
in the sense of distributions. We will therefore refer to \eqref{eq:DMPK-density} as the DMPK ensemble and the transplanted variables $\xi_1,\dots,\xi_n\in[0,\infty)$ as the physical variables. The DMPK equation is a Fokker-Planck equation introduced by Dorokhov in \cite{Dorokhov82} and independently by Mello, Pereyra and Kumar in \cite{Mello-Pereyra-Kumar88}, because it provided a model for quantum transport in which, according to the predictions, a single dynamic parameter controls whether the particles fall into a diffusive (metallic) or localized (insulating) regime. Its exact solution for $\beta=2$ was derived by Beenakker and Rejaei in \cite{Beenakker-Rejaei93,Beenakker-Rejaei94} and the determinantal structure was subsequently analyzed by Frahm in \cite{Frahm95}. In the notation of \eqref{eq:DMPK-density}, the role of this dynamic parameter is being played by $M>0$. It corresponds to $s=L/l$ in \cite[Eq. (3.21)]{Beenakker97} and can be seen as the wire length in units of the mean free path. The distinguished physical regimes are the ballistic regime $M\ll1$, the metallic or diffusive regime $1\ll M\ll n$, and the insulating or localized regime $n\ll M$. We will also study the two intermediate regimes: the crossover I regime, where $M>0$ is fixed, and the crossover II regime, where $M=\alpha n$ with fixed $\alpha>0$.
\medbreak

The DMPK equation also has an interpretation as a model for Brownian motion on pseudo-unitary matrices, see \cite{Bachmann-Butz-De_Roeck12}. We will come back to the definition of pseudo-unitary matrices later. In particular, the ordered particles associated with \eqref{eq:DMPK-density} can be interpreted as the positions at time $M$ of $n$ non-intersecting paths with entrance configuration $x_1(0)=\cdots=x_n(0)=1$. 
\begin{comment}
They satisfy
\begin{equation*}
    dx_j(M)
    = \sqrt{\frac{2}{n}\bigl(x_j(M)^2-1\bigr)}\,dB_j(M)
    + \frac{2}{n}\Big(
        x_j(M)
        + \sum_{\substack{k=1\\k\neq j}}^n
          \frac{x_j(M)^2-1}{x_j(M)-x_k(M)}
      \Big)dM,
    \qquad j=1,\dots,n,
\end{equation*}
where $B_1,\dots,B_n$ are independent standard real Brownian motions.
\end{comment}
Figure~\ref{fig1} illustrates the simulated paths in the physical variables $\xi_1(M),\dots,\xi_n(M)\in[0,\infty)$ for $n=5$ and $M\in[0,10]$, together with the different regimes. General properties of non-intersecting path models have been studied in \cite{Cepa-Lepingle97,Graczyk-Malecki14,Duits18,Assiotis23}. Local limits for non-intersecting Brownian and squared Bessel paths have also received considerable attention, see, e.g., \cite{Daems-Kuijlaars-Veys08,Johansson13,Delvaux-Kuijlaars-Zhang11,Neuschel-Venker24} and \cite{Kuijlaars-Martinez_Finkelshtein-Wielonsky09,Kuijlaars-Martinez_Finkelshtein-Wielonsky11,Delvaux-Kuijlaars-Roman-Zhang12}, respectively. 
\medbreak

\begin{figure}[h]
    \centering
    \includegraphics[width=0.6\linewidth]{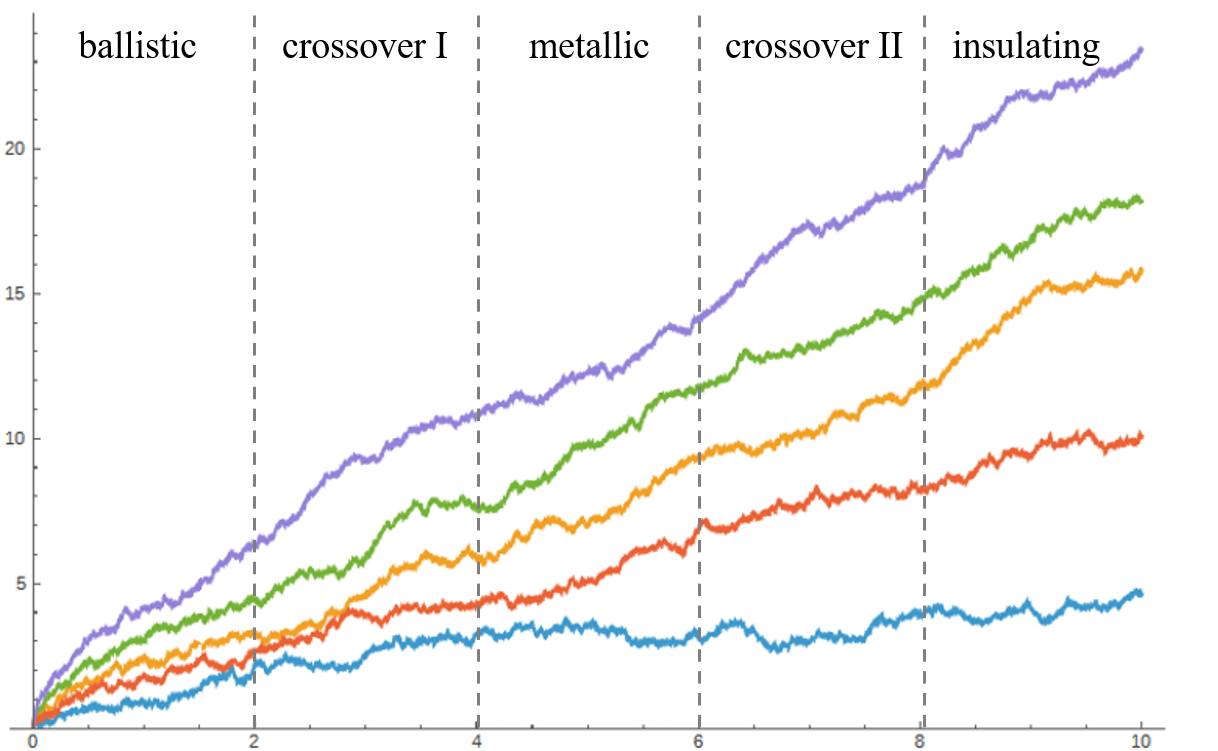}
    \caption{DMPK ensemble as a non-intersecting path model.} \label{fig1}
\end{figure}

\medbreak

Our first objective is to describe the limiting local statistics of the DMPK ensemble as $n\to\infty$ and to quantify the predictions from physics in the metallic and insulating regime. To the best of our knowledge, no predictions about the local statistics in the crossover regimes have been made so far. In the metallic regime, we will establish limiting sine statistics in the bulk, Bessel statistics at the hard edge and Airy statistics at the soft edge. This confirms the universal diffusive behavior predicted in \cite{Macedo94a} and \cite[Sect.~III.B]{Beenakker97}. The same three limits occur in the crossover I regime. In the insulating regime, the particles are predicted to be localized around deterministic positions, see \cite[Sect.~III.C]{Beenakker97}. We will identify these positions and prove that the particles exhibit Gaussian fluctuations around them, with asymptotically empty regions between these positions. In the crossover II regime, we obtain explicit limiting kernels in the bulk and at both edges, describing the intermediate local behavior. 

Since the crossover II regime acts as the transition between the metallic regime with classical random matrix local limits and the insulating regime with Gaussian local limits, the limiting local statistics in crossover II regime will be fairly non-standard. In spirit, similar transitions appear in connection to the one-dimensional Anderson Hamiltonian, see \cite{Dumaz-Labbe20, Dumaz-Labbe23, Dumaz-Labbe24}. Curiously, such transitions also appear in connection to products of independent Ginibre matrices. A Ginibre matrix is a matrix whose entries are independent standard complex Gaussian random variables. The local statistics of the squared singular values of products of independent Ginibre matrices have been studied extensively so far, see \cite{Akemann-Kieburg-Wei13,Kuijlaars-Zhang14,Liu-Wang-Zhang14}, and the review \cite{Akemann-Ipsen15} in the physics literature. It was shown in \cite{Liu-Wang-Wang18} that when the number of factors and matrix dimension tend to infinity together, their ratio governs a similar transition between the classical random matrix sine and Airy limits and Gaussian fluctuations. This confirmed the earlier predictions in \cite{Akemann-Burda-Kieburg18,Akemann-Burda-Kieburg20}. We will show that the limiting bulk and soft edge kernels for the DMPK ensemble at crossover II agree with those described for Ginibre products when the number of factors is proportional to the matrix dimension. Remarkably, it was shown in \cite{Ahn23} that the same soft edge limit arises for critical products within a broad class of right unitarily invariant random matrices, and a connection was given to the edge statistics of a Brownian motion on $\mathrm{GL}(n,\C)$. Further properties of this kernel were studied in \cite{Berezin-Strahov22}. At the hard edge, we obtain a limiting kernel involving Legendre functions, which to the best of our knowledge has not appeared before.
\medbreak

Our second objective is to identify a product structure underlying the DMPK ensemble, explaining the previous similarities with products of Ginibre matrices. We call a matrix $g\in\mathrm{GL}(2n,\C)$ pseudo-unitary if it satisfies
\[
g^*Jg=J,\quad J=\begin{pmatrix}I_n&0\\0&-I_n\end{pmatrix}.
\]
The set of such matrices is denoted by
\begin{equation} \label{eq:def-U(n,n)}
    \U(n,n)=\{g\in\mathrm{GL}(2n,\C):g^*Jg=J\},
\end{equation}
and forms a group $G$ with respect to matrix multiplication. We will construct a class of random matrices in the set of pseudo-unitary matrices $\U(n, n)$ whose laws are invariant under left and right multiplication with matrices in 
\begin{equation} \label{eq:defn_subgroup_K}
    K=\{\diag(u_1,u_2):u_1,u_2\in\U(n)\}
    \cong\U(n)\times\U(n),
\end{equation}
and which is closed under taking products of independent matrices. The relevant spectral variables will be referred to as the radial variables. The joint density of the radial variables in this class has the form
\begin{equation}\label{eq:Leg-DT}
    \rho_{\mathrm{rv}}(x)
    =\frac{1}{Z_n(\om)}\prod_{1\leq j<k\leq n}(x_k-x_j)
    \det[(D^{j-1}\om)(x_k)]_{j,k=1}^n,
    \qquad x\in(1,\infty)^n,
\end{equation}
where $\om$ is sufficiently regular, $Z_n(\om)\neq0$ is the normalization constant and
\begin{equation}\label{D}
    (D\om)(x)
    =(1-x^2)\om''(x)-2x\om'(x)-\tfrac14\om(x).
\end{equation}
We will show that the DMPK ensemble \eqref{eq:DMPK-density} belongs to this class and that, for $M\in\N$, it is the joint density of the radial variables of a product of $M$ independent, identically distributed pseudo-unitary matrices. In order to prove this, we will make use of some tools from harmonic analysis, particularly the spherical transform of the pair $(\U(n,n),K)$.

Ensembles with similar closure properties have already been constructed for products of bi-unitarily invariant invertible complex matrices in \cite{Kieburg-Kosters16,Kieburg-Kosters19}, sums of unitarily invariant Hermitian matrices in \cite{Kuijlaars-Roman19} and products of conjugation invariant unitary matrices in \cite{Kieburg-Li-Zhang-Forrester23}. All of these constructions fit into the theory of polynomial ensembles of derivative type. In each setting, the appropriate differential operator depends on the underlying matrix space.
\medbreak

An important linear statistic in the context of the DMPK ensemble \eqref{eq:DMPK-density} is the conductance
\begin{equation}\label{eq:defn_conductance}
    G=\sum_{k=1}^n\frac{2}{1+x_k},
\end{equation}
because the variables $T_k=2/(1+x_k)$ represent the transmission eigenvalues of the underlying physical model. Our third and final objective is to prove that the DMPK ensemble \eqref{eq:DMPK-density} admits universal conductance fluctuations throughout the metallic regime, i.e.
\begin{equation}\label{eq:UCF}
    \lim_{n\to\infty}\Var_{n,M_n}(G)=\frac{1}{15},
    \qquad \text{if}\quad M_n\to\infty,\quad M_n/n\to0,
\end{equation}
see \cite[Eq.~(3.35)]{Beenakker97}, and \cite[Conj.~2]{Bachmann-Butz-De_Roeck12} with $\beta=2$. We will obtain asymptotic expansions of $\E_{n,M}[G]$ and $\Var_{n,M}(G)$ that hold uniformly for $M\in (0,n-1)$, and from which \eqref{eq:UCF} will readily follow. Both expansions match the predictions in \cite[Eqs.~(3.34) \& (3.35)]{Beenakker97}. For a fixed $M$, the expansion of the mean $\E_{n,M}[G]$ was already proven rigorously before in \cite[Thm.~2]{Bachmann-Butz-De_Roeck12} using different techniques. Our expansion of the variance matches the one obtained in \cite{Wang23} for a model proposed in \cite{Beenakker-Rejaei93,Beenakker-Rejaei94} that was predicted to serve as an appropriate approximation to the DMPK ensemble at crossover I. Remarkably, our bulk limits at crossover I also match the ones obtained in \cite{Wang23}, thereby confirming the predicted accurateness of the appropriating model.
\medbreak

Essential in our asymptotic analysis will be a double integral representation of the correlation kernel. Recall that the DMPK ensemble \eqref{eq:DMPK-density} is a biorthogonal ensemble and hence a determinantal point process, see \cite{Johansson06} for an introduction. Its correlation kernel can be written as
\begin{equation}\label{K}
    K_n(x,y)=\sum_{k=0}^{n-1}f_k(x)g_k(y),
\end{equation}
where the functions $f_0,\dots,f_{n-1}:[0,\infty)\to\R$ form a basis of the space of polynomials of degree at most $n-1$, the functions $g_0,\dots,g_{n-1}:[0,\infty)\to\R$ form a basis of $\spn{w_{1,M},\dots,w_{n,M}}$ defined in \eqref{w_j}, and satisfy the biorthogonality relations
\begin{equation}\label{eq:biorthogonality}
    \int_1^\infty f_j(x)g_k(x)dx=\delta_{j,k},
    \qquad j,k=0,\dots,n-1.
\end{equation}
A particular biorthogonal system for \eqref{eq:DMPK-density} was already derived in \cite[Eq. (16)]{Frahm95}, see also \cite[Eq. (3.79)]{Beenakker97}, but is not appropriate for further rigorous analysis. We will derive a different biorthogonal system adapted to the derivative type structure in \eqref{eq:Leg-DT}. This choice will yield a double integral representation for the correlation kernel of the general class, which we will then specialize to the DMPK ensemble. Similar phenomenon also occurred for other kinds of polynomial ensembles of derivative type, see \cite{Claeys-Zhang26} for an overview. As explained there, the resulting double integral representations often open up the road for an asymptotic analysis through a steepest descent analysis. The latter has been proven to be successful in, e.g., \cite{Johansson01a,Baik-Ben_Arous-Peche05,Liu-Wang-Zhang14,Liu-Wang-Wang18}.
\medbreak

The remainder of this work is organized as follows. Section~\ref{sec:MR} contains our main results. In Section~\ref{sec:IPUM}, we develop the harmonic analysis for products of invariant pseudo-unitary matrices, construct the ensembles of derivative type and derive double integral representations for their correlation kernels. In Section~\ref{sec:conductance}, we prove the uniform expansions of the mean and variance of the conductance. Finally, in Section~\ref{sec:LS}, we derive the local limits in the insulating, crossover II, crossover I and metallic regimes. The appendix collects the properties and asymptotic estimates of Legendre functions used in the proofs.

\section{Main results} \label{sec:MR}

Recall the group of pseudo-unitary matrices $\U(n, n)$ defined in \eqref{eq:def-U(n,n)}. Since it is a connected reductive, and thus unimodular, Lie group, see \cite[Sects. 1.4 \& 1.5]{Anker-Orsted05}, it admits a Haar measure $dg$ which is both left and right invariant and is unique up to multiplication by a positive constant. In Section \ref{S:IPUM-prod}, we will fix the precise normalization of $dg$. We will consider probability measures $d\mu(g)=\rho(g)dg$ on $\U(n,n)$ whose densities posses left and right multiplication invariance with respect to the subgroup $K$ defined in \eqref{eq:defn_subgroup_K}. The precise definition is as follows.

\begin{defi}\label{def:K-biinvariant-ensemble}
A $K$-invariant ensemble on $\U(n,n)$ is an absolutely continuous probability measure $d\mu(g)=\rho(g)dg$ with respect to Haar measure $dg$ on $\U(n,n)$ whose density $\rho:\U(n,n)\to\R_{\geq 0}$ is bi-$K$-invariant, i.e. $\int_{\U(n, n)}\rho dg=1$ and $\rho(k_1gk_2)=\rho(g)$ for all $k_1,k_2\in K$. A random matrix with law $\mu$ is said to be a $K$-invariant pseudo-unitary random matrix.
\end{defi}

We will focus on a subclass of ensembles that only depends on a single function $\om:\R_{\geq 1}\to\R$ and where the $n$-dimensional structure is created by consecutive applications of the differential operator
\begin{equation*}
(D\om)(x)= (1-x^2)\om''(x) -2x \om'(x) - \tfrac{1}{4}\om(x),\qquad x\in\R_{\geq1}.
\end{equation*}
For technical convenience in the proofs, we will restrict to functions from the following set
\begin{equation} \label{eq:def-W}
    \mathcal{W}(\R_{\geq 1}) = \Big\{f \in C^{\infty}(\R_{\geq 1}) : \int_1^\infty |(D^kf)(x)| x^m dx < \infty \text{ for all } k,m\in\Z_{\geq 0} \Big\},
\end{equation}
which is certainly not optimal. To every pseudo-unitary matrix, we can associate a matrix of (transplanted) radial variables of the form
\begin{equation}\label{eq:a-x}
\alpha_{x}=\frac1{\sqrt2}
\begin{pmatrix}
(\Lambda_{x}+I_n)^{1/2}&(\Lambda_{x}-I_n)^{1/2}\\
(\Lambda_{x}-I_n)^{1/2}&(\Lambda_{x}+I_n)^{1/2}
\end{pmatrix},\quad \Lambda_{x}=\diag(x_1,\ldots,x_n),
\end{equation}
and it will be sufficient to define $K$-invariant ensembles on those matrices. We will carefully explain this in Section \ref{S:IPUM-prod}.

\begin{defi}\label{def:derivative-type-ensemble}
A $K$-invariant ensemble on $\U(n,n)$ with density $\rho$ is said to be of derivative type if there exists a function $\omega\in\mathcal{W}(\R_{\geq 1})$ and constant $Z_{n}(\om)\neq 0$ such that
\begin{equation}\label{eq:rho-omega-density}
\rho(\alpha_{x})=
\frac{\det[(D^{j-1}\omega)(x_k)]_{j,k=1}^n}{Z_{n}(\om)\Delta_n(x)},\qquad \text{ a.e. }x\in(1,\infty)^n.
\end{equation}
\end{defi}
We want to highlight that positivity of the right hand side of \eqref{eq:rho-omega-density} is a separate condition from membership in $\mathcal{W}(\R_{\geq 1})$, even if $\omega\ge0$.
\medbreak

Our first result is that the joint density of the unordered radial variables of a $K$-invariant random pseudo-unitary matrix whose law is of derivative type possesses the structure of a polynomial ensemble of derivative type (in the broader sense explained in the introduction). We will prove this in Section \ref{S:IPUM-prod}.

\begin{prop} \label{prop:imup-rv}
    The joint density of the unordered radial variables of a $K$-invariant random pseudo-unitary matrix whose law is of derivative type with respect to $\om\in\mathcal{W}(\R_{\geq 1})$ is given by
    \begin{equation}\label{eq:radial-density-DT}
    \rho_{\rm rv}( x)=\frac{1}{Z_{n}(\om)}\Delta_n(x)
    \det[(D^{j-1}\omega)(x_k)]_{j,k=1}^n,\qquad \text{a.e. } x\in(1,\infty)^n.
    \end{equation}
\end{prop}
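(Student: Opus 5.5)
The plan is to use the Cartan ($KAK$) decomposition of $\U(n,n)$ with respect to $K$ and the corresponding Weyl-type integration formula for Haar measure. Every $g\in\U(n,n)$ can be written as $g=k_1\alpha_x k_2$ with $k_1,k_2\in K$ and $\alpha_x$ as in \eqref{eq:a-x}, where $x\in[1,\infty)^n$ is determined up to permutation. Writing $x_j=\cosh\xi_j$ with $\xi_j\ge0$ and $\alpha_x=\exp(H_\xi)$ with $H_\xi=\begin{pmatrix}0&\sinh\text{-block}\\ \cdots&0\end{pmatrix}$ in the Cartan subspace $\mathfrak{a}$ of $\mathfrak{p}$, the standard formula for a noncompact symmetric pair (see \cite{Anker-Orsted05}) is
\begin{equation}
\int_{\U(n,n)} f(g)\,dg = c_n\int_K\int_K\int_{\R_{\ge0}^n} f(k_1\exp(H_\xi)k_2)\,\delta(\xi)\,d\xi\,dk_1\,dk_2 .
\end{equation}
Here $\delta(\xi)=\prod_{\gamma\in\Sigma^+}|\sinh\gamma(\xi)|^{m_\gamma}$. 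For the pair $(\U(n,n),\U(n)\times\U(n))$ the restricted root system is of type $BC_n$ (in fact $C_n$ here), with roots $\xi_j\pm\xi_k$ of multiplicity $2$, $2\xi_j$ of multiplicity $1$, and $\xi_j$ of multiplicity $0$. I would verify these multiplicities by a direct computation of the adjoint action of $\mathfrak a$ on $\mathfrak p\cong M_n(\C)$.

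Next I would change variables to $x_j=\cosh 2\xi_j$ (or $\cosh\xi_j$, depending on the normalisation in \eqref{eq:a-x}; with the half-angle blocks $(\Lambda\pm I)^{1/2}/\sqrt2$ one has $x_j=\cosh 2\theta_j$, where $\theta_j$ is the hyperbolic angle in $\exp H$). The product over the roots $\xi_j\pm\xi_k$ gives
\begin{equation}
\prod_{j<k}\sinh^2(\xi_k-\xi_j)\sinh^2(\xi_k+\xi_j)=\prod_{j<k}\tfrac14(\cosh2\xi_k-\cosh2\xi_j)^2 .
\end{equation}
The factor $\prod_j\sinh 2\xi_j\,d\xi_j$ is exactly $\tfrac12\prod_j dx_j$. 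So the Haar measure in radial coordinates becomes $c_n'\,\Delta_n(x)^2\,dx\,dk_1\,dk_2$ on the ordered chamber. Passing to unordered variables only multiplies by $1/n!$ and is absorbed into constants.

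Now apply this with $f=\rho\cdot\phi(x(\cdot))$ for a test function $\phi$ symmetric in $x$. Bi-$K$-invariance gives $\rho(k_1\alpha_xk_2)=\rho(\alpha_x)$, and the $K$-integrals contribute $\mathrm{vol}(K)^2$. Inserting \eqref{eq:rho-omega-density}, one factor $\Delta_n(x)$ cancels, and the density of the unordered radial variables is $C\,\Delta_n(x)\det[(D^{j-1}\om)(x_k)]/Z_n(\om)$. The constant $C$ depends only on the Haar normalisation. Choosing that normalisation (as the paper does in Section~\ref{S:IPUM-prod}) so that $C=1$ yields \eqref{eq:radial-density-DT}. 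Alternatively, with $Z_n(\om)$ defined by total mass one, $C$ is forced by $\int\rho\,dg=1$.

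The main obstacle is the careful derivation of the radial Jacobian, in particular getting the correct multiplicities and the exact substitution linking $\alpha_x$ to $\exp H$. One must confirm that the $2\xi_j$ roots, with multiplicity $1$, give precisely the measure $dx_j$ with no leftover power of $x_j^2-1$. I would check this first in the case $n=1$, where $\U(1,1)$ with the hyperboloid measure gives $\sinh 2\xi\,d\xi\propto dx$. It is also necessary to note that the set where the $x_j$ coincide or equal $1$ is Haar-null, which is why the statement holds only almost everywhere.
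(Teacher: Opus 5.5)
Your proposal is correct and is essentially the paper's proof: the $KAK$ integration formula with the $C_n$ root data (multiplicities $2,2,1$), the substitution $x_j=\cosh 2\lambda_j$ turning the Jacobian into $\Delta_n(x)^2\,dx$ as in Propositions~\ref{prop:polar-integration-x}--\ref{prop:radial-density}, and then cancelling one Vandermonde factor against \eqref{eq:rho-omega-density}. Two small points to tidy: the $2\xi_j$-factors give $2^{-n}\prod_j dx_j$, not $\tfrac12\prod_j dx_j$. Also, $C=1$ comes from the paper's normalization of $dg$ and is not forced by $\int\rho\,dg=1$, because $Z_n(\om)$ in \eqref{eq:rho-omega-density} is itself defined relative to that normalization.
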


More importantly, the precise derivative type structure allows us to prove the following closure property under taking products. The stated convolution $\ast_{\mathcal{P}}$ is such that
    $$ (\mathcal{P}[\om_1\ast_{\mathcal{P}}\om_2])(s) = \mathcal{P}\om_1(s)\cdot\mathcal{P}\om_2(s),\qquad s\in\C, $$
in terms of the Mehler-Fock transform
    $$ (\mathcal{P}\om)(s) = \int_1^\infty \om(x) P_{is-\frac{1}{2}}(x) dx,\qquad s\in\C. $$
We will give a more precise definition in Section \ref{S:IPUM-DT}, which also contains the proof of the closure property.

\begin{thm}\label{thm:closure-products}
Let $g_1,g_2$ be two independent $K$-invariant random pseudo-unitary matrices whose laws are of derivative type with respect to $\omega_1,\omega_2\in\mathcal{W}(\R_{\geq 1})$ respectively. Then $g_1g_2$ is a random pseudo-unitary matrix whose law is of derivative type with respect to $\omega_1\ast_{\MF}\omega_2$.
\end{thm}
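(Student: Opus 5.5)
The plan is to use the spherical transform of the Gelfand pair $(\U(n,n),K)$. The transform diagonalizes convolution of bi-$K$-invariant functions, and the law of $g_1g_2$ has density $\rho_1\ast\rho_2$ with respect to Haar measure. This density is again bi-$K$-invariant, because $\rho_1$ is left-$K$-invariant and $\rho_2$ is right-$K$-invariant. So the problem reduces to three tasks: computing the spherical transform $\widehat{\rho}(s)$, $s=(s_1,\dots,s_n)$, of a density of derivative type; checking that the product $\widehat{\rho_1}\widehat{\rho_2}$ is again of that form; and inverting.

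\emph{Step 1 (spherical functions).} I would use a Berezin--Karpelevich type formula for the spherical functions of $\U(n,n)$ in the radial variables $x$. It should take the form
\[
\phi_s(\alpha_x)=c_n\,\frac{\det[P_{is_j-\frac12}(x_k)]_{j,k=1}^n}{\Delta_n(x)\,\Delta_n(s^2)},
\]
up to a normalizing constant. I would also use the Weyl integration formula $dg\propto \Delta_n(x)^2\,dx\,dk_1\,dk_2$. The factor $\Delta_n(x)^2$ is what turns \eqref{eq:rho-omega-density} into \eqref{eq:radial-density-DT}, as in Proposition~\ref{prop:imup-rv}.

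\emph{Step 2 (transform of a derivative-type density).} Substituting \eqref{eq:rho-omega-density} and one $\Delta_n(x)$ from the Haar measure, one factor $\Delta_n(x)$ cancels. This leaves
\[
\widehat{\rho}(s)\propto \frac{1}{Z_n(\om)\Delta_n(s^2)}\int_{(1,\infty)^n}\det[(D^{j-1}\om)(x_k)]\det[P_{is_j-\frac12}(x_k)]\,dx .
\]
Andréief's identity reduces this to $\det\big[\int_1^\infty (D^{j-1}\om)(x)P_{is_k-\frac12}(x)dx\big]$. Next, $D$ is the Legendre operator shifted by $-\frac14$, so $DP_{is-\frac12}=-s^2P_{is-\frac12}$. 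Integrating by parts, with the boundary terms vanishing at $x=1$ because of the factor $1-x^2$ and at infinity thanks to $\om\in\mathcal{W}(\R_{\geq1})$, gives
\[
\mathcal{P}[D^{j-1}\om](s)=(-s^2)^{j-1}\mathcal{P}\om(s).
\]
The determinant $\det[(-s_k^2)^{j-1}]$ is $\pm\Delta_n(s^2)$, which cancels the denominator. Hence
\[
\widehat{\rho}(s)=C_n(\om)\prod_{k=1}^n\mathcal{P}\om(s_k),
\]
a product form.

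\emph{Step 3 (closure).} By Step 2, $\widehat{\rho_1\ast\rho_2}=\widehat{\rho_1}\,\widehat{\rho_2}=C\prod_k \mathcal{P}\om_1(s_k)\mathcal{P}\om_2(s_k)=C\prod_k\mathcal{P}[\om_1\ast_{\MF}\om_2](s_k)$. Running Step 2 backwards for $\om=\om_1\ast_{\MF}\om_2$, the candidate density built from $\om$ via \eqref{eq:rho-omega-density} has the same spherical transform up to a constant. Injectivity of the spherical transform on integrable bi-$K$-invariant functions then forces the two densities to agree a.e., with the constant fixed by normalization. Two further points need checking here. First, $\om_1\ast_{\MF}\om_2\in\mathcal{W}(\R_{\geq1})$, which should follow from the definition of the Mehler--Fock convolution via a product formula for Legendre functions, or by showing $D$ commutes with $\ast_{\MF}$ and using the moment bounds. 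Second, $Z_n(\om)\ne0$, which follows from $\widehat{\rho_1}(s_0)\widehat{\rho_2}(s_0)=1$ at the trivial spherical function.

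\emph{Main obstacle.} I expect the hardest part to be the analytic justification rather than the algebra. The spherical function formula must be correctly normalized, in particular at coinciding $s_j$, where a limiting argument is needed. Fubini and the integration by parts must be justified with only the decay provided by $\mathcal{W}$. Most delicate is verifying that $\mathcal{W}$ is preserved under $\ast_{\MF}$. For that I would first try writing $\om_1\ast_{\MF}\om_2(x)=\int\int \om_1(y)\om_2(z)\,k(x,y,z)\,dy\,dz$ with the positive Legendre product kernel $k$. I would then use the relation $D_x k=D_y k$ to move powers of $D$ onto $\om_1$, and use polynomial moment bounds from $x\le 2yz$ on the support of $k$.
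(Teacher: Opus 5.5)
Your proposal is correct and follows the paper's route. The product form $(\mathcal{S}\rho)(s)\propto\prod_k(\MF\om)(s_k)$ via Andr\'eief's identity and $\MF[D^k\om](s)=s^{2k}(\MF\om)(s)$, followed by multiplicativity and injectivity of the spherical transform, is exactly Proposition~\ref{thm:derivative-transform} together with the proof of Theorem~\ref{thm:closure-products}. Note that in fact $DP_{is-\frac12}=+s^2P_{is-\frac12}$, so your minus sign is a slip, but it is harmless. Deducing $Z_n(\om)\neq0$ from the normalization (the trivial spherical function) is a valid replacement for \eqref{eq:Z-exact}.

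The one step to adjust is your first attempt at $\om_1\ast_{\MF}\om_2\in\mathcal{W}(\R_{\geq1})$. The kernel $\mathsf K$ blows up like $(2xyz+1-x^2-y^2-z^2)^{-1/2}$ at the edge of its support, so $\partial_x\mathsf K$ is not locally integrable. You therefore cannot differentiate under the integral or integrate $D$ by parts against $\mathsf K$, even though $D_x\mathsf K=D_y\mathsf K$ holds pointwise inside the support. Instead, either work with the smooth form $\frac1\pi\int_0^\pi\om_2\bigl(xy+\sqrt{(x^2-1)(y^2-1)}\cos\theta\bigr)\,d\theta$, or, as in Proposition~\ref{prop:transform-multiplicativity-C}, obtain smoothness and $D^k(\om_1\ast_{\MF}\om_2)=(D^k\om_1)\ast_{\MF}\om_2$ from the Mehler--Fock inversion formula. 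After that, your moment bound via $x\le 2yz$ is the same as the paper's.
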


Specializing Theorem \ref{thm:closure-products} to the setting of the DMPK ensemble \eqref{eq:DMPK-density} leads to the result below. We will give a proof in Section \ref{S:IPUM-DMPK}.

\begin{cor} \label{cor:DPMK-prod}
    Let $M\in\N$ and let $w_{1,1}$ and $w_{1,M}$ be as in \eqref{w_j}. Suppose that $\mathfrak{g}_1,\dots,\mathfrak{g}_{M}$ are independent $K$-invariant random pseudo-unitary matrices whose laws are of derivative type with respect to $w_{1,1}$. Then their product $\mathfrak{g}_1\cdots\mathfrak{g}_{M}$ is a $K$-invariant random pseudo-unitary matrix whose law is of derivative type with respect to $w_{1,M}$. In particular, its radial variables have joint probability density \eqref{eq:radial-density-DT} with $\om=w_{1,M}$, which is the DMPK ensemble \eqref{eq:DMPK-density}.
\end{cor}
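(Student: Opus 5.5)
The plan is to apply Theorem~\ref{thm:closure-products} inductively and then identify the resulting weight function through the Mehler--Fock transform. By induction on $M$, Theorem~\ref{thm:closure-products} shows that $\mathfrak{g}_1\cdots\mathfrak{g}_M$ is a $K$-invariant random pseudo-unitary matrix of derivative type with respect to the $M$-fold convolution
\[
\om^{(M)}=w_{1,1}\ast_{\MF}\cdots\ast_{\MF}w_{1,1}.
\]
At each step we need $\om^{(m)}\in\mathcal{W}(\R_{\geq1})$. The cleanest way to see this is to prove directly that $\om^{(m)}=w_{1,m}$ and that every $w_{1,M}$ lies in $\mathcal{W}(\R_{\geq1})$, so the induction never leaves the admissible class.

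\emph{Identifying the convolution.} By the defining property of $\ast_{\MF}$ we have $\MF\om^{(M)}=(\MF w_{1,1})^M$, so it suffices to compute $\MF w_{1,M}$. Recall the Mehler--Fock inversion formula
\[
\om(x)=\int_0^\infty s\tanh(\pi s)\,P_{is-\frac12}(x)\,(\MF\om)(s)\,ds .
\]
Comparing this with \eqref{w_j} for $j=1$, which reads $w_{1,M}(x)=\int_0^\infty e^{-\frac Mn s^2}s\tanh(\pi s)P_{is-\frac12}(x)\,ds$, gives
\[
(\MF w_{1,M})(s)=e^{-\frac Mn s^2}=\bigl(e^{-\frac1n s^2}\bigr)^M=\bigl(\MF w_{1,1}(s)\bigr)^M .
\]
Injectivity of $\MF$ on $\mathcal{W}(\R_{\geq1})$ then yields $\om^{(M)}=w_{1,M}$. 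This relies on the precise definition of $\ast_{\MF}$ in Section~\ref{S:IPUM-DT}, which is presumably defined through exactly this transform identity.

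\emph{Membership in $\mathcal{W}$.} Since $D P_{is-\frac12}=-s^2P_{is-\frac12}$ by the Legendre equation with $\nu(\nu+1)=-s^2-\tfrac14$, differentiating under the integral sign gives
\[
D^k w_{1,M}=(-1)^k w_{k+1,M}.
\]
This differentiation is justified by the Gaussian factor $e^{-\frac Mn s^2}$. To obtain the moment bounds $\int_1^\infty |D^k w_{1,M}|\,x^m\,dx<\infty$, we would shift the $s$-contour upward into the strip $\operatorname{Im}s=c$, which is legitimate by Gaussian decay. On that contour $P_{is-\frac12}(x)=\bigO(x^{-\frac12-c}\log x)$, which we take from the appendix asymptotics, so the integrand decays faster than any power of $x$. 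One must handle the poles of $\tanh(\pi s)$ at $s=i(k+\tfrac12)$: either take residues, which contribute terms $P_{-k-1}=P_k$ that are polynomial and would spoil decay but should cancel after symmetrization, or use $\tanh(\pi s)P_{is-\frac12}$ written via $Q$-functions. \textbf{This decay estimate is the main obstacle.} I would first try writing $\tanh(\pi s)P_{is-\frac12}$ in terms of $Q_{is-\frac12}$ and $Q_{-is-\frac12}$ and shifting the contour for each piece separately.

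\emph{The determinant.} With $D^{j-1}w_{1,M}=(-1)^{j-1}w_{j,M}$, the determinant in \eqref{eq:radial-density-DT} equals $\pm\det[w_{j,M}(x_k)]$, with the sign absorbed into $Z_{n,M}$. Proposition~\ref{prop:imup-rv} then gives exactly the DMPK density \eqref{eq:DMPK-density}.
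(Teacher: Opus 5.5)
Your skeleton matches the paper's proof, and so does your order of reduction. You compute $\MF w_{1,M}(s)=e^{-\frac Mn s^2}$, use multiplicativity and injectivity of $\MF$ to get $w_{1,M}=w_{1,1}\ast_{\MF}\cdots\ast_{\MF}w_{1,1}$, iterate Theorem~\ref{thm:closure-products}, and read off the determinant. Membership of the iterated convolutions in $\mathcal W(\R_{\geq1})$ is in any case supplied by Proposition~\ref{prop:transform-multiplicativity-C}.

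The gap is the step you flag yourself, and it cannot be postponed. Your comparison with \eqref{MF_inv} already requires $w_{1,M}\in L^1$ with $\MF w_{1,M}\in L^1(\R_{\geq0};s^{1/2}ds)$, so the decay of $D^kw_{1,M}$ has to be proved first. The route you put first cannot give it. Because $P_{is-\frac12}=P_{-is-\frac12}$, the function $P_{is-\frac12}(x)$ carries both the $x^{is-\frac12}$ and the $x^{-is-\frac12}$ behaviour. On $\Im s=c\neq0$ it is therefore of order $x^{|c|-\frac12}$ as $x\to\infty$, not $\bigO(x^{-\frac12-c}\log x)$; compare Proposition~\ref{prop:Leg-geom-est}. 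Shifting the $P$-representation thus makes the integrand grow. The residues at the poles of $\tanh(\pi s)$ are non-decaying polynomials $P_\ell(x)$, and their cancellation against the shifted integral is exactly what you would have to prove.

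Your fallback is the right idea, but ``shifting each piece separately'' does not work as stated: each $Q$-piece is a half-line integral and is not even in $s$. Follow the proof of Proposition~\ref{prop:gaussian-weights-admissible} instead. Use \eqref{eq:Q-Q=P}, i.e.\ $\pi i\tanh(\pi s)P_{is-\frac12}=Q_{-is-\frac12}-Q_{is-\frac12}$. Substitute $s\mapsto-s$ in the $Q_{-is-\frac12}$ piece; since $s^{2k+1}e^{-\frac Mn s^2}$ is odd, both pieces merge into one full-line integral,
\[
(D^kw_{1,M})(x)=-\int_{\R}e^{-\frac Mn s^2}s^{2k+1}Q_{is-\frac12}(x)\,\frac{ds}{\pi i}.
\]
The poles of $s\mapsto Q_{is-\frac12}(x)$ lie at $s=i(\ell+\frac12)$, $\ell\in\Z_{\geq0}$, in the upper half-plane. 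So you shift \emph{downward}, $s\mapsto t-iA$, crossing no poles. There $|Q_{A+it-\frac12}(x)|\le C_Ax^{-A-\frac12}$ by Proposition~\ref{prop:Leg-Q-geom-est}, while $|e^{-\frac Mn(t-iA)^2}|=e^{\frac Mn A^2}e^{-\frac Mn t^2}$. Hence $|(D^kw_{1,M})(x)|\le Cx^{-A-\frac12}$ for every $A>0$, and all moments are finite. For $Q_{is-\frac12}$ an upward shift would cross those poles and produce growth.

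Finally, there is a sign error. With $D$ as in \eqref{D}, the Legendre equation gives $(1-x^2)P''-2xP'=(s^2+\frac14)P$. Hence $DP_{is-\frac12}=+s^2P_{is-\frac12}$ and $D^{j-1}w_{1,M}=w_{j,M}$ exactly. Nothing needs to be absorbed into $Z_{n,M}$, although your version does not affect the conclusion.
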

\begin{rem}
    By slightly modifying the proof of Corollary \ref{cor:DPMK-prod}, we can also show that the conclusion continues to hold for any product $\mathfrak{g}_1\cdots\mathfrak{g}_{r}$ of independent $K$-invariant random pseudo-unitary matrices $\mathfrak{g}_1,\dots,\mathfrak{g}_{r}$ whose laws are of derivative type with respect to $w_{1,M_1},\dots,w_{1,M_r}$ respectively, where all $M_j>0$ and $M_1+\dots+M_r=M$.
\end{rem}

The correlation kernel of the joint density of unordered radial variables in \eqref{eq:radial-density-DT} admits the following double integral representation, opening up the road to further asymptotic analysis. It involves both the Legendre functions $P_{it-\frac{1}{2}}$ of the first kind and the Legendre functions $Q_{is-\frac{1}{2}}$ of the second kind, see \cite[Ch. 14]{Boisvert-Clark-Lozier-Olver10}. We will prove this in Section \ref{S:IPUM-DIR}.

\begin{thm} \label{DIR}
Let $\contC$ be a counterclockwise contour that encloses $\{\pm i(k+{\textstyle\frac{1}{2}})\}_{k=0}^{n-1}$ once and let $\Sigma$ be a contour from $-\infty$ to $\infty$ in $\{s\in\C:\Im s \leq 0\}$ that doesn't intersect $\contC$. Suppose that $\om\in\mathcal{W}(\R_{\geq 1})$ and that $\mathcal{P}\om$ is non-vanishing on a neighborhood of $\mathcal{C}_{-}$ and its interior. Then the correlation kernel of the ensemble in \eqref{eq:radial-density-DT} can be written as
\begin{equation} \label{K_gen}
    K_n(x,y) = \int_{\Sigma} \oint_{\contC} \frac{(\mathcal{P}\om)(s)}{(\mathcal{P}\om)(t)} \frac{(\frac{1}{2}-is)_{n}(\frac{1}{2}+is)_{n}}{(\frac{1}{2}-it)_{n}(\frac{1}{2}+it)_{n}} \frac{P_{it-\frac{1}{2}}(x) Q_{is-\frac{1}{2}}(y)}{t-s} \frac{tdt}{\pi i} \frac{ds}{2\pi i}.
    \end{equation}
\end{thm}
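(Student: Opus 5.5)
We would follow the standard route for polynomial ensembles of derivative type: build an explicit biorthogonal system adapted to $D$ and sum it into a double integral. The basic input is that $P_{is-\frac12}$ is an eigenfunction of the Legendre operator,
\[
(1-x^2)P''-2xP'=-(\tfrac14+s^2)P
\quad\text{for } P=P_{is-\frac12}.
\]
Hence $D P_{is-\frac12}=-(\frac12+s^2)P_{is-\frac12}$ up to the constant shift, so that $D$ acts diagonally with eigenvalue $\lambda(s)=s^2$ (possibly up to sign). Since $D$ is formally self-adjoint on $[1,\infty)$ with weight $1$ and the boundary terms vanish (the factor $1-x^2$ kills them at $1$, and decay in $\mathcal W$ kills them at $\infty$), we get
\[
\mathcal P(D^{j}\om)(s)=\lambda(s)^j\,(\mathcal P\om)(s).
\]
Step one is to use the Mehler--Fock inversion,
\[
\om(y)=\int_0^\infty (\mathcal P\om)(s)\,s\tanh(\pi s)\,P_{is-\frac12}(y)\,ds .
\]
Combining it with the functions $Q_{is-\frac12}$, we would write functions in $\spn{D^{j}\om}$ as
\[
\int_\Sigma (\mathcal P\om)(s)\,q(s^2)\,Q_{is-\frac12}(y)\,\frac{ds}{2\pi i}
\]
for polynomials $q$ of degree $<n$. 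Here one uses $P_{is-\frac12}\propto Q_{is-\frac12}-Q_{-is-\frac12}$ (with the $\tanh$ factor) to unfold the half-line integral to $\Sigma$, shifted into the lower half plane.

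Step two produces the polynomial side. The candidate is
\[
f_k(x)=\oint_{\contC}\frac{p(t)}{(\mathcal P\om)(t)\,(\frac12-it)_n(\frac12+it)_n}\,P_{it-\frac12}(x)\,t\,dt .
\]
Residues at $t=\pm i(k+\frac12)$ give $P_{k}(x)$, the Legendre polynomials of degree $k\le n-1$. Indeed $P_{it-\frac12}$ at $it-\frac12=k$ or $-k-1$ equals $P_k$, and the $t$ factor pairs the symmetric residues so they do not cancel. So the $t$-integral yields a polynomial of degree $<n$ in $x$, and nonvanishing of $\mathcal P\om$ near $\contC$ makes this well defined.

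Step three is the biorthogonality computation. We need
\[
\int_1^\infty P_{it-\frac12}(x)\,Q_{is-\frac12}(x)\,dx=\frac{1}{(s-t)(s+t)}
\]
(up to normalization), valid for $t$ inside $\contC$ and $s\in\Sigma$ with $\Im s\le 0$, where the integral converges. This follows from the Wronskian identity for two Legendre-equation solutions with different degrees, where the boundary terms at $x=1$ come from the logarithmic singularity of $Q$. With this, pairing the two integral representations and computing residues reduces the kernel to
\[
\sum_k f_k(x)g_k(y)=\int_\Sigma\oint_{\contC}\frac{(\mathcal P\om)(s)}{(\mathcal P\om)(t)}\,R(s,t)\,\frac{P_{it-\frac12}(x)\,Q_{is-\frac12}(y)}{t-s}\cdots,
\]
where we aim for
\[
R(s,t)=\frac{(\frac12-is)_n(\frac12+is)_n}{(\frac12-it)_n(\frac12+it)_n}.
\]
The key point is the classical telescoping: $R(s,t)-1$ divided by $s^2-t^2$ is a polynomial in $s^2$ of degree $<n$ times the reciprocal of the $t$-Pochhammer product. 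That is exactly what is needed for the $s$-part to lie in $\spn{D^j\om}$ and the $t$-part in the polynomials. Writing $\frac{1}{t-s}$ in place of $\frac{2t}{t^2-s^2}$ is justified by the $t\mapsto -t$ symmetry of everything else, which explains the $t\,dt$. We would then check the reproducing property directly, namely
\[
\int K_n(x,y)\,x^m\,dx=x^m\quad\text{and}\quad \int D^j\om(y)\,K_n(\cdot,y)\,dy=D^j\om
\]
for the relevant indices; this characterizes the kernel.

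The main obstacle we expect is the analytic justification. This includes the convergence and Fubini interchanges for the $x$- and $y$-integrals against $Q_{is-\frac12}$ along $\Sigma$, which needs growth bounds on $(\mathcal P\om)(s)$ along $\Sigma$, obtained from $\om\in\mathcal W$ by repeated integration by parts with $D$. It also includes the correct unfolding of the Mehler--Fock inversion onto the contour $\Sigma$ avoiding $\contC$. We would first establish the $P$--$Q$ integral identity with explicit convergence conditions, then the decay of $\mathcal P\om$, and only then perform the residue bookkeeping.
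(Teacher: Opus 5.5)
Your plan is viable, but it is organized differently from the paper's proof, and the lemma you put at its center does not hold in the range you state.

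The paper does not guess the kernel and then verify it. It writes down an explicit biorthogonal system,
$p_j(x)=\oint_{\mathcal{C}_-}\frac{P_{it-\frac12}(x)}{(\mathcal{P}\omega)(t)\,\pi_{j+1}(t^2)}\frac{t\,dt}{\pi i}$ and $q_j=\pi_j(D)\omega$, where $\pi_j(u)=\prod_{k<j}(u+(k+\frac12)^2)$, so that $\pi_n(s^2)=(\frac12-is)_n(\frac12+is)_n$.

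It proves biorthogonality by pairing $p_j$ with $q_k$ \emph{as a function}: $\int_1^\infty P_{it-\frac12}(x)q_k(x)\,dx=(\mathcal{P}q_k)(t)=\pi_k(t^2)(\mathcal{P}\omega)(t)$. This converges absolutely for every $t$ because $q_k\in\mathcal{W}(\R_{\geq1})$, and then $\oint_{\mathcal{C}_-}\frac{\pi_k(t^2)}{\pi_{j+1}(t^2)}\frac{t\,dt}{\pi i}=\delta_{j,k}$.

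Only after that does the paper:
\begin{itemize}
\item replace $q_k$ by its $Q$-representation;
\item telescope the sum and drop the part without the Pochhammer ratio;
\item use $t\mapsto-t$ to turn $\frac{2s}{t^2-s^2}$ on $\mathcal{C}_-$ into $\frac{1}{t-s}$ on $\mathcal{C}_-\cup(-\mathcal{C}_-)$.
\end{itemize}
Your scheme has three steps. Telescoping puts the kernel in (polynomials of degree $<n$ in $x$) $\otimes$ $\operatorname{span}\{D^j\omega\}_{j<n}$ in $y$. One reproducing identity is then checked. Uniqueness follows because the Gram matrix $[\int_1^\infty P_m D^j\omega\,dx]$ is Vandermonde times the nonzero values $(\mathcal{P}\omega)(-i(m+\frac12))$. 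This is equivalent to the paper's argument and uses the same ingredients. However, it routes the verification through $\int_1^\infty P_{it-\frac12}Q_{is-\frac12}\,dx$, which the paper never needs.

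That integral converges only for $\Im s<-|\Im t|$. As $x\to\infty$, $|P_{it-\frac12}(x)|$ grows like $x^{|\Im t|-\frac12}$ and $|Q_{is-\frac12}(x)|$ behaves like $x^{\Im s-\frac12}$. Correspondingly, the boundary term at $\infty$ in your Wronskian argument vanishes only in that range.

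So the identity is false for $t$ inside $\mathcal{C}$ and all $s\in\Sigma$ with merely $\Im s\leq0$. Take $\Sigma=\R$ passing between $\mathcal{C}_-$ and $-\mathcal{C}_-$, the configuration used later in the asymptotics. Then it diverges for $t$ near the bottom of $\mathcal{C}_-$. The same happens to $\int_1^\infty K_n(x,y)y^m\,dy$ if you insert the double integral and swap.

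To repair your version:
\begin{itemize}
\item First push $\Sigma$ below all of $\mathcal{C}$. This is allowed, since $s\mapsto Q_{is-\frac12}(y)$ is analytic for $\Im s<\frac12$ and $\mathcal{P}\omega$ decays in horizontal strips.
\item Verify the reproducing identity there.
\item Move $\Sigma$ back. Crossing $\mathcal{C}_-$ only adds a multiple of $\oint_{\mathcal{C}_-}t\,P_{it-\frac12}(x)Q_{it-\frac12}(y)\,dt$. This is zero because $R(t,t)=1$ and the integrand is analytic inside $\mathcal{C}_-$.
\end{itemize}

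A simpler fix avoids the $P$--$Q$ integral altogether. Once telescoping gives $K_n(x,y)=\sum_{j<n}F_j(x)(D^j\omega)(y)$, pair with $P_m(y)$ using $\int_1^\infty (D^j\omega)P_m\,dy=(-(m+\frac12)^2)^j(\mathcal{P}\omega)(-i(m+\frac12))$. Evaluating $\frac{\pi_n(u)-\pi_n(t^2)}{u-t^2}$ at $u=-(m+\frac12)^2$ gives $\frac{\pi_n(t^2)}{t^2+(m+\frac12)^2}$. Only the poles at $t=\pm i(m+\frac12)$ survive, and you recover $P_m(x)$.

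Also fix these slips:
\begin{itemize}
\item The reproducing identities should read $\int_1^\infty K_n(x,y)y^m\,dy=x^m$ and $\int_1^\infty (D^j\omega)(x)K_n(x,y)\,dx=(D^j\omega)(y)$.
\item The $Q$-representation needs the factor $s$: $(\hat q(D)\omega)(y)=-\int_\R\hat q(s^2)(\mathcal{P}\omega)(s)Q_{is-\frac12}(y)\frac{s\,ds}{\pi i}$.
\item The symmetrization exchanges $\frac{1}{t-s}$ for $\frac{2s}{t^2-s^2}$, not $\frac{2t}{t^2-s^2}$.
\item $DP_{is-\frac12}=s^2P_{is-\frac12}$ holds exactly, with no sign ambiguity.
\end{itemize}
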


Specializing Theorem \ref{DIR} to the setting of the DMPK ensemble \eqref{eq:DMPK-density} leads to the result below. We will give a proof in Section \ref{S:IPUM-DIR}.

\begin{cor} \label{DIR_DMPK}
Let $\contC$ be a counterclockwise contour that encloses $\{\pm i(k+{\textstyle\frac{1}{2}})\}_{k=0}^{n-1}$ once and let $\Sigma$ be a contour from $-\infty$ to $\infty$ in $\{s\in\C:\Im s \leq 0\}$ that doesn't intersect $\contC$. Then the correlation kernel of the DMPK ensemble \eqref{eq:DMPK-density} can be written as
\begin{equation} \label{eq:defn_K_in_PQ_simplified_further}
  K_{n,M}(x,y) = \int_{\Sigma} \oint_{\contC}  \frac{e^{-\frac{M}{n} (s^2-t^2)}}{t-s} \frac{(\frac{1}{2}-is)_{n}(\frac{1}{2}+is)_{n}}{(\frac{1}{2}-it)_{n}(\frac{1}{2}+it)_{n}}   P_{it-\frac{1}{2}}(x) Q_{is-\frac{1}{2}}(y) \frac{tdt}{\pi i} \frac{ds}{2\pi i}.
\end{equation}
\end{cor}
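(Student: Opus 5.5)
The plan is to specialize Theorem~\ref{DIR} to $\om=w_{1,M}$, which by Corollary~\ref{cor:DPMK-prod} (and Proposition~\ref{prop:imup-rv}) realizes the DMPK ensemble \eqref{eq:DMPK-density} as the ensemble \eqref{eq:radial-density-DT}. There are three points to check.
\begin{enumerate}[label=(\roman*)]
\item The DMPK density coincides with \eqref{eq:radial-density-DT} for every $M>0$, not only for $M\in\N$.
\item The function $w_{1,M}$ belongs to $\mathcal{W}(\R_{\geq1})$.
\item The Mehler--Fock transform $\mathcal{P}w_{1,M}$ is explicit and non-vanishing, and the ratio $(\mathcal{P}\om)(s)/(\mathcal{P}\om)(t)$ equals $e^{-\frac{M}{n}(s^2-t^2)}$.
\end{enumerate}

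For (i), I would use that $P_{is-\frac12}$ is an eigenfunction of the Legendre operator:
\[
(1-x^2)P''-2xP'=\nu(\nu+1)P, \qquad \nu=is-\tfrac12, \qquad \nu(\nu+1)=-s^2-\tfrac14.
\]
Hence $D P_{is-\frac12}=-s^2P_{is-\frac12}$. Differentiating \eqref{w_j} under the integral sign gives
\[
D^{j-1}w_{1,M}=(-1)^{j-1}w_{j,M}.
\]
This is justified by the Gaussian factor $e^{-\frac{M}{n}s^2}$ together with the growth bound $|P_{is-\frac12}(x)|\le P_{-\frac12}(x)$ for real $s$. The signs only change the normalization constant, so the determinants in \eqref{eq:DMPK-density} and \eqref{eq:radial-density-DT} agree up to a constant.

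For (iii), I would use Mehler--Fock inversion. In the paper's normalization, $\om(x)=\int_0^\infty s\tanh(\pi s)\,(\mathcal{P}\om)(s)\,P_{is-\frac12}(x)\,ds$. Reading this off \eqref{w_j} with $j=1$ gives $(\mathcal{P}w_{1,M})(s)=e^{-\frac{M}{n}s^2}$, up to a harmless constant. This transform is entire and nowhere vanishing, so the hypothesis of Theorem~\ref{DIR} holds on any neighbourhood of $\mathcal{C}_-$. Substituting the ratio $e^{-\frac Mn s^2}/e^{-\frac Mn t^2}$ into \eqref{K_gen} yields \eqref{eq:defn_K_in_PQ_simplified_further} directly. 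The identity $D^k w_{1,M}=\pm w_{k+1,M}$ reduces (ii) to showing that each $w_{j,M}$ is smooth and integrable against every power $x^m$.

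The main obstacle is (ii), namely the decay of $w_{j,M}(x)$ faster than any power as $x\to\infty$. The trivial bound $|P_{is-\frac12}(x)|\lesssim x^{-1/2}\log x$ is not enough. I would write $x=\cosh\xi$ and use the asymptotics $P_{is-\frac12}(\cosh\xi)\sim c(s)e^{(is-\frac12)\xi}+c(-s)e^{(-is-\frac12)\xi}$. I would then shift the $s$-contour into $\Im s<0$, which is possible because the integrand $s^{2j-1}\tanh(\pi s)e^{-\frac Mn s^2}$ is even in structure and Gaussian-decaying. Taking the shift arbitrarily far, past the poles of $\tanh$ and handling their residues, gives decay like $e^{-A\xi}$ for every $A$. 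Alternatively, the heat-kernel picture of $w_{1,M}$ on hyperbolic space gives Gaussian decay in $\xi=\arcosh x$, hence decay faster than any power of $x$.
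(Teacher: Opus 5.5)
Your three steps follow the paper's route. Step (i) is Proposition~\ref{prop:gaussian-derivative-type}, steps (ii) and (iii) are Proposition~\ref{prop:gaussian-weights-admissible}, and Theorem~\ref{DIR} is then applied with $\MF w_{1,M}(s)=e^{-\frac{M}{n}s^2}$. Three smaller corrections first.
\begin{itemize}
\item The Legendre equation is $(1-x^2)P''-2xP'+\nu(\nu+1)P=0$. So $DP_{is-\frac12}=+s^2P_{is-\frac12}$ and $D^{j-1}w_{1,M}=w_{j,M}$ with no signs. This is harmless, as you note, and it is consistent with $\MF[D^kf]=s^{2k}\MF f$.
\item Differentiating \eqref{w_j} under the integral requires bounds on the $x$-derivatives of $P_{is-\frac12}$ that are polynomial in $s$ (Proposition~\ref{prop:Leg-der-s-est}). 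The bound $|P_{is-\frac12}|\leq P_{-\frac12}$ alone is not enough.
\item Step (iii) depends on (ii). The inversion step needs $w_{1,M}\in L^1(\R_{\geq1})$. Transferring $\MF w_{1,M}=e^{-\frac{M}{n}s^2}$ from $s>0$ to the non-real points where Theorem~\ref{DIR} evaluates it needs $\MF w_{1,M}$ to be entire (Proposition~\ref{prop:admissable-weight-prop}) together with the identity theorem.
\end{itemize}

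The genuine gap is the mechanism you propose for (ii). You cannot shift a contour using asymptotics; you need an exact splitting whose pieces have known analyticity and bounds uniform along the shifted line. Shifting the full integrand $e^{-\frac{M}{n}s^2}s^{2j-1}\tanh(\pi s)P_{is-\frac12}(x)$ to $\Im s=-A$ fails. On that line $|P_{is-\frac12}(x)|$ grows like $x^{A-\frac12}$, and the residues of $\tanh(\pi s)$ at $s=-i(k+\frac12)$ are multiples of the Legendre polynomials $P_k(x)$.

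The exact splitting is the connection formula \eqref{eq:Q-Q=P}, $\pi i\tanh(\pi s)P_{is-\frac12}=Q_{-is-\frac12}-Q_{is-\frac12}$. Combined with the evenness you mention, it gives $w_{j,M}(x)=-\int_{\R}e^{-\frac{M}{n}s^2}s^{2j-1}Q_{is-\frac12}(x)\frac{ds}{\pi i}$. The map $s\mapsto Q_{is-\frac12}(x)$ is holomorphic in $\Im s<\frac12$. So, contrary to your description, there are \emph{no} poles to pass and no residues to handle. In the $e^{-is\xi}$-part of $\tanh(\pi s)P_{is-\frac12}$, the poles of $\tanh(\pi s)$ at $-i(k+\frac12)$ are cancelled by the zeros of $1/\Gamma(\frac12-is)$. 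The poles of $\Gamma(-is)$ at $-ik$ are cancelled by the zeros of $\tanh(\pi s)$.

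This cancellation is the crux, not a detail. A nonzero residue at $-i(k+\frac12)$ would leave a term of order $x^{-k-1}$, which for $k=0$ is not even integrable, and super-polynomial decay would fail.

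With the pole-free integrand, the argument closes as in the paper. Substitute $s=t-iA$, use the bound $|Q_{A+it-\frac12}(x)|\leq C_Ax^{-A-\frac12}$ uniformly in $t\in\R$ (Proposition~\ref{prop:Leg-Q-geom-est}), and use $|e^{-\frac{M}{n}(t-iA)^2}|=e^{\frac{M}{n}A^2}e^{-\frac{M}{n}t^2}$. Together these give $|w_{j,M}(x)|\leq Cx^{-A-\frac12}$ for $x\geq2$.

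Your heat-kernel alternative can also work. However, you would then need Gaussian bounds for every $w_{j,M}=D^{j-1}w_{1,M}$, that is, for time derivatives of the hyperbolic heat kernel, not only for the kernel itself.
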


The previous result will be the basis of our asymptotic analysis of the DMPK ensemble. To present its limiting local statistics, it will be more convenient to consider the correlation kernel in the dilated physical variables $\xi=M\xiscaled,\eta=M\etascaled \in \R_{\geq 0}$, i.e.
\begin{equation} \label{eq:standard_K_kernel}
  \mathbb{K}_{n,M}(\xiscaled,\etascaled) := M \sqrt{\sinh(M\xiscaled)\sinh(M\etascaled)} \, K_{n,M}(\cosh(M\xiscaled),\cosh(M\etascaled)),
\end{equation}
Evidently, if the particles $x_1,\dots,x_n\in[1,\infty)$ have a law with joint probability density \eqref{eq:DMPK-density}, then $\{ \lambdascaled_j = M^{-1} \arcosh x_j \}^n_{j = 1}$ is a determinantal point process with correlation kernel \eqref{eq:standard_K_kernel}. 

We will present the limits up to cocycles: when writing that $K_n(x, y) \to K(x, y)$ up to cocycles, we mean that there exists non-negative functions $f_n:\R_{\geq 1}\to\R$ such that 
\[
\lim_{n \to \infty} \frac{f_n(x)}{f_n(y)} K_n(x, y) = K(x, y). 
\]
The specific cocycles appear explicitly in the corresponding proofs. 
\medbreak

The limiting local statistics in the metallic regime, which has $M = M_n \to \infty$ and $M_n/n \to 0$ as $n \to \infty$, are given below. We will prove this in Section \ref{subsec:diffusive_regime}. This confirms and quantifies the universal behavior predicted in \cite{Macedo94a} and \cite[Sect.~III.B]{Beenakker97}. We briefly recall the following universal limiting correlation kernels in random matrix theory: the Bessel kernel (with parameter $\alpha > -1$),
\begin{equation} \label{eq:defn_K_Bes}
  K_{{\rm Bes},\alpha}(x,y) = \frac{J_{\alpha}(\sqrt{x}) \sqrt{y} J'_{\alpha}(\sqrt{y}) - \sqrt{x} J'_{\alpha}(\sqrt{x}) J_{\alpha}(\sqrt{y})}{2(x - y)} = \frac{1}{4} \int_0^1 J_{\alpha}(\sqrt{xt}) J_{\alpha}(\sqrt{yt}) dt,
\end{equation}
where $J_\alpha$ denotes the Bessel function of the first kind, see see \cite{Tracy-Widom94b}, the Airy kernel
\begin{equation} \label{eq:defn_K_Ai}
  K_{\rm Ai}(x,y) = \frac{\Ai(x) \Ai'(y) - \Ai'(x) \Ai(y)}{x - y}
  = i\int_{\Sigma^{(\Ai)}}\frac{du}{2\pi i} \int_{\contC^{(\Ai)}} \frac{dv}{2\pi i} \frac{1}{u - v} \frac{e^{\frac{i}{3} v^3 + iy v}}{e^{\frac{i}{3} u^3 + ix u}},
\end{equation}
where ${\rm Ai}$ denotes the Airy function, $\Sigma^{(\Ai)}$ is the contour from $e^{-\frac{5}{6} \pi i}\infty$ to $e^{-\frac{1}{6} \pi i}\infty$ and $\contC^{(\Ai)}$ is the contour from $e^{\frac{5}{6} \pi i}\infty$ to $e^{\frac{1}{6} \pi i}\infty$, see \cite{Tracy-Widom94} and \cite[Prop. 2.3]{Johansson03}, and the sine kernel
\begin{equation}
  K_{\sin}(x, y) = \frac{\sin \left(\pi (x - y) \right)}{\pi(x - y)},
\end{equation}
see \cite{Mehta04}.

\begin{thm}[Local statistics in the metallic regime] \label{thm:metallic}
    Take $M_n=\alpha_n n$ with all $\alpha_n>0$ such that $\lim_{n\to\infty} \alpha_n = 0$ and $\lim_{n \to \infty} M_n = \infty$.  
    \begin{enumerate}
    \item \label{enu:thm:metallic_1} 
      bulk: let $\eps>0$, take $r\in(\eps,1-\eps)$, and set $\xiscaled_n=2(r+\frac{\tau}{n})$ and $\etascaled_n = 2(r + \frac{\sigma}{n})$, then, up to cocycles,
      $$\frac{2}{n} \mathbb{K}_{n,M_n}(\xiscaled_n,\etascaled_n) \to K_{\sin}(\tau, \sigma), $$
    \item \label{enu:thm:metallic_2} 
      hard edge: set $\xiscaled_n=\frac{2\tau}{n}$ and $\etascaled_n = \frac{2\sigma}{n}$, then, up to cocycles,
      $$ \frac{2}{n} \mathbb{K}_{n,M_n}(\xiscaled_n,\etascaled_n) \to 2\pi^2  \sqrt{\tau \sigma} K_{\Bes, 0}((\pi\tau)^2,(\pi\sigma)^2),$$
    \item \label{enu:thm:metallic_3} 
      soft edge: set $\xiscaled_n = M^{-1}_n \zeta_{\max} + 2^{\frac{1}{3}} n^{-\frac{2}{3}} M^{-\frac{1}{3}}_n \tau$ and $\etascaled_n = M^{-1}_n \zeta_{\max} + 2^{\frac{1}{3}} n^{-\frac{2}{3}} M^{-\frac{1}{3}}_n \sigma$, where $\zeta_{\max}=2M_n+\bigO(\log M_n)$ as $n\to\infty$ is given by \eqref{eq:defn_zeta_max}, then, up to cocycles,
      $$2^{\frac{1}{3}} n^{-\frac{2}{3}} M^{-\frac{1}{3}}_n \mathbb{K}_{n,M_n}(\xiscaled_n,\etascaled_n) \to K_{\rm Ai}(\sigma,\tau).$$
    \end{enumerate}
\end{thm}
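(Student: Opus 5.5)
We would start from the double contour integral in Corollary~\ref{DIR_DMPK} and carry out a steepest descent analysis. First we rewrite the Legendre functions at $x=\cosh(M\xiscaled)$ and $y=\cosh(M\etascaled)$ by their large-parameter asymptotics from the appendix: uniform Bessel-type (Hilb) approximations when $M\xiscaled$ is small, and exponential asymptotics $P_{it-\frac12}(\cosh\xi)\approx (2\pi\sinh\xi)^{-1/2}\Gamma(it)\Gamma(\frac12+it)^{-1}e^{it\xi}+(t\to-t)$ and $Q_{is-\frac12}(\cosh\eta)\approx\sqrt{\pi/(2\sinh\eta)}\,\Gamma(\frac12+is)\Gamma(1+is)^{-1}e^{-is\eta}$ when $M\xiscaled\to\infty$. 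The prefactor $M\sqrt{\sinh\sinh}$ in \eqref{eq:standard_K_kernel} cancels the $\sinh^{-1/2}$ factors. Next we rescale $s=n u$, $t=n v$ and apply Stirling's formula to the Pochhammer ratio, $(\frac12\pm is)_n=\Gamma(n+\frac12\pm is)/\Gamma(\frac12\pm is)$. The $\Gamma(\frac12\pm is)$ factors combine with those in the Legendre asymptotics, and the integrand takes the form $\exp(n[F(v;\xiscaled)-F(u;\etascaled)])$ times slowly varying factors, with
\[
F(u;\xiscaled)=\alpha_n u^2 + (1+iu)\log(1+iu)+(1-iu)\log(1-iu) - iu\,\alpha_n\xiscaled\cdot\tfrac{1}{\alpha_n}\,(\text{suitably normalized}) .
\]
The essential point is that $M_n/n=\alpha_n\to0$. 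The Gaussian term $e^{-\alpha_n n(s^2-t^2)}$ is therefore subleading on the scale $u=O(1)$, but it still shapes the edge, which is why $\zeta_{\max}\approx 2M_n$. In the bulk and at the hard edge the relevant variable is $\xi=M\xiscaled$, of order $n\alpha_n$.

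\emph{Bulk.} The exponent has the form $n\,\phi(u)$ with $\phi'(u)=i\log\frac{1+iu}{1-iu}\cdot(\dots)-i\,r\cdot(\dots)$ plus a correction of size $\alpha_n$. For $r\in(\eps,1-\eps)$ there are two complex-conjugate saddle points $u_\pm$. We deform $\contC$ and $\Sigma$ through them so that $\Re\phi$ is maximal at the saddles on one contour and minimal on the other. The deformation crosses the pole $1/(t-s)$, which produces a residue term $\int_{\bar u_+}^{u_+} e^{i(\tau-\sigma)\cdot(\dots)}\,du$; after normalization by $\frac2n$ this residue gives $K_{\sin}(\tau,\sigma)$ up to a cocycle $e^{c(\tau-\sigma)}$. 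The remaining double integral is $O(n^{-1/2})$ relative to the residue term. We also need to check that the poles of $\Gamma(\frac12+is)$ in the lower half plane, which the $Q$-function introduces, are avoided. This holds because $\Sigma$ stays in $\Im s\le 0$, and the deformation only moves it towards the saddles.

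\emph{Hard edge.} Here $\xi=2M\tau/n$, which is small compared with $1/n$ times $M$. We keep $P_{it-\frac12}(\cosh\xi)\approx J_0(t\xi)$ and the corresponding $Q\approx K_0$-type representation, so the phase function no longer depends on $\xi$. The integrals over $u$ localize at the same saddle structure with $r=0$, where the saddles merge at $u=0$ to endpoint-type behaviour. Taking the residue at $t=s$ gives an integral $\int_0^{c} J_0(\pi\tau v)J_0(\pi\sigma v)\,v\,dv$, which we identify with the integral form of $K_{\Bes,0}$ in \eqref{eq:defn_K_Bes} and which produces the factor $2\pi^2\sqrt{\tau\sigma}$. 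The leftover double integral is shown to be negligible.

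\emph{Soft edge.} The two saddles coalesce into a double saddle $u_c$, purely imaginary, at the position $\zeta_{\max}$ defined by \eqref{eq:defn_zeta_max}. Because $\alpha_n\to0$, the coalescence point drifts, $u_c\sim$ scale determined by $\alpha_n$, and the cubic coefficient $\phi'''(u_c)$ is of order $\alpha_n^{-1}$-dependent size. This is what yields the unusual scale $n^{-2/3}M_n^{-1/3}$. We expand $n\phi$ to third order around $u_c$, substitute $u=u_c+c_n w$ with $c_n\propto (nM_n^{-1})^{\cdot}$ chosen so that the cubic term becomes $\frac i3 w^3$, and deform the contours to the Airy rays $\Sigma^{(\Ai)}$ and $\contC^{(\Ai)}$. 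Dominated convergence then gives the contour form of $K_{\rm Ai}$ in \eqref{eq:defn_K_Ai}.

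\emph{Main obstacle.} The hardest step is uniformity. We need uniform-in-$n$ asymptotics of the Legendre functions in the transition scale $s\sim n$, $\xi\sim M_n$ with $M_n/n\to0$ at an arbitrary rate, and global control of $\Re\phi$ along the deformed contours, including near $s=\pm i(k+\frac12)$, far out on $\Sigma$, and in the regime where $\alpha_n$ is not small enough to be ignored. We would first try to establish these estimates by working with the exact phase including the $\alpha_n u^2$ term, treating $\alpha_n$ as a parameter in a compact set $[0,\alpha_0]$, and proving the descent properties for all such parameters simultaneously.
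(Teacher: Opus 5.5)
Your overall architecture is the paper's. You do steepest descent on the double integral of Corollary~\ref{DIR_DMPK}, using the appendix Legendre asymptotics and Stirling for the Pochhammer ratio. A residue from crossing $1/(s-t)$ then gives the sine and Bessel kernels, and coalescing saddles give Airy.

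\textbf{The gap is the phase function, and the error is not cosmetic.} With $s=nu$ one has $e^{-\frac{M_n}{n}s^2}=e^{-nM_nu^2}$, and $e^{-is\eta}$ with $\eta=2rM_n+2\alpha_n\sigma$ contributes $e^{-2inM_nru}$. Both are of size $e^{\bigO(nM_n)}$, whereas the $u$-dependent part of the Stirling exponent is only $\bigO(n)$. So the Gaussian is the dominant term, not a subleading one, and your $F$ has the wrong parameter. The correct phase is the crossover-I phase \eqref{eq:defn_F(z;r)} with $\alpha$ replaced by $M_n\to\infty$ (not by $\alpha_n\to0$); this is the substitution $(z,\alpha,r)\mapsto(n^{-1}z,M_n,\tfrac12M_n^{-1}\zeta)$ leading to \eqref{eq:defn_eq_z(zeta)}. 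You must also isolate the term $\mp\pi u$ coming from $\cosh(\pi s)=\pi/(\Gamma(\frac12-is)\Gamma(\frac12+is))$. It is of lower order, but it is the only thing that pulls the saddles off the imaginary axis.

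\textbf{Where this breaks your sketch.} The saddles are
$z_\pm(\zeta)=\frac{1}{2\alpha_n}\bigl(\pm\pi-i\zeta+i\log\frac{2M_n+\zeta}{2M_n-\zeta}+\bigO(M_n^{-1})\bigr)$.
They are mirror images in $i\R$, not complex conjugates, and lie only $\bigO(M_n^{-1})$ from the imaginary axis in your $u$-variable. A $u=\bigO(1)$ analysis with a fixed limiting phase cannot resolve them. The relevant scale is $\Re s\approx\pm\pi/(2\alpha_n)$, and the residue runs over the horizontal segment $[z_-,z_+]$. This scale is what produces the limits:
\begin{itemize}
\item In the bulk, $(\xi-\eta)\Re z_+=2\alpha_n(\tau-\sigma)\Re z_+\to\pi(\tau-\sigma)$ uniformly in $r$. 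This is why the density is flat and the same normalization $\frac2n$ works for every $r$. Your saddles at $\bigO(1)$ separation in $u$ would make the residue oscillate at frequency of order $M_n$.
\item At the hard edge the saddles do not coalesce into an endpoint. They sit at $\pm z_+(0)$ on $\R$, with $z_+(0)=\frac{\pi}{2\alpha_n}(1+\bigO(M_n^{-1}))$. This gives $z_+(0)\xi=\pi\tau$ and the prefactor $2\alpha_n\sqrt{\xi\eta}\cdot 2z_+(0)^2=8\alpha_n^2z_+(0)^2\sqrt{\tau\sigma}\to2\pi^2\sqrt{\tau\sigma}$.
\end{itemize}

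\textbf{Error size and uniformity.} The Gaussian width at the saddles is $\alpha_n^{-1/2}$, since the second derivative of the phase in $s$ is $\approx2\alpha_n$. So the non-residue part is $\bigO(\alpha_n^{1/2})$ relative to the residue, not $\bigO(n^{-1/2})$. The uniformity you worry about is in $M_n\to\infty$, not in $\alpha_n\in[0,\alpha_0]$. The paper gets the global descent properties from Lemma~\ref{lem:ballistic} applied with $\alpha=M_n$, taking the $t$-contour to be the saddle locus $\{z_+(\zeta):\zeta\in[0,\zeta_{\max}]\}$ and its reflections. At the soft edge it uses $z_*=-in(1+M_n^{-1})^{1/2}$ and $F'''(z_*)=4i\alpha_n^2\sqrt{1+M_n^{-1}}$. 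These give the local scale $2^{-1/3}\alpha_n^{-2/3}$ in $s$, hence $2^{1/3}n^{-2/3}M_n^{-1/3}$ in $\xiscaled$.
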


The limiting local statistics in the insulating regime, which has $M=M_n$ with $M_n \to \infty$ and $M_n/n \to \infty$ as $n \to \infty$, are as follows. We will give a proof in Section \ref{subsec:insulator_regime}. This confirms and quantifies the localized behavior predicted in \cite[Sect.~III.C]{Beenakker97}: the particles $\{ \lambdascaled_j \}^n_{j = 1}$ admit Gaussian fluctuations around certain classical positions, which we define as
\begin{equation} \label{eq:defn_classical_position}
        \zetascaled_{n, m} = 
            \frac{2m-1}{n} + \frac{\log(n + m - \frac{1}{2}) - \psi(n - m + 1)}{M_n}, \qquad m\in\{1,\dots,n\},
    \end{equation}
where $\psi$ denotes the digamma function.

\begin{thm}[Local statistics in the insulating regime] \label{thm:insulator}
    Take $M_n=\alpha_n n$ with all $\alpha_n>0$ and $\lim_{n\to\infty} \alpha_n = \infty$.    
    \begin{enumerate}
    \item \label{enu:thm:insulator_1}
      normal distribution around the $\zetascaled_{n, m}$: set $\xiscaled_n=\zetascaled_{n,m} + \sqrt{2} n^{-\frac{1}{2}} M_n^{-\frac{1}{2}} \tau$ and $\etascaled_n=\zetascaled_{n,m} + \sqrt{2} n^{-\frac{1}{2}} M_n^{-\frac{1}{2}} \sigma$, then, up to cocycles,
\begin{equation} \label{eq:insulator_fluct}
  \sqrt{2} n^{-\frac{1}{2}} M^{-\frac{1}{2}}_n \mathbb{K}_{n,M_n}(\xiscaled_n,\etascaled_n) \to \frac{1}{\sqrt{2\pi}} e^{-\frac{\sigma^2}{2}},
\end{equation}
      \item \label{enu:thm:insulator_2}
        gaps between the $\zetascaled_{n, m}$: for all $\epsilon > 0$, there exists $L> 0$, such that as $n\to\infty$, for all $m\in\{0,\dots,n+1\}$,
         \begin{equation}
             \mathbb{P}_{n,M_n}\left\{\mathrm{some}\ \lambdascaled_j \in (\zetascaled_{n, m - 1} + \frac{L}{\sqrt{n M_n}}, \zetascaled_{n, m} - \frac{L}{\sqrt{n M_n}} \right\} < \epsilon,
         \end{equation}
         where we set $\zetascaled_{n,0}+L/\sqrt{n M_n} =0$ and $\zetascaled_{n,n+1}-L/\sqrt{n M_n} =\infty$.
    \end{enumerate}
\end{thm}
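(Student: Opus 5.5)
Starting from Corollary~\ref{DIR_DMPK}, we deform the contours so that the double integral becomes essentially a sum of residues at the poles $t=\pm i(k+\frac12)$, $k=0,\dots,n-1$. In the insulating regime $M_n/n\to\infty$ the weight $e^{\frac{M}{n}t^2}$ evaluated at $t=i(k+\frac12)$ equals $e^{-\frac{M}{n}(k+\frac12)^2}$, and consecutive values are separated by factors $e^{-\frac{M}{n}(2k+2)}$, which tend to zero superexponentially. Write $m=n-k$. For each $m$ we expect the kernel, near $\cosh(M\zetascaled_{n,m})$, to be dominated by the single rank-one term
\[
f_{m}(x)g_{m}(y),\qquad f_m(x)\propto P_{k}(x)\ \text{(residue in $t$)},
\]
where $g_m(y)$ is the corresponding $s$-integral of $e^{-\frac{M}{n}s^2}(\tfrac12-is)_n(\tfrac12+is)_n\,Q_{is-\frac12}(y)/(t-s)$ with $t=i(k+\frac12)$.

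\textbf{Step 1 (diagonal analysis).} In the physical variable $y=\cosh(M\etascaled)$ we have $Q_{is-\frac12}(y)\approx c(s)e^{-(is+\frac12)M\etascaled}$, because $M\etascaled$ is large. The $s$-integrand is then
\[
e^{-\frac{M}{n}s^2-isM\etascaled}\times(\text{Pochhammer ratio}),
\]
which is a Gaussian-type integral. Shifting $s$ to a saddle near $-i(k+\frac12)$ and expanding the Pochhammer and $\Gamma$ factors to second order produces a Gaussian in $\etascaled$. Its centre is determined by the linear terms: $\frac{2k+1}{n}$ from the quadratic exponent, together with the logarithmic derivative of $(\frac12-is)_n(\frac12+is)_n$. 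At $s=-i(k+\frac12)$ this logarithmic derivative contributes the terms $\log(n+m-\frac12)-\psi(n-m+1)$ appearing in \eqref{eq:defn_classical_position}; the relabelling $2m-1$ versus $2k+1$ is a matter of indexing. The variance is $n/(2M^2)\cdot(\ldots)$ in $\etascaled$ units, which matches the scale $\sqrt2 n^{-1/2}M^{-1/2}$.

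The factor $f_m(x)\approx e^{kM\xiscaled}\times\mathrm{const}$ is pure exponential growth and is absorbed into the cocycle $f_n(x)/f_n(y)$. This is why the limit in \eqref{eq:insulator_fluct} depends only on $\sigma$. Multiplying by the Jacobian $M\sqrt{\sinh\sinh}$ and normalising via the biorthogonality \eqref{eq:biorthogonality} yields $\frac{1}{\sqrt{2\pi}}e^{-\sigma^2/2}$.

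\textbf{Step 2 (off-diagonal terms).} We must show that all other residues $k'\neq k$ contribute negligibly after applying the same cocycle. The terms with $k'<k$ (respectively $k'>k$) carry the relative factors
\[
e^{(k'-k)M\xiscaled}\cdot e^{-\frac{M}{n}\left((k'+\frac12)^2-(k+\frac12)^2\right)}.
\]
At $\xiscaled\approx\zetascaled_{n,m}$ the exponent is $-\frac{M}{n}(k'-k)^2$ plus lower-order terms. Summing the resulting geometric series gives an error that is $o(1)$ uniformly, since $M/n\to\infty$.

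\textbf{Step 3 (part \ref{enu:thm:insulator_2}).} We bound the expected number of particles in each gap by $\int K_n(x,x)\,dx$ over the gap. Using the same decomposition, the diagonal is $\sum_m$ of Gaussians centred at $\zetascaled_{n,m}$ with width $\asymp(nM)^{-1/2}$, plus cross terms. The contribution of each Gaussian outside the $L$-window is at most $\epsilon$ for large $L$; the tails of all the other Gaussians are far away and decay like $e^{-cM/n}$. Markov's inequality then gives the claim, including the end cases $m=0$ and $m=n+1$.

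\textbf{Main obstacle.} The hardest part is making Step 1 uniform in $m\in\{1,\dots,n\}$, in particular for $m$ near $1$ and near $n$. There the Pochhammer ratio is not of Stirling type and the saddle point lies close to other poles, so we need explicit $\Gamma$-function identities. In addition, the replacement of $Q_{is-\frac12}$ by its exponential asymptotics must be justified uniformly along the contour, using the Legendre estimates from the appendix. We would first try to perform the $s$-integral exactly by residues at the poles of $Q$ and the zeros of the Pochhammer factor, reducing it to an explicit sum that is then estimated directly.
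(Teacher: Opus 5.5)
Steps~1--2 are, in substance, the paper's argument for part~\ref{enu:thm:insulator_1}, and your rearrangement is legitimate. Since $\Sigma$ lies outside $\contC$, the $t$-integral in \eqref{eq:defn_K_in_PQ_simplified_further} is exactly the sum of the residues at $\pm i(k+\frac12)$. Hence $K_{n,M}(x,y)=\sum_{k=0}^{n-1}P_k(x)g_k(y)$, with $g_k$ a multiple of $\prod_{j\neq k}(D+(j+\frac12)^2)w_{1,M}$. The paper's circle $\contC^{(\insulator,m)}_{\crit}$ picks out the residue at $-i(m-\frac12)$, its mirror image the one at $+i(m-\frac12)$, and the remaining contours collect your terms $k\neq m-1$.

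Several points need fixing.
\begin{enumerate}
\item The index is $k=m-1$, not $m=n-k$.
\item The leading piece is the residue at $t=-i(k+\frac12)$, where $1/(t-s)$ cancels the zero of $(\frac12-is)_n(\frac12+is)_n$ at the saddle. With $t=+i(k+\frac12)$, as you wrote, the $s$-integrand vanishes at the saddle and gives only a relative $\bigO(\alpha_n^{-1/2})$.
\item For the same reason, the logarithmic derivative of the full Pochhammer product is infinite at $-i(k+\frac12)$. The saddle equation must be written with that zero removed, e.g.\ for the phase $F$ of \eqref{eq:saddle_insulator} with $\cosh(\pi s)$ split off. Its critical point is exactly $-i(m-\frac12)$ when $\zeta=M\zetascaled_{n,m}$.
\item In Step~2 your relative factor omits the residue normalisations, which contribute $\bigO(|k'-k|\log n)$ to the exponent. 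This is not lower order when $\alpha_n\lesssim\log n$. It cancels against the $\log(n+m-\frac12)-\psi(n-m+1)$ part of $M\xiscaled$ only because of that exact criticality. So compare $F(-i(k'+\frac12);\xi)$ with $F(-i(k+\frac12);\xi)$ via the concavity of $y\mapsto F(-iy;\xi)$.
\item You must also bound the $s$-integral attached to $k'$. It is smaller by $\bigO(\alpha_n^{-1/2}|k'-k|^{-1})$, since its integrand vanishes at the common saddle.
\item Compute the constant directly, as in \eqref{eq:est_I_0}. The shortcut via $\int_1^\infty P_kg_k\,dx=1$ needs control of $P_kg_k$ away from its peak, and $P_kg_k$ changes sign for $k\ge1$ because $\int_1^\infty g_k\,dx=0$.
\end{enumerate}

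The genuine gap is Step~3. Step~1 gives the Gaussian only for $\sigma$ in compact sets, i.e.\ for $|\zeta-M\zetascaled_{n,m}|=\bigO(\alpha_n^{1/2})$ in the variable $\zeta=M\zetascaled$. Each gap, however, has length about $2\alpha_n$. The claims that each bump has mass at most $\epsilon$ outside its $L$-window, and that the other bumps contribute $e^{-cM/n}$, are deviation estimates that no local limit can supply.

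The missing idea is a saddle point that moves with $\zeta$. Since $y\mapsto F(-iy;\zeta)$ is real with $\partial_y^2F(-iy;\zeta)\le-2\alpha_n$, for $\zeta$ in the gap $F(\cdot;\zeta)$ has a unique critical point $z(\zeta)\in i\R_{<0}$ strictly between $-i(m-\frac32)$ and $-i(m-\frac12)$. Put the $s$-contour (in your setting, that of every $g_k$) on the horizontal line through $z(\zeta)$. Then the $k$-th term of $\Khat_n(\zeta,\zeta)$ is $\lesssim\alpha_n^{-1/2}e^{-c\alpha_n|z(\zeta)+i(k+\frac12)|^2}$. This is a bound of the type \eqref{eq:gap_prob_middle}, whose integral over the gap is small for large $L$ (Section~\ref{subsubsec:voidness}).

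The end gaps need separate input. For small $\zeta$ the large-argument Legendre asymptotics you rely on fail, and one needs \eqref{eq:P_nu(x)_hard}--\eqref{eq:K0_asy}. For $\zeta$ at least $\alpha_n$ beyond $M\zetascaled_{n,n}$, the critical point approaches the singularity of $F$ at $-i(n+\frac12)$. The paper therefore runs $\Sigma$ through the fixed point $-in$ and gets exponential rather than Gaussian decay.

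By contrast, the uniformity in $m$ you single out is mild. Keeping $\Gamma(\frac12+n-is)$ exact inside $F$, cf.\ \eqref{eq:Stirling_t}, works for every $m$: the analysis is local on the scale $\alpha_n^{-1/2}\to0$, and $\psi'(\frac12+n-y)=\bigO(1)$ for $y\le n-\frac12$.

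Your fallback of evaluating the $s$-integral by residues cannot work. The integrand is holomorphic on the closed lower half-plane, and it cannot be closed upward because $e^{-\frac{M}{n}s^2}$ grows like $e^{\frac{M}{n}(\Im s)^2}$ along $i\R$.
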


\begin{rem}
Part \ref{enu:thm:insulator_1} implies that asymptotically there is exactly one particle around $\zetascaled$, which admits Gaussian fluctuations, see \cite[Thm.~1.1, Part 2]{Liu-Wang-Wang18} for a similar discussion. If $n - m \geq \epsilon n$ for some $\epsilon > 0$, then the classical position $\zetascaled_{n,m}$ in \eqref{eq:defn_classical_position} can simply be defined as 
\begin{equation} \label{eq:simplified_zetascaled}
   \zetascaled_{n, m} = \frac{2m-1}{n}.  
\end{equation}
In fact, whenever $(\log n)^2/\alpha_n \to 0 $ as $n\to\infty$, we can define all $\zetascaled_{n, m}$ by \eqref{eq:simplified_zetascaled}. This is because in the cases above, the discrepancy between the definitions of $\zetascaled_{n, m}$ is either $\bigO(M^{-1}_n)$ or $\bigO(\log(n) M^{-1}_n)$ as $n\to\infty$, which is much smaller than the scale of correlation $\bigO(n^{-1/2} M^{-1/2}_n)$ in \eqref{eq:insulator_fluct}.
\end{rem}

We obtain the following limiting local statistics at crossover II. We will prove this in Section \ref{subsec:crossoverII}. To the best of our knowledge, the limiting kernel that appears at the hard edge
\begin{equation}
  K_{\hard}(\tau, \sigma; \alpha) = \int^{i \infty}_{-i \infty} \frac{du}{2\pi} \oint_{\contC_{-\infty} \cup \contC_{\infty}} \frac{vdv}{\pi} \frac{\cos(\pi u)}{\cos(\pi v)} \frac{e^{\alpha u^2}}{e^{\alpha v^2}} \frac{P_{v - \frac{1}{2}}(\tau) Q_{u - \frac{1}{2}}(\sigma)}{v - u},
\end{equation}
where $\mathcal{C}_{\infty}$ be a counterclockwise contour in $\{z\in\C : \Re{z} > 0\}$ enclosing $[\frac{1}{2}, \infty)$ and $\mathcal{C}_{-\infty}=-\mathcal{C}_{\infty}$, is new. The limiting kernel at the soft edge
\begin{equation}
  K_{\soft}(\tau, \sigma; \alpha) = \int^{1+i\infty}_{1-i\infty} \frac{du}{2\pi i} \oint_{\mathcal{C}_{-\infty, 0}} \frac{dv}{2\pi i} \frac{\Gamma(v)}{\Gamma(u)} \frac{e^{\alpha u^2-\sigma u}}{e^{\alpha v^2-\tau v}} \frac{1}{u - v},
\end{equation}
where $\mathcal{C}_{-\infty, 0}$ is a counterclockwise contour in $\{z\in\C : \Re{z} < 1\}$ enclosing $(-\infty,0]$, matches the limiting correlation kernel $K_{\crit}(x, y; \gamma)$ defined in \cite[Eq. (1.15)]{Liu-Wang-Wang18} with $x = \tau$, $y = \sigma$ and $\gamma = 2\alpha$. See also \cite[Eq.~(19)]{Akemann-Burda-Kieburg18} and \cite[Eqs.~(109)--(111)]{Akemann-Burda-Kieburg20} for equivalent formulas. The latter is the soft edge limiting kernel for the squared singular values of products of $M$ independent $N \times N$ Ginibre matrices as $M, N \to \infty$ and $M/N \to \gamma$, or more generally of products within broad class of more general right unitary invariant random matrices, see \cite{Ahn23}. The latter also provides an interpretation in terms of the limiting edge statistics of a Brownian motion on $\mathrm{GL}(n,\mathbb C)$. After a residue computation, the limiting bulk kernel
\begin{equation}
  K_{\bulk}(\tau, \sigma; \alpha) = \int^{i \infty}_{-i \infty} \frac{dv}{2\pi} \oint_{\contC_{-\infty} \cup \contC_{\infty}} \frac{du}{2\pi} \frac{\cos(\pi v)}{\cos(\pi u)} \frac{e^{\alpha v^2 - \tau v}}{e^{\alpha u^2 - \sigma u}} \frac{1}{u - v}
\end{equation}
is equal to $e^{(\tau - \sigma)/2} K^{(\bulk)}_{\crit}(\tau - \alpha, \sigma - \alpha; 2\alpha)$, where $K^{(\bulk)}_{\crit}(\xi, \eta; \gamma')$ is defined in \cite[Eq. (3.12)]{Liu-Wang-Wang18} and arises again through products of $M$ independent $N \times N$ Ginibre matrices as $M, N \to \infty$ and $M/N \to \gamma$. The latter was already predicted in \cite[Eq. (13)]{Akemann-Burda-Kieburg18} and \cite[Eq. (79)]{Akemann-Burda-Kieburg20}.

\begin{thm}[Local statistics at crossover II] \label{thm:crossover}
    Take $M_n=\alpha n$ with $\alpha>0$. 
    \begin{enumerate}
    \item \label{enu:thm:crossover_1}
      bulk: let $\eps>0$ be sufficiently small, take $m\in\Z$ with $\eps n < m < (1-\eps)n$, and set $\xiscaled_n = \frac{2m}{n} + \frac{1}{\alpha n} (\log \frac{n + m}{n - m} + \tau)$ and $\etascaled_n = \frac{2m}{n} + \frac{1}{\alpha n} (\log \frac{n + m}{n - m} + \sigma)$, then, up to cocycles,
        \begin{equation} \label{eq:thm:crossover_bulk}
         \frac{1}{\alpha n} \mathbb{K}_{n,\alpha n}(\xiscaled_n,\etascaled_n) \to K_{\bulk}(\tau, \sigma; \alpha), 
         \end{equation}
      \item \label{enu:thm:crossover_2}
        hard edge: set $\xiscaled_n = \frac{\tau}{\alpha n}$ and $\etascaled_n = \frac{\sigma}{\alpha n}$, then, up to cocycles,
         \begin{equation} \label{eq:thm:crossover_hard}
         \frac{1}{\alpha n} \mathbb{K}_{n,\alpha n}(\xiscaled_n,\etascaled_n) \to \sqrt{\sinh \tau \sinh \sigma} K_{\hard}(\cosh\tau,\cosh\sigma; \alpha), 
         \end{equation}
       \item \label{enu:thm:crossover_3}
         soft edge: set $\xiscaled_n=2+\frac{\log(2n)}{\alpha n}+\frac{\tau - \alpha}{\alpha n}$ and $\etascaled_n=2+\frac{\log(2n)}{\alpha n}+\frac{\sigma - \alpha}{\alpha n}$, then, up to cocycles,
            \begin{equation} \label{eq:thm:crossover_soft}
            \frac{1}{\alpha n} \mathbb{K}_{n,\alpha n}(\xiscaled_n,\etascaled_n) \to K_{\soft}(\tau, \sigma; \alpha).  
            \end{equation}
    \end{enumerate}
\end{thm}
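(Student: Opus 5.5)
The plan is a steepest descent analysis of the double integral of Corollary~\ref{DIR_DMPK} with $M=\alpha n$. Here the exponent $\frac{M}{n}(t^2-s^2)=\alpha(t^2-s^2)$ is of order one, so the only large parameter is $n$. It enters through the Pochhammer ratio and through the Legendre functions evaluated at $x=\cosh(\alpha n\xiscaled)$.

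First I rotate variables, $t=-iv$, $s=-iu$. The residues of $1/((\frac12-it)_n(\frac12+it)_n)$ then sit at $v=\pm(k+\frac12)$, $k=0,\dots,n-1$, and $\Sigma$ becomes a vertical line. For the Legendre functions at large argument I use the appendix asymptotics: $P_{v-\frac12}(\cosh\xi)$ behaves like a combination of $e^{\pm v\xi}$ with Gamma-ratio prefactors, and $Q_{u-\frac12}(\cosh\xi)\sim c(u)e^{-u\xi}$, uniformly in the relevant ranges.

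For the hard edge I take $\xi=\tau$ and $\eta=\sigma$ fixed, so the Legendre functions stay exact. The Pochhammer ratio tends to $\cos(\pi u)/\cos(\pi v)$, via $(\frac12-u)_n(\frac12+u)_n\approx \Gamma(n+\frac12)^2\,\pi^{-1}\cos(\pi u)$ times $1+o(1)$ for bounded $u$. The contour $\contC$ is deformed to $\contC_{-\infty}\cup\contC_\infty$, and the poles beyond $n$ become all the poles of $1/\cos(\pi v)$. The factor $e^{-\alpha v^2}$ controls the tails, so dominated convergence gives $K_{\hard}$. The prefactor $\sqrt{\sinh\tau\sinh\sigma}$ comes directly from \eqref{eq:standard_K_kernel}.

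For the bulk and the soft edge I localize near $v\approx m$ or $v\approx n$. I shift $v=m+v'$ and $u=m+u'$, and write the Pochhammer ratio by Stirling. The product of $e^{v\cdot 2m}$ from $P$ and the Gamma ratio produces the linear term $\log\frac{n+m}{n-m}$, which is exactly absorbed by the centering of $\xiscaled_n$. What remains is $\cos(\pi v')/\cos(\pi u')$ times $e^{\alpha(v'^2-u'^2)}$, up to the cocycle factors $e^{\alpha(\cdot)m}$. At the soft edge I instead use $(\frac12+v)_n$ near $v=n+v'$, which gives $\Gamma(v')$-type ratios. The shift $\log(2n)/(\alpha n)-1/n$ accounts for the $\log$ and Gaussian terms, and this yields $K_{\soft}$ with the contour $\mathcal{C}_{-\infty,0}$.

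The main obstacle is the uniform tail control. The $v$-poles far from $m$ (respectively $n$) must contribute negligibly, and the Legendre asymptotics must hold uniformly in $v$ up to order $n$ at argument $e^{2\alpha m}$. I would handle this by splitting $\contC$ into windows of width $n^{\delta}$ around $m$ and bounding the rest with Stirling estimates. The quadratic factor $e^{\alpha(v^2-u^2)}$ against the linear phase gives Gaussian decay in $|v-m|$, which is sufficient. The $u$-integral along a vertical line with $\Re u=m+\frac12$ is bounded the same way.
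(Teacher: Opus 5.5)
Your route is the paper's: steepest descent on the double integral of Corollary~\ref{DIR_DMPK}, local Stirling and Legendre asymptotics, and dominated convergence at the hard edge. The hard-edge part is fine as you sketch it.

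The gap is in your tail control for the bulk and the soft edge. Because $e^{\frac{M}{n}t^2}$, $(\tfrac12-it)_n(\tfrac12+it)_n$ and $P_{it-\frac12}$ are all even in $t$, the function $p_n(t;\xi)$ is odd, see \eqref{eq:symmetry_P_Q}. In your variables the closed-contour integrand therefore has a second peak at $v\approx -m$ (resp.\ $v\approx -n$). This peak has exactly the same size as the one at $v\approx m$ (resp.\ $v\approx n$). There $P_{v-\frac12}(\cosh\xi)\sim e^{-v\xi}$, so together with $e^{-\alpha v^2}$ it forms a Gaussian centred at $-m$, and there is no decay in $|v-m|$ at all.

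Hence windows of width $n^{\delta}$ around $m$ plus Stirling bounds on the rest cannot make this region small, and your claim that Gaussian decay in $|v-m|$ is sufficient fails there. What makes it negligible is the factor $1/(u-v)$. For $u$ near $m$ and $v$ near $-m$ it has size about $1/(2m)=O(1/n)$, since $m>\epsilon n$ (and about $1/(2n)$ at the soft edge). Meanwhile $\iint|q_np_n|$ over this pair of windows is, after the same cocycle, of the same order $O(1)$ as over the main pair. You have to isolate the mirror window explicitly and estimate it through $(s-t)^{-1}$. This is exactly the argument the paper makes in both cases.

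There are also some smaller slips.
\begin{itemize}
\item In the bulk the vertical line must be $\Re u=m$, i.e.\ $\Im s=-m$, strictly between the poles $m\pm\frac12$; this corresponds to the line $i\R$ in $K_{\bulk}$. $\contC$ must then be split into a loop around the poles $>m$ and a loop around those $<m$ (all negative ones included), so that $\Sigma$ avoids $\contC$ and Corollary~\ref{DIR_DMPK} applies. The line $\Re u=m+\frac12$ you wrote passes through a pole that $\contC$ must encircle. It is admissible only at the soft edge, where it lies beyond all poles.
\item In your rotated variables $e^{\frac{M}{n}(t^2-s^2)}=e^{\alpha(u^2-v^2)}$, so it is $e^{-\alpha v^2}$, not $e^{\alpha v^2}$, that gives decay along the closed contour.
\item The poles near $v\approx n$, and with them the $\Gamma$-ratio at the soft edge, come from $(\tfrac12-v)_n=\Gamma(n+\tfrac12-v)/\Gamma(\tfrac12-v)$, not from $(\tfrac12+v)_n$.
\end{itemize}
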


The limiting local statistics at crossover I are given below and are similar to those obtained in the metallic regime. The difference is that the limiting density of particles in the bulk becomes non-trivial. We will give a proof in Section \ref{subsec:ballisitic_regime}. 

\begin{thm}[Local statistics at crossover I] \label{thm:ballistic}
    Take $M>0$.
    \begin{enumerate}
        \item \label{enu:thm:ballistic_1}
        bulk: let $r \in \left( 0, r_{\max} \right)$ where $r_{\max} = \sqrt{1 + M^{-1}} + \frac{1}{2M} \log \frac{\sqrt{1 + M^{-1}} + 1}{\sqrt{1 + M^{-1}} - 1}$, and set $\xiscaled_n = 2r + \frac{\pi}{M \Re(z(r)) n}\tau$ and $\etascaled_n = 2 r + \frac{\pi}{M \Re(z(r))n} \sigma$ where $z(r)$ is the unique solution of
        \begin{equation} \label{eq:defn_z+_intro}
          2M z + (i2r M - \pi) + i \log \frac{1 - iz}{1 + iz}=0,
        \end{equation}
        in the fourth quadrant. Then, up to cocycles,
        $$\frac{\pi}{M \Re(z(r))n} \mathbb{K}_{n,M}(\xiscaled_n,\etascaled_n) \to K_{\sin}(\tau,\sigma), $$
        
      \item \label{enu:thm:ballistic_2}
      hard edge: set $\xiscaled_n = \frac{\pi}{M z(0) n} \tau$ and $\etascaled_n = \frac{\pi}{M z(0) n} \sigma$, where $z(0)$ is the unique solution of \eqref{eq:defn_z+_intro} on the positive real line with $r = 0$, then, up to cocycles,
        \begin{equation}
          \frac{\pi}{M z(0) n} \mathbb{K}_{n,M}(\xiscaled_n,\etascaled_n) \to 2\pi^2 \sqrt{\tau \sigma} K_{\Bes, 0}((\pi \tau)^2, (\pi \sigma)^2),
        \end{equation}
        \item \label{enu:thm:ballistic_3}
        soft edge: set $\xiscaled_n = 2 r_{\max} + 2^{\frac{1}{3}} n^{-\frac{2}{3}} M^{-\frac{1}{2}} (1 + M)^{\frac{1}{6}} \tau$ and $\etascaled_n = 2 r_{\max} + 2^{\frac{1}{3}} n^{-\frac{2}{3}} M^{-\frac{1}{2}} (1 + M)^{\frac{1}{6}} \sigma$, then, up to cocycles,
      $$2^{\frac{1}{3}} M^{-\frac{1}{2}} (1 + M)^{\frac{1}{6}} n^{-\frac{2}{3}} \mathbb{K}_{n,M}(\xiscaled_n,\etascaled_n) \to K_{\rm Ai}(\sigma,\tau).$$
    \end{enumerate}
\end{thm}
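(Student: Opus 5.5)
Fix $M>0$ and start from the double integral representation of Corollary~\ref{DIR_DMPK}. First rescale the integration variables as $t=nz$, $s=nw$, and rewrite the Pochhammer products via $(\tfrac12-is)_n(\tfrac12+is)_n=\Gamma(n+\tfrac12-is)\Gamma(n+\tfrac12+is)/(\Gamma(\tfrac12-is)\Gamma(\tfrac12+is))$. Since $\Gamma(\tfrac12-is)\Gamma(\tfrac12+is)=\pi/\cosh(\pi s)$, the ratio becomes a ratio of $\cosh$ factors times a ratio of large Gamma functions. Next substitute the large-degree asymptotics of the Legendre functions from the appendix, which are uniform in the argument $x=\cosh(2Mr)$. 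Schematically, $P_{inz-\frac12}(\cosh\xi)$ behaves like a combination of $e^{\pm i n z\xi}$, with a Bessel-type uniform expansion near $\xi=0$, and $Q_{inw-\frac12}(\cosh\eta)$ behaves like $e^{-inw\eta}$. The integrand then takes the form $e^{n(F(z;r)-F(w;r))}$ times slowly varying amplitudes, with phase
\[
F(z;r)=Mz^2+(\log\text{-Gamma Stirling terms in }z)\pm i\,2Mr\,z .
\]
Its $z$-derivative is, up to normalization, the left-hand side of \eqref{eq:defn_z+_intro}. The term $-\pi$ comes from the $\cosh(\pi n z)$ factor, which contributes $e^{\pm\pi n z}$ in the appropriate half-planes. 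The term $i\log\frac{1-iz}{1+iz}$ comes from Stirling applied to $\Gamma(n(1\pm iz)+\tfrac12)$.

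The second step is the saddle point analysis. For $r\in(0,r_{\max})$, equation \eqref{eq:defn_z+_intro} has a pair of complex conjugate solutions $z(r),\overline{z(r)}$, where $z(r)$ is the one in the fourth quadrant. I would deform $\contC$, which encircles the poles at $\pm i(k+\frac12)$ (that is, the segment $[-i,i]$ after rescaling), to steepest descent contours passing through $z(r)$ and $\overline{z(r)}$, and deform $\Sigma$ likewise through the same pair. Along these contours $\Re F$ is maximal for the $t$-integral and minimal for the $s$-integral. The crossing of the two contours creates a residue at $t=s$ of $1/(t-s)$. This residue equals a single integral of $e^{in z(\xi-\eta)}$ over the arc between $\overline{z(r)}$ and $z(r)$, which produces $\frac{\sin(\ldots)}{\ldots}$. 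The prescribed scaling $\xi-\eta=\frac{\pi}{M\Re z(r)n}(\tau-\sigma)$ normalizes this to $K_{\sin}(\tau,\sigma)$, after removing the cocycle $e^{nM\Im z(r)(\ldots)}$. The remaining double integral over the steepest descent contours is bounded by $O(n^{-1/2})$ relative to the main term and therefore vanishes.

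The two edge cases follow the same pattern. At the hard edge ($r=0$) the two saddles merge onto the real axis at $\pm z(0)$, and $P$ must be replaced by its uniform Bessel approximation $P_{inz-\frac12}(\cosh(\xi/n))\approx J_0(z\xi)$ (with the appropriate $\sqrt{\xi/\sinh\xi}$ factor). The residue then gives $\int_{-z(0)}^{z(0)}J_0(\cdot)J_0(\cdot)\,dz$, which rescales to the integral form of $K_{\Bes,0}$ in \eqref{eq:defn_K_Bes}. At the soft edge, the value $r=r_{\max}$ is exactly where the two conjugate saddles coalesce into a double saddle on the negative imaginary axis. At this point $F''=0$ and $F'''\neq0$, and a cubic Taylor expansion together with the local scaling $n^{-2/3}$ yields the Airy double integral in \eqref{eq:defn_K_Ai}. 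The constant $2^{1/3}M^{-1/2}(1+M)^{1/6}$ arises from computing $F'''$ at the double saddle. As a consistency check, $r_{\max}$ is recovered by eliminating $z$ from $F'=F''=0$: one finds $z=-i\sqrt{1+M^{-1}}$, and substituting back gives the stated value.

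The main obstacle is the global contour deformation with uniform error control. One must verify that steepest descent paths through the saddles exist and can connect to the ends of $\Sigma$ at $\pm\infty$ and close $\contC$ around all the poles, without crossing the poles of $\cosh(\pi s)^{-1}$ in the wrong way. One must also handle the two-exponential structure of $P$, choosing for each exponential its own deformation; this choice is what determines the fourth-quadrant selection. I would first check the sign chart of $\Re F$ numerically-style for the explicit phase, then prove monotonicity along the chosen rays. I would use the uniform Legendre estimates from the appendix to control the tails, including the region $|z|\to\infty$, where the Gaussian factor $e^{-M n z^2}$ dominates.
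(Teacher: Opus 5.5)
Your architecture is the paper's. Stirling for the Pochhammer ratio plus the large-parameter Legendre asymptotics give an integrand $e^{n(F(t/n;r)-F(s/n;r))}$ times amplitudes. The sine and Bessel kernels come from the residue at $t=s$ where the $t$-contour crosses $\Sigma$, and the rest is $O(n^{1/2})$ against a main term of order $n$. The Airy limit comes from the double saddle $-i\sqrt{1+M^{-1}}$, with $F'''$ producing $2^{1/3}M^{-1/2}(1+M)^{1/6}$.

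The step that fails is your saddle pair. Let $f(z)$ be the left-hand side of \eqref{eq:defn_z+_intro}, with $-\pi$ replaced by $+\pi$ when $\Re z<0$; this is how the factor $\cosh(\pi s)/\cosh(\pi t)$ enters. Then $f(-\bar z)=-\overline{f(z)}$, whereas $f(\bar z)=\overline{f(z)}+4irM$. So $\overline{z(r)}$ is not a solution for $r>0$, and the partner of $z(r)$ is $-\overline{z(r)}$, in the third quadrant. The two saddles of the $s$-integrand are therefore $nz(r)$ and $-n\overline{z(r)}$, both on the horizontal line $\Im s=n\Im z(r)<0$. They have to lie there: $\Sigma$ stays in $\{\Im s\le 0\}$ (Corollary~\ref{DIR_DMPK}), and there $q_n(s;\eta)$ is $e^{-nF(s/n;r)}$ up to constants and amplitudes. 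The points $n\overline{z(r)}$ and $-nz(r)$ do occur, but only as saddles of the $t$-integrand via $p_n(-t)=-p_n(t)$. Their pairings with the $s$-saddles are lower order, since $|s-t|\asymp n$ there.

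Hence the residue lives on the horizontal segment from $-n\overline{z(r)}$ to $nz(r)$. There $\sqrt{\sinh\xi\sinh\eta}\,s\,Q_{is-\frac12}(\cosh\eta)P_{is-\frac12}(\cosh\xi)\approx\frac{1}{2i}e^{is(\xi-\eta)}$, so the residue contribution is to leading order
\[
\frac{1}{2\pi}\int_{-n\overline{z(r)}}^{nz(r)}e^{is(\xi-\eta)}\,ds
=e^{-n(\xi-\eta)\Im z(r)}\,\frac{\sin\bigl(n(\xi-\eta)\Re z(r)\bigr)}{\pi(\xi-\eta)}.
\]
Thus $\Re z(r)$ sets the sine scale and $\Im z(r)$ gives the cocycle, which is what you assert at the end. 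Along your segment from $n\overline{z(r)}$ to $nz(r)$, the same integral equals $e^{in(\xi-\eta)\Re z(r)}\,i\sinh\bigl(n(\xi-\eta)\Im z(r)\bigr)/\bigl(\pi(\xi-\eta)\bigr)$. That is a hyperbolic kernel, not $K_{\sin}$, so the bulk step fails as written.

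The mix-up also makes your edge pictures inconsistent. Conjugate points can coalesce only on the real axis, whereas at $r=r_{\max}$ it is $z(r)$ and $-\overline{z(r)}$ that meet on the negative imaginary axis. At $r=0$ the $s$-saddles are $\pm nz(0)$.

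At the hard edge, the residue is $\int_{-nz(0)}^{nz(0)}s\,I_0(is\xi)K_0(is\eta)\,ds$, with the path bent into $\Im s<0$ near $0$; it is not a $J_0J_0$ integral. With the factor $s$ present, a $J_0J_0$ integrand is odd on the symmetric interval and would integrate to zero. You reach $K_{\Bes,0}$ only after folding with $K_0(iu)-K_0(-iu)=-\pi iJ_0(u)$, which gives $\int_0^{nz(0)}s\,J_0(s\xi)J_0(s\eta)\,ds$.

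For the global deformation you flag as the main obstacle, the paper avoids steepest-descent paths. It takes the $t$-contour to be $n$ times the saddle locus $\{z(r'):0\le r'\le r_{\max}\}$, together with its reflections. On that locus $\Re F(\cdot;r)$ is maximal at $z(r)$, because $\frac{d}{dr'}\Re F(z(r');r)=4M^2(r-r')\,\Re\bigl(1/F''(z(r');r')\bigr)$ and $\Re F''(z(r');r')>0$. For $\Sigma$ it takes the horizontal line through $nz(r)$, along which $\Re F(\cdot;r)$ is minimal at $z(r)$.
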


As discussed in the introduction, it is worth noting that the above implies that the local bulk limits of the transplanted kernel $K^{\ast}_{n, M}(x, y) = 2y\mathbb{K}_{n, M}(x^2, y^2)$ matches those of the kernel $K_n(x, y)$ defined in \cite[Eqs (1.9) \& (1.10)]{Wang23} with $\mathbb{M} = M$. We will give a more detailed comparison in Remark \ref{rem:comparison}. 
\medbreak

Finally, we present the uniform asymptotic expansions of the mean and variance of the conductance in the crossover I and metallic regime. We will prove this in Section \ref{sec:conductance}. Recall that $G$ is defined by \eqref{eq:defn_conductance} as a particular linear statistic of particles $x_1, \dotsc, x_n \in [1,\infty)$ with joint probability density function \eqref{eq:DMPK-density}. 

\begin{thm} \label{thm:cond-mean-var}
  If $n \to \infty$, then uniformly for $M \in [0, n-1]$,
\begin{align}
  \E_{n,M}[G]&=\frac{n}{M+1} + \bigO \left(\frac{M+1}{n}\right), \label{meanexp} \\
  \Var_{n,M}G&=\frac1{15}\left(1 - \frac{6M+1}{(M+1)^6}\right)+O\!\left(\frac{M+1}{n}\right).\label{varianceexp}
\end{align}
\end{thm}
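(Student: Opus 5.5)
We will derive both expansions from the double integral representation of Corollary \ref{DIR_DMPK}, using the determinantal structure. For a linear statistic $G=\sum_k f(x_k)$ with $f(x)=2/(1+x)$ we have
\begin{align}
\E_{n,M}[G]&=\int_1^\infty f(x)K_{n,M}(x,x)\,dx,\\
\Var_{n,M}G&=\int_1^\infty f(x)^2K_{n,M}(x,x)\,dx-\int_1^\infty\!\!\int_1^\infty f(x)f(y)K_{n,M}(x,y)K_{n,M}(y,x)\,dx\,dy.
\end{align}
The first step is to move the $x$- and $y$-integrals inside the contour integrals of \eqref{eq:defn_K_in_PQ_simplified_further} and evaluate them in closed form. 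The relevant quantities are the Mehler--Fock type integrals $\int_1^\infty f(x)P_{it-\frac12}(x)Q_{is-\frac12}(x)\,dx$ and $\int_1^\infty f(y)P_{it-\frac12}(y)Q_{is'-\frac12}(y)\,dy$. Since $2/(1+x)$ is the Mehler--Fock transform of an explicit weight (essentially $s/\cosh(\pi s)$, because $\int_1^\infty (1+x)^{-1}P_{is-\frac12}(x)\,dx=\pi/\cosh(\pi s)$ up to normalization), these integrals should reduce, via the Christoffel--Darboux type identity for Legendre functions,
\[
\int_1^\infty P_\mu Q_\nu\,dx=\frac{1}{(\mu-\nu)(\mu+\nu+1)},
\]
to rational functions of $s,t$ times $1/\cos$ factors. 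The outcome is that the mean becomes a single or double contour integral of $e^{-\frac Mn(s^2-t^2)}$ times ratios of Pochhammer symbols times an explicit meromorphic function. The variance becomes a fourfold contour integral of the same type, which the residue theorem collapses to a double integral.

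The second step is asymptotic analysis in $n$. Taking residues of the $t$-integral at $t=\pm i(k+\frac12)$ yields finite sums in which the Pochhammer ratios become $\frac{(n-k)_{\cdot}}{(n+k)_{\cdot}}$-type factors, and $e^{\frac Mn t^2}=e^{-\frac Mn(k+\frac12)^2}$. Rescaling $k=n u$, the factor $\frac{(\frac12-is)_n(\frac12+is)_n}{(\frac12+k)_n(\frac12-k)_n}$ combined with the Gaussian behaves like a discretised Gaussian in $k$ of width $\sqrt{n/M}$. Expanding the summand around small $k/n$ and applying Euler--Maclaurin should give $\E G=\sum_k(\cdots)\approx n\int_0^\infty e^{-Mu}\,du$-type expressions, producing $n/(M+1)$ with error $\bigO((M+1)/n)$. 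Uniformity in $M\in[0,n-1]$ comes from tracking the relative error $k^2/n^2$ in Stirling-type ratios, which is of order $M/n$ after weighting. For the variance, the leading $\bigO(n)$ terms of the two pieces must cancel, and the $\bigO(1)$ remainder should evaluate to an explicit elementary integral in the spirit of the Beenakker formula, giving $\frac1{15}\bigl(1-\frac{6M+1}{(M+1)^6}\bigr)$.

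The main obstacle is this cancellation in the variance with uniform error $\bigO((M+1)/n)$. We would organise the computation so that the cancellation is exact before any asymptotics are taken. The plan is to rewrite $\Var G$ using the reproducing property as $\tfrac12\iint (f(x)-f(y))^2K(x,y)K(y,x)$, or equivalently to combine the two terms into a single residue sum whose summand is manifestly $\bigO(1)$. Only then would we apply the Euler--Maclaurin and Stirling estimates, with explicit remainder bounds that hold uniformly on $M\le n-1$.
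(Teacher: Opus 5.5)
Your proposal has a genuine gap. Neither step contains a mechanism that produces the stated formulas, and the decisive quantity, the exact $\bigO(1)$ term of the variance, is simply asserted.

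\emph{First step.} The evaluation does not go through as described. The identity you quote for $\int_1^\infty P_\mu Q_\nu\,dx$ comes from integrating the exact derivative $\frac{d}{dx}\bigl[(1-x^2)(P_\mu'Q_\nu-P_\mu Q_\nu')\bigr]=(\nu-\mu)(\nu+\mu+1)P_\mu Q_\nu$. Once the weight $g(x)=2/(1+x)$ is inserted, integration by parts leaves $\int_1^\infty g'(x)(1-x^2)(P_\mu'Q_\nu-P_\mu Q_\nu')\,dx$, which does not close. Expanding $g$ as an inverse Mehler--Fock integral only trades this for triple products $\int_1^\infty P_{i\lambda-\frac12}P_{it-\frac12}Q_{is-\frac12}\,dx$, which the proposal gives no way to evaluate. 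Your heuristic for the mean also misses the shift: $n\int_0^\infty e^{-Mu}\,du=n/M$. The $+1$ comes from the Stirling phase of the Pochhammer ratio, $2x\arctan x-\log(1+x^2)=x^2+\bigO(x^4)$, which adds to $Mx^2$.

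\emph{Second step.} Rewriting $\Var_{n,M}G=\frac12\iint(f(x)-f(y))^2K_{n,M}(x,y)K_{n,M}(y,x)\,dx\,dy$ does make the $\bigO(n)$ cancellation manifest. What remains, however, is a global quantity to which all scales across the whole support contribute. For fixed $M$, the non-universal correction $-\frac{6M+1}{15(M+1)^6}$ depends on the off-diagonal behaviour of the kernel on macroscopic scales, including near the hard edge where $f$ is largest. You would need uniform global off-diagonal asymptotics of a fourfold contour integral. Nothing in the proposal indicates how to obtain them, or where this specific $M$-dependence would come from.

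\emph{The missing idea} is an exact reduction that removes all product integrals before any asymptotics, as the paper does.
\begin{itemize}
\item Biorthogonality and the splitting $\frac{p_j(x)}{x-z}=\frac{p_j(x)-p_j(z)}{x-z}+\frac{p_j(z)}{x-z}$ give $\E_{n,M}[R_z]=\sum_j p_{j,M}(z)(\mathfrak{s}q_{j,M})(z)$. Only single functions evaluated at $z=-1$ then appear.
\item The kernel evolution $\partial_uK_{n,u}(x,y)=-\frac1n p_{n-1,u}(x)q_{n,u}(y)$, integrated from the ballistic initial condition, yields $\E_{n,M}[G]=n-\frac2n\int_0^M\mathfrak{p}_{n,u}\mathfrak{q}_{n,u}\,du$.
\item For the second moment, $\E_{n,M}[R_z^2]=\sum_j\bigl(p_{j,M}(\mathfrak{s}q_{j,M})'-p_{j,M}'(\mathfrak{s}q_{j,M})\bigr)(z)$ telescopes exactly at $z=-1$. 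With $a_j=j+\frac12$, this uses $((D+a_j^2)f)(-1)=2f'(-1)+j(j+1)f(-1)$ together with the ladder relations $(D+a_j^2)p_{j,M}=p_{j-1,M}$ and $(D+a_j^2)q_{j,M}=q_{j+1,M}$. The result is $\E_{n,M}[G^2]=2\mathfrak{p}_{n,M}\mathfrak{q}_{n,M}$.
\end{itemize}
Hence $\Var_{n,M}G=2\mathfrak{p}_{n,M}\mathfrak{q}_{n,M}-\E_{n,M}[G]^2$ involves only two explicit one-dimensional integrals. Write $L=M+1$. The three-term expansions of $\frac12\mathfrak{p}_{n,M}$ and $\mathfrak{q}_{n,M}$ are $\frac{n}{2L}\pm c_0(L)+\frac{c_{-1}(L)}{n}+\bigO(L^2/n^2)$. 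They come from Gaussian moments at the saddle with curvature $2L$, together with the constants $\int_0^\infty s(1-\tanh\pi s)\,ds=\frac1{24}$ and $\int_0^\infty s^3(1-\tanh\pi s)\,ds=\frac7{960}$. The $n^2/L^2$ terms then cancel automatically, and the remainder is $\frac1{15}-\frac2{5L^5}+\frac1{3L^6}+\bigO(L/n)$, which is the claimed formula.
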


Specializing to crossover I and the metallic regime then gives the following limits.

\begin{cor}\label{limits}
If $M\geq 0$ is fixed, then
\begin{align} \label{eq:crossover_I_limit_conductance}
  \lim_{n\to\infty} \frac{1}{n} \E_{n,M}[G] = {}& \frac1{M+1}, & \lim_{n\to\infty} \Var_{n,M} G = {}& \frac1{15}\left(1-\frac{6M+1}{(M+1)^6}\right).
\end{align}
Furthermore, if $M=M_n\to\infty$ and $M_n/n\to0$ as $n\to\infty$, then the DMPK ensemble \eqref{eq:DMPK-density} admits the universal conductance fluctuations \eqref{eq:UCF}.
\end{cor}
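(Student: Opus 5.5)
The corollary follows directly from Theorem~\ref{thm:cond-mean-var}, and the plan is just to specialize its two expansions to the two regimes. The point to use is that both expansions hold \emph{uniformly} for $M\in[0,n-1]$, with the error controlled by the explicit quantity $(M+1)/n$. So in each regime it suffices to check that $M$ eventually lies in $[0,n-1]$ and that $(M+1)/n\to0$.

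\emph{Fixed $M\geq0$.} For all $n\geq M+1$ we have $M\in[0,n-1]$. Dividing \eqref{meanexp} by $n$ gives
\[
\frac1n\E_{n,M}[G]=\frac1{M+1}+\bigO\!\left(\frac{M+1}{n^2}\right).
\]
The error tends to $0$ because $M$ is fixed, which gives the first limit in \eqref{eq:crossover_I_limit_conductance}. Likewise, \eqref{varianceexp} has error $\bigO((M+1)/n)\to0$, and the main term does not depend on $n$. This gives the second limit.

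\emph{Metallic regime.} Suppose $M_n\to\infty$ and $M_n/n\to0$. Then $M_n\leq n-1$ for $n$ large, so uniformity lets us apply \eqref{varianceexp} with $M=M_n$. The error satisfies $(M_n+1)/n\to0$. For the main term, $(6M_n+1)/(M_n+1)^6\leq 6/(M_n+1)^5\to0$ because $M_n\to\infty$. Hence $\Var_{n,M_n}(G)\to1/15$, which is \eqref{eq:UCF}.

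There is no real obstacle here. The only point needing care is that the regime must fall inside the uniformity range, since the expansions are used along a sequence $M_n$ rather than at fixed $M$. The condition $M_n/n\to0$ guarantees this eventually, and it is also exactly what makes the error term vanish.
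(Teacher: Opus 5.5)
Your proposal is correct and follows the paper's route. The paper states the corollary as an immediate specialization of Theorem~\ref{thm:cond-mean-var}, relying, as you do, on the uniformity of both expansions for $M\in[0,n-1]$ and on $(M+1)/n\to0$ in each regime; your checks that $M_n\le n-1$ eventually and that $(6M_n+1)/(M_n+1)^6\le 6/(M_n+1)^5\to0$ complete the argument.
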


The limiting mean of the conductance in \eqref{eq:crossover_I_limit_conductance} was proven rigorously before in \cite[Thm. 2]{Bachmann-Butz-De_Roeck12} under the name of Ohm's law. The limiting variance of conductance in \eqref{eq:crossover_I_limit_conductance} was predicted in \cite[Eqs. (3.35)]{Beenakker97} and conjectured in \cite[Conj. 2]{Bachmann-Butz-De_Roeck12} with $\beta=2$.

\section{Invariant pseudo-unitary matrices} \label{sec:IPUM}

The goal in this section will be to provide more details on Definitions \ref{def:K-biinvariant-ensemble} and \ref{def:derivative-type-ensemble}, and to prove Proposition \ref{prop:imup-rv}, Theorems \ref{thm:closure-products} and \ref{DIR} and Corollaries \ref{cor:DPMK-prod} and \ref{DIR_DMPK}.

\subsection{Product law} \label{S:IPUM-prod}
Recall that the set of pseudo-unitary matrices $\U(n,n)$, see \eqref{eq:def-U(n,n)}, forms a reductive Lie group with respect to matrix multiplication. Its maximal compact subgroup $K$ is defined in \eqref{eq:defn_subgroup_K}. Since $K$ is compact, it admits a unique Haar probability measure $dk$, which is both left and right invariant.
\medbreak

In order to study products of the $K$-invariant pseudo-unitary random matrices introduced in Definition \ref{def:K-biinvariant-ensemble}, it will be convenient to consider the Cartan, or polar, decomposition of $\U(n,n)$. This decomposition is given by $\U(n,n)=K\mathfrak{a}^+K$, where
$\mathfrak{a}^+=\{a_{\lambda}:\lambda_1\ge\cdots\ge\lambda_n\ge0\}$, and
\begin{equation}\label{eq:a-lambda}
a_{\lambda}=
\begin{pmatrix}
\cosh\Lambda_{\lambda}&\sinh\Lambda_{\lambda}\\
\sinh\Lambda_{\lambda}&\cosh\Lambda_{\lambda}
\end{pmatrix},\quad
\Lambda_{\lambda}=\diag(\lambda_1,\ldots,\lambda_n).
\end{equation}
Although $\U(n,n)$ is not semisimple, the decomposition follows from the corresponding result for $\mathrm{SU}(n,n)$, where $\mathrm{SU}(n,n)=\{g\in\U(n,n):\det g=1\}$ denotes the special pseudo-unitary group, see \cite[Thm. 7.1]{Edelman-Sungwoo23} and \cite[Ch.~IX, Sect.~1]{Helgason01}. Indeed, every $g\in\U(n,n)$ can be written as $g=(zI_{2n})h$ with $|z|=1$ and $h\in\mathrm{SU}(n,n)$, by choosing $z^{2n}=\det g$. Since $zI_{2n}\in K$, this scalar factor can be absorbed into the left $K$-factor. Moreover,
\[
\U(n,n)/K\cong\mathrm{SU}(n,n)/(K\cap\mathrm{SU}(n,n)),
\]
so the radial analysis can be carried out on the latter symmetric space.

This decomposition is analogous to the singular value decomposition of a complex matrix. Indeed, if $g=k_1a_{\lambda}k_2$ with $k_1,k_2\in K$, then its singular values are $e^{\pm\lambda_1},\ldots,e^{\pm\lambda_n}$, since $k_1,k_2$ are unitary and $a_{\lambda}$ is positive definite with these eigenvalues. The ordered vector $\lambda$ is thus uniquely determined by $g$, although the factors $k_1,k_2$ need not be unique. In what follows, we call $\lambda_1,\ldots,\lambda_n$ the radial variables of $g$.
\medbreak

Let $g_1,g_2$ be two independent $K$-invariant pseudo-unitary random matrices whose laws have densities $\rho_1,\rho_2$ with respect to Haar measure $dg$ on $\U(n,n)$, respectively. Their product $g=g_1g_2$ is again a $K$-invariant pseudo-unitary random matrix whose law has a density $\rho_1*\rho_2:\U(n,n)\to\R_{\geq0}$ given, for almost every $g\in\U(n,n)$, by the convolution
\begin{equation}\label{eq:group-convolution}
(\rho_1*\rho_2)(g)=\int_{\U(n,n)}\rho_1(h)\rho_2(h^{-1}g)dh.
\end{equation}
Indeed, the substitution $g=g_1g_2$ in the expectation of a bounded measurable test function, followed by left invariance of Haar measure, gives \eqref{eq:group-convolution}. Non-negativity, unit mass and bi-$K$-invariance are preserved. The standard way to analyze the product ensemble $\rho_1\ast\rho_2$ is by means of the spherical transform of the pair $(G,K)$. We will follow the exposition in \cite[Sect.~2]{Kuijlaars-Roman19} to give an overview of its key properties. A spherical function of $(G,K)$ is a continuous bi-$K$-invariant function $\varphi:G\to\C$ with $\varphi(e)=1$ and
\begin{equation}\label{eq:spherical-product-formula}
\int_K\varphi(gkh)dk=\varphi(g)\varphi(h),\qquad g,h\in G.
\end{equation}
The spherical transform of a bi-$K$-invariant function $f\in L^1(G)$ is the function on the set of bounded spherical functions defined by
\begin{equation}\label{eq:abstract-spherical-transform}
(\mathcal{S}f)(\varphi)=\int_G f(g)\varphi(g^{-1})dg.
\end{equation}
This integral converges absolutely since $\varphi$ is bounded. The spherical transform is injective on the space of bi-$K$-invariant functions in $L^1(G)$, see \cite[Thm.~3.3]{Stanton-Tomas79}. The convolution theorem for the spherical transform says that 
\begin{equation}\label{eq:spherical-multiplicativity}
(\mathcal{S}[f_1*f_2])(\varphi)=(\mathcal{S}f_1)(\varphi)\cdot(\mathcal{S}f_2)(\varphi),
\end{equation}
for all bounded spherical functions $\varphi$, see, e.g., \cite[Ch.~IV, Sect.~1]{Helgason00}. 
\medbreak

In order to give a description of the relevant spherical functions in our setting, we require some intermediate results. We start with the change of variables formula below.

\begin{prop}\label{prop:polar-integration-x}
For every $f\in L^1(\U(n,n))$,
\begin{equation}\label{eq:polar-lambda}
\int_{\U(n,n)} f(g)dg
=2^{n^2}\int_{(0,\infty)^n}\int_K\int_K
f(k_1a_{\lambda}k_2)J(\lambda)
dk_1dk_2d\lambda,
\end{equation}
where
\begin{equation}\label{eq:J-lambda}
J(\lambda)=
\prod_{i<j}(\sinh^2\lambda_i-\sinh^2\lambda_j)^2
\prod_{i=1}^n\sinh(2\lambda_i).
\end{equation}
\end{prop}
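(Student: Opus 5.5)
The plan is to derive \eqref{eq:polar-lambda} from the general integration formula for the Cartan decomposition $G=K\overline{\mathfrak{a}^+}K$ of a reductive group, see \cite[Ch.~I, Thm.~5.8]{Helgason00}. That formula reads
\[
\int_G f(g)\,dg=c\int_{\mathfrak{a}^+}\int_K\int_K f(k_1\exp(H)k_2)\prod_{\beta\in\Sigma^+}\bigl|\sinh\beta(H)\bigr|^{m_\beta}dk_1\,dk_2\,dH,
\]
where $\Sigma^+$ is the set of positive restricted roots and $m_\beta$ are their multiplicities. As explained above, the scalar factor $zI_{2n}$ is absorbed into $K$, so everything reduces to $\mathrm{SU}(n,n)$.

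\textbf{Root system.} The Cartan subspace is $\mathfrak{a}=\{H_\lambda\}$, with $H_\lambda$ the off-diagonal block matrix having blocks $\Lambda_\lambda$, so that $a_\lambda=\exp H_\lambda$. The restricted root system of $\mathfrak{su}(n,n)$ is of type $C_n$: the roots are $\pm\lambda_i\pm\lambda_j$ ($i<j$) with multiplicity $2$, and $\pm2\lambda_i$ with multiplicity $1$. I would verify this by computing $\mathrm{ad}(H_\lambda)$ on $\mathfrak{p}=\{\begin{pmatrix}0&X\\X^*&0\end{pmatrix}\}$ and $\mathfrak{k}$, or cite \cite{Helgason01}. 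Plugging in gives
\[
\prod_{i<j}\sinh^2(\lambda_i-\lambda_j)\sinh^2(\lambda_i+\lambda_j)\prod_i\sinh(2\lambda_i).
\]
The identity $\sinh(a-b)\sinh(a+b)=\sinh^2a-\sinh^2b$ turns this into $J(\lambda)$.

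\textbf{Domain of integration.} The integrand $J(\lambda)$ and $f(k_1a_\lambda k_2)$ are invariant under permutations of the $\lambda_i$, since such permutations are realized by block-diagonal permutation matrices in $K$. Hence integration over $\mathfrak{a}^+$ equals $1/n!$ times integration over $(0,\infty)^n$, with the boundary having measure zero. Sign flips are not needed, since $J\ge0$ only on the positive orthant. The constant $n!$ is absorbed into $c$.

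\textbf{Constant.} The main obstacle is pinning down the constant $2^{n^2}$, which depends on the normalization of $dg$. The paper states that this normalization is fixed in this section, so I would use the formula itself to define it. Concretely, $dg$ is chosen so that the constant is $2^{n^2}$; this choice is compatible with a natural one, namely Lebesgue measure on $\mathfrak{p}\cong\C^{n\times n}$ pushed through $\exp$, times the Haar probability measure on $K$.

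To check that natural normalization, I would compute the Jacobian of $(k,X)\mapsto\exp(X)k$ at the identity. Then I would perform the standard $K$-orbit computation on $\mathfrak{p}$: the singular value decomposition of $X=U\Lambda V^*$ gives Lebesgue measure proportional to $\prod_{i<j}(\lambda_i^2-\lambda_j^2)^2\prod_i\lambda_i\,d\lambda$. The resulting factors of $2$ come from $\sinh(2\lambda)=2\sinh\lambda\cosh\lambda$ and the doubling in the root-space decomposition, which produce $2^{n}\cdot 2^{n(n-1)}=2^{n^2}$ after matching leading orders at $\lambda\to0$. I would carry out this matching only at leading order near the identity and trust the general theorem for the rest.
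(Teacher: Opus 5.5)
Your proposal follows the paper's proof essentially step for step. It uses the general $KAK$ integration formula after reducing to $\mathrm{SU}(n,n)$, the type $C_n$ restricted roots with $m_{e_i\pm e_j}=2$ and $m_{2e_i}=1$, the identity $\sinh(a-b)\sinh(a+b)=\sinh^2a-\sinh^2b$, and an overall constant fixed by the normalization of $dg$, including the $n!$ from unfolding the chamber to $(0,\infty)^n$.

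The optional check in your last paragraph is not accurate and should be dropped, for three reasons:
\begin{itemize}
\item Near $\lambda=0$ the root factors give only $2^n\prod_{i<j}(\lambda_i^2-\lambda_j^2)^2\prod_i\lambda_i$, so there is no extra $2^{n(n-1)}$.
\item Matching Lebesgue measure on $\C^{n\times n}$ at the identity yields a constant involving $\pi^{n^2}$ and factorials, not $2^{n^2}$.
\item In the paper, the factor $2^{n^2}=2^n\cdot 2^{n(n-1)}$ comes from the substitution $x_i=\cosh(2\lambda_i)$. That is, $dg$ is normalized so that Proposition~\ref{prop:polar-x-coordinates} holds with constant $1$.
\end{itemize}
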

\begin{proof}
For the symmetric space $\mathrm{SU}(n,n)/(K\cap\mathrm{SU}(n,n))$, the restricted root system is of type $C_n$, see \cite[Sect.~1]{Hoogenboom82}. The positive restricted roots and multiplicities are
\[
\Sigma^+=\{e_i-e_j,e_i+e_j:i<j\}\cup\{2e_i:1\le i\le n\},\quad m_{e_i-e_j}=m_{e_i+e_j}=2,\quad m_{2e_i}=1.
\]
The polar Jacobian is proportional to
$\prod_{\alpha\in\Sigma^+}|\sinh\alpha(\lambda)|^{m_\alpha}$.
The identity
\[
\sinh(\lambda_i-\lambda_j)\sinh(\lambda_i+\lambda_j)
=\sinh^2\lambda_i-\sinh^2\lambda_j,
\]
then yields \eqref{eq:J-lambda}. Passing from the ordered chamber to the full set only changes the scalar coefficient, which is fixed by our normalization of $dg$.
\end{proof}

A further change of variables leads to an expression that resembles Weyl's integration formula for unitary invariant ensembles on Hermitian matrices, now in terms of the (transplanted) radial variables in $(1,\infty)^n$ rather than the eigenvalues in $\R^n$.

\begin{prop}\label{prop:polar-x-coordinates}
For every $f\in L^1(\U(n,n))$,
\begin{equation}\label{eq:polar-x}
\int_{\U(n,n)} f(g)dg=
\int_{(1,\infty)^n}\int_K\int_K
f(k_1\alpha_{ x}k_2)\Delta_n( x)^2
dk_1dk_2dx,
\end{equation}
where $\alpha_x$ is as in \eqref{eq:a-x}.
\end{prop}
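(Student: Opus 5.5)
The plan is to derive \eqref{eq:polar-x} from Proposition \ref{prop:polar-integration-x} by the substitution $x_i=\cosh(2\lambda_i)$, $i=1,\dots,n$. First I check that this substitution matches the matrices. Using the half-angle identities $\cosh\lambda=\sqrt{(\cosh 2\lambda+1)/2}$ and $\sinh\lambda=\sqrt{(\cosh2\lambda-1)/2}$, valid for $\lambda\ge0$, we get $a_\lambda=\alpha_x$ exactly when $\Lambda_x=\cosh(2\Lambda_\lambda)$, by comparing \eqref{eq:a-lambda} with \eqref{eq:a-x}. The map $\lambda\mapsto x$ is a smooth bijection from $(0,\infty)^n$ onto $(1,\infty)^n$, acting coordinatewise. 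So $f(k_1a_\lambda k_2)=f(k_1\alpha_xk_2)$, and the inner $K\times K$ integrals pass through unchanged.

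Next, I rewrite the Jacobian $J(\lambda)$ from \eqref{eq:J-lambda} in the new variables. Since $\sinh^2\lambda_i=(x_i-1)/2$, we have $\sinh^2\lambda_i-\sinh^2\lambda_j=(x_i-x_j)/2$. Hence
\[
\prod_{i<j}(\sinh^2\lambda_i-\sinh^2\lambda_j)^2=2^{-n(n-1)}\Delta_n(x)^2.
\]
The change of variables gives $dx_i=2\sinh(2\lambda_i)d\lambda_i$. The remaining factor in $J$ therefore becomes
\[
\prod_i\sinh(2\lambda_i)\,d\lambda=2^{-n}dx.
\]
Altogether,
\[
2^{n^2}J(\lambda)\,d\lambda=2^{n^2-n(n-1)-n}\Delta_n(x)^2dx=\Delta_n(x)^2dx.
\]
The constant is exactly $1$, which explains the choice of the prefactor $2^{n^2}$ in \eqref{eq:polar-lambda}.

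Finally, I apply Fubini/Tonelli, which is justified since $f\in L^1$ and all integrands are measurable, together with the classical change-of-variables theorem on $(0,\infty)^n$. This yields \eqref{eq:polar-x}. No step is a real obstacle. The only points needing care are the sign and branch choices for the square roots in \eqref{eq:a-x}, which are fixed by $\lambda_i\ge0$, and the bookkeeping of powers of $2$. The boundary set where some $x_i=1$ or $x_i=x_j$ has measure zero and can be ignored.
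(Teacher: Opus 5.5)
Your proposal is correct and is the paper's own proof: substitute $x_i=\cosh(2\lambda_i)$ in Proposition~\ref{prop:polar-integration-x} and check that $\Delta_n(x)^2\,dx=2^{n^2}J(\lambda)\,d\lambda$. You also verify explicitly that $a_\lambda=\alpha_x$ under this substitution, which the paper leaves implicit.
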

\begin{proof}
The change of variables $x_i\mapsto\cosh(2\lambda_i)=1+2\sinh^2\lambda_i$ gives $dx_i=2\sinh(2\lambda_i)d\lambda_i$ and $x_i-x_j=2(\sinh^2\lambda_i-\sinh^2\lambda_j)$. Consequently
\begin{equation}\label{eq:radial-change-measure}
\Delta_n( x)^2dx
=2^{n^2}J(\lambda)d\lambda.
\end{equation}
It then remains to apply Proposition~\ref{prop:polar-integration-x}.
\end{proof}

The joint density of the unordered (transplanted) radial variables of a $K$-invariant pseudo-unitary random matrix therefore takes the following form.

\begin{prop}\label{prop:radial-density}
Let $g$ be a $K$-invariant random pseudo-unitary matrix whose law has density $\rho$. Then the joint density of the unordered radial variables of $g$ is given by
\begin{equation}\label{eq:radial-density}
\rho_{\rm rv}( x)=\Delta_n( x)^2
\rho(\alpha_{ x}),\quad \text{a.e. } x\in(1,\infty)^n.
\end{equation}
Conversely, every symmetric probability density $p$ on $(1,\infty)^n$
is the unordered radial density for the unique $K$-invariant ensemble defined by $\rho(\alpha_x)=p(x)/\Delta_n(x)^2$ for a.e. $x\in(1,\infty)^n$.
\end{prop}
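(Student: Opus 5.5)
The proof is a direct application of the integration formula in Proposition~\ref{prop:polar-x-coordinates}, combined with the bi-$K$-invariance of $\rho$. For the first statement, I would fix a bounded measurable symmetric test function $\phi$ on $(1,\infty)^n$ and define $F(g)=\phi(x(g))$. Here $x(g)\in(1,\infty)^n$ is the (unordered, say sorted) vector of transplanted radial variables of $g$. Concretely, $x_i=\cosh(2\lambda_i)$ when $g=k_1a_\lambda k_2$. This is well defined because $\lambda$ is uniquely determined by $g$, and $F$ is bi-$K$-invariant.

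I would first observe that $\alpha_x=a_\lambda$ when $x_i=\cosh 2\lambda_i$. This follows from $\cosh\lambda=\sqrt{(x+1)/2}$ and $\sinh\lambda=\sqrt{(x-1)/2}$. Hence $x(k_1\alpha_xk_2)$ is a permutation of $x$, and by symmetry $F(k_1\alpha_xk_2)=\phi(x)$. Applying \eqref{eq:polar-x} to $f=F\rho\in L^1$ and using $\rho(k_1\alpha_xk_2)=\rho(\alpha_x)$ together with $\int_K dk=1$, I would obtain
\[
\E[\phi(x(g))]=\int_{(1,\infty)^n}\phi(x)\,\Delta_n(x)^2\rho(\alpha_x)\,dx .
\]
Since $\Delta_n(x)^2\rho(\alpha_x)$ is symmetric in $x$, it is by definition the unordered joint density, which gives \eqref{eq:radial-density}. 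The only care needed is the convention for "unordered" density: the factor relating ordered and unordered densities is exactly absorbed by the normalization of $dg$ built into \eqref{eq:polar-x}, which integrates over all of $(1,\infty)^n$.

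For the converse, given a symmetric probability density $p$, I would define $\rho$ on $\U(n,n)$ by $\rho(g)=p(x(g))/\Delta_n(x(g))^2$. This is well defined off the null set where two radial variables coincide, and it is bi-$K$-invariant and nonnegative because $x(g)$ is. Applying \eqref{eq:polar-x} gives $\int\rho\,dg=\int p\,dx=1$, so $\rho$ defines a $K$-invariant ensemble, and by the first part its radial density is $p$.

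For uniqueness, suppose $\rho'$ is another bi-$K$-invariant density with the same radial density. Then the first part forces $\rho'(\alpha_x)=\rho(\alpha_x)$ for a.e.\ $x$. Since $\U(n,n)=K\mathfrak a^+K$, the two densities agree on $K\{\alpha_x\}K$ off a set that is null by \eqref{eq:polar-x}, so $\rho'=\rho$ a.e.

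The main (mild) obstacle is measurability and well-definedness: I need $g\mapsto x(g)$ to be measurable and the coincidence set $\{x_i=x_j\}$ to be Haar-null. Measurability holds because the $x_i$ are continuous functions of $g$, namely through the eigenvalues of $g^*g$, which are $e^{\pm2\lambda_i}$, so that $x_i=\tfrac12(e^{2\lambda_i}+e^{-2\lambda_i})$. The coincidence set is Haar-null by \eqref{eq:polar-x}, since $\Delta_n^2\,dx$ assigns it zero mass.
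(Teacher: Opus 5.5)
Your proposal is correct and follows the same route as the paper, whose proof is the one-line remark that the statement follows from Proposition~\ref{prop:polar-x-coordinates} together with bi-$K$-invariance of $\rho$. You fill in the details the paper leaves implicit: the symmetric test-function argument, the identification $\alpha_x=a_\lambda$ for $x_i=\cosh 2\lambda_i$, and the uniqueness step via \eqref{eq:polar-x} applied to $|\rho-\rho'|$.
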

\begin{proof}
This follows immediately from Proposition \ref{prop:polar-x-coordinates} and the fact that a $K$-invariant ensemble is bi-$K$-invariant.
\end{proof}

The proof of Proposition \ref{prop:imup-rv} is now straightforward.
\begin{proof}[Proof of Proposition \ref{prop:imup-rv}]
    Combine \eqref{eq:radial-density} and \eqref{eq:rho-omega-density}.
\end{proof}

The result below is the most important result of this section. It states that on matrices $\alpha_{x}$, the spherical functions of $(\U(n,n),K)$ can be expressed as determinants of Legendre functions.

\begin{prop}\label{prop:det-spherical-function}
The spherical functions of $(\U(n,n),K)$ can be parametrized by $\varphi_s$ with $s\in\C^n$ where
\begin{equation}\label{eq:spherical-determinant}
\varphi_{s}(\alpha_{ x})
=\frac{c_n\det[P_{(is_j-1)/2}(x_k)]_{j,k=1}^n}
{\Delta_n(s^2)\Delta_n(x)},\qquad x\in(1,\infty)^n,
\end{equation}
and 
\begin{equation*}
    c_n=(-1)^{\frac{n(n-1)}{2}}2^{\frac{3}{2}n(n-1)}\prod_{m=0}^{n-1}(m!)^2.
\end{equation*}
The right hand side of \eqref{eq:spherical-determinant} is interpreted as a confluent limit whenever some $s_k=\pm s_l$ or $x_k=x_l$.
\end{prop}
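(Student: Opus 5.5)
The plan is to use the Berezin--Karpelevich type formula for spherical functions on complex Grassmannian symmetric spaces. The root system of $\mathrm{SU}(n,n)/(K\cap\mathrm{SU}(n,n))$ is of type $C_n$ with even multiplicities $m_{e_i\pm e_j}=2$, and these are precisely the spaces for which the Harish-Chandra spherical functions admit a determinantal expression in terms of rank-one spherical functions. I would start from Harish-Chandra's integral formula, which parametrizes the bounded and unbounded spherical functions by spectral parameters $s\in\C^n$ modulo the Weyl group $W(C_n)$ (signed permutations). That $W(C_n)$-invariance is exactly why $s$ enters through $s_j^2$ in $\Delta_n(s^2)$.

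\textbf{Step 1: identify the spherical functions.} For the rank-one factor, the spherical function of $\mathrm{SU}(1,1)/\U(1)$, viewed as a function of $x=\cosh 2\lambda$, is the Legendre function $P_{(is-1)/2}(x)$. This function is the unique solution, regular at $x=1$ and equal to $1$ there, of the radial Laplace equation
\[
\bigl((1-x^2)\partial_x^2-2x\partial_x\bigr)P=\nu(\nu+1)P,\qquad \nu=\tfrac{is-1}{2}.
\]
For rank $n$, $\varphi_s$ is characterized as the unique bi-$K$-invariant joint eigenfunction of the algebra of invariant differential operators with $\varphi_s(e)=1$. It therefore suffices to show that the candidate
\[
F_s(x)=\frac{\det[P_{(is_j-1)/2}(x_k)]}{\Delta_n(x)}
\]
is a joint eigenfunction of the radial parts of generators of that algebra.

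\textbf{Step 2: radial Laplacian.} Using Proposition~\ref{prop:polar-x-coordinates}, the radial density in the $x$-variables is $\Delta_n(x)^2$. The radial part of the Laplace--Beltrami operator then has the form $\Delta_n(x)^{-2}\sum_k\partial_{x_k}\bigl((x_k^2-1)\Delta_n(x)^2\partial_{x_k}\bigr)$. I would verify the conjugation identity
\[
\Delta_n^{-1}\circ\Bigl(\sum_k L_{x_k}\Bigr)\circ\Delta_n=\text{radial Laplacian}+\text{const},
\]
where $L_x=\partial_x(x^2-1)\partial_x$ is the one-variable Legendre operator. The mechanism is that $\Delta_n$ is annihilated by the cross terms thanks to the Vandermonde identities $\sum_k\partial_k^2\Delta_n=0$ and $\sum_k x_k\partial_k\Delta_n=\binom n2\Delta_n$ (together with the analogous identity for $x_k^2\partial_k^2$). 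This is the standard Dyson/Gelfand--Naimark trick. It shows that $F_s$ is an eigenfunction. For the higher-order invariant operators, I would use the symmetric power sums $\sum_k L_{x_k}^m$ conjugated by $\Delta_n$. The fact that the full algebra is generated this way is the known Berezin--Karpelevich result, which I would cite (e.g. \cite{Hoogenboom82}) rather than reprove.

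\textbf{Step 3: smoothness and normalization.} Antisymmetry in $x$ makes $F_s$ symmetric and regular on coinciding $x_k$, and antisymmetry in $s_j^2$ allows the division by $\Delta_n(s^2)$, understood via confluent limits. The main obstacle is the constant $c_n$. To find it, I would evaluate the limit $x\to(1,\dots,1)$. Expand $P_\nu(x)=\sum_m \frac{(-\nu)_m(\nu+1)_m}{(m!)^2}\bigl(\frac{1-x}{2}\bigr)^m$, where $(-\nu)_m(\nu+1)_m$ is a monic polynomial of degree $m$ in $-\nu(\nu+1)=\frac{s^2+1}{4}$. The Cauchy--Binet/Taylor confluent limit gives
\[
\det\bigl[P_{\nu_j}(x_k)\bigr]\sim \det\Bigl[\frac{(-\nu_j)_{m}(\nu_j+1)_{m}}{(m!)^2}\Bigr]_{j,m}\,\det\Bigl[\bigl(\tfrac{1-x_k}{2}\bigr)^{m}\Bigr]_{m,k}.
\]
The first determinant equals $\prod_m(m!)^{-2}\,4^{-\binom n2}\Delta_n(s^2)$ up to sign, and the second equals $(-2)^{-\binom n2}\Delta_n(x)$ up to sign. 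Collecting the powers of $2$ and the signs should reproduce exactly $c_n=(-1)^{n(n-1)/2}2^{\frac32n(n-1)}\prod_m(m!)^2$. The care needed is mainly in tracking signs and the ordering conventions of the two Vandermondes.
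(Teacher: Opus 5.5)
This is correct and is essentially the paper's argument: the determinantal form comes from the Berezin--Karpelevich formula \cite[Thm.~3]{Hoogenboom82}, and your Laplacian computation is only a consistency check once that is cited. The constant $c_n$ is then fixed by $\varphi_s(e)=1$ through the confluent limit at $x=(1,\dots,1)$; your Cauchy--Binet expansion in $(1-x)/2$ is equivalent to the paper's use of $\frac{1}{m!}P^{(m)}_{(is-1)/2}(1)$ and gives exactly $(-1)^{n(n-1)/2}2^{-\frac32 n(n-1)}\prod_m(m!)^{-2}\Delta_n(s^2)$, hence the stated $c_n$. One small slip: in Step 1 the Legendre equation should read $\bigl((1-x^2)\partial_x^2-2x\partial_x\bigr)P=-\nu(\nu+1)P$, which is the form consistent with $L_xP=\nu(\nu+1)P$ in Step 2.
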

\begin{proof}
The Berezin-Karpelevich determinant formula in \cite[Thm.~3]{Hoogenboom82} with $k=0$ gives the stated determinant up to a
constant. Differentiating \eqref{eq:Leg} and taking $x=1$ yields
\begin{equation}\label{eq:p-derivative-at-one}
\frac{1}{m!}P_{(is-1)/2}^{(m)}(1)
=\frac{(-1)^m}{2^{3m}(m!)^2}
\prod_{l=0}^{m-1}\bigl(s^2+(2l+1)^2\bigr).
\end{equation}
The confluent limit is therefore
\begin{align}\label{eq:derivative-determinant-at-one}
\lim_{ x\to 1}
\frac{\det[P_{(is_j-1)/2}(x_k)]}{\Delta_n( x)}
=\det\Big[\frac{1}{(k-1)!}P_{(is_j-1)/2}^{(k-1)}(1)\Big]_{j,k=1}^n
=\frac{(-1)^{\frac{n(n-1)}{2}}\Delta_n(s_1^2,\ldots,s_n^2)}
{2^{\frac{3}{2}n(n-1)}\prod_{m=0}^{n-1}(m!)^2}.
\end{align}
The condition $\varphi_{s}(e)=1$ then leads to the stated expression for $c_n$.
\end{proof}

We can now use the functions $\varphi_s$ to build a spherical transform.

\begin{cor}\label{cor:radial-spherical-transform}
Let $f\in L^1(\U(n,n))$ be bi-$K$-invariant. The spherical transform $(\mathcal{S} f)(s) := (\mathcal{S} f)(\varphi_s)$ associated with $\varphi_s$ in \eqref{eq:spherical-determinant} with $s\in\C^n$ is given by
\begin{equation}\label{eq:radial-spherical-transform}
 (\mathcal{S} f)(s)=\frac{c_n}{\Delta_n(s_1^2,\ldots,s_n^2)}
\int_{(1,\infty)^n}f(\alpha_{ x})\Delta_n( x)
\det[P_{(is_j-1)/2}(x_k)]_{j,k=1}^ndx,
\end{equation}
whenever this integral exists (e.g. when $\varphi_s$ is bounded).
\end{cor}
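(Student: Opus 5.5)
The plan is to compute the abstract spherical transform \eqref{eq:abstract-spherical-transform} directly in polar coordinates. Three ingredients are used in turn: bi-$K$-invariance of the integrand, the integration formula of Proposition~\ref{prop:polar-x-coordinates}, and the determinantal formula of Proposition~\ref{prop:det-spherical-function}.

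\textbf{Step 1: invariance of the integrand.} Since $f$ and $\varphi_s$ are bi-$K$-invariant, the function $g\mapsto f(g)\varphi_s(g^{-1})$ is bi-$K$-invariant. Inversion maps $K$ to itself, so $g\mapsto\varphi_s(g^{-1})$ is indeed bi-$K$-invariant.

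\textbf{Step 2: reducing to radial variables.} Apply \eqref{eq:polar-x}. The integrand evaluated at $k_1\alpha_xk_2$ equals its value at $\alpha_x$. Because $dk$ is a probability measure, the two $K$-integrals contribute a factor $1$. This gives
\[
(\mathcal{S}f)(s)=\int_{(1,\infty)^n} f(\alpha_x)\,\varphi_s(\alpha_x^{-1})\,\Delta_n(x)^2\,dx .
\]

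\textbf{Step 3: computing $\varphi_s(\alpha_x^{-1})$.} From \eqref{eq:a-x} the inverse is
\[
\alpha_x^{-1}=J\alpha_xJ=\frac1{\sqrt2}\begin{pmatrix}(\Lambda_x+I)^{1/2}&-(\Lambda_x-I)^{1/2}\\-(\Lambda_x-I)^{1/2}&(\Lambda_x+I)^{1/2}\end{pmatrix}.
\]
This holds because $\alpha_x$ is Hermitian and pseudo-unitary, so $\alpha_x^{-1}=J\alpha_x^*J=J\alpha_xJ$. One can check directly that $\alpha_x J\alpha_x=J$, using $(\Lambda+I)-(\Lambda-I)=2I$ and the fact that diagonal blocks commute. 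Since $J=\diag(I_n,-I_n)\in K$, bi-$K$-invariance gives $\varphi_s(\alpha_x^{-1})=\varphi_s(\alpha_x)$.

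\textbf{Step 4: substitution.} Insert \eqref{eq:spherical-determinant}. One factor $\Delta_n(x)$ cancels, and this yields \eqref{eq:radial-spherical-transform}. The set where coordinates of $x$ coincide has measure zero, so the confluent interpretation of the formula plays no role there.

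\textbf{Main obstacle: justifying the manipulations.} Absolute convergence is needed to apply Proposition~\ref{prop:polar-x-coordinates} to $f\cdot\varphi_s(\cdot^{-1})$. When $\varphi_s$ is bounded this is immediate, since then $f\varphi_s(\cdot^{-1})\in L^1$. In general, the phrase \enquote{whenever this integral exists} is read as absolute existence: apply the formula to $|f\,\varphi_s(\cdot^{-1})|$ via Tonelli, then to the function itself. The other point needing care is the $\alpha_x$ parametrization, that is, the correspondence $x_i=\cosh 2\lambda_i$, which gives $\alpha_x=a_\lambda$. This is already implicit in Proposition~\ref{prop:polar-x-coordinates}.
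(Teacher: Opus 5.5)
Your proposal is correct and follows the paper's proof. The paper likewise uses that conjugation by $J\in K$ sends $a_\lambda=\alpha_x$ to its inverse, so that $\varphi_s(g^{-1})=\varphi_s(g)$ on radial representatives, and then inserts \eqref{eq:spherical-determinant} into \eqref{eq:polar-x}; your explicit check of $\alpha_x^{-1}=J\alpha_xJ$ and the Tonelli remark only spell out details the paper leaves implicit.
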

\begin{proof}
Conjugation by $J\in K$ sends $a_{\lambda}$ to
$a_{-\lambda}=a_{\lambda}^{-1}$.
Thus $g^{-1}$ has the same radial coordinates as $g$. It then remains to insert
\eqref{eq:spherical-determinant} into \eqref{eq:polar-x}.
\end{proof}

\subsection{Derivative type ensembles} \label{S:IPUM-DT}

The spherical transform \eqref{eq:radial-spherical-transform} gives rise to a more extensively studied one-dimensional integral transform called the Mehler-Fock transform, see, e.g., \cite[Sect. 1.9]{Glaeske-Prudnikov-Skornik06} for an overview of its properties. We first recall its definition.

\begin{defi}
    The Mehler-Fock transform of a function $f:\R_{\geq 1}\to \C$ is given by
    $$ (\mathcal{P}f)(s) = \int_1^\infty f(x) P_{is-\frac{1}{2}}(x) dx,\qquad s\in\C, $$
    whenever this integral exists.
\end{defi}

Due to the connection to the spherical transform the Mehler-Fock transform has several useful properties similar to, e.g., the Laplace/Fourier transform. Many of these properties are proven in a self-contained way in \cite[Sect. 1.9]{Glaeske-Prudnikov-Skornik06}. We will recall some of them below, see \cite[Thm. 1.9.52, Thm. 1.9.53 \& Prop. 1.9.39]{Glaeske-Prudnikov-Skornik06} in particular. First, recall that $\mathcal{P}f$ is well-defined on $\R$ for $f\in L^1(\R_{\geq 1})$. Second, whenever $f\in L^1(\R_{\geq 1})$ and $\mathcal{P}f\in L^1(\R_{\geq 0};s^{\frac{1}{2}}ds)$, there is an inverse transform, namely
\begin{equation}\label{MF_inv}
    f(x) = \int_0^\infty (\mathcal{P}f)(s) \tanh(\pi s) P_{is-\frac{1}{2}}(x) s ds,\qquad \text{a.e. } x\in\R_{>1}.
\end{equation}

For technical convenience, we will restrict to functions from the subspace $\mathcal{W}(\R_{\geq 1})$ of $L^1(\R_{\geq1})$ defined in \eqref{eq:def-W}. We also recall the differential operator $D$ defined in \eqref{D}. The set $\mathcal{W}(\R_{\geq 1})$ is essentially such that the differentiation property in $ii)$ below is guaranteed to hold. This property will play a crucial role in what follows.

\begin{prop} \label{prop:admissable-weight-prop}
    Suppose that $f\in\mathcal{W}(\R_{\geq 1})$. Then, for all $k\in\N$,
    \begin{itemize}
        \item[$i)$] $(\MF[D^k f])(s)$ converges absolutely for all $s\in\C$ and defines an entire function,
        \item[$ii)$] $(\MF[D^k f])(s)=s^{2k}(\MF f)(s)$ for all $s\in\C$,
        \item[$iii)$] for all $A\geq0$ and $m\in\N$,
        \[
            \sup_{|\Im s|\leq A}
            (1+|s|)^m|(\MF f)(s)|<\infty.
        \]
    \end{itemize}
\end{prop}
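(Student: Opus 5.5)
The plan is to reduce everything to growth bounds on Legendre functions for complex degree, together with integration by parts using the Legendre equation. The function $u(x)=P_{is-\frac12}(x)$ satisfies
\[
(1-x^2)u''-2xu'+\nu(\nu+1)u=0,\qquad \nu=is-\tfrac12,\qquad \nu(\nu+1)=-s^2-\tfrac14 .
\]
Hence $D_x P_{is-\frac12}=s^2P_{is-\frac12}$, where $D$ is the formally symmetric operator $\frac{d}{dx}\bigl((1-x^2)\frac{d}{dx}\bigr)-\frac14$ from \eqref{D}.

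\emph{Step 1 (part $i)$).} I first need a uniform bound of the form $|P_{is-\frac12}(x)|\le C_A(1+x)^{A}\log(1+x)$ for $|\Im s|\le A$ and $x\ge1$; crudely, $|P_{is-\frac12}(x)|\le P_{|\Im s|-\frac12}(x)\lesssim x^{|\Im s|}$ up to logarithmic factors. This can be obtained from the Laplace integral representation
\[
P_\nu(x)=\frac1\pi\int_0^\pi\bigl(x+\sqrt{x^2-1}\cos\theta\bigr)^\nu d\theta,
\]
since $|(x+\sqrt{x^2-1}\cos\theta)^{is-\frac12}|\le (x+\sqrt{x^2-1})^{|\Im s|}(x+\sqrt{x^2-1}\cos\theta)^{-\frac12}$, and integrating the last factor in $\theta$ gives at most a $\log$ bound. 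Because $D^kf\in\mathcal W$ has all polynomial moments finite, the integral defining $\mathcal P[D^kf]$ converges absolutely and locally uniformly in $s$. Since $s\mapsto P_{is-\frac12}(x)$ is entire (the hypergeometric series is entire in its parameters), Morera's theorem combined with Fubini shows that $\mathcal P[D^kf]$ is entire.

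\emph{Step 2 (part $ii)$).} It suffices to treat $k=1$ and then iterate, since $D^{k-1}f\in\mathcal W$. Integrating by parts twice on $[1,R]$ gives
\[
\int_1^R (Df)\,u\,dx-\int_1^R f\,(Du)\,dx=\Bigl[(1-x^2)(f'u-fu')\Bigr]_1^R .
\]
At $x=1$ the boundary term vanishes because of the factor $1-x^2$, provided $f'$ and $u'$ are bounded there; this holds since $f\in C^\infty(\R_{\ge1})$ and $u$ is analytic at $1$. The main obstacle is the boundary term as $R\to\infty$. Membership in $\mathcal W$ controls only integrals of $D^jf$, not pointwise decay of $f$ or $f'$. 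I would handle this by averaging over $R$: the boundary term $B(R)$ is continuous, and $B(R)=I_1(R)-I_2(R)$, where $I_1,I_2$ converge as $R\to\infty$, so $B(R)$ has a limit $L$. It then suffices to show that $\int_1^\infty |B(R)|R^{-1}dR<\infty$ (or merely $\liminf|B(R)|=0$), which forces $L=0$. For this, I would control $(1-x^2)f'(x)$ by writing
\[
(1-x^2)f'(x)=\int_1^x\bigl(Df+\tfrac14 f\bigr)(y)\,dy,
\]
which is valid since the boundary value at $1$ vanishes. The right-hand side converges to a finite constant $c$ as $x\to\infty$. Pairing this with $u$ and using integrability of $f$ against high powers of $x$, together with the bound on $u'$ obtained in the same way as in Step 1 (or via $u'=$ a combination of $P_{\nu\pm1}$), I expect a subsequence $R_j\to\infty$ along which $x^{A+1}|f(x)|$ and the boundary term tend to zero. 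The remaining difficulty is that $c\,u(R)$ need not tend to zero; I would rule out $c\neq0$ by noting that $f'\sim -c/x^2$ forces $f\to$ const, and this constant must be $0$ by integrability, which then contradicts the integrability of $Df$ unless $c=0$. Having shown $B(R_j)\to0$, the identity $\mathcal P[Df]=s^2\mathcal Pf$ follows.

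\emph{Step 3 (part $iii)$).} By $ii)$, for $|\Im s|\le A$ we have
\[
|s|^{2k}|(\mathcal Pf)(s)|=|\mathcal P[D^kf](s)|\le C_A\int_1^\infty|D^kf(x)|\,x^{A+1}dx<\infty,
\]
using the uniform bound from Step 1. On the bounded region $|s|\le1$, $\mathcal Pf$ is bounded by continuity. Combining the two cases with $2k\ge m$ gives $\sup_{|\Im s|\le A}(1+|s|)^m|\mathcal Pf(s)|<\infty$.
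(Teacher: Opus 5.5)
Your proposal is correct and follows essentially the paper's route: the Laplace-integral bound on $P_{is-\frac12}$ for $|\Im s|\le A$, integration by parts against $DP_{is-\frac12}=s^2P_{is-\frac12}$ with the boundary term at infinity controlled through $(1-x^2)f'(x)=\int_1^x(Df+\frac14 f)\,dy$, and $iii)$ read off from $ii)$; the only difference is that you kill the boundary term via the existence of $\lim_{R\to\infty}B(R)$ and a subsequence, instead of first proving pointwise super-polynomial decay of $D^kf$ and $(D^kf)'$. Your argument that $c=\int_1^\infty(Df+\frac14 f)\,dy$ must vanish (otherwise $f\sim c/x$ is not integrable) is exactly what the paper's tail formula $(1-x^2)(D^kf)'(x)=-\int_x^\infty(D^{k+1}f+\frac14 D^kf)\,dt$ needs, but the paper justifies it only by asserting that the integrand tends to zero, so keep it; also write out the consequence $|(1-R^2)f'(R)|\le R^{-N}\int_R^\infty y^N(|Df|+\frac14|f|)\,dy$, since it is this rate, not merely $c=0$, that beats the polynomial growth of $P_{is-\frac12}(R)$.
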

\begin{proof}
We first establish that all $D^kf$ and $(D^kf)'$ decay sufficiently fast to apply integration by parts. By \eqref{eq:def-W}, both $D^kf$ and $D^{k+1}f$ have all absolute polynomial moments. Observe that 
\[
D^{k+1}f(x) = \bigl((1-x^2)(D^kf)'(x)\bigr)' - \frac{1}{4} D^k f(x),
\]
and $\lim_{x\to\infty} (D^{k+1}f(x)+\frac{1}{4} D^k f(x)) = 0$ by integrability. Therefore,
\[
    (1-x^2)(D^kf)'(x)
    =-\int_x^\infty\Big(D^{k+1}(t)+\frac14(D^kf)(t)\Big)\,dt.
\]
Hence, for every $m\in\N$, 
\begin{align*}
|(1-x^2)(D^kf)'(x)|\, &\leq x^{-m}\int_x^\infty
    t^m\left(|D^{k+1}f(t)|+\frac14|D^kf(t)|\right)\,dt \\
    &\leq x^{-m}\int_1^\infty
    t^m\left(|Dg(t)|+\frac14|g(t)|\right)\,dt.
\end{align*}
The function $(D^kf)'$ therefore decreases faster than all $x\mapsto x^m$. Since $g(x)=-\int_x^\infty (D^kf)'(t)\,dt$, the same then holds for $D^kf$.

Recall from Proposition \ref{prop:Leg-geom-est} that
\[
    |P_{is-\frac12}(x)|
    \leq C_A x^{A+\frac12},
    \qquad x\geq1,\quad |\Im s|\leq A.
\]
It then follows from \eqref{eq:def-W} that
\[
    (\MF[D^k f])(s)
    =\int_1^\infty (D^k f)(x)P_{is-\frac12}(x)\,dx,
\]
converges absolutely and locally uniformly in $s\in\C$. Furthermore, the integral representation \eqref{eq:Leg-int-repr} shows that
$s\mapsto P_{is-\frac12}(x)$ is entire for every fixed $x\geq1$, hence the same holds for $\MF[D^k f]$. This proves $i)$.

To prove $ii)$, let $s\in\C$. Similarly as before, now using the fact that $DP_{is-\frac12}=s^2P_{is-\frac12}$ and $|P_{is-\frac{1}{2}}(1)|\,\leq 2^{A+\frac{1}{2}}$, we obtain
\[
    (1-x^2)P_{is-\frac12}'(x)
    = \Big(s^2+\frac14\Big)
    \int_1^x P_{is-\frac12}(t)\,dt.
\]
Together with the previous bound, this shows that $P_{is-\frac12}$ and its derivative grow at most polynomially at infinity. Integration by parts therefore yields
\[
    \begin{aligned}
        &\int_1^\infty (D^{j+1}f)(x)P_{is-\frac12}(x)\,dx
        -\int_1^\infty (D^jf)(x)(DP_{is-\frac12})(x)\,dx\\
        &\qquad=
        \left[
        (1-x^2)\left(
        (D^jf)'(x)P_{is-\frac12}(x)
        -(D^jf)(x)P_{is-\frac12}'(x)
        \right)
        \right]_1^\infty.
    \end{aligned}
\]
The boundary term at $1$ vanishes by smoothness and the factor $1-x^2$, while the boundary term at infinity vanishes by the rapid decay established above. Hence,
\[
    (\MF[D^{j+1}f])(s)
    =s^2(\MF[D^j f])(s).
\]
Consecutive use of this identity then proves $ii)$.

Finally, let $A\geq0$ and $m\in\N$, and take $k\in\N$ such that $2k\geq m$. By $ii)$ and the bound on the Legendre function,
\[
    \begin{aligned}
        |s|^{2k}|(\MF f)(s)|
        =|(\MF[D^k f])(s)|
        \leq
        2^{A+\frac12}\int_1^\infty
        |(D^k f)(x)|x^{A+\frac12}\,dx,
        \qquad |\Im s|\leq A.
    \end{aligned}
\]
The right hand side is finite by \eqref{eq:def-W}, which then proves $iii)$.
\end{proof}

The previous result can be used to show that the normalization constant $Z_{n}(\om)$ of the derivative type ensemble in \eqref{eq:rho-omega-density} admits an explicit representation in terms of the Mehler-Fock transform of $\om$.

\begin{prop}  \label{prop:Z-exact}
In the setting of Definition \ref{def:derivative-type-ensemble}, the constant $Z_{n}(\om)$ is given by
\begin{equation}  \label{eq:Z-exact}
Z_{n}(\om)=(-2)^{\frac{n(n-1)}{2}}n!\prod_{k=0}^{n-1}\left[(k!)^2(\MF\omega)\bigl(-i(k+{\textstyle\frac12})\bigr)\right].
\end{equation}
\end{prop}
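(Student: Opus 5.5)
We compute $Z_n(\om)$ from the normalization $\int\rho_{\rm rv}=1$. By Proposition \ref{prop:imup-rv},
\[
Z_n(\om)=\int_{(1,\infty)^n}\Delta_n(x)\det[(D^{j-1}\om)(x_k)]_{j,k=1}^n\,dx .
\]
The plan is to express $\Delta_n(x)$ as a combination of Legendre functions with explicit coefficients. We then integrate row by row with the Andréief identity and evaluate the resulting one-dimensional integrals using Proposition \ref{prop:admissable-weight-prop} $ii)$. The key observation is that at $s_k=-i(k+\frac12)$ we have $is_k-\frac12=k$, so $P_{is_k-\frac12}=P_k$ is the Legendre polynomial of degree $k$. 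Its leading coefficient is $\frac{(2k)!}{2^k(k!)^2}$.

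First, since the $P_k$, $k=0,\dots,n-1$, span the polynomials of degree $\le n-1$, row operations give
\[
\Delta_n(x)=\det[x_k^{j-1}]=\prod_{k=0}^{n-1}\frac{2^k(k!)^2}{(2k)!}\,\det[P_{j-1}(x_k)]_{j,k=1}^n .
\]
Andréief's identity then yields
\[
Z_n(\om)=n!\prod_{k=0}^{n-1}\frac{2^k(k!)^2}{(2k)!}\;\det\Big[\int_1^\infty P_{l}(x)(D^{j}\om)(x)\,dx\Big]_{j,l=0}^{n-1}.
\]
Absolute convergence holds because $\om\in\mathcal W(\R_{\ge1})$ and the $P_l$ are polynomials.

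Each entry equals $(\MF[D^j\om])(s_l)$, and by Proposition \ref{prop:admissable-weight-prop} $ii)$ this is $s_l^{2j}(\MF\om)(s_l)=(-(l+\frac12)^2)^j(\MF\om)(s_l)$. Factoring $(\MF\om)(s_l)$ out of each column leaves the Vandermonde determinant $\det[(-(l+\frac12)^2)^j]$. Writing $\mu_l=-(l+\frac12)^2$, its value is
\[
\prod_{l<m}(\mu_m-\mu_l)=\prod_{l<m}\big((l+\tfrac12)^2-(m+\tfrac12)^2\big)=(-1)^{\frac{n(n-1)}2}\prod_{l<m}(m-l)(m+l+1).
\]
Here one uses $\prod_{l<m}(m-l)=\prod_m m!$ and $\prod_{l<m}(m+l+1)=\prod_m\frac{(2m)!}{m!}$. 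Hence the Vandermonde equals $(-1)^{n(n-1)/2}\prod_m(2m)!$, which cancels the factor $\prod(2k)!$ from the first step. Collecting terms gives $(-1)^{n(n-1)/2}\,2^{n(n-1)/2}\,n!\prod_k (k!)^2(\MF\om)(s_k)$, which is \eqref{eq:Z-exact}.

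The main obstacle is justifying the use of $ii)$ at complex $s$ with $P_{is-\frac12}$ a polynomial. Proposition \ref{prop:admissable-weight-prop} already asserts $ii)$ for all $s\in\C$, so this is covered. The remaining work is careful sign and power-of-two bookkeeping. The index shift between $D^{j-1}$ with $j=1,\dots,n$ and powers $s^{2(j-1)}$ must also be tracked correctly.
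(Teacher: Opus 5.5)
Your proof is correct and follows the paper's argument essentially step for step. Both reduce $\Delta_n$ to $\det[P_{j-1}(x_k)]$ via the Legendre leading coefficients, apply Andr\'eief's identity, evaluate the entries with Proposition~\ref{prop:admissable-weight-prop} $ii)$ at $s=-i(k+\frac12)$, and use the Vandermonde value $(-1)^{n(n-1)/2}\prod_k(2k)!$ to cancel the factors $\prod_k(2k)!$.
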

\begin{proof}
By \eqref{eq:radial-density} and \eqref{eq:rho-omega-density}, the normalization $\int_{\U(n,n)}\rho dg=1$ gives
\[
Z_{n}(\om)
=\int_{(1,\infty)^n}\Delta_n(x)
  \det[(D^{j-1}\omega)(x_k)]_{j,k=1}^n\,dx.
\]
The leading coefficient of the Legendre polynomial $P_k$ is $(2k)!/(2^k(k!)^2)$, see \eqref{eq:Leg}. Hence, by elementary row operations,
\[
\Delta_n(x)
=\left(\prod_{k=0}^{n-1}\frac{2^k(k!)^2}{(2k)!}\right)
  \det[P_{j-1}(x_k)]_{j,k=1}^n.
\]
It then follows from Andr\'eief's identity that
\[
Z_{n}(\om)
=n!\left(\prod_{k=0}^{n-1}\frac{2^k(k!)^2}{(2k)!}\right)
  \det\Big[
    \int_1^\infty P_j(x)(D^k\omega)(x)\,dx
  \Big]_{j,k=0}^{n-1}.
\]
Proposition~\ref{prop:admissable-weight-prop}, applied at
$s=-i(j+\frac12)$, gives
\[
    \begin{aligned}
        \int_1^\infty P_j(x)(D^k\omega)(x)\,dx
        =\bigl(-(j+{\textstyle\frac12})^2\bigr)^k
        (\MF\omega)\bigl(-i(j+{\textstyle\frac12})\bigr).
    \end{aligned}
\]
Factoring the values of the Mehler-Fock transform out of the rows leaves the Vandermonde determinant
\[
\begin{aligned}
\det\Big[\bigl(-(j+{\textstyle\frac12})^2\bigr)^k\Big]_{j,k=0}^{n-1}
=\prod_{0\leq j<k\leq n-1}
  \left((j+{\textstyle\frac12})^2-(k+{\textstyle\frac12})^2\right).
\end{aligned}
\]
Using the identity $(j+{\textstyle\frac12})^2-(k+{\textstyle\frac12})^2 = (j+k+1)(j-k)$, we can write this as
\[
\det\Big[\bigl(-(j+{\textstyle\frac12})^2\bigr)^k\Big]_{j,k=0}^{n-1} = (-1)^{\frac{n(n-1)}{2}}\prod_{k=0}^{n-1}(2k)!.
\]
Combining these results then yields \eqref{eq:Z-exact}.
\end{proof}

The following characterization of derivative type ensembles will play an essential role in identifying the convolution of such ensembles, as it essentially reduces it to a one-dimensional problem. 

\begin{prop}\label{thm:derivative-transform}
Suppose that a density $\rho$ defines a $K$-invariant ensemble $\mu$ on $\U(n,n)$ and let $\omega\in\mathcal{W}(\R_{\geq 1})$. Then the following are equivalent:
\begin{enumerate}
\item[$i)$] $\mu$ is of derivative type with respect to $\omega$,
\item[$ii)$] the spherical transform of $\rho$ factorizes as
\begin{equation}\label{eq:rho-omega-sharp}
(\mathcal{S}\rho)(s)=
\frac{\prod_{j=1}^n(\MF\omega)(s_j/2)}
{\prod_{k=0}^{n-1}(\MF\omega)\bigl(-i(k+\frac12)\bigr)},\qquad s\in\R^n.
\end{equation}
\end{enumerate}
\end{prop}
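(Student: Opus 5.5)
For $i)\Rightarrow ii)$ I compute $\mathcal{S}\rho$ directly from Corollary~\ref{cor:radial-spherical-transform}. Inserting \eqref{eq:rho-omega-density}, the factor $\Delta_n(x)$ cancels, and
\[
(\mathcal{S}\rho)(s)=\frac{c_n}{Z_n(\om)\Delta_n(s_1^2,\ldots,s_n^2)}\int_{(1,\infty)^n}\det[(D^{j-1}\om)(x_k)]\det[P_{(is_j-1)/2}(x_k)]\,dx .
\]
Andr\'eief's identity turns the integral into
\[
n!\det\Big[\int_1^\infty (D^{k-1}\om)(x)P_{i(s_j/2)-\frac12}(x)\,dx\Big]_{j,k=1}^n .
\]
By Proposition~\ref{prop:admissable-weight-prop}~$ii)$, evaluated at the point $s_j/2$, the $(j,k)$ entry equals $(s_j^2/4)^{k-1}(\MF\om)(s_j/2)$. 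Pulling $(\MF\om)(s_j/2)$ out of each row leaves
\[
\det[(s_j^2/4)^{k-1}]=4^{-n(n-1)/2}\Delta_n(s^2),
\]
which cancels the Vandermonde in the denominator. What remains is $\prod_j(\MF\om)(s_j/2)$ times a constant depending only on $n$, $c_n$ and $Z_n(\om)$. Substituting Proposition~\ref{prop:Z-exact} for $Z_n(\om)$, I expect the purely numerical constants $c_n$, $4^{-n(n-1)/2}$, $n!$ and the factors in \eqref{eq:Z-exact} to cancel exactly. This leaves the denominator $\prod_k(\MF\om)(-i(k+\frac12))$ in \eqref{eq:rho-omega-sharp}. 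A cleaner way to fix the constant is to note that the formula must give $1$ at the trivial spherical function, i.e.\ at $s_j=-i(2j-1)$, where $\varphi_s\equiv1$. This is consistent with \eqref{eq:rho-omega-sharp}, since $s_j/2=-i(j-\frac12)$ reproduces the denominator. I would still check the constants directly as a sanity check.

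Two technical points need care. First, for $s\in\R^n$ the $\varphi_s$ are bounded, so $\mathcal{S}\rho$ is defined, and the integrals converge absolutely by Proposition~\ref{prop:admissable-weight-prop}~$i)$ and the polynomial bound on $P_{is-\frac12}$, which justifies Andr\'eief. Second, when some $s_j^2$ coincide, the formula extends by continuity: both sides are continuous in $s$, since $\varphi_s$ is defined as a confluent limit.

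For $ii)\Rightarrow i)$ I use injectivity. Let $\tilde\rho$ be the bi-$K$-invariant function on $\U(n,n)$ defined by the right-hand side of \eqref{eq:rho-omega-density}, with $Z_n(\om)$ given by \eqref{eq:Z-exact}; the latter is nonzero because $\mathcal{S}\rho$ is finite and nonzero at the trivial character. By Proposition~\ref{prop:polar-x-coordinates}, $\tilde\rho\in L^1$, since $\int|\Delta_n(x)\det[D^{j-1}\om(x_k)]|\,dx<\infty$ by the moment conditions in \eqref{eq:def-W}. The computation above, which never used positivity, shows that $\mathcal{S}\tilde\rho$ equals the right-hand side of \eqref{eq:rho-omega-sharp}, hence equals $\mathcal{S}\rho$ on all $\varphi_s$ with $s\in\R^n$.

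The main obstacle is the injectivity step. The result of \cite[Thm.~3.3]{Stanton-Tomas79} concerns the transform on all bounded spherical functions, whereas \eqref{eq:rho-omega-sharp} is only asserted for $s\in\R^n$. I would handle this through the Plancherel/inversion theorem for $L^1\cap L^2$ bi-$K$-invariant functions, whose spectral support is the tempered spectrum $s\in\R^n$. Alternatively, I would note that both transforms extend holomorphically to the tube of bounded parameters, so they agree everywhere by analytic continuation. Either route gives $\rho=\tilde\rho$ a.e., and hence $i)$.
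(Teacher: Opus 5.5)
Your proposal is correct and matches the paper's proof step for step. It uses the same Andr\'eief computation with Proposition~\ref{prop:admissable-weight-prop}~$ii)$ at $s_j/2$, the same exact cancellation of $n!c_n/(2^{n(n-1)}Z_n(\om))$ against \eqref{eq:Z-exact}, continuity when some $s_j^2$ coincide, and, for the converse, the same auxiliary $\widetilde\rho\in L^1$ together with injectivity of $\mathcal{S}$.

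Your extra care about deducing $\rho=\widetilde\rho$ from agreement on $s\in\R^n$ alone goes beyond what the paper writes, since the paper simply cites injectivity. Of your two fixes, holomorphic continuation to the tube of bounded parameters followed by \cite[Thm.~3.3]{Stanton-Tomas79} works as stated. The Plancherel route first needs a convolution with a bi-$K$-invariant approximate identity, because $\rho$ need not lie in $L^2$.

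One small correction: $Z_n(\om)\neq0$ is part of hypothesis $ii)$, since the denominator in \eqref{eq:rho-omega-sharp} must be nonzero for the formula to make sense. It does not follow from evaluating $\mathcal{S}\rho$ at the trivial spherical function, which gives no information about that denominator.
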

\begin{proof}
We first prove the implication from $i)$ to $ii)$. Proposition~\ref{prop:Z-exact} and $Z_n(\om)\neq0$ imply that the denominator in \eqref{eq:rho-omega-sharp} is non-zero.
For $s\in\R^n$ with pairwise distinct $s_1^2,\ldots,s_n^2$, \eqref{eq:radial-spherical-transform},
\eqref{eq:rho-omega-density}, and Andr\'eief's identity give
\[
        (\mathcal{S}\rho)(s)
        =\frac{n!c_n}{Z_n(\om)\Delta_n(s^2)}
        \det\Big[
            \int_1^\infty (D^{j-1}\omega)(x)
            P_{(is_k-1)/2}(x)\,dx
        \Big]_{j,k=1}^n.
\]
Hence, by Proposition~\ref{prop:admissable-weight-prop},
\[
        (\mathcal{S}\rho)(s)
        =\frac{n!c_n}{Z_n(\om)\Delta_n(s^2)}
        \det\Big[
            \Big(\frac{s_k^2}{4}\Big)^{j-1}
            (\MF\omega)(s_k/2)
        \Big]_{j,k=1}^n
        =\frac{n!c_n}{2^{n(n-1)}Z_n(\om)}
        \prod_{k=1}^n(\MF\omega)(s_k/2).
\]
Substituting the value of $c_n$ and using \eqref{eq:Z-exact} proves \eqref{eq:rho-omega-sharp} for these parameters. Since both sides are continuous on $\R^n$, the equality extends to all $s\in\R^n$, proving $ii)$.

We now prove the implication from $ii)$ to $i)$. Consider the bi-$K$-invariant function $\widetilde\rho$ determined by
\[
    \widetilde\rho(\alpha_x)
    =\frac{\det[(D^{j-1}\omega)(x_k)]_{j,k=1}^n}
    {Z_n(\om)\Delta_n(x)},
    \qquad x\in(1,\infty)^n,\quad \Delta_n(x)\neq0,
\]
where $Z_n(\om)$ is defined by the right hand side of \eqref{eq:Z-exact} and is non-zero because the denominator in \eqref{eq:rho-omega-sharp} is non-zero. Whenever $\Delta_n(x)=0$, we define $\widetilde\rho(\alpha_x)$ to be equal to zero. Evidently, by \eqref{eq:polar-x} and \eqref{eq:def-W}, we have $\widetilde\rho\in L^1(\U(n,n))$. The preceding transform calculation therefore applies and yields
\[
    (\mathcal{S}\widetilde\rho)(s)
    =\frac{\prod_{j=1}^n(\MF\omega)(s_j/2)}
    {\prod_{k=0}^{n-1}(\MF\omega)(-i(k+\frac12))}
    =(\mathcal{S}\rho)(s),
    \qquad s\in\R^n.
\]
Injectivity of the spherical transform then gives $\widetilde\rho=\rho$ almost everywhere on $\U(n,n)$. Hence $\rho$ satisfies \eqref{eq:rho-omega-density} with this $\omega$ and nonzero $Z_n(\om)$, proving $i)$.
\end{proof}

The one-dimensional version of the convolution in \eqref{eq:group-convolution}, which should factorize the Mehler-Fock transform, was already identified in \cite[Lem.~1.9.10]{Glaeske-Prudnikov-Skornik06} in a self-contained way (without mentioning the underlying spherical transform). For $x,y,z>1$, denote
\begin{equation}\label{eq:MF-endpoints}
\kappa_\pm(y,z)=yz\pm\sqrt{(y^2-1)(z^2-1)},
\end{equation}
and define
\begin{equation}\label{eq:MF-kernel-corrected}
\mathsf K(x,y,z)=\begin{cases}
\displaystyle\frac1{\pi\sqrt{2xyz+1-x^2-y^2-z^2}},
&\kappa_-(y,z)<x<\kappa_+(y,z),\\[4pt]
0,&\text{otherwise}.
\end{cases}
\end{equation}
Then, the authors showed that
\begin{equation}\label{eq:MF-product-formula}
P_{is-\frac{1}{2}}(y)P_{is-\frac{1}{2}}(z)
=\int_1^\infty\mathsf K(x,y,z)P_{is-\frac{1}{2}}(x)dx,\qquad s\in\C.
\end{equation}
This identity corresponds to \eqref{eq:spherical-product-formula} for $n=1$. It leads to the definition below.

\begin{defi}\label{def:MF-convolution}
Given $\omega_1,\omega_2\in L^1(\R_{\geq 1})$, we define
\begin{equation}\label{eq:MF-convolution}
(\omega_1*_{\MF}\omega_2)(x)=
\int_1^\infty\int_1^\infty\mathsf K(x,y,z)
\omega_1(y)\omega_2(z)dydz,\qquad x\in\R_{\geq 1},
\end{equation}
whenever this integral exists.
\end{defi}

The Mehler-Fock transform of $\omega_1*_{\MF}\omega_2$ has the following multiplicative property, which can also be anticipated from \eqref{eq:spherical-multiplicativity}.

\begin{prop}\label{prop:transform-multiplicativity}
If $\omega_1,\omega_2\in L^1(\R_{\geq 1})$, then $\omega_1\ast_{\MF}\omega_2\in L^1(\R_{\geq 1})$ and
\begin{equation}\label{eq:MF-convolution-transform}
\MF[\omega_1*_{\MF}\omega_2](s)
=(\MF\omega_1)(s)(\MF\omega_2)(s),\qquad s\in \R.
\end{equation}
\end{prop}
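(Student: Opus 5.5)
The plan is to use Fubini--Tonelli together with the product formula \eqref{eq:MF-product-formula}. The main ingredient is the $s=i/2$ case of \eqref{eq:MF-product-formula}: since $P_{-1}=P_0\equiv1$, it gives
\begin{equation}
\int_1^\infty \mathsf K(x,y,z)\,dx=1,\qquad y,z>1.
\end{equation}
This identity can also be checked directly. Substitute $x=yz+\sqrt{(y^2-1)(z^2-1)}\cos\theta$. Then the quadratic $2xyz+1-x^2-y^2-z^2$ equals $(y^2-1)(z^2-1)\sin^2\theta$, and the integral becomes $\frac1\pi\int_0^\pi d\theta=1$. Because $\mathsf K\ge0$, Tonelli gives
\begin{equation}
\int_1^\infty\int_1^\infty\int_1^\infty \mathsf K(x,y,z)\,|\omega_1(y)||\omega_2(z)|\,dy\,dz\,dx=\|\omega_1\|_{L^1}\|\omega_2\|_{L^1}<\infty.
\end{equation}
Consequently the integral \eqref{eq:MF-convolution} converges absolutely for a.e.\ $x$, and $\omega_1*_{\MF}\omega_2\in L^1(\R_{\geq1})$ with norm at most $\|\omega_1\|_{L^1}\|\omega_2\|_{L^1}$.

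For the multiplicativity, fix $s\in\R$ and use that $|P_{is-\frac12}(x)|\le P_{-\frac12}(x)$. This follows from the Laplace integral representation \eqref{eq:Leg-int-repr}, or equivalently from Proposition~\ref{prop:Leg-geom-est} with $A=0$, which gives a bound $C x^{1/2}$. The triple integrand $\mathsf K(x,y,z)\omega_1(y)\omega_2(z)P_{is-\frac12}(x)$ is absolutely integrable provided
\begin{equation}
\int_1^\infty \mathsf K(x,y,z)\,|P_{is-\frac12}(x)|\,dx\le \int_1^\infty \mathsf K(x,y,z)\,P_{-\frac12}(x)\,dx=P_{-\frac12}(y)P_{-\frac12}(z)
\end{equation}
is integrable against $|\omega_1(y)||\omega_2(z)|$. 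The equality here is \eqref{eq:MF-product-formula} at $s=0$, using $P_{-1/2}>0$.

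This step is the main obstacle. The growth $P_{-\frac12}(y)\sim c\,y^{-1/2}\log y$ is in fact bounded, and indeed $|P_{is-\frac12}(x)|\le P_{-\frac12}(x)\le1$ for real $s$ and $x\ge1$. I would verify this from the integral representation, where the integrand has modulus at most that of $P_{-1/2}$, and from $P_{-1/2}$ being decreasing with $P_{-1/2}(1)=1$. Hence the required bound is just $\|\omega_1\|_{L^1}\|\omega_2\|_{L^1}$, and no moment conditions are needed.

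With absolute integrability established, Fubini lets us integrate in $x$ first. This gives
\begin{equation}
\MF[\omega_1*_{\MF}\omega_2](s)=\int_1^\infty\int_1^\infty \omega_1(y)\omega_2(z)\Big(\int_1^\infty \mathsf K(x,y,z)P_{is-\frac12}(x)\,dx\Big)\,dy\,dz.
\end{equation}
By \eqref{eq:MF-product-formula}, the inner integral equals $P_{is-\frac12}(y)P_{is-\frac12}(z)$. The remaining double integral factorizes as $(\MF\omega_1)(s)(\MF\omega_2)(s)$, which proves \eqref{eq:MF-convolution-transform}.
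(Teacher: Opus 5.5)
Your proof is correct and follows the paper's argument. You show that $\mathsf K$ has unit mass via the substitution $x=yz+\sqrt{(y^2-1)(z^2-1)}\cos\theta$, use Tonelli for the $L^1$ bound, and combine Fubini with the product formula \eqref{eq:MF-product-formula} to get the factorization. Your explicit justification of the Fubini step, through $|P_{is-\frac12}(x)|\le P_{-\frac12}(x)\le 1$ for real $s$, fills in a detail the paper leaves implicit; only boundedness of $P_{-\frac12}$ is actually needed, which also follows from its continuity and decay at infinity.
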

\begin{proof}
Note that
$2xyz+1-x^2-y^2-z^2
=(\kappa_+(y,z)-x)(x-\kappa_-(y,z)).
$
The change of variables $x\mapsto yz+\sqrt{(y^2-1)(z^2-1)}\cos\theta$ therefore gives
\begin{equation}\label{eq:MF-kernel-mass}
\int_1^\infty\mathsf K(x,y,z)dx
=\frac1\pi\int_{\kappa_-(y,z)}^{\kappa_+(y,z)}
\frac{dx}{\sqrt{(\kappa_+(y,z)-x)(x-\kappa_-(y,z))}}
=\frac1\pi\int_0^\pi d\theta=1.
\end{equation}
It then follows from Tonelli's theorem that
\[
\int_1^\infty\int_1^\infty\int_1^\infty
\mathsf K(x,y,z)|\omega_1(y)\omega_2(z)|
dydzdx
=\|\omega_1\|_1\|\omega_2\|_1,
\]
and therefore $\omega_1\ast_{\MF}\omega_2\in L^1(\R_{\geq 1})$. Moreover, by Fubini's theorem and \eqref{eq:MF-product-formula}
\begin{align*}
\MF[\omega_1*_{\MF}\omega_2](s)
=\left(\int_1^\infty\omega_1(y)P_{is-\frac{1}{2}}(y)dy\right)
\left(\int_1^\infty\omega_2(z)P_{is-\frac{1}{2}}(z)dz\right),\qquad s\in\R.
\end{align*}
This is exactly \eqref{eq:MF-convolution-transform}.
\end{proof}

We can lift the previous property to functions from the space $\mathcal{W}(\R_{\geq1})$, which also allows us to extend \eqref{eq:MF-convolution-transform} to $s\in\R$.

\begin{prop} \label{prop:transform-multiplicativity-C}
If $\omega_1,\omega_2\in\mathcal{W}(\R_{\geq1})$, then $\omega_1\ast_{\MF}\omega_2\in\mathcal{W}(\R_{\geq1})$. Moreover,
\begin{equation}
\MF[\omega_1*_{\MF}\omega_2](s)
=(\MF\omega_1)(s)(\MF\omega_2)(s),
\qquad s\in\C.
\end{equation}
\end{prop}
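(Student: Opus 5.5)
The plan has two parts. First I show that $\omega=\omega_1\ast_{\MF}\omega_2$ lies in $\mathcal{W}(\R_{\geq1})$. Then I extend the product identity from $s\in\R$, where Proposition~\ref{prop:transform-multiplicativity} already gives it, to all $s\in\C$ by analytic continuation.

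\textbf{Smoothness and the commutation $D\omega=(D\omega_1)\ast_{\MF}\omega_2$.} The key structural fact I would establish is that $D$ commutes with the convolution:
\[
D(\omega_1\ast_{\MF}\omega_2)=(D\omega_1)\ast_{\MF}\omega_2 .
\]
To see this, I would write the kernel differently. After the substitution $x=yz+\sqrt{(y^2-1)(z^2-1)}\cos\theta$ used in \eqref{eq:MF-kernel-mass}, the convolution becomes a geometric average. Put $y=\cosh a$, $z=\cosh b$, and view $\mathsf K$ through the hyperbolic law of cosines. This gives
\[
(\omega_1\ast_{\MF}\omega_2)(\cosh r)=\frac1\pi\int_0^\infty\int_0^\pi \omega_1\bigl(\cosh r\cosh b-\sinh r\sinh b\cos\theta\bigr)\,\omega_2(\cosh b)\sinh b\,d\theta\,db .
\]
This is the spherical mean of $\omega_1$ over hyperbolic circles of radius $b$ around a point at distance $r$, weighted by $\omega_2$. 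On the radial variable $x=\cosh r$, the operator $D+\frac14$ is minus the Laplace--Beltrami operator of the hyperbolic plane. The Laplacian commutes with isometries, hence with spherical means. Therefore the formula can be differentiated under the integral, using the moment bounds of $\omega_1$ and its derivatives (the rapid decay of $D^k\omega_1$ and $(D^k\omega_1)'$ shown in the proof of Proposition~\ref{prop:admissable-weight-prop}), and doing so yields the commutation relation. Iterating, $D^k\omega=(D^k\omega_1)\ast_{\MF}\omega_2$, and smoothness follows in the same way.

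\textbf{Moments.} Next I would bound the moments of $D^k\omega$. Since $x\le\kappa_+(y,z)\le 2yz$ on the support of $\mathsf K$ and $\int_1^\infty \mathsf K\,dx=1$, Tonelli gives
\[
\int_1^\infty |D^k\omega(x)|\,x^m\,dx\le 2^m\int_1^\infty |D^k\omega_1(y)|\,y^m\,dy\int_1^\infty|\omega_2(z)|\,z^m\,dz<\infty .
\]
Hence $\omega\in\mathcal{W}(\R_{\geq1})$.

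\textbf{Extension to $s\in\C$.} By Proposition~\ref{prop:admissable-weight-prop}$\,i)$, applied to $\omega,\omega_1,\omega_2\in\mathcal{W}$, both $\MF\omega$ and $(\MF\omega_1)(\MF\omega_2)$ are entire. Proposition~\ref{prop:transform-multiplicativity} says they agree on $\R$. The identity theorem then gives equality on all of $\C$. One could alternatively argue directly via \eqref{eq:MF-product-formula} for complex $s$ with Fubini, using $|P_{is-\frac12}(x)|\le C_Ax^{A+\frac12}$ together with the moment bound above; but analytic continuation is simpler.

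\textbf{Main obstacle.} I expect the hardest step to be the rigorous justification of $D\omega=(D\omega_1)\ast_{\MF}\omega_2$, namely differentiating under the integral near $x=1$ and handling the endpoint singularities of $\mathsf K$. If the geometric argument gets messy, my first fallback would be to prove the identity weakly. For each real $s$, Proposition~\ref{prop:admissable-weight-prop}$\,ii)$ gives $\MF[(D\omega_1)\ast_{\MF}\omega_2](s)=s^2\,\MF\omega(s)$. One then pairs $\omega$ against test functions and integrates by parts to get $\MF[D\omega]=s^2\,\MF\omega$ in the distributional sense, and concludes by injectivity of the Mehler--Fock transform via \eqref{MF_inv}. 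Smoothness would then come from elliptic regularity of $D$ on $(1,\infty)$ and from the representation above near $x=1$.
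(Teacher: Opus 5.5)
Your proposal is correct. Its last two steps are exactly the paper's: the moment bound via $x\le 2yz$ and $\int_1^\infty\mathsf K\,dx=1$, and the identity-theorem extension from $\R$ to $\C$. The difference is in how you obtain smoothness and the relation $D^k\omega=(D^k\omega_1)\ast_{\MF}\omega_2$.

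\textbf{The paper's route.} It argues spectrally:
\begin{itemize}
\item Rapid decay of $(\MF\omega_1)(\MF\omega_2)$ on $\R$ and the inversion formula \eqref{MF_inv} give $\omega(x)=\int_0^\infty(\MF\omega_1)(s)(\MF\omega_2)(s)P_{is-\frac12}(x)\tanh(\pi s)s\,ds$ a.e.
\item Proposition~\ref{prop:Leg-der-s-est} allows differentiating this to all orders on $[1,R]$.
\item A dominated-convergence continuity argument shows that this smooth representative equals $\omega$ on $(1,\infty)$.
\item The commutation then holds a.e., because both sides have the same inverse-transform density $s^{2k}(\MF\omega_1)(s)(\MF\omega_2)(s)$.
\end{itemize}

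\textbf{Your route.} You read $\ast_{\MF}$ as radial convolution on $\mathbb{H}^2$ and $D+\frac14$ as $-\Delta$. The commutation then becomes the classical fact that invariant operators commute with spherical means. Differentiating under the $b$-integral is legitimate, since $p$-derivatives of $M_bF$ for $p$ in a compact set grow at most polynomially in $\cosh b$, and the moments of $\omega_2$ absorb this growth.

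\textbf{What your route gains.} It explains conceptually why $D$ commutes with $\ast_{\MF}$. It gives pointwise rather than a.e.\ identities, and it does not use the inversion theorem. It also makes $x=1$ an interior point of the manifold, so the endpoint singularity you were worried about never appears.

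\textbf{The step you gloss over.} You still have to transfer smoothness of the radial function on $\mathbb{H}^2$ back to smoothness of $\omega$ on $\R_{\geq1}$ at $x=1$, which $\omega\in\mathcal W(\R_{\geq1})$ requires. Since $\cosh$ is not a local diffeomorphism at $r=0$, this needs the Whitney-type fact that a smooth radial function is a smooth function of $\cosh r$. Equivalently, only even powers of $\sqrt{x^2-1}$ survive the $\theta$-average.

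Your fallback is essentially the paper's argument. The paper, however, needs neither distributions nor elliptic regularity, because \eqref{MF_inv} produces the smooth representative directly.
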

\begin{proof}
Denote $\omega=\omega_1\ast_{\MF}\omega_2$. By Proposition~\ref{prop:transform-multiplicativity},
$\omega\in L^1(\R_{\geq1})$ and
\[
    (\MF\omega)(s)
    =(\MF\omega_1)(s)(\MF\omega_2)(s),
    \qquad s\in\R.
\]
Proposition~\ref{prop:admissable-weight-prop}, applied to $\omega_1,\omega_2\in\mathcal{W}(\R_{\geq1})$, allows us to use the inversion formula \eqref{MF_inv} to obtain
\[
    \omega(x)
    =\int_0^\infty
    (\MF\omega_1)(s)(\MF\omega_2)(s)
    P_{is-\frac12}(x)\tanh(\pi s)s\,ds,
    \qquad \text{a.e. }x>1.
\]
The bound in Proposition~\ref{prop:Leg-der-s-est} justifies differentiation under the integral to every order, hence the right hand side defines a function in $C^{\infty}(\R_{\geq 1})$. This smooth representative agrees with the left hand side for every $x>1$. In order to show this, it suffices to show that $\om\in C^{0}(\R_{\geq 1})$. Symmetry of $\mathsf K$ and the substitution
$z=xy+\sqrt{(x^2-1)(y^2-1)}\cos\theta$ give
\[
    \omega(x)
    =\frac1\pi\int_1^\infty\omega_1(y)
    \int_0^\pi
    \omega_2\left(
    xy+\sqrt{(x^2-1)(y^2-1)}\cos\theta
    \right)\,d\theta\,dy.
\]
The decay established in the proof of Proposition~\ref{prop:admissable-weight-prop} implies that
$\omega_2$ is bounded. Since $\omega_1\in L^1(\R_{\geq 1})$, dominated convergence then shows that $\om\in C^{0}(\R_{\geq 1})$.

Since $DP_{is-\frac12}=s^2P_{is-\frac12}$, after interchanging differentiations and integration, we obtain
\[
    (D^k\omega)(x)
    =\int_0^\infty
    s^{2k}(\MF\omega_1)(s)(\MF\omega_2)(s)
    P_{is-\frac12}(x)\tanh(\pi s)s\,ds.
\]
On the other hand, Proposition~\ref{prop:admissable-weight-prop} gives
\[
    (\MF[D^k\omega_1])(s)
    =s^{2k}(\MF\omega_1)(s),\qquad s\in\C.
\]
Applying Proposition~\ref{prop:transform-multiplicativity} and \eqref{MF_inv} to $(D^k\omega_1)\ast_{\MF}\omega_2$ therefore yields
\[
    D^k\omega=(D^k\omega_1)\ast_{\MF}\omega_2,
    \qquad\text{a.e. on }(1,\infty).
\]

On the support of $\mathsf K$, we have
    $x<yz+\sqrt{(y^2-1)(z^2-1)}\leq2yz$.
Hence Tonelli's theorem and \eqref{eq:MF-kernel-mass} give, for all $k,m\in\N$,
\[
    \begin{aligned}
        \int_1^\infty |(D^k\omega)(x)|x^m\,dx
        &\leq
        \int_1^\infty\int_1^\infty
        |(D^k\omega_1)(y)\omega_2(z)|
        \left(\int_1^\infty x^m\mathsf K(x,y,z)\,dx\right)
        dy\,dz\\
        &\leq
        2^m
        \left(\int_1^\infty |(D^k\omega_1)(y)|y^m\,dy\right)
        \left(\int_1^\infty |\omega_2(z)|z^m\,dz\right).
    \end{aligned}
\]
Together with smoothness, this proves that $\omega\in\mathcal W(\R_{\geq1})$.

Finally, Proposition~\ref{prop:admissable-weight-prop} shows that $\MF\omega$, $\MF\omega_1$, and $\MF\omega_2$ are entire. The identity theorem therefore extends the factorization already established on $\R$ to all of $\C$.
\end{proof}

We are now ready to prove the main result of this section, which is Theorem \ref{thm:closure-products}:

\begin{proof}[Proof of Theorem \ref{thm:closure-products}]
Let $\rho_1,\rho_2$ denote the densities of the laws of $g_1,g_2$ respectively. By independence and \eqref{eq:group-convolution}, the product $g_1g_2$ has the bi-$K$-invariant probability density
$\rho_1*\rho_2$ on $\U(n,n)$. Denote $\omega=\omega_1*_{\MF}\omega_2$, then by Proposition \ref{prop:transform-multiplicativity-C}, $\omega\in\mathcal W(\R_{\geq1})$ and 
\[
\MF\om(s)
=(\MF\omega_1)(s)(\MF\omega_2)(s),
\qquad s\in\C.
\]
By \eqref{eq:Z-exact}, for both $j = 1, 2$,
\[
(\MF\omega_j)\bigl(-i(k+\tfrac12)\bigr)\neq0,
\qquad k=0,\ldots,n-1,
\]
hence the same holds for $\MF\om$. Furthermore, it follows from \eqref{eq:spherical-multiplicativity}, and the implication from $i)$ to $ii)$ in Proposition~\ref{thm:derivative-transform} that, for all $s\in\R^n$,
\[
\begin{aligned}
\mathcal{S}[\rho_1*\rho_2](s)
&=
 \frac{\prod_{j=1}^n(\MF\omega_1)(s_j/2)}
      {\prod_{k=0}^{n-1}(\MF\omega_1)(-i(k+\frac12))}\frac{\prod_{j=1}^n(\MF\omega_2)(s_j/2)}
      {\prod_{k=0}^{n-1}(\MF\omega_2)(-i(k+\frac12))}
      =\frac{\prod_{j=1}^n(\MF\omega)(s_j/2)}
       {\prod_{k=0}^{n-1}(\MF\omega)(-i(k+\frac12))}.
\end{aligned}
\]
It then remains to use the implication from $ii)$ to $i)$ in
Proposition~\ref{thm:derivative-transform}.
\end{proof}

\subsection{DMPK ensemble} \label{S:IPUM-DMPK}

Recall that the DMPK ensemble is defined in \eqref{eq:DMPK-density} in terms of the functions $w_{1, M},\dots w_{n,M}$ defined in \eqref{w_j}. We first prove that $w_{1,M}\in\mathcal{W}(\R_{\geq 1})$.

\begin{prop} \label{prop:gaussian-weights-admissible}
    Let $M>0$. Then $w_{1,M}\in\mathcal{W}(\R_{\geq 1})$ and
        $$ \MF w_{1,M}(s) = e^{-\frac{M}{n} s^2},\qquad s\in\C. $$
\end{prop}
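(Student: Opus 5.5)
The plan is to recognise $w_{1,M}$ as the Mehler-Fock inverse of the Gaussian $g(s)=e^{-\frac{M}{n}s^2}$. By \eqref{w_j} with $j=1$,
\[
w_{1,M}(x)=\int_0^\infty g(s)\tanh(\pi s)P_{is-\frac12}(x)s\,ds,
\]
which is exactly the inversion formula \eqref{MF_inv} applied to $g$. We therefore show first that $w_{1,M}\in\mathcal W(\R_{\geq1})$, and then identify its transform via the injectivity (inversion) theory of the Mehler-Fock transform, extending from $\R$ to $\C$ by analyticity.

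\emph{Smoothness and the action of $D$.} By the $s$-uniform bounds on $P_{is-\frac12}$ and its $x$-derivatives (Proposition~\ref{prop:Leg-der-s-est}), which grow at most polynomially in $s$ for $x$ in compacts, the Gaussian factor justifies differentiation under the integral sign to all orders. Hence $w_{1,M}\in C^\infty(\R_{\geq1})$. Since $DP_{is-\frac12}=s^2P_{is-\frac12}$, we get
\[
(D^kw_{1,M})(x)=\int_0^\infty s^{2k}e^{-\frac{M}{n}s^2}\tanh(\pi s)P_{is-\frac12}(x)s\,ds,
\]
which is of the same form with an extra even polynomial factor.

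\emph{Polynomial moments, the main obstacle.} We must show each $D^kw_{1,M}$ decays faster than every power of $x$. The uniform bound $|P_{is-\frac12}(x)|\le Cx^{-1/2}\log x$ for real $s$ only gives $x^{-1/2}\log x$, which is not integrable against $x^m$. The decay has to come from oscillation, so I would deform the $s$-contour into the lower half plane.

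\begin{itemize}
\item Write $\tanh(\pi s)\,s\,P_{is-\frac12}$ via the even symmetric form over $\R$: use $\int_0^\infty=\tfrac12\int_{-\infty}^\infty$, since the integrand is even in $s$.
\item Express $P_{is-\frac12}$ through $Q_{\pm is-\frac12}$ using the connection formula $P_\nu\propto \tan(\pi\nu)(Q_\nu-Q_{-\nu-1})$. This absorbs $\tanh(\pi s)$ and leaves $s\,Q_{is-\frac12}(x)$ terms that behave like $x^{-is-\frac12}$.
\item Shift the contour to $\Im s=-A$. This makes the integrand $\bigO(x^{-A-\frac12})$, and the Gaussian guarantees convergence on horizontal lines and vanishing at the vertical ends.
\item Account for poles crossed during the shift. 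The poles of $Q_{is-\frac12}$ in the lower half plane sit at $s=-i(k+\frac12)$, but their residues must cancel, since the combination is entire, being equal to the original $P$ integrand.
\end{itemize}

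Taking $A$ arbitrarily large gives decay $\bigO(x^{-A-\frac12})$ for every $A$, uniformly enough in $k$, so all moments in \eqref{eq:def-W} are finite. An alternative route would use the explicit heat-kernel form of $w_{1,M}$ on the hyperbolic plane, which is Gaussian in $\arcosh x$ and hence decays faster than any power of $x$. The contour shift is the first thing I would try, falling back on the hyperbolic heat kernel if it becomes unwieldy.

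\emph{Transform.} With $w_{1,M}\in L^1$ and $g\in L^1(\R_{\ge0};s^{1/2}ds)$, the Mehler-Fock inversion theorem (\cite[Sect.~1.9]{Glaeske-Prudnikov-Skornik06}, the companion of \eqref{MF_inv}) gives $\MF w_{1,M}(s)=g(s)$ for $s\geq0$. Evenness in $s$ of both sides extends this to $s\in\R$. By Proposition~\ref{prop:admissable-weight-prop}$\,i)$, $\MF w_{1,M}$ is entire, as is $g$, so the identity theorem yields $\MF w_{1,M}(s)=e^{-\frac Mn s^2}$ for all $s\in\C$.
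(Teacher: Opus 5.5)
Your overall route is the paper's: differentiate under the integral, pass to a $Q$-representation, shift the line to $\Im s=-A$, and identify the transform by inversion and the identity theorem. The gap is in the justification of the contour shift, which is the one step that produces the decay. You claim that $s\mapsto Q_{is-\frac12}(x)$ has poles at $s=-i(k+\frac12)$, and that the residues there cancel because the integrand equals the original $P$-integrand, which you call entire. Neither claim is true.

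\begin{itemize}
\item The poles of $Q_{is-\frac12}(x)$ come from $\Gamma(is+\frac12)$ and lie at $s=+i(k+\frac12)$, in the upper half-plane. The points $-i(k+\frac12)$ are poles of $Q_{-is-\frac12}$, equivalently of $\tanh(\pi s)$.
\item The integrand $s\tanh(\pi s)P_{is-\frac12}(x)e^{-\frac Mn s^2}$ is not entire. At $s=-i(k+\frac12)$ it has residue $-\frac{i}{\pi}(k+\frac12)e^{\frac Mn(k+\frac12)^2}P_k(x)\neq0$.
\end{itemize}

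So if you shift the original integrand, or the two-term form $Q_{-is-\frac12}-Q_{is-\frac12}$ that comes straight out of the connection formula, you pick up Legendre polynomials $P_k(x)$. These do not cancel and grow polynomially. In addition, on the line $\Im s=-A$ the $Q_{-is-\frac12}$ term has $\Re\nu=-A-\frac12$ and itself grows like $x^{A-\frac12}$. Either way the decay argument collapses.

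The missing step is to finish the reduction to a single $Q$ before shifting. Substitute $s\mapsto -s$ in the $Q_{-is-\frac12}$ part; the factor $s^{2k+1}$ is odd and the Gaussian is even. This turns $(D^kw_{1,M})(x)$ into $-\int_{\R}e^{-\frac Mn s^2}s^{2k+1}Q_{is-\frac12}(x)\frac{ds}{\pi i}$. This integrand is holomorphic on $\{\Im s<\frac12\}$, by \eqref{eq:Leg-Q-int-repr} with $\Re\nu>-1$. The shift to $\Im s=-A$ therefore crosses no poles at all, and the vertical ends vanish thanks to the Gaussian. Proposition~\ref{prop:Leg-Q-geom-est} then gives $|(D^kw_{1,M})(x)|\le Cx^{-A-\frac12}$ for $x\ge2$, and smoothness handles $[1,2]$.

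With this correction your argument coincides with the paper's proof. The remaining steps also match the paper: identifying $\MF w_{1,M}$ on $(0,\infty)$ via the inversion theory, then extending to $\C$ by evenness and the identity theorem.
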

\begin{proof}
The bound in Proposition~\ref{prop:Leg-der-s-est} and Gaussian decay justify differentiation under the integral to every order, hence $w_{1,M}\in C^{\infty}(\R_{\geq 1})$. Using $DP_{is-\frac12}=s^2P_{is-\frac12}$ and the connection
formula \cite[Eq.~14.20.6]{Boisvert-Clark-Lozier-Olver10}, we obtain
\begin{align*}
(D^kw_{1,M})(x)
=-\int_{\R}e^{-\frac{M}{n} s^2}s^{2k+1}Q_{is-\frac12}(x)
\frac{ds}{\pi i},
\qquad x>1.
\end{align*}
Let $A>0$ and perform the change of variables $s\mapsto t-iA$ to write this as
\[
(D^kw_{1,M})(x)
=-\int_{\R}e^{-\frac{M}{n}(t-iA)^2}(t-iA)^{2k+1}
Q_{A+it-\frac12}(x)\frac{dt}{\pi i}.
\]
Proposition \ref{prop:Leg-Q-geom-est} and the identity $|e^{-\frac{M}{n}(t-iA)^2}|=e^{\frac{M}{n} A^2}e^{-\frac{M}{n} t^2}$ then imply that
\[
|(D^kw_{1,M})(x)|
\leq C_{M,n,k,A}x^{-A-\frac12},
\qquad x\geq2.
\]
Given $m\in\N$, taking $A>m+\frac12$ and using smoothness on $[1,2]$ therefore yields
\[
\int_1^\infty |(D^kw_{1,M})(x)|x^m dx<\infty.
\]
This proves that $w_{1,M}\in\mathcal W(\R_{\geq1})$. In particular, by Proposition \ref{prop:admissable-weight-prop}, $\MF w_{1,M}$ is entire. The inversion formula \eqref{MF_inv} then implies that
\[
(\MF w_{1,M})(s)=e^{-\frac{M}{n} s^2},
\qquad\text{a.e. }s>0.
\]
Since both sides define continuous functions, this extends to all $s>0$. The identity theorem then extends it to all $s\in\C$.
\end{proof}

The previous result allows us to show that the DMPK ensemble admits the derivative type structure in \eqref{eq:radial-density-DT}.

\begin{prop} \label{prop:gaussian-derivative-type}
    The probability density in \eqref{eq:DMPK-density} can be written as
    \begin{equation} \label{E_DT}
        \mathfrak{p}_{n,M}(x) = \frac{1}{Z_{n}(w_{1,M})} \prod_{1\leq j<k\leq n} (x_k-x_j) \, \det[(D^{j-1}w_{1,M})(x_k)]_{j,k=1}^n,\quad x\in (1,\infty)^n,
    \end{equation}
    where $D$ is as in \eqref{D}, $w_{1,M}$ is as in \eqref{w_j} and $Z_{n}(w_{1,M})$ is as in \eqref{eq:Z-exact}.
\end{prop}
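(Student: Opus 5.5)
The plan is to show that $D^{j-1}w_{1,M}=w_{j,M}$ for $j=1,\dots,n$, and then to identify the normalization constants. With the functions themselves matched, the determinant in \eqref{E_DT} coincides with the one in \eqref{eq:DMPK-density}. Both expressions are then probability densities with the same unnormalized form, so $Z_{n,M}=Z_{n}(w_{1,M})$ automatically. The value \eqref{eq:Z-exact} applies because Proposition~\ref{prop:Z-exact} computes $\int\Delta_n(x)\det[(D^{j-1}\om)(x_k)]\,dx$ for any $\om\in\mathcal W(\R_{\geq1})$. Its proof only uses Andr\'eief's identity and Proposition~\ref{prop:admissable-weight-prop}, and $w_{1,M}\in\mathcal W(\R_{\geq1})$ by Proposition~\ref{prop:gaussian-weights-admissible}. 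Nonvanishing of $Z_n(w_{1,M})$ follows from $(\MF w_{1,M})(-i(k+\frac12))=e^{\frac{M}{n}(k+\frac12)^2}\neq0$.

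For the main identity, start from the definition
\[
w_{1,M}(x)=\int_0^\infty e^{-\frac{M}{n}s^2}s\tanh(\pi s)P_{is-\frac12}(x)\,ds .
\]
As in the proof of Proposition~\ref{prop:gaussian-weights-admissible}, the bound of Proposition~\ref{prop:Leg-der-s-est} on $x$-derivatives of $P_{is-\frac12}$ is polynomial in $s$, locally uniformly in $x$. The Gaussian factor therefore allows differentiation under the integral sign to any order. Applying $D$ termwise and using the eigenfunction property $DP_{is-\frac12}=s^2P_{is-\frac12}$ gives
\[
(D^{j-1}w_{1,M})(x)=\int_0^\infty e^{-\frac{M}{n}s^2}s^{2j-1}\tanh(\pi s)P_{is-\frac12}(x)\,ds=w_{j,M}(x),
\]
which is exactly \eqref{w_j}. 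An induction on $j$ makes this rigorous, since each step only requires dominated convergence on compact subsets of $[1,\infty)$.

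The only point needing care is the justification of differentiating under the integral, namely a bound on $\partial_x^k P_{is-\frac12}(x)$ for real $s$ that grows at most polynomially in $s$, uniformly for $x$ in compacts. I would take this directly from Proposition~\ref{prop:Leg-der-s-est}, as the paper already does for smoothness of $w_{1,M}$. The Gaussian factor $e^{-\frac{M}{n}s^2}$ makes all the resulting integrals converge absolutely, so no real obstacle remains. The statement holds on $(1,\infty)^n$, consistent with \eqref{eq:DMPK-density} up to the null boundary set.
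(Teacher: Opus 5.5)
Your proposal is correct and follows essentially the same route as the paper. You differentiate under the integral using Proposition~\ref{prop:Leg-der-s-est} and the eigenrelation $DP_{is-\frac12}=s^2P_{is-\frac12}$ to get $w_{j,M}=D^{j-1}w_{1,M}$, and then identify the normalizing constant through Proposition~\ref{prop:Z-exact} using $w_{1,M}\in\mathcal W(\R_{\geq1})$. Your observation that the computation in Proposition~\ref{prop:Z-exact} needs only $\om\in\mathcal W(\R_{\geq1})$, rather than the full derivative-type setting, makes explicit what the paper uses implicitly.
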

\begin{proof}
Similarly as in the proof of Proposition \ref{prop:gaussian-weights-admissible}, we obtain
\[
(D^{j-1}w_{1,M})(x)
=\int_0^\infty e^{-\frac{M}{n}s^2}\tanh(\pi s)
s^{2j-1}P_{is-\frac12}(x)ds,
\qquad j=1,\ldots,n,\quad x>1.
\]
Hence, by definition, $w_{j,M}=D^{j-1}w_{1,M}$. Substituting these identities into \eqref{eq:DMPK-density} gives the determinant
in \eqref{E_DT}. Moreover, $w_{1,M}\in\mathcal W(\R_{\geq1})$ by
Proposition~\ref{prop:gaussian-weights-admissible}, so Proposition~\ref{prop:Z-exact} identifies the normalizing constant with $Z_n(w_{1,M})$ as given in \eqref{eq:Z-exact}. This proves \eqref{E_DT}.
\end{proof}

\begin{rem}
    This result was already predicted in \cite[Eq. (10)]{Frahm95}.
\end{rem}

We are now ready to prove Corollary \ref{cor:DPMK-prod}.

\begin{proof}[Proof of Corollary \ref{cor:DPMK-prod}]
Propositions \ref{prop:imup-rv} and \ref{prop:gaussian-derivative-type} allow us to conclude that the DMPK ensemble models the radial variables of a $K$-invariant random pseudo-unitary matrix $\mathfrak{g}$ sampled with respect to the derivative type with respect to $w_{1,M}$. It follows from Proposition \ref{prop:gaussian-weights-admissible} that
    $$ \MF w_{1,M}(s) = e^{-\frac{M}{n}s^2} = e^{-\frac{1}{n} s^2} \dotsm e^{-\frac{1}{n} s^2} = \MF w_{1,1}(s) \dotsm \MF w_{1,1}(s), \quad s\in\C, $$
hence, whenever $M\in\N$, by Proposition \ref{prop:transform-multiplicativity-C} and injectivity of the Mehler-Fock transform, the function $w_{1,M}$ admits the decomposition
    $$ w_{1,M} = w_{1,1} \ast_{\MF} \dots \ast_{\MF} w_{1,1}, $$
into $M$ factors $w_{1,1}$. The desired result then follows after consecutive use of Theorem~\ref{thm:closure-products}.
\end{proof}

\subsection{Double integral representations} \label{S:IPUM-DIR}

Similarly as for the polynomial ensembles of derivative type in \cite{Forster-Kieburg-Kosters21}, the properties of the Mehler-Fock transform allow us to obtain a double integral representation for the correlation kernel \eqref{K} for the joint density of the radial variables of $K$-invariant random pseudo-unitary matrices. Our main result will be Theorem \ref{DIR} from which Corollary \ref{DIR_DMPK} will follow. In order to prove that Theorem \ref{DIR} holds, we require appropriate integral representations for the functions in the underlying biorthogonal system.
 \medbreak

We will focus first on the polynomials in the biorthogonal system. Conveniently, they admit integral representations in terms of the kernel $(x,s)\mapsto P_{is-\frac{1}{2}}(x)$ of the Mehler-Fock transform by exploiting the connection to the Legendre polynomials.

\begin{prop} \label{p_int}
    Let $\mathcal{C}_{-}$ be a counterclockwise contour enclosing the zeros of $(\frac{1}{2}-it)_{n+1}$ once without enclosing any of the zeros of $(\frac{1}{2}+it)_{n+1}$. Then following are equivalent:
    \begin{itemize}
        \item[$i)$] $p$ is a polynomial of degree at most $n$,
        \item[$ii)$] there exists a polynomial $\hat{p}$ of degree at most $n$, such that
        \begin{equation} \label{eq:p_n_contour_integral}
        p(x) = \oint_{\mathcal{C}_{-}} \frac{\hat{p}(t) P_{it-\frac{1}{2}}(x)}{(\frac{1}{2}-it)_{n+1}(\frac{1}{2}+it)_{n+1}} \frac{dt}{2\pi i}.
        \end{equation}
    \end{itemize}
   Furthermore, $\hat{p}$ may be replaced by any analytic function defined on and inside of $\mathcal{C}_{-}$ that takes the same values at the zeros of $(\frac{1}{2}-it)_{n+1}$.
\end{prop}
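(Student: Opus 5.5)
The plan is to evaluate the contour integral \eqref{eq:p_n_contour_integral} by residues and recognise the result as a combination of Legendre polynomials. The zeros of $(\frac12-it)_{n+1}$ are $t_k=-i(k+\frac12)$, $k=0,\dots,n$. At $t=t_k$ we have $it_k-\frac12=k$, so $P_{it_k-\frac12}=P_k$ is the Legendre polynomial of degree $k$, by the terminating hypergeometric series \eqref{eq:Leg}. The poles are simple. The zeros of $(\frac12+it)_{n+1}$ are $+i(k+\frac12)$, so they are disjoint from the $t_k$ and lie outside $\mathcal{C}_-$. Also $t\mapsto P_{it-\frac12}(x)$ is entire, as used in the proof of Proposition~\ref{prop:admissable-weight-prop}.

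For analytic $\hat p$ inside $\mathcal C_-$, the residue theorem therefore gives
\[
p(x)=\sum_{k=0}^{n} c_k\,\hat p(t_k)\,P_k(x),\qquad c_k=\operatorname*{Res}_{t=t_k}\frac{1}{(\frac12-it)_{n+1}(\frac12+it)_{n+1}}.
\]
This sum depends only on the values $\hat p(t_k)$, which proves the final claim of the proposition.

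The next step is to check that each $c_k$ is nonzero. Write $(\frac12-it)_{n+1}=\prod_{j=0}^n(\frac12+j-it)$. Near $t_k$ the factor $j=k$ equals $-i(t-t_k)$. The other factors evaluated at $t_k$ are $j-k\neq0$. The second Pochhammer at $t_k$ equals $\prod_{j=0}^n(1+j+k)\neq0$. Hence
\[
c_k=\frac{i}{\prod_{j\neq k}(j-k)\,\prod_{j=0}^n(j+k+1)}\neq 0,
\]
and this closed form would be recorded explicitly.

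With this in hand, ii)$\Rightarrow$ i) is immediate: the right-hand side is a finite combination of $P_0,\dots,P_n$, hence a polynomial of degree at most $n$.

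For i)$\Rightarrow$ ii), expand $p=\sum_{k=0}^n a_kP_k$. This is possible because the $P_k$ have exact degree $k$, with leading coefficient $(2k)!/(2^k(k!)^2)\neq0$ as in the proof of Proposition~\ref{prop:Z-exact}. One then needs a polynomial $\hat p$ of degree at most $n$ with $\hat p(t_k)=a_k/c_k$ for $k=0,\dots,n$. The $n+1$ nodes $t_k$ are distinct, so Lagrange interpolation provides such a $\hat p$ uniquely. Substituting it into the residue formula recovers $p$.

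The only delicate points are bookkeeping: the orientation of $\mathcal C_-$, and confirming simplicity of the poles and nonvanishing of the residues. No analytic estimate is needed, since the contour is compact and the integrand is meromorphic. I expect the explicit computation of $c_k$, including signs and factors of $i$, to be the main obstacle. It is not essential here, since only $c_k\neq0$ is used, but it will matter later when the kernel in Theorem~\ref{DIR} is assembled.
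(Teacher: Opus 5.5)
Your proposal is correct and follows the paper's proof essentially step by step. The paper also takes residues at the simple poles $t_k=-i(k+\frac12)$ using $P_{it_k-\frac12}=P_k$, expands $p$ in the Legendre basis, and interpolates at the distinct nodes $t_k$; your closed form simplifies to $c_k=i(-1)^k/\bigl((n-k)!\,(n+k+1)!\bigr)$, which is exactly the coefficient in the paper's residue formula.
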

\begin{proof}
For fixed $x>1$, the function $t\mapsto P_{it-\frac12}(x)$ is entire, hence the only possible poles of the integrand inside $\mathcal{C}_{-}$ are the simple poles at $t_k=-i(k+\frac12)$ for $k=0,\dots,n$. Since
\[
\frac{d}{dt}({\textstyle\frac12}-it)_{n+1}\Big|_{t=t_k}
=-i(-1)^k k!(n-k)!,
\qquad
({\textstyle\frac12}+it_k)_{n+1}=\frac{(n+k+1)!}{k!},
\]
and $P_{it_k-\frac12}(x)=P_k(x)$, the residue theorem gives
\begin{equation}\label{eq:p_n_contour_residues}
\oint_{\mathcal{C}_{-}}
\frac{\hat p(t)P_{it-\frac12}(x)}
{(\frac12-it)_{n+1}(\frac12+it)_{n+1}}
\frac{dt}{2\pi i}
=i\sum_{k=0}^n
\frac{(-1)^k\hat p(t_k)}{(n-k)!(n+k+1)!}\,P_k(x).
\end{equation}
In particular, the right-hand side is a polynomial of degree at most $n$, proving the implication from $ii)$ to $i)$.

For the implication from $i)$ to $ii)$, suppose that $\deg p\le n$. Since $P_0,\ldots,P_n$ form a basis of the polynomials of degree at most $n$, we can write
\[
p(x)=\sum_{k=0}^n a_kP_k(x).
\]
The nodes $t_0,\ldots,t_n$ are distinct, so polynomial interpolation yields a unique polynomial $\hat p$ of degree at most $n$ satisfying
\[
\hat p(t_k)=-i(-1)^k(n-k)!(n+k+1)!\,a_k,
\qquad k=0,\ldots,n.
\]
Substituting these values into \eqref{eq:p_n_contour_residues} gives \eqref{eq:p_n_contour_integral}, proving $ii)$.

Finally, the residue calculation applies to any function analytic on a neighborhood of the contour and its interior. By \eqref{eq:p_n_contour_residues}, its contour integral depends only on its values at $t=t_k$ for $k=0,\dots,n$. This proves the last assertion.
\end{proof}

From \eqref{eq:DMPK-density} and \eqref{w_j}, it is straightforward to see that for every function $q \in \spn{D^{j-1}w_{1,M}}_{j=1}^n$, there exists a polynomial $q_n$ of degree at most $n-1$ such that
    $$ q(x) = \int_0^\infty \hat{q}_n(s^2) e^{-\frac{M s^2}{n}} \tanh(\pi s) P_{is-\frac{1}{2}}(x) sds. $$
However, due to the factor $\tanh(\pi s)$, such representations won't be convenient for the asymptotic analysis that we want to carry out (indicated already in the proof of Proposition \ref{prop:gaussian-weights-admissible}). In order to obtain suitable integral representations, we need to establish a different way to invert the Mehler-Fock transform. It turns out that we have the following alternative inversion law.

\begin{prop} \label{P_inv_mod}
    Suppose that $f\in L^1(\R_{\geq 1})$ and $\mathcal{P}f\in L^1(\R;|s|^{\frac{1}{2}}ds)$. Then, 
    \begin{equation} \label{eq:P_inv_mod}
        f(x) = -\int_{\R} (\mathcal{P}f)(s) Q_{is - \frac{1}{2}}(x) \frac{sds}{\pi i} ,\qquad \text{ a.e. } x\in\R_{>1}.
    \end{equation}
\end{prop}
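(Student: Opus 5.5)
Our plan is to reduce the statement to the classical inversion formula \eqref{MF_inv} through the connection formula relating $P$ and $Q$, namely \cite[Eq.~14.20.6]{Boisvert-Clark-Lozier-Olver10}. For real $s$ this reads, in our normalization,
\[
    \tanh(\pi s)\,P_{is-\frac12}(x)=\frac{1}{\pi i}\Bigl(Q_{-is-\frac12}(x)-Q_{is-\frac12}(x)\Bigr),\qquad x>1.
\]
This is exactly the identity already used in the proof of Proposition~\ref{prop:gaussian-weights-admissible}, where it turned an integral over $(0,\infty)$ with the weight $\tanh(\pi s)$ into an integral over $\R$ of $Q_{is-\frac12}$ alone.

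\textbf{Step 1.} We note that $s\mapsto(\mathcal{P}f)(s)$ is even on $\R$, since $P_{is-\frac12}=P_{-is-\frac12}$; this follows from the symmetry of the hypergeometric parameters in \eqref{eq:Leg}. Under the hypotheses, \eqref{MF_inv} applies, since $L^1(\R;|s|^{1/2}ds)$ restricted to $s\ge0$ is $L^1(\R_{\geq0};s^{1/2}ds)$. This gives
\[
    f(x)=\int_0^\infty(\mathcal{P}f)(s)\tanh(\pi s)P_{is-\frac12}(x)\,s\,ds,\qquad \text{a.e. } x>1.
\]

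\textbf{Step 2.} We substitute the connection formula and split the integrand into its two $Q$-terms. In the term containing $Q_{-is-\frac12}$ we change variables $s\mapsto-s$, using evenness of $\mathcal{P}f$ and the factor $s$. The two pieces then recombine into
\[
    -\frac{1}{\pi i}\int_{\R}(\mathcal{P}f)(s)\,Q_{is-\frac12}(x)\,s\,ds,
\]
which is \eqref{eq:P_inv_mod}. The sign should be checked against the computation in Proposition~\ref{prop:gaussian-weights-admissible}, which produced the same $-\frac{1}{\pi i}$.

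\textbf{Step 3 (main obstacle).} Splitting the integral requires each $Q$-piece to converge absolutely, not just their combination. The difficulty is that $Q_{is-\frac12}(x)$, unlike $\tanh(\pi s)P_{is-\frac12}$, is singular near $s=0$: $Q_{-\frac12}$ is finite for $x>1$, but there is a pole structure at $is-\frac12\in\{-1,-2,\dots\}$, which lies off the real line. So on $\R$ the function $Q_{is-\frac12}(x)$ is continuous for fixed $x>1$. For large $|s|$ we need a bound of the form $|Q_{is-\frac12}(x)|\le C_x|s|^{-1/2}$ locally uniformly in $x>1$. We expect this from the appendix estimates (Proposition~\ref{prop:Leg-Q-geom-est}, or the Laplace/Bessel-type asymptotics $Q_{is-\frac12}(\cosh\xi)\sim\sqrt{\pi/(2is\sinh\xi)}\,e^{-is\xi}$). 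With this bound, $|s\,(\mathcal{P}f)(s)Q_{is-\frac12}(x)|\le C_x|s|^{1/2}|(\mathcal{P}f)(s)|$, which is integrable by hypothesis. This justifies the splitting, and hence the whole argument, for a.e.\ $x>1$.
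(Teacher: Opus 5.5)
Your proposal is correct and follows the paper's proof essentially step for step. The shared ingredients are the inversion formula \eqref{MF_inv}, the connection formula $Q_{-is-\frac12}=Q_{is-\frac12}+\pi i\tanh(\pi s)P_{is-\frac12}$, absolute convergence of each $Q$-piece, and the substitution $s\mapsto -s$ combined with evenness of $\mathcal{P}f$.

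One adjustment is needed in Step 3. The $|s|^{-1/2}$ decay for real $s$ comes from \eqref{eq:Q_nu(x)_hard} together with \eqref{eq:K0_asy}, as in the paper. Proposition~\ref{prop:Leg-Q-geom-est} does not supply it: it concerns $\Im s<0$ and gives no decay in $\Re s$. There is also no singularity near $s=0$ to worry about, since the poles of $s\mapsto Q_{is-\frac12}(x)$ lie at $s=i(k+\frac12)$, $k\in\Z_{\geq 0}$.
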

\begin{proof}
It follows from \cite[Eq. 14.9.12]{Boisvert-Clark-Lozier-Olver10} that
\begin{equation} \label{eq:Q-Q=P}
  Q_{-is - \frac{1}{2}}(x) = Q_{is - \frac{1}{2}}(x) + \pi i \tanh(\pi s) P_{is - \frac{1}{2}}(x).
\end{equation}
By the original inversion law \eqref{MF_inv}, we can therefore write
$$ f(x) = \int_{0}^{\infty} (\mathcal{P}f)(s) (Q_{-is-\frac{1}{2}}(x)-Q_{is - \frac{1}{2}}(x)) \frac{sds}{\pi i} ,\qquad \text{ a.e. } x\in\R_{>1}.$$
Both parts converge due to \eqref{eq:Q_nu(x)_hard} and \eqref{eq:K0_asy}. It then remains to note that
$$\int_{0}^{\infty} (\mathcal{P}f)(s) Q_{-is-\frac{1}{2}}(x) \frac{sds}{\pi i} = -\int_{-\infty}^{0} (\mathcal{P}f)(-s) Q_{is-\frac{1}{2}}(x) \frac{sds}{\pi i},  $$
and to use the symmetry $(\mathcal{P}f)(s) = (\mathcal{P}f)(-s) $ for $s\in\R$.  
\end{proof}

The above leads to the following integral representations for functions in $\spn{D^{j-1} \om}_{j=1}^n$.

\begin{prop}\label{q_int}
Suppose that $q=\hat q(D)\om$ for some $\hat q\in\R[s]$ and
$\om\in\mathcal W(\R_{\geq1})$. Then
\begin{equation} \label{eq:q_int}
q(x)=-\int_{\R}\hat q(s^2)(\mathcal P\om)(s)
Q_{is-\frac12}(x)\frac{sds}{\pi i},\qquad x>1.
\end{equation}
\end{prop}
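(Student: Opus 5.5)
The plan is to reduce to Proposition~\ref{P_inv_mod} applied to $f=q$, and then upgrade the resulting a.e.\ identity to an identity for every $x>1$.

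\textbf{Step 1: the Mehler-Fock transform of $q$.} Write $\hat q(s)=\sum_{k=0}^{d} a_k s^k$, so that $q=\sum_k a_k D^k\om$. By Proposition~\ref{prop:admissable-weight-prop}~$ii)$, applied term by term, we get $(\mathcal P q)(s)=\hat q(s^2)(\mathcal P\om)(s)$ for all $s\in\C$. Moreover $q\in L^1(\R_{\geq1})$, since each $D^k\om$ is integrable by the definition \eqref{eq:def-W} of $\mathcal W(\R_{\geq1})$.

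\textbf{Step 2: integrability hypothesis.} Proposition~\ref{prop:admissable-weight-prop}~$iii)$ with $A=0$ gives, for every $m$,
\[
|\hat q(s^2)(\mathcal P\om)(s)|\le C_m(1+|s|)^{-m},\qquad s\in\R .
\]
Hence $\mathcal Pq\in L^1(\R;|s|^{1/2}ds)$, and Proposition~\ref{P_inv_mod} yields \eqref{eq:q_int} for a.e.\ $x>1$.

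\textbf{Step 3: from a.e.\ to every $x>1$.} This is the main obstacle. The left side $q$ is smooth on $(1,\infty)$, being a finite combination of $D^k\om$ with $\om\in C^\infty$. So it suffices to show that the right side of \eqref{eq:q_int} is continuous in $x>1$; two continuous functions that agree a.e.\ agree everywhere. For continuity I would fix a compact interval $[a,b]\subset(1,\infty)$ and use the bounds on $Q_{is-\frac12}(x)$ already invoked in the proof of Proposition~\ref{P_inv_mod}, namely \eqref{eq:Q_nu(x)_hard} and \eqref{eq:K0_asy}, or Proposition~\ref{prop:Leg-Q-geom-est}. These should give a bound $|Q_{is-\frac12}(x)|\le C(1+|s|)^{N}$, uniform in $x\in[a,b]$ and $s\in\R$, for some fixed $N$ (in fact one expects decay like $|s|^{-1/2}e^{-|s|\,\arcosh x}$ for large $|s|$).

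It is also necessary to check that $s=0$ causes no trouble. $Q_{-\frac12}(x)$ is finite for $x>1$, and in any case the factor $s$ in the measure tames any mild singularity there.

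With this bound, the integrand is dominated by $C(1+|s|)^{N+1-m}$, which is integrable for $m$ large. Joint continuity of $(s,x)\mapsto Q_{is-\frac12}(x)$ on $\R\times(1,\infty)$ then lets dominated convergence give continuity of the right side on $[a,b]$, and hence on all of $(1,\infty)$. This completes the proof.
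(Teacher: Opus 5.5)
Your proposal is correct and follows the paper's proof essentially step by step. You compute $\mathcal P q=\hat q(s^2)\,\mathcal P\om$, use the rapid decay from Proposition~\ref{prop:admissable-weight-prop} to invoke Proposition~\ref{P_inv_mod} for a.e.\ $x$, and upgrade to every $x>1$ via continuity of the right-hand side, using a uniform bound on $Q_{is-\frac12}$ and dominated convergence (the paper cites Proposition~\ref{prop:Leg-Q-cont} for this step). One small correction to an aside you do not actually use: for real $s$ the factor $e^{-is\eta}$ is unimodular, so $|Q_{is-\frac12}(\cosh\eta)|$ decays only like $|s|^{-1/2}$, not exponentially; the cleanest uniform bound is $|Q_{is-\frac12}(x)|\le Q_{-\frac12}(x)\le Q_{-\frac12}(a)$ for $x\ge a$, which can be read off from \eqref{eq:Leg-Q-int-repr}.
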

\begin{proof}
Evidently, $q\in\mathcal W(\R_{\geq1})$. Hence, by Proposition~\ref{prop:admissable-weight-prop},
\[
(\MF q)(s)=\hat q(s^2)(\MF\om)(s),
\qquad s\in\C.
\]
Moreover, $\MF q$ decays faster than $s\mapsto |s|^{-m}$ on $\R$ for all $m\in\N$. Thus, in particular, $\MF q\in L^1(\R;|s|^{\frac{1}{2}}ds)$. Proposition~\ref{P_inv_mod}
then gives the desired result for a.e. $x>1$. In order to prove it for all $x>1$, it suffices to show that the right hand side of \eqref{eq:q_int} defines a continuous function on $\R_{>1}$. This follows from Proposition \ref{prop:Leg-Q-cont} and dominated convergence, which imply continuity on $\R_{\geq a}$ for all $a>1$.
\end{proof}

Proposition \ref{p_int} and \ref{q_int} imply that the correlation kernel \eqref{K} for \eqref{eq:DMPK-density} has the following double integral representation
\begin{equation}
  K_n(x,y) = - \int_{\R} \oint_{\mathcal{C}_{-}} \left(\sum_{k=0}^{n-1}\frac{\hat{p}_k(t) \hat{q}_k(s^2)}{(\frac{1}{2}-it)_{k+1}(\frac{1}{2}+it)_{k+1}} \right) (\mathcal{P}\om)(s) P_{it-\frac{1}{2}}(x) Q_{is-\frac{1}{2}}(y) \frac{dt}{2\pi i} \frac{s ds}{\pi i},
\end{equation}
in terms of and polynomials $\hat{p}_0,\dots,\hat{p}_{n-1}$ and $\hat{q}_0,\dots,\hat{q}_{n-1}$. Generally, it is not clear whether the double integral representation further simplifies, e.g., for the biorthogonal system that was already derived in \cite[Eq. (16)]{Frahm95}, see also \cite[Eq. (3.79]{Beenakker97}. Preferably, we would like to choose $\hat{p}_k$ and $\hat{q}_k$ such that the inner sum factorizes. Such choice will be given below.

\begin{prop}\label{BS_0}
Let $\om\in\mathcal{W}(\R_{\geq 1})$ and set $\pi_j(s) = \prod_{k=0}^{j-1}\bigl(s+(k+{\textstyle\frac12})^2\bigr)$ for all $j\in\{0,\dots,n\}$. Let $\mathcal{C}_{-}$
be a counterclockwise contour enclosing each zero of $(\frac12-is)_n$ once and enclosing none of the zeros of $(\frac12+is)_n$. Suppose that $\mathcal{P}\om$ is non-vanishing on a neighborhood of
$\mathcal{C}_{-}$ and its interior. For all $j\in\{0,\ldots,n-1\}$, define for $x>1$,
\begin{align}
p_j(x)
= {}& \oint_{\mathcal{C}_{-}} \frac{1}{(\MF\om)(t)}
\frac{P_{it-\frac12}(x)}
{ \pi_{j+1}(t^2)}
\frac{tdt}{\pi i},
\label{eq:p_j_biorth} \\
q_j(x) = {}& (\pi_j(D)\om)(x). \label{eq:q_j_biorth}
\end{align}
Then, $p_j$ is a polynomial of degree at most $j$, and
\begin{equation} \label{eq:BS_biorth}
  \int_1^\infty p_j(x)q_k(x)\,dx=\delta_{j,k},
  \quad j,k=0,\ldots,n-1.
\end{equation}
\end{prop}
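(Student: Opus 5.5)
The plan is to compute $p_j$ by residues, recognise it as a polynomial, and then evaluate $\int_1^\infty p_j q_k\,dx$ by pairing the Legendre polynomials with $q_k$ through the Mehler-Fock transform.

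\emph{Polynomiality.} The integrand in \eqref{eq:p_j_biorth} has poles inside $\mathcal{C}_{-}$ only at zeros of $\pi_{j+1}(t^2)=\prod_{k=0}^{j}(t-i(k+\frac12))(t+i(k+\frac12))$. Since $\mathcal{C}_{-}$ encloses only the points $t_k=-i(k+\frac12)$, and $\MF\om$ is non-vanishing there, these are the only poles. The residue at $t_k$ equals $P_{it_k-\frac12}(x)=P_k(x)$ times a constant independent of $x$. Hence $p_j$ is a linear combination of $P_0,\dots,P_j$, and so a polynomial of degree at most $j$. This is the same residue argument as in Proposition~\ref{p_int}.

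\emph{Biorthogonality.} Since $\mathcal{C}_{-}$ is compact, Fubini lets us interchange the $x$- and $t$-integrals. We need $\int_1^\infty |q_k(x)|\,|P_{it-\frac12}(x)|\,dx$ to be bounded uniformly on $\mathcal{C}_{-}$. This follows from $q_k\in\mathcal W(\R_{\geq1})$ and the bound $|P_{it-\frac12}(x)|\le C_A x^{A+\frac12}$ for $|\Im t|\le A$. By Proposition~\ref{prop:admissable-weight-prop}~ii) applied to the polynomial $\pi_k$, we have $(\MF q_k)(t)=\pi_k(t^2)(\MF\om)(t)$. Therefore
\[
\int_1^\infty p_j(x)q_k(x)\,dx=\oint_{\mathcal{C}_{-}}\frac{\pi_k(t^2)}{\pi_{j+1}(t^2)}\frac{t\,dt}{\pi i}.
\]

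\emph{Evaluating the contour integral.} Write $r(t)=\pi_k(t^2)/\pi_{j+1}(t^2)$.
\begin{enumerate}
\item If $k\le j$, then $r(t)=1/\prod_{l=k}^{j}(t^2+(l+\frac12)^2)$. This is a rational function whose poles are the points $\pm i(l+\frac12)$ with $k\le l\le j$.
\item If $k\ge j+1$, then $r(t)$ is a polynomial, so the integral vanishes.
\end{enumerate}
In the first case, the integrand $t\,r(t)$ is odd. Its poles are paired as $t_l=-i(l+\frac12)$ and $-t_l$, and the contour encloses only the $t_l$. Oddness means the residue at $-t_l$ equals the residue at $t_l$, so the integral over $\mathcal{C}_{-}$ is half the integral over a large circle enclosing all the poles. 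That large-circle integral is $2\pi i$ times the coefficient of $t^{-1}$ in $t\,r(t)$ at infinity. Since $t\,r(t)\sim t^{1-2(j-k+1)}$, this coefficient is $0$ unless $j=k$, in which case it equals $1$. Hence the integral is $\frac12\cdot\frac{2\pi i}{\pi i}\delta_{j,k}=\delta_{j,k}$.

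The step I expect to need the most care is this residue-symmetry argument, in particular confirming that the normalisation $t\,dt/(\pi i)$ produces exactly $\delta_{j,k}$.
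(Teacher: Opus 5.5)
Your proposal is correct and follows the paper's proof. You interchange the integrals by Fubini, use $(\MF q_k)(t)=\pi_k(t^2)(\MF\om)(t)$, and evaluate $\oint_{\mathcal{C}_-}\frac{\pi_k(t^2)}{\pi_{j+1}(t^2)}\frac{t\,dt}{\pi i}$ through its behaviour at infinity. The differences are cosmetic: where the paper substitutes $u=t^2$ (so the image contour winds once around every zero of $\pi_{j+1}$ and $\frac{t\,dt}{\pi i}=\frac{du}{2\pi i}$), you pair the residues at $\pm t_l$ using the oddness of $t\,r(t)$, and you spell out the residue argument for polynomiality, which the paper's proof leaves implicit via Proposition~\ref{p_int}.
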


\begin{proof}
Since $q_k$ is a finite linear combination of $\om,D\om,\ldots,D^k\om$ with $\om\in\mathcal{W}(\R_{\geq 1})$, by \eqref{eq:def-W}, it has all absolute polynomial moments. Together with the bound in Proposition~\ref{prop:Leg-geom-est}, this justifies interchanging the integrals to obtain
\[
        \int_1^\infty p_j(x)q_k(x)\,dx
        =\oint_{\mathcal C_-}
        \frac{1}{(\MF\om)(t)}\frac{\MF q_k(t)}{\pi_{j+1}(t^2)}
        \frac{tdt}{\pi i}.
\]
Proposition~\ref{prop:admissable-weight-prop} gives
\[
    \MF q_k(t)
    =\pi_k(t^2)(\MF\om)(t),
    \qquad t\in\C,
\]
and thus
\[
    \begin{aligned}
        \int_1^\infty p_j(x)q_k(x)\,dx
        &=\oint_{\mathcal C_-}
        \frac{\pi_k(t^2)}{\pi_{j+1}(t^2)}
        \frac{tdt}{\pi i}.
    \end{aligned}
\]
If $k>j$, then $\pi_{j+1}$ divides $\pi_k$, so the quotient is a polynomial and the contour integral is zero. Suppose now that $k\leq j$. Under the change of variables $u=t^2$, the transplanted contour
winds once around each zero of $\pi_{j+1}$, since $\mathcal C_-$ encloses its negative imaginary preimage and excludes its positive imaginary preimage. It then follows from
\[
    \frac{\pi_k(u)}{\pi_{j+1}(u)}
    =
    \begin{cases}
        u^{-1}+\bigO(u^{-2}),&k=j,\\
        \bigO(u^{-2}),&k<j,
    \end{cases}
    \qquad u\to\infty,
\]
that the contour integral equals $\delta_{j,k}$. This proves the stated biorthogonality relations.
\end{proof}

Proposition \ref{q_int} provides the following integral representations for the $q_j$.

\begin{prop}\label{BS}
The functions $q_j$ defined in \eqref{eq:q_j_biorth} are given by
\begin{equation} \label{eq:explicit_q_j}
q_j(x)
=-\int_{\R}(\mathcal{P}\om)(s)
\pi_j(s^2)
Q_{is-\frac12}(x)\frac{sds}{\pi i}.
\end{equation}
\end{prop}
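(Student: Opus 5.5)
This is a direct specialization of Proposition~\ref{q_int}. The plan is to take $\hat q=\pi_j$, which is a real polynomial since $\pi_j(s)=\prod_{k=0}^{j-1}(s+(k+\frac12)^2)$. Then \eqref{eq:q_j_biorth} reads $q_j=\hat q(D)\om$ with $\om\in\mathcal W(\R_{\geq1})$, exactly the hypothesis of Proposition~\ref{q_int}. Substituting $\hat q(s^2)=\pi_j(s^2)$ into \eqref{eq:q_int} gives \eqref{eq:explicit_q_j} for every $x>1$.

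The only points to record are the routine checks behind Proposition~\ref{q_int}, all of which hold here.
\begin{itemize}
\item \emph{Membership in $\mathcal W$.} $q_j$ lies in $\mathcal W(\R_{\geq1})$, because it is a finite linear combination of $\om,D\om,\dots,D^j\om$ and $\mathcal W(\R_{\geq1})$ is closed under $D$ and under linear combinations.
\item \emph{Transform identity.} Proposition~\ref{prop:admissable-weight-prop}~$ii)$, applied termwise, gives $(\MF q_j)(s)=\pi_j(s^2)(\MF\om)(s)$ for all $s\in\C$.
\item \emph{Integrability.} Part $iii)$ of the same proposition shows that $\MF q_j$ lies in $L^1(\R;|s|^{1/2}ds)$, since $\pi_j(s^2)$ grows polynomially while $\MF\om$ decays faster than any power on $\R$.
\end{itemize}
With these in place, Proposition~\ref{P_inv_mod} yields the formula almost everywhere. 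Continuity of the right-hand side on $\R_{\geq a}$ for every $a>1$, via Proposition~\ref{prop:Leg-Q-cont} and dominated convergence, upgrades this to every $x>1$.

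None of these steps is a real obstacle, since all the analytic work has already been done in Proposition~\ref{q_int}. The one thing to watch is the convention: $\pi_j$ must be evaluated at $s^2$, matching $D P_{is-\frac12}=s^2P_{is-\frac12}$, and the factor $s\,ds/(\pi i)$ must come with the minus sign from \eqref{eq:P_inv_mod}.
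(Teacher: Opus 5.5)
Your proposal is correct and follows the paper's proof: you apply Proposition~\ref{q_int} with $\hat q=\pi_j\in\R[s]$, which is exactly what the paper does. The routine checks you list (membership in $\mathcal W(\R_{\geq1})$, the transform identity, integrability, and the continuity upgrade to every $x>1$) are the ones already carried out inside the proof of Proposition~\ref{q_int}.
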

\begin{proof}
This follows immediately from Proposition \ref{q_int}.
\end{proof}

We are now ready to prove the main result.

\begin{proof}[Proof of Theorem \ref{DIR}]
The biorthogonal system in Propositions \ref{BS_0} and \ref{BS} gives rise to the following double integral representation of the correlation kernel
\begin{equation}
  K_n(x,y) = -\int_{\R} \oint_{\mathcal{C}_{-}} \frac{(\mathcal{P}\om)(s)}{(\mathcal{P}\om)(t)} \left(\sum_{k=0}^{n-1}\frac{(\frac{1}{2}-is)_{k}(\frac{1}{2}+is)_{k}}{(\frac{1}{2}-it)_{k+1}(\frac{1}{2}+it)_{k+1}} \right) P_{it-\frac{1}{2}}(x) Q_{is-\frac{1}{2}}(y) \frac{tdt}{\pi i} \frac{sds}{\pi i}.
\end{equation}
with $\mathcal{C}_{-}$ defined in Proposition \ref{BS_0}. At this point, since $s\mapsto Q_{is-\frac{1}{2}}(y)$ is meromorphic with poles at $\{i(k+\tfrac{1}{2}\}_{k\in\Z_{\geq 0}}$  for all $y\in\R_{>1}$, we may deform the contour $\R$ to any contour $\Sigma$ from $-\infty$ to $\infty$ in $\{s\in\C:\Im s \leq 0\}$ that doesn't intersect $\mathcal{C}_{-}$. Observe that
    $$ (t^2-s^2) \frac{(\frac{1}{2}-is)_{k}(\frac{1}{2}+is)_{k}}{(\frac{1}{2}-it)_{k+1}(\frac{1}{2}+it)_{k+1}} = \frac{(\frac{1}{2}-is)_{k}(\frac{1}{2}+is)_{k}}{(\frac{1}{2}-it)_{k}(\frac{1}{2}+it)_{k}} - \frac{(\frac{1}{2}-is)_{k+1}(\frac{1}{2}+is)_{k+1}}{(\frac{1}{2}-it)_{k+1}(\frac{1}{2}+it)_{k+1}}, $$
and therefore
$$ \sum_{k=0}^{n-1}\frac{(\frac{1}{2}-is)_{k}(\frac{1}{2}+is)_{k}}{(\frac{1}{2}-it)_{k+1}(\frac{1}{2}+it)_{k+1}} = \frac{1}{t^2-s^2} \left( 1-\frac{(\frac{1}{2}-is)_{n}(\frac{1}{2}+is)_{n}}{(\frac{1}{2}-it)_{n}(\frac{1}{2}+it)_{n}} \right). $$
As $\mathcal{C}_{-}$ doesn't intersect $\Sigma$, the first term doesn't contribute to the value of the integral. Hence,
\begin{equation} \label{eq:kernel_formula}
  K_n(x,y) = \int_{\Sigma} \oint_{\mathcal{C}_{-}} \frac{(\mathcal{P}\om)(s)}{(\mathcal{P}\om)(t)} \frac{(\frac{1}{2}-is)_{n}(\frac{1}{2}+is)_{n}}{(\frac{1}{2}-it)_{n}(\frac{1}{2}+it)_{n}} \frac{P_{it-\frac{1}{2}}(x) Q_{is-\frac{1}{2}}(y)}{t^2-s^2} \frac{tdt}{\pi i} \frac{sds}{\pi i}.
\end{equation}
It then remains to carry out some further simplifications. We will write
\begin{equation*}
  \frac{s}{t^2 - s^2} = \frac{1}{2} \left( \frac{1}{t-s} - \frac{1}{s + t}\right).
\end{equation*}
The change of variables $t\mapsto -t$ yields
\begin{align*}
    \oint_{\mathcal{C}_{-}} \frac{1}{(\mathcal{P}\om)(t)} \frac{1}{(\frac{1}{2}-it)_{n}(\frac{1}{2}+it)_{n}} \frac{P_{it-\frac{1}{2}}(x)}{s + t}   \frac{tdt}{\pi i} = \oint_{-\mathcal{C}_{-}} \frac{1}{(\mathcal{P}\om)(t)} \frac{1}{(\frac{1}{2}+it)_{n}(\frac{1}{2}-it)_{n}} \frac{P_{-it-\frac{1}{2}}(x)}{s - t}   \frac{tdt}{\pi i},
\end{align*} 
and thus, after using the symmetry $P_{it-\frac{1}{2}}(x) = P_{-it-\frac{1}{2}}(x)$, see  \cite[Eq. 14.9.5]{Boisvert-Clark-Lozier-Olver10}, the right hand side becomes
\begin{align*}
    \oint_{-\mathcal{C}_{-}} \frac{1}{(\mathcal{P}\om)(t)} \frac{1}{(\frac{1}{2}-it)_{n}(\frac{1}{2}+it)_{n}} \frac{P_{it-\frac{1}{2}}(x)}{s-t}   \frac{tdt}{\pi i}.
\end{align*} 
We can therefore conclude that 
$$ K_n(x,y) = \int_{\Sigma} \oint_{\contC} \frac{(\mathcal{P}\om)(s)}{(\mathcal{P}\om)(t)} \frac{(\frac{1}{2}-is)_{n}(\frac{1}{2}+is)_{n}}{(\frac{1}{2}-it)_{n}(\frac{1}{2}+it)_{n}} \frac{P_{it-\frac{1}{2}}(x) Q_{is-\frac{1}{2}}(y)}{t-s}  \frac{t dt}{\pi i} \frac{ds}{2\pi i},$$
where $\contC$ is a contour that encloses $\{\pm i(k+{\textstyle\frac{1}{2}})\}_{k=0}^{n-1}$ once.
\end{proof}

Finally, specializing Theorem \ref{DIR} to the DMPK ensemble yields Corollary \ref{DIR_DMPK}.

\begin{proof}[Proof of Corollary \ref{DIR_DMPK}]
    This follows immediately from Theorem \ref{DIR} and the fact that for the DMPK ensemble $\om\in\mathcal{W}(\R_{\geq 1})$ and $\MF\om(s)=e^{-\frac{M}{n}s^2}$, see Proposition \ref{prop:gaussian-weights-admissible}.
\end{proof}

\section{Universal conductance fluctuations} \label{sec:conductance}

The goal in this section is to prove Theorem \ref{thm:cond-mean-var}. The starting point will be the biorthogonal system provided by Propositions \ref{BS_0} and \ref{BS}. For the DMPK ensemble \eqref{E_DT}, which has $(\mathcal{P}\om)(s) = e^{-\frac{M}{n} s^2}$, the biorthogonal functions take the form
\begin{align}
    p_{j,M}(x) = {}& \oint_{\contC_{-}} e^{\frac{M}{n} t^2} \frac{P_{it-\frac{1}{2}}(x)}{(\frac{1}{2} - it)_{j + 1} (\frac{1}{2} + it)_{j + 1}} \frac{tdt}{\pi i}, \label{eq:pj-definition} \\
  q_{j,M}(x) = {}& - \int_{\R} e^{-\frac{M}{n} s^2} (\tfrac{1}{2} - is)_j (\tfrac{1}{2} + is)_j Q_{is-\frac{1}{2}}(x) \frac{sds}{\pi i} \\
  = {}& \int^{\infty}_0 e^{-\frac{M}{n} s^2} (\tfrac{1}{2} - is)_j (\tfrac{1}{2} + is)_j \tanh(\pi s) P_{is - \frac{1}{2}}(x) sds.  \label{eq:qj-definition}
\end{align}
where $\contC_{-}$ is the contour in \eqref{eq:p_j_biorth} and we used \eqref{eq:Q-Q=P} in \eqref{eq:qj-definition}. The biorthogonality relations are given by \eqref{eq:BS_biorth} with $p_j, q_k$ replaced by $p_{j, M}, q_{k, M}$.
It follows from the proof of Proposition \ref{BS_0} that the biorthogonality relations extend to all $j,k\in\N$ as well (after enlarging $\contC_{-}$ appropriately). By Corollary \ref{DIR_DMPK}, the correlation kernel is given by $K_{n,M}(x,y)$ in \eqref{eq:defn_K_in_PQ_simplified_further}.

\subsection{Variance of the conductance and resolvent evolution}

The conductance \eqref{eq:defn_conductance} arises as a scalar multiple of the resolvent
$$ R_z=\sum_{j=1}^n\frac1{x_j-z},\quad z\in\C\setminus[1,\infty),$$
at $z=-1$. The mean of the resolvent admits the following expansion, where
$$(\mathfrak{s}f)(z)=\int_1^\infty\frac{f(x)}{x-z} dx,\quad z\in\C\setminus[1,\infty),$$
denotes the Stieltjes transform of $f$.
\medbreak

We will describe an explicit expression of the variance of the resolvent $R_z$ by expressing both terms in the elementary identity
\[
    \Var_{n,M}(R_z) =\mathbb{E}_{n,M}[R_z^2]-\mathbb{E}_{n,M}[R_z]^2,
\]
in terms of the biorthogonal system. We note that for complex $z$, $\Var_{n,M}(R_z)$ is the holomorphic second cumulant, and may not be nonnegative.
\medbreak

We will first focus on $\mathbb{E}_{n,M}[R_z]$. Biorthogonality leads to the following representation.

\begin{prop}\label{prop:resolvent-exp}
For all $z\in\C\setminus[1,\infty)$ and $M>0$,
\begin{equation} \label{eq:resolvent-exp}
 \E_{n,M}[R_z]
 =\sum_{j=0}^{n-1} p_{j,M}(z) \mathfrak{s}q_{j,M}(z).    
\end{equation}
\end{prop}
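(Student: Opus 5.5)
The plan is to use the determinantal structure: the one-point function of the DMPK ensemble is $K_{n,M}(x,x)$. Hence, for every bounded measurable (or suitably integrable) $f$,
\[
\E_{n,M}\Big[\sum_{j=1}^n f(x_j)\Big]=\int_1^\infty f(x)K_{n,M}(x,x)\,dx .
\]
We apply this with $f(x)=1/(x-z)$, which is bounded on $[1,\infty)$ for fixed $z\in\C\setminus[1,\infty)$. For the kernel we take the biorthogonal system from Propositions~\ref{BS_0} and \ref{BS}, specialized in \eqref{eq:pj-definition} and \eqref{eq:qj-definition}, so that
\[
K_{n,M}(x,y)=\sum_{j=0}^{n-1}p_{j,M}(x)q_{j,M}(y).
\]
This gives
\[
\E_{n,M}[R_z]=\sum_{j=0}^{n-1}\int_1^\infty \frac{p_{j,M}(x)q_{j,M}(x)}{x-z}\,dx .
\]
The integrals converge absolutely because $q_{j,M}\in\mathcal{W}(\R_{\geq1})$ has all absolute polynomial moments and $p_{j,M}$ is a polynomial.

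The key algebraic step is to split the polynomial as
\[
p_{j,M}(x)=p_{j,M}(z)+(x-z)r_j(x,z),
\]
where $r_j(x,z)=\frac{p_{j,M}(x)-p_{j,M}(z)}{x-z}$ is a polynomial in $x$ of degree at most $j-1$. The first piece contributes exactly $p_{j,M}(z)\,\mathfrak{s}q_{j,M}(z)$. The second piece contributes
\[
\sum_{j=0}^{n-1}\int_1^\infty r_j(x,z)q_{j,M}(x)\,dx .
\]
We must show this vanishes. By biorthogonality, $q_{j,M}$ is orthogonal to $p_{0,M},\dots,p_{j-1,M}$. Each $p_{k,M}$ has degree at most $k$, and since the Gram matrix is the identity, $p_{0,M},\dots,p_{j-1,M}$ are linearly independent. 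So they span the polynomials of degree at most $j-1$ exactly, and therefore $q_{j,M}$ annihilates every polynomial of degree at most $j-1$, in particular $r_j(\cdot,z)$. The independence count needs $\deg p_{k,M}=k$, which is forced by linear independence of $k+1$ polynomials of degree at most $k$ for each $k$. Alternatively, one can verify directly from the moment computation in the proof of Proposition~\ref{BS_0}: for a polynomial $p$ of degree less than $j$, expand $p$ in Legendre polynomials $P_0,\dots,P_{j-1}$. Then $\int_1^\infty P_k q_{j,M}\,dx=(\MF q_{j,M})(-i(k+\tfrac12))=\pi_j(-(k+\tfrac12)^2)e^{\frac{M}{n}(k+\frac12)^2}=0$ for $k<j$, because $\pi_j$ vanishes at $-(k+\tfrac12)^2$. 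This direct route is cleaner, and I would use it.

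The only mild obstacle is justifying the identification of the one-point function with $K_{n,M}(x,x)$ for this particular biorthogonal system. This is standard for biorthogonal ensembles once $\spn{p_{j,M}}$ equals the polynomials of degree at most $n-1$ and $\spn{q_{j,M}}=\spn{D^{j-1}w_{1,M}}$, the latter because $\pi_j$ is monic of degree $j$. Both follow from the degree argument above together with Proposition~\ref{prop:gaussian-derivative-type}.
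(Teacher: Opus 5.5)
Your proof is correct and follows the paper's argument: you write $\E_{n,M}[R_z]=\int_1^\infty K_{n,M}(x,x)(x-z)^{-1}\,dx$, split $p_{j,M}(x)=p_{j,M}(z)+(x-z)r_j(x,z)$, and remove the remainder of degree at most $j-1$ by biorthogonality. Your justification of that last step, either via the span of $p_{0,M},\dots,p_{j-1,M}$ or directly via $(\MF q_{j,M})(-i(k+\frac12))=0$ for $k<j$, fills in what the paper leaves implicit; you do not need $\deg p_{k,M}=k$, since $j$ linearly independent polynomials in the $j$-dimensional space of polynomials of degree at most $j-1$ already span it.
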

\begin{proof}
    Evidently, 
        \begin{equation} \label{eq:mean-res-expl}
        \E_{n,M}[R_z] = \int_1^\infty \frac{K_{n,M}(x,x)}{x-z}\,dx = \sum_{j=0}^{n-1} \int_1^\infty \frac{p_{j,M}(x)q_{j,M}(x)}{x-z}dx.
        \end{equation}
    Since
        \[
        \frac{p_{j,M}(x)}{x-z} = \frac{p_{j,M}(x)-p_{j,M}(z)}{x-z} + \frac{p_{j,M}(z)}{x-z},
        \]
    and the first term is a polynomial of degree at most $j-1$, the biorthogonality relations imply that
    \[
    \E_{n,M}[R_z] = \int_1^\infty \frac{K_{n,M}(x,x)}{x-z}\,dx = \sum_{j=0}^{n-1} p_{j,M}(z)\int_1^\infty  \frac{q_{j,M}(x)}{x-z}dx.
    \]
    This finishes the proof.
\end{proof}

The following kernel evolution property, which decouples the variables $(x,y)$, will lead to a more suitable representation of $\E_{n,M}[R_z]$ involving fewer biorthogonal functions.

\begin{prop}\label{prop:kernel-evolution}
Let $x,y>1$. For all $M>0$,
\begin{equation}\label{eq:kernel-evolution}
 \left.\partial_u K_{n,u}(x,y)\right|_{u=M} = -\frac{1}{n}p_{n-1,M}(x)q_{n,M}(y).
\end{equation}
\end{prop}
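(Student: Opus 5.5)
The plan is to differentiate the double integral representation \eqref{eq:kernel_formula} of Theorem~\ref{DIR}, specialized via Corollary~\ref{DIR_DMPK}, with respect to $M$. We keep the form with denominator $t^2-s^2$, since the $M$-dependence is only through $e^{-\frac{M}{n}(s^2-t^2)}$. Differentiating this factor gives
\[
\partial_M e^{-\frac{M}{n}(s^2-t^2)} = \frac{1}{n}(t^2-s^2)\,e^{-\frac{M}{n}(s^2-t^2)}.
\]
This cancels the denominator $t^2-s^2$, so the integrand factorizes:
\[
\partial_M K_{n,M}(x,y) = \frac{1}{n}\left(\oint_{\mathcal C_-} e^{\frac{M}{n}t^2}\frac{P_{it-\frac12}(x)}{(\frac12-it)_n(\frac12+it)_n}\frac{t\,dt}{\pi i}\right)\left(\int_{\Sigma} e^{-\frac{M}{n}s^2}(\tfrac12-is)_n(\tfrac12+is)_n Q_{is-\frac12}(y)\frac{s\,ds}{\pi i}\right).
\]
Comparing with \eqref{eq:pj-definition}, the first factor is exactly $p_{n-1,M}(x)$. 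Deforming $\Sigma$ back to $\R$ and comparing with \eqref{eq:qj-definition} at $j=n$, the second factor is $-q_{n,M}(y)$. Together these give \eqref{eq:kernel-evolution}.

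The deformation of $\Sigma$ back to $\R$ is legitimate because $s\mapsto Q_{is-\frac12}(y)$ has poles only in the upper half-plane. We can also simply take $\Sigma=\R$ from the start, choosing $\mathcal C_-$ in the open lower half-plane away from $\R$. Using the contour $\mathcal C_-$, which encloses only the $-i(k+\frac12)$, rather than the symmetrized $\contC$ is what makes the $t$-factor match $p_{n-1,M}$ directly. The sign also needs checking: \eqref{eq:kernel_formula} carries no leading minus, while $q_{n,M}$ is defined with a minus sign, so the product of the two factors is $-\frac1n p_{n-1,M}(x)q_{n,M}(y)$ as claimed.

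The main obstacle is justifying differentiation under the integral sign for $x,y>1$ fixed. The $t$-integral is over a compact contour with an entire integrand, so it poses no difficulty. For the $s$-integral on $\R$, $M$ ranges over a compact neighborhood of the given value with $M>0$. The integrand and its $M$-derivative are then bounded by $C(1+|s|)^{2n+3}e^{-\frac{M'}{n}s^2}|Q_{is-\frac12}(y)|$ for some $M'>0$. This bound is integrable by the bounds on $Q_{is-\frac12}$ for real $s$ from the appendix, namely \eqref{eq:Q_nu(x)_hard} and \eqref{eq:K0_asy}, which were already used in the proof of Proposition~\ref{P_inv_mod}. It is also uniform in $t\in\mathcal C_-$, because $|t-s|$ is bounded below there. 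Dominated convergence then gives the result.
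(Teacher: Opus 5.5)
Your proposal is correct and follows essentially the paper's route: differentiate the double contour integral in $M$, let the derivative of the Gaussian ratio cancel the denominator, and identify the resulting factors as $p_{n-1,M}(x)$ and $-q_{n,M}(y)$. The only difference is that you differentiate the unsymmetrized form \eqref{eq:kernel_formula} over $\contC_-$, where $t^2-s^2$ cancels directly, whereas the paper differentiates \eqref{eq:defn_K_in_PQ_simplified_further} over $\contC$, obtains a factor $t+s$, and then uses the odd symmetry of the $t$-integrand to reduce $\oint_{\contC}(t+s)\cdots$ to $2s\,p_{n-1,M}(x)$, a step your version does not need.
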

\begin{proof}
Since the contour $\contC$ is compact and doesn't intersect $\R$, the bound for $Q_{is-\frac12}(y)$ in Proposition \ref{prop:Leg-Q-cont} and dominated convergence justify interchanging $\partial_u$ and the double integral in \eqref{eq:defn_K_in_PQ_simplified_further}. Since $(t^2-s^2)/(t-s)=t+s$, this yields
\begin{align*}
n\left.\partial_u K_{n,u}(x,y)\right|_{u=M}
=\int_{\R}\oint_{\contC}
e^{\frac{M}{n}(t^2-s^2)}(t+s)
\frac{\pi_n(s^2)}{\pi_n(t^2)}
P_{it-\frac12}(x)Q_{is-\frac12}(y)
\frac{tdt}{\pi i}\frac{ds}{2\pi i}.
\end{align*}
By symmetry \eqref{eq:symmetry_P_Q} and \eqref{eq:pj-definition},
\begin{align*}
\oint_{\contC}
e^{\frac{M}{n}t^2}(t+s)
\frac{P_{it-\frac12}(x)}{\pi_n(t^2)}
\frac{tdt}{\pi i}
=2s\oint_{\contC_-}
e^{\frac{M}{n}t^2}
\frac{P_{it-\frac12}(x)}{\pi_n(t^2)}
\frac{tdt}{\pi i}
=2s p_{n-1,M}(x).
\end{align*}
It then remains to use \eqref{eq:qj-definition}.
\end{proof}

The average resolvent evolution takes a particularly nice form. 

\begin{prop}\label{prop:resolvent-evolution}
For all $z\in\C\setminus[1,\infty)$ and $M>0$,
\begin{equation} \label{eq:first-resolvent-evolution}
 \left.\partial_u\E_{n,u}[R_z]\right|_{u=M}
 =-\frac1n p_{n-1,M}(z) (\mathfrak{s}q_{n,M})(z).    
\end{equation}
\end{prop}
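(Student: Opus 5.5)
The plan is to differentiate the identity $\E_{n,u}[R_z]=\int_1^\infty K_{n,u}(x,x)(x-z)^{-1}dx$ from \eqref{eq:mean-res-expl} in $u$. Then Proposition~\ref{prop:kernel-evolution} can be applied on the diagonal, and the polynomial $p_{n-1,M}$ can be decoupled from the Stieltjes transform by the same biorthogonality trick used in the proof of Proposition~\ref{prop:resolvent-exp}.

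\textbf{Step 1: formal identity.} Formally, Proposition~\ref{prop:kernel-evolution} with $x=y$ gives
\[
\left.\partial_u\E_{n,u}[R_z]\right|_{u=M}=-\frac1n\int_1^\infty\frac{p_{n-1,M}(x)q_{n,M}(x)}{x-z}\,dx .
\]
We split $p_{n-1,M}(x)/(x-z)$ as $\bigl(p_{n-1,M}(x)-p_{n-1,M}(z)\bigr)/(x-z)+p_{n-1,M}(z)/(x-z)$. The first term is a polynomial in $x$ of degree at most $n-2$. Its pairing with $q_{n,M}$ vanishes by the extended biorthogonality relations $\int_1^\infty p_{j,M}q_{n,M}\,dx=0$ for $j<n$, noted after \eqref{eq:qj-definition}. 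A more direct route uses $\MF q_{n,M}(t)=\pi_n(t^2)e^{-\frac Mn t^2}$, which vanishes at $t=-i(k+\frac12)$ for $k\le n-1$. Hence $\int_1^\infty P_k\,q_{n,M}\,dx=0$ for all $k\le n-1$, and $q_{n,M}$ annihilates every polynomial of degree at most $n-1$. What remains is $p_{n-1,M}(z)(\mathfrak{s}q_{n,M})(z)$, which is \eqref{eq:first-resolvent-evolution}.

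\textbf{Step 2: justification.} The main work is interchanging $\partial_u$ with $\int_1^\infty dx$. I would show that $\partial_uK_{n,u}(x,x)(x-z)^{-1}$ is dominated by an integrable function of $x$, uniformly for $u$ in a compact neighbourhood of $M$. By Proposition~\ref{prop:kernel-evolution} this derivative equals $-\frac1n p_{n-1,u}(x)q_{n,u}(x)/(x-z)$. Here $p_{n-1,u}$ is a polynomial of degree $n-1$ whose coefficients, from the residue formula \eqref{eq:p_n_contour_residues}, depend continuously on $u$. Also $q_{n,u}=\pi_n(D)w_{1,u}$ decays faster than any power of $x$, locally uniformly in $u$. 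This decay follows from the proof of Proposition~\ref{prop:gaussian-weights-admissible}: the bound $|D^kw_{1,u}(x)|\le C x^{-A-\frac12}$ for $x\ge2$ has a constant that is continuous in $u$, and boundedness on $[1,2]$ holds similarly. Since $1/|x-z|$ is bounded on $[1,\infty)$ for fixed $z\notin[1,\infty)$, dominated convergence, applied to difference quotients via the mean value theorem, justifies the interchange. I expect this uniform decay estimate to be the only real obstacle, and it amounts to tracking $u$-dependence in estimates already in the paper.
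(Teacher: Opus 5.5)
Your proposal is correct and follows the paper's proof essentially step for step. You differentiate \eqref{eq:mean-res-expl} under the integral, insert Proposition~\ref{prop:kernel-evolution} on the diagonal, and remove the polynomial part of $p_{n-1,M}(x)/(x-z)$ by orthogonality. Your direct check that $q_{n,M}$ annihilates all polynomials of degree at most $n-1$, via $\mathcal{P}q_{n,M}(t)=\pi_n(t^2)e^{-\frac Mn t^2}$ vanishing at $t=-i(k+\frac12)$, usefully makes explicit the extended biorthogonality with index $n$ that the paper invokes only implicitly. Your locally uniform domination in $u$ is a slightly more careful version of the paper's interchange argument.
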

\begin{proof}
Since $x\mapsto(x-z)^{-1}$ is bounded on $\R_{\geq 1}$, $p_{j, M}$ are polynomials and all $q_{j,M}\in\mathcal{W}(\R_{\geq 1})$ so that both $q_{j, M}(x)$ and $\partial_u q_{j, u}(x) = -D q_{j, u}(x)/n$ vanishes rapidly as $x \to \infty$, dominated convergence justifies interchanging $\partial_u$ and the integral in the expression on the right hand side of \eqref{eq:mean-res-expl}. Consequently,
$$\left.\partial_u\E_{n,u}[R_z]\right|_{u=M}
=\int_1^\infty\frac{\left.\partial_uK_{n,u}(x,x)\right|_{u=M}}{x-z}dx,$$
where, by Proposition \ref{prop:kernel-evolution},
$$\left.\partial_uK_{n,u}(x,x)\right|_{u=M} = -\frac{1}{n} p_{n-1,M}(x) q_{n,M}(x).$$
Similarly as in the proof of Proposition \ref{prop:resolvent-exp}, the biorthogonality relations \eqref{eq:BS_biorth} then yield \eqref{eq:first-resolvent-evolution}.
\end{proof}

This leads to the following alternative representation of $\E_{n,M}[R_z]$ that only depends on two biorthogonal functions.

\begin{prop}\label{prop:resolvent-mean}
For all $z\in\C\setminus[1,\infty)$ and $M>0$,
\begin{equation} \label{eq:resolvent-mean}
\E_{n,M}[R_z]
 =\frac{n}{1-z}- \int_0^M \frac1n p_{n-1,u}(z) (\mathfrak{s}q_{n,u})(z) du.    
\end{equation}
\end{prop}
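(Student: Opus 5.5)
The natural route is to integrate the evolution identity of Proposition~\ref{prop:resolvent-evolution} in the parameter $u$ over $(\epsilon,M)$ and then let $\epsilon\downarrow0$:
\[
\E_{n,M}[R_z]=\E_{n,\epsilon}[R_z]-\int_\epsilon^M \frac1n p_{n-1,u}(z)(\mathfrak{s}q_{n,u})(z)\,du .
\]
This is legitimate because $u\mapsto\E_{n,u}[R_z]$ is differentiable on $(0,\infty)$ by Proposition~\ref{prop:resolvent-evolution}, with continuous derivative. Continuity of $u\mapsto p_{n-1,u}(z)$ is clear from \eqref{eq:pj-definition}, since the contour is compact. Continuity of $u\mapsto(\mathfrak{s}q_{n,u})(z)$ follows from \eqref{eq:qj-definition} and dominated convergence: the Gaussian factor $e^{-\frac{u}{n}s^2}$ is locally uniformly controlled in $u>0$. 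So the fundamental theorem of calculus applies on $[\epsilon,M]$.

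Two limits then remain as $\epsilon\downarrow0$. First, I claim $\E_{n,\epsilon}[R_z]\to n/(1-z)$. This is the ballistic initial condition: the particles concentrate at $x=1$, and $x\mapsto(x-z)^{-1}$ is bounded and continuous on $[1,\infty)$. To make it rigorous I would write
\[
\E_{n,\epsilon}[R_z]=\sum_{j=0}^{n-1}p_{j,\epsilon}(z)(\mathfrak{s}q_{j,\epsilon})(z),
\]
as in Proposition~\ref{prop:resolvent-exp}, and compute the limits directly.
\begin{itemize}
\item As $\epsilon\to0$, $p_{j,\epsilon}\to p_{j,0}$ coefficientwise. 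Here $p_{j,0}$ is given by \eqref{eq:pj-definition} without the exponential, so it is an explicit combination of $P_0,\dots,P_j$ by the residue formula \eqref{eq:p_n_contour_residues}.
\item The function $q_{j,\epsilon}=\pi_j(D)w_{1,\epsilon}$ converges as a distribution to $\pi_j(D^*)\delta_1$, suitably interpreted with boundary terms. Since $\mathfrak{s}q_{j,\epsilon}(z)=\int q_{j,\epsilon}(x)(x-z)^{-1}dx$, I would move $\pi_j(D)$ onto $(x-z)^{-1}$ by the integration by parts of Proposition~\ref{prop:admissable-weight-prop}. The boundary terms at $1$ are harmless, and $w_{1,\epsilon}$ tends to $\delta_1$, because its Mehler--Fock transform tends to $1$.
\end{itemize}

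I expect this first limit to be the main obstacle. The cleaner alternative I would try first avoids it: express $\E_{n,\epsilon}[R_z]$ through the one-point density and show directly that the law of the radial variables converges weakly to $\delta_{(1,\dots,1)}$ as $\epsilon\downarrow0$. For that I would show $\E_{n,\epsilon}[\sum_k(x_k-1)]\to0$. This expectation is a linear statistic of a polynomial, so it can be computed exactly via the biorthogonality \eqref{eq:BS_biorth} and Proposition~\ref{prop:admissable-weight-prop}: the $\epsilon$-dependence enters only through values of $e^{-\frac{\epsilon}{n}s^2}$ at $s=-i(k+\frac12)$ and their derivatives, which are continuous in $\epsilon$ and give $0$ at $\epsilon=0$. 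Since $x_k-1\geq0$, weak convergence then follows, and boundedness and continuity of $(x-z)^{-1}$ give the limit $n/(1-z)$.

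Second, the integrand must be integrable near $u=0$, so that $\int_\epsilon^M\to\int_0^M$. Here $p_{n-1,u}(z)$ is bounded for $u\in[0,M]$. For $(\mathfrak{s}q_{n,u})(z)$ I would again integrate by parts, writing $q_{n,u}=\pi_n(D)w_{1,u}$ and moving $\pi_n(D)$ onto the smooth function $(x-z)^{-1}$, whose $D$-images decay like $x^{-1}$. This bounds it by a constant times $\int w_{1,u}$-type quantities that stay bounded as $u\downarrow0$, and dominated convergence finishes.
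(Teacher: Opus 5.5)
Your proposal is the paper's argument: integrate Proposition~\ref{prop:resolvent-evolution} over $[\epsilon,M]$ and let $\epsilon\downarrow0$ using $\E_{n,\epsilon}[R_z]\to n/(1-z)$. The paper simply takes this limit from the ballistic initial condition, whereas you justify it via $\E_{n,\epsilon}[\sum_k(x_k-1)]\to0$. That justification is correct, but the vanishing at $\epsilon=0$ deserves one line: the limiting pairing with $q_{j,\epsilon}$ is $g\mapsto(\pi_j(D)g)(1)$, $p_{j,0}$ is a multiple of $(1-x)^j$, and $D$ lowers the order of vanishing at $x=1$ by at most one, so each term $(\pi_j(D)[(x-1)p_{j,0}])(1)$ is zero. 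Your separate integrability argument near $u=0$ is not needed: as the paper notes, once $\lim_{\epsilon\downarrow0}\E_{n,\epsilon}[R_z]$ exists, the identity itself forces $\int_\epsilon^M$ to converge.
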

\begin{proof}
Let $0<\epsilon<M$. Integrating \eqref{eq:first-resolvent-evolution} over $[\epsilon,M]$ gives
\[
\E_{n,M}[R_z]
=\E_{n,\epsilon}[R_z]
-\frac1n\int_\epsilon^M
p_{n-1,u}(z)(\mathfrak{s}q_{n,u})(z)\,du.
\]

Recall that the ballistic initial condition is the deterministic configuration $x_1=\cdots=x_n=1$. Thus the joint distribution of the particles converges to this configuration as $\epsilon\downarrow0$. Consequently,
\[
\lim_{\epsilon\downarrow0}\E_{n,\epsilon}[R_z]
=\frac{n}{1-z}.
\]
Letting $\epsilon\downarrow0$ in the previous identity therefore proves both convergence of the improper integral at $u=0$ and \eqref{eq:resolvent-mean}.
\end{proof}

We will now express $\mathbb{E}_{n,M}[R_z^2]$ in terms of the biorthogonal functions.

\begin{prop}\label{prop:resolvent-sq-exp}
For all $z\in\C\setminus[1,\infty)$ and $M>0$,
\begin{equation} \label{eq:resolvent-sq-exp}
\E_{n,M}[R_z^2]
=\sum_{j=0}^{n-1}
\left(
p_{j,M}(z)(\mathfrak{s}q_{j,M})'(z)
-p_{j,M}'(z)(\mathfrak{s}q_{j,M})(z)
\right).
\end{equation}
\end{prop}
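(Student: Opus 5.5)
The plan is to compute $\E_{n,M}[R_z^2]$ from the determinantal structure with correlation kernel $K_{n,M}(x,y)=\sum_{j=0}^{n-1}p_{j,M}(x)q_{j,M}(y)$. We then reduce every integral to values of $p_{j,M}$, $p_{j,M}'$, $\mathfrak{s}q_{j,M}$ and $(\mathfrak{s}q_{j,M})'$ at $z$. This uses the biorthogonality relations \eqref{eq:BS_biorth} and the reproducing property of the kernel on polynomials, as in Proposition~\ref{prop:resolvent-exp}.

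Split $R_z^2=\sum_j (x_j-z)^{-2}+\sum_{j\neq k}(x_j-z)^{-1}(x_k-z)^{-1}$. The one- and two-point correlation functions give
\[
\E_{n,M}[R_z^2]=\sum_{j}B_{j}+(\operatorname{tr}A)^2-\operatorname{tr}(A^2),
\]
where the quantities are
\[
B_j=\int_1^\infty\frac{p_{j,M}(x)q_{j,M}(x)}{(x-z)^2}dx,
\qquad
A_{jk}=\int_1^\infty\frac{p_{j,M}(x)q_{k,M}(x)}{x-z}dx .
\]
Absolute convergence, and hence Fubini, is unproblematic: $(x-z)^{-1}$ is bounded on $\R_{\geq1}$, the $p_{j,M}$ are polynomials and the $q_{k,M}\in\mathcal W(\R_{\geq1})$.

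Next, decompose $A$. Write $p_{j,M}(x)=p_{j,M}(z)+(x-z)h_j(x)$, where $h_j(x)=(p_{j,M}(x)-p_{j,M}(z))/(x-z)$ is a polynomial of degree at most $j-1$. Then $A_{jk}=p_{j,M}(z)(\mathfrak{s}q_{k,M})(z)+N_{jk}$ with $N_{jk}=\int_1^\infty h_j q_{k,M}\,dx$. By biorthogonality $N_{jk}=0$ for $k\geq j$, so $N$ is strictly lower triangular. Hence $\operatorname{tr}A=\sum_j p_{j,M}(z)(\mathfrak{s}q_{j,M})(z)$ and $\operatorname{tr}(N^2)=0$, and expanding $\operatorname{tr}(A^2)$ gives
\[
(\operatorname{tr}A)^2-\operatorname{tr}(A^2)=-2\sum_{k}(\mathfrak{s}q_{k,M})(z)\sum_{j<k}p_{j,M}(z)N_{kj}.
\]
The inner sum equals $\int_1^\infty h_k(x)\sum_{j<k}p_{j,M}(z)q_{j,M}(x)\,dx$. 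The partial kernel $\sum_{j<k}p_{j,M}(z)q_{j,M}(x)$ reproduces polynomials of degree at most $k-1$: this holds since the $p_{j,M}$ with $j<k$ span them and biorthogonality applies. So the inner sum is $h_k(z)=p_{k,M}'(z)$.

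For the diagonal term, Taylor expand $p_{j,M}(x)=p_{j,M}(z)+p_{j,M}'(z)(x-z)+(x-z)^2r_j(x)$ with $\deg r_j\leq j-2$. Biorthogonality kills the $r_j$ contribution, and $\int q_{j,M}(x)(x-z)^{-2}dx=(\mathfrak{s}q_{j,M})'(z)$. This gives $B_j=p_{j,M}(z)(\mathfrak{s}q_{j,M})'(z)+p_{j,M}'(z)(\mathfrak{s}q_{j,M})(z)$.

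Adding the two contributions yields \eqref{eq:resolvent-sq-exp}. The main point requiring care is the off-diagonal bookkeeping: one must recognize the strict triangularity of $N$ and use the partial reproducing identity to collapse the double sum to $p_{k,M}'(z)$. The analytic justifications, such as differentiating the Stieltjes transform under the integral, are routine given the rapid decay of the $q_{j,M}$.
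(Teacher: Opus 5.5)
Your argument is correct, but it takes a different route from the paper's. The paper never uses the two-point correlation function. It rewrites the off-diagonal part of $R_z^2$ by partial fractions as $-\frac{2}{\Delta_n(x)}\sum_k \frac{\partial_{x_k}\Delta_n(x)}{x_k-z}$. It then gets the diagonal part $\sum_k (x_k-z)^{-2}$ as $\partial_z\E_{n,M}[R_z]$, using Proposition~\ref{prop:resolvent-exp}. Finally, it identifies the expectation of the Vandermonde term with $\sum_j\int_1^\infty p_{j,M}'(x)q_{j,M}(x)(x-z)^{-1}dx$. It does this by extracting the coefficient of $\epsilon$ in an Andr\'eief identity whose polynomial rows are perturbed to $p_{j-1,M}+\epsilon\,p_{j-1,M}'/(x-z)$, and biorthogonality then reduces each integral to $p_{j,M}'(z)(\mathfrak{s}q_{j,M})(z)$.

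You work instead with $\rho_2=K(x,x)K(y,y)-K(x,y)K(y,x)$ and linear algebra. The key step is the decomposition $A=uv^{T}+N$ with $N$ strictly lower triangular, so that $\operatorname{tr}(N^2)=0$. The partial reproducing identity then collapses $v^{T}Nu$ to $\sum_k p_{k,M}'(z)(\mathfrak{s}q_{k,M})(z)$. A second-order Taylor expansion handles $B_j$, in place of differentiating $\E_{n,M}[R_z]$ in $z$.

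The paper's route needs only Andr\'eief's identity. It shows that the $-p_{j,M}'\,\mathfrak{s}q_{j,M}$ term comes purely from the Vandermonde factor, i.e.\ from the polynomial side of the ensemble. Your route stays inside the standard determinantal-process formalism and never meets the factor $\partial_{x_k}\Delta_n/\Delta_n$. It also makes transparent why the double sum over $(j,k)$ reduces to a single sum, at the price of somewhat more bookkeeping.

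One point you should state explicitly, since you use it both for the triangularity of $N$ and for the reproducing identity: $p_{0,M},\dots,p_{j-1,M}$ span all polynomials of degree at most $j-1$. This holds because Proposition~\ref{BS_0} only gives $\deg p_{i,M}\le i$, and biorthogonality forces linear independence.
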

\begin{proof}
Evidently,
\begin{align*}
R_z^2 = \sum_{k=1}^n\frac1{(x_k-z)^2} + \sum_{1\leq k<l\leq n}
\frac{2}{(x_k-z)(x_l-z)},
\end{align*}
hence, after a partial fraction decomposition,
\begin{align*}
R_z^2 = \sum_{k=1}^n\frac1{(x_k-z)^2} - \frac{2}{\Delta_n(x)}
\sum_{k=1}^n\frac{\partial_{x_k}\Delta_n(x)}{x_k-z}.
\end{align*}
Taking expectations, the first term becomes $\E_{n,M}[\partial_z R_z]$. Since $x\mapsto (x-z)^{-2}$ is bounded on $\R_{\geq 1}$ and all $q_{j,M}\in\mathcal{W}(\R_{\geq 1})$, dominated convergence justifies interchanging expectation and $\partial_z$. Hence,
\[
\E_{n,M}[R_z^2]
=\partial_z \E_{n,M}[R_z] - 
\E_{n,M}\left[
\frac{2}{\Delta_n(x)}
\sum_{k=1}^n\frac{\partial_{x_k}\Delta_n(x)}{x_k-z}
\right].
\]
We will show that
\[
\E_{n,M}\left[
\frac1{\Delta_n(x)}
\sum_{k=1}^n\frac{\partial_{x_k}\Delta_n(x)}{x_k-z}
\right]
=
\sum_{j=0}^{n-1}
\int_1^\infty
\frac{p_{j,M}'(x)q_{j,M}(x)}{x-z}dx.
\]
In that case, similarly as in the proof of Proposition \ref{prop:resolvent-exp}, the biorthogonality relations yield
\[
\int_1^\infty
\frac{p_{j,M}'(x)q_{j,M}(x)}{x-z}\,dx
=p_{j,M}'(z)(\mathfrak{s}q_{j,M})(z),
\]
and together with \eqref{eq:resolvent-exp} this proves \eqref{eq:resolvent-sq-exp}.

To show the remaining identity, observe that by Andr\'eief's identity and the biorthogonality relations
\begin{align}\label{eq:resolvent-sq-exp-Andreief}
\frac1{n!}&\int_{(1,\infty)^n}
\det\Big[
p_{j-1,M}(x_k)
+\epsilon\frac{p_{j-1,M}'(x_k)}{x_k-z}
\Big]_{j,k=1}^n
\det[q_{j-1,M}(x_k)]_{j,k=1}^n
dx_1\cdots dx_n \notag \\
&\quad=
\det\Big[
\delta_{j,k}
+\epsilon\int_1^\infty
\frac{p_{j,M}'(x)q_{k,M}(x)}{x-z}dx
\Big]_{j,k=0}^{n-1} .
\end{align}
Both sides are polynomials in $\epsilon$. The coefficient of $\epsilon$ in the first determinant on the left hand side  of \eqref{eq:resolvent-sq-exp-Andreief} is given by
\begin{equation} \label{eq:after_andreief}
\left.\partial_\epsilon
\det\Big[
p_{j-1,M}(x_k)
+\epsilon\frac{p_{j-1,M}'(x_k)}{x_k-z}
\Big]_{j,k=1}^n
\right|_{\epsilon=0}
=
\sum_{k=1}^n\frac1{x_k-z}
\partial_{x_k}
\det[p_{j-1,M}(x_\ell)]_{j,\ell=1}^n.
\end{equation}
The determinant on the right hand side of \eqref{eq:after_andreief} is a non-zero constant multiple
of $\Delta_n(x)$, hence it can be written as
\[
\det[p_{j-1,M}(x_\ell)]_{j,\ell=1}^n
\frac1{\Delta_n(x)}
\sum_{k=1}^n\frac{\partial_{x_k}\Delta_n(x)}{x_k-z}.
\]
Multiplying by the second determinant and integrating, the biorthogonal representation of the joint density shows that the coefficient of $\epsilon$ on the left hand side of \eqref{eq:resolvent-sq-exp-Andreief} is
\[
\E_{n,M}\left[
\frac1{\Delta_n(x)}
\sum_{k=1}^n\frac{\partial_{x_k}\Delta_n(x)}{x_k-z}
\right].
\]
On the other hand, expanding the determinant on the right hand side of \eqref{eq:resolvent-sq-exp-Andreief}, it is straightforward to see that the coefficient of $\epsilon$ is given by
\[
\sum_{j=0}^{n-1}
\int_1^\infty
\frac{p_{j,M}'(x)q_{j,M}(x)}{x-z}\,dx.
\]
Comparing both coefficients then finishes the proof.
\end{proof}

For the conductance, i.e. $z=-1$, the right hand side of \eqref{eq:resolvent-sq-exp} drastically simplifies.

\begin{prop} \label{prop:conductance-closure}
    For all $M>0$,
    \begin{equation}\label{eq:conductance-closure}
        \mathbb{E}_{n,M}[G^2] = 2p_{n-1,M}(-1)(\mathfrak{s}q_{n,M})(-1).
    \end{equation}
\end{prop}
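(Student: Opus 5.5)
The approach is to use $G=-2R_{-1}$, which reduces the claim to
\[
\E_{n,M}[R_{-1}^2]=\tfrac12\,p_{n-1,M}(-1)(\mathfrak{s}q_{n,M})(-1).
\]
I then start from Proposition~\ref{prop:resolvent-sq-exp} and write $\E_{n,M}[R_z^2]=\sum_{j=0}^{n-1}W_j(z)$ with the Wronskian-type quantities $W_j=p_{j,M}(\mathfrak{s}q_{j,M})'-p_{j,M}'(\mathfrak{s}q_{j,M})$. The point is that $z=-1$ is the second singular point of the Legendre operator $D$ from \eqref{D}. For any two smooth $f,g$ one has
\[
\frac{d}{dz}\Big[(1-z^2)\big(f g'-f'g\big)\Big]=f\,(Dg)-(Df)\,g .
\]
Since $(1-z^2)W_j(z)$ is smooth near $z=-1$ and the prefactor vanishes there, with derivative $2$ at $z=-1$, this gives
\[
W_j(-1)=\tfrac12\Big[p_{j,M}\,D_z(\mathfrak{s}q_{j,M})-(Dp_{j,M})\,\mathfrak{s}q_{j,M}\Big](-1).
\]

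\emph{Step 1: $D_z$ commutes with the Stieltjes transform.} I will show $D_z(\mathfrak{s}q)(z)=(\mathfrak{s}[Dq])(z)$ for $q\in\mathcal W(\R_{\geq1})$. Integrate by parts twice in $\int_1^\infty (Dq)(x)(x-z)^{-1}dx$. The boundary terms at $x=1$ vanish because of the factor $1-x^2$, and those at infinity vanish by the rapid decay established in the proof of Proposition~\ref{prop:admissable-weight-prop}. This yields $\int_1^\infty q(x)\,D_x[(x-z)^{-1}]\,dx$. A direct computation then gives $D_x[(x-z)^{-1}]-D_z[(x-z)^{-1}]=-2(x+z)(x-z)^{-2}+2(x+z)(x-z)^{-2}=0$. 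Differentiating under the integral, which is justified by dominated convergence, finishes this step.

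\emph{Step 2: ladder relations.} Write $c_j=(j+\frac12)^2$. By \eqref{eq:q_j_biorth}, $q_{j+1,M}=(D+c_j)q_{j,M}$. For the polynomials, use the self-adjointness $\int_1^\infty (Dp)q\,dx=\int_1^\infty p\,(Dq)\,dx$, which holds with the same vanishing boundary terms. Together with the biorthogonality \eqref{eq:BS_biorth}, extended to all indices as remarked at the start of this section, this gives
\[
\int_1^\infty (Dp_{j,M})\,q_{k,M}\,dx=\delta_{j,k+1}-c_k\delta_{j,k}.
\]
Because $D$ preserves polynomials of degree at most $j$, it follows that $(D+c_j)p_{j,M}=p_{j-1,M}$, with $p_{-1,M}:=0$.

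\emph{Step 3: telescoping.} Insert Steps 1 and 2 into the formula for $W_j(-1)$. The $c_j$ terms cancel, and
\[
W_j(-1)=\tfrac12\Big[p_{j,M}\,\mathfrak{s}q_{j+1,M}-p_{j-1,M}\,\mathfrak{s}q_{j,M}\Big](-1).
\]
Summing over $j=0,\dots,n-1$ telescopes to $\tfrac12 p_{n-1,M}(-1)(\mathfrak{s}q_{n,M})(-1)$, and multiplying by $4$ gives \eqref{eq:conductance-closure}.

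The main obstacle is Step 1, in particular verifying that no boundary contribution survives at $x=1$ and that the two differential expressions really cancel identically. I expect the computation to go through, since $1-x^2$ kills the endpoint terms. Step 2 is routine once self-adjointness is in place.
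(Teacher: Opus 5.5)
Your proposal is correct and is essentially the paper's proof. Both evaluate the sum from Proposition~\ref{prop:resolvent-sq-exp} at the singular point $z=-1$ of $D$ to trade first derivatives for $D$; your Wronskian identity is the same observation as the paper's $((D+c_j)f)(-1)=2f'(-1)+j(j+1)f(-1)$ with $c_j=(j+\frac12)^2$. Both then use $(D+c_j)p_{j,M}=p_{j-1,M}$, $(D+c_j)q_{j,M}=q_{j+1,M}$ and $D_z\mathfrak{s}q=\mathfrak{s}[Dq]$ (by integration by parts), and telescope.

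The only deviation is that the paper reads off the $p$-ladder directly from \eqref{eq:pj-definition} via $DP_{it-\frac12}=t^2P_{it-\frac12}$. Your self-adjointness route also works, but it tacitly uses that biorthogonality makes $p_{0,M},\dots,p_{j,M}$ a basis of the polynomials of degree at most $j$. Also, $G=2R_{-1}$ rather than $-2R_{-1}$; this is harmless since only $G^2$ enters.
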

\begin{proof}
At $z=-1$, \eqref{eq:resolvent-sq-exp} yields
\begin{equation*}
\E_{n,M}[G^2]
=4\sum_{j=0}^{n-1}
\Bigl(
p_{j,M}(-1)(\mathfrak{s}q_{j,M})'(-1)
-p_{j,M}'(-1)(\mathfrak{s}q_{j,M})(-1)
\Bigr).
\end{equation*}
Recall the differential operator $D$ defined in \eqref{D}. For notational convenience, denote $a_j = j + \frac{1}{2}$. We can use the relation
\[
((D+a_j^2)f)(-1)=2f'(-1)+j(j+1)f(-1),
\]
to express the derivatives at $-1$ as $D$-derivatives. This yields
\begin{equation} \label{eq:conductance-closure-sum}
\E_{n,M}[G^2]
=2\sum_{j=0}^{n-1}
\Bigl(
p_{j,M}(-1)((D+a_j^2)\mathfrak{s}q_{j,M})(-1)
-((D+a_j^2)p_{j,M})(-1)(\mathfrak{s}q_{j,M})(-1)
\Bigr).
\end{equation}
The identity $DP_{is-\frac12}=s^2P_{is-\frac12}$ gives
\[
(D+a_j^2)p_{j,M}=p_{j-1,M},
\qquad
(D+a_j^2)q_{j,M}=q_{j+1,M},
\]
where we define $p_{-1,M}=0$. Since $q_{j,M}\in\mathcal W(\R_{\geq1})$, smoothness at $1$ and the decay established in the proof of Proposition~\ref{prop:admissable-weight-prop} imply that
\[
\Big[
(1-x^2)\Big(
\frac{q_{j,M}'(x)}{x-z}
+\frac{q_{j,M}(x)}{(x-z)^2}
\Big)
\Big]_1^\infty=0.
\]
Using $D_x(x-z)^{-1}=D_z(x-z)^{-1}$, integration by parts therefore yields
\[
((D+a_j^2)\mathfrak{s}q_{j,M})(z)
=\mathfrak{s}[(D+a_j^2)q_{j,M}](z)
=(\mathfrak{s}q_{j+1,M})(z).
\]
This makes the sum on the right hand side of \eqref{eq:conductance-closure-sum} telescoping, leading to \eqref{eq:conductance-closure}.
\end{proof}

\begin{rem}\label{rem:Frahm-moment-identity}
Proposition \ref{prop:resolvent-evolution} at $z=-1$ and Proposition \ref{prop:conductance-closure} imply that 
$$\mathbb{E}_{n,M}[G^2] = -n \left.\partial_u\E_{n,u}[G]\right|_{u=M}.$$
This closure property of the second conductance moment $\mathbb{E}_{n,M}[G^2]$ in terms of the first moment $\mathbb{E}_{n,M}[G]$ was already predicted in
\cite[Eq. (27)]{Frahm95}.
\end{rem}

The following theorem summarizes the results so far for the conductance. For notational convenience, we define
\begin{align}\label{pq-def}
\mathfrak p_{n,M}
 =\Gamma(n+{\textstyle\frac{1}{2}})^2 p_{n-1,M}(-1),\quad
\mathfrak q_{n,M}
 = \frac{\mathfrak{s}q_{n,M}(-1)}{\Gamma(n+\frac{1}{2})^2}.
\end{align}
The additional factors will be convenient in the asymptotic analysis in the next subsection and don't change the value of the product.

\begin{thm}
For all $M>0$,
\begin{equation} \label{momentsdef}
    \mathbb E_{n,M}[G]=n-\frac2n\int_0^M \mathfrak p_{n,u} \mathfrak q_{n,u}\,du, \quad
    \operatorname{Var}_{n,M} G =2\mathfrak p_{n,M}\mathfrak q_{n,M} -\mathbb E_{n,M}[G]^2. 
\end{equation}
\end{thm}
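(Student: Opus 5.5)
This theorem only repackages Propositions~\ref{prop:resolvent-mean} and \ref{prop:conductance-closure} at the point $z=-1$. The plan is to translate the resolvent statements into statements about $G$ and then account for the normalising factors in \eqref{pq-def}.

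\textbf{The mean.} By \eqref{eq:defn_conductance}, $G=\sum_k 2/(1+x_k)=2R_{-1}$, since $x_k-(-1)=x_k+1$. We evaluate \eqref{eq:resolvent-mean} at $z=-1$, which is admissible because $-1\notin[1,\infty)$. This gives
\[
\E_{n,M}[R_{-1}]=\frac n2-\frac1n\int_0^M p_{n-1,u}(-1)(\mathfrak{s}q_{n,u})(-1)\,du .
\]
The factors $\Gamma(n+\frac12)^{\pm2}$ in \eqref{pq-def} cancel in the product, so $p_{n-1,u}(-1)(\mathfrak{s}q_{n,u})(-1)=\mathfrak p_{n,u}\mathfrak q_{n,u}$. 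Multiplying by $2$ then gives the first identity in \eqref{momentsdef}. The improper integral at $u=0$ converges by Proposition~\ref{prop:resolvent-mean}, so nothing further is needed there.

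\textbf{The variance.} Proposition~\ref{prop:conductance-closure} gives $\E_{n,M}[G^2]=2p_{n-1,M}(-1)(\mathfrak{s}q_{n,M})(-1)$. By the same cancellation this equals $2\mathfrak p_{n,M}\mathfrak q_{n,M}$. The second identity then follows from $\Var_{n,M}G=\E_{n,M}[G^2]-\E_{n,M}[G]^2$. Here $G$ is real, bounded and nonnegative, so this is the genuine variance and there are no holomorphic-cumulant subtleties.

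\textbf{The only delicate point.} The one step that is not pure bookkeeping is the normalisation in Proposition~\ref{prop:conductance-closure}. One should check that the factor $4$ coming from $G=2R_{-1}$ and the factor $\frac12$ coming from trading $f'(-1)$ for $((D+a_j^2)f)(-1)/2$ combine to the stated factor $2$. One should also confirm that the boundary term $j=0$, where $p_{-1,M}=0$, leaves exactly the endpoint term $p_{n-1,M}(-1)(\mathfrak{s}q_{n,M})(-1)$ of the telescoping sum. Both checks are already contained in that proposition, which we assume, so the theorem follows by direct substitution.
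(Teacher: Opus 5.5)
Your proposal is correct and is exactly the paper's argument: evaluate Proposition~\ref{prop:resolvent-mean} at $z=-1$ using $G=2R_{-1}$, take $\E_{n,M}[G^2]$ from Proposition~\ref{prop:conductance-closure}, and note that the factors $\Gamma(n+\frac12)^{\pm2}$ in \eqref{pq-def} cancel in the product $\mathfrak p_{n,u}\mathfrak q_{n,u}$. Your bookkeeping also checks out: $n/(1-z)=n/2$ doubles to $n$, and the factors combine as $4\cdot\frac12=2$ in the closure identity.
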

\begin{proof}
This follows from Proposition~\ref{prop:resolvent-mean} at $z=-1$ and Proposition~\ref{prop:conductance-closure}.
\end{proof}

\subsection{Expansion of the mean and variance of the conductance}
\label{subsec:variance-asymptotics}

In order to facilitate the asymptotic analysis, we will consider the following representations of $\mathfrak p_{n,M}$ and $\mathfrak q_{n,M}$. It will be convenient to use
\begin{align}\label{kernels}
R_1(v)=\frac{\Gamma(n+\frac{1}{2})^2}{\Gamma(n+\frac{1}{2}-v)\Gamma(n+\frac{1}{2}+v)},\quad
R_2(v)=\frac{\Gamma(n+\frac{1}{2}-iv)\Gamma(n+\frac{1}{2}+iv)}{\Gamma(n+\frac{1}{2})^2}.
\end{align}
The contour $i\contC_-$ is counterclockwise, encloses $\{ k + \frac{1}{2} : k = 0, 1, \dotsc, n - 1 \}$, and excludes $\{ -k - \frac{1}{2} : k = 0, 1, \dotsc, n - 1 \}$.

\begin{prop}
\label{prop:explicit-endpoints}
For all $M>0$,
\begin{align}
  \mathfrak p_{n,M}
&=i\oint_{i\contC_-}
z e^{-\frac{M}{n}z^2}R_1(z)\tan(\pi z)dz,
 \label{eq:pi-explicit}\\
 \mathfrak{q}_{n,M}
 &=\int_0^\infty s
 e^{-\frac{M}{n}s^2} R_2(s) \tanh(\pi s) ds.
 \label{eq:chi-explicit}
\end{align}
\end{prop}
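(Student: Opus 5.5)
The plan is to obtain both formulas by evaluating the biorthogonal functions \eqref{eq:pj-definition} and \eqref{eq:qj-definition} at, respectively against, the point $-1$. Everything then reduces to the reflection formula $\Gamma(\frac12-z)\Gamma(\frac12+z)=\pi/\cos(\pi z)$ and one classical Mehler--Fock integral.

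\emph{Formula for $\mathfrak p_{n,M}$.} By Proposition \ref{p_int} and its residue formula \eqref{eq:p_n_contour_residues}, $p_{n-1,M}$ is a polynomial whose coefficients in the basis $P_0,\dots,P_{n-1}$ depend only on the residues at $t_k=-i(k+\frac12)$, $k=0,\dots,n-1$. These residues involve $P_{it_k-\frac12}=P_k$. The identity is polynomial in $x$, so we may evaluate at $x=-1$ and use $P_k(-1)=(-1)^k$. Hence
\[
p_{n-1,M}(-1)=\oint_{\contC_-}e^{\frac Mn t^2}\frac{h(t)}{(\frac12-it)_n(\frac12+it)_n}\frac{tdt}{\pi i},
\]
for any function $h$ analytic on and inside $\contC_-$ with $h(t_k)=(-1)^k$; this is the last assertion of Proposition \ref{p_int}. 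I would take $h(t)=\sin(\pi i t)$. Substituting $z=it$ gives $t\,dt=-z\,dz$ and $e^{\frac Mn t^2}=e^{-\frac Mn z^2}$, and the contour becomes $i\contC_-$, still counterclockwise. Moreover
\[
\frac{1}{(\frac12-z)_n(\frac12+z)_n}=\frac{\Gamma(\frac12-z)\Gamma(\frac12+z)}{\Gamma(n+\frac12-z)\Gamma(n+\frac12+z)}=\frac{\pi}{\cos(\pi z)\,\Gamma(n+\frac12-z)\Gamma(n+\frac12+z)}.
\]
The integrand therefore becomes $\frac{-z}{\pi i}\,\pi\tan(\pi z)\,e^{-\frac Mn z^2}R_1(z)/\Gamma(n+\frac12)^2$. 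Multiplying by $\Gamma(n+\frac12)^2$ yields \eqref{eq:pi-explicit}. As a sanity check, inside $i\contC_-$ the poles of $\tan(\pi z)$ are exactly at $k+\frac12$, $k<n$, so the residues are as before.

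\emph{Formula for $\mathfrak q_{n,M}$.} Write $\mathfrak s q_{n,M}(-1)=\int_1^\infty q_{n,M}(x)(x+1)^{-1}dx$ and insert the $\tanh$-representation \eqref{eq:qj-definition}. The interchange of integrals is justified by Fubini: $|P_{is-\frac12}(x)|\le P_{-\frac12}(x)=\bigO(x^{-1/2}\log x)$ uniformly for real $s$, so $P_{is-\frac12}(x)/(x+1)$ is absolutely integrable in $x$, while the Gaussian factor controls $s$. The key input is the Mehler--Fock transform
\[
\int_1^\infty\frac{P_{is-\frac12}(x)}{x+1}dx=\frac{\pi}{\cosh(\pi s)}.
\]
Then $(\frac12-is)_n(\frac12+is)_n=\Gamma(n+\frac12-is)\Gamma(n+\frac12+is)\cosh(\pi s)/\pi$, by reflection again, so the $\cosh$ and $\pi$ cancel. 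What remains is $\Gamma(n+\frac12)^2\int_0^\infty s\,e^{-\frac Mn s^2}R_2(s)\tanh(\pi s)ds$, which is \eqref{eq:chi-explicit} after dividing by $\Gamma(n+\frac12)^2$.

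\emph{Main obstacle.} The real work is proving the displayed transform. I would use Mehler's representation
\[
P_{is-\frac12}(\cosh\xi)=\frac{\sqrt2}{\pi}\int_0^\xi\frac{\cos(s\theta)}{\sqrt{\cosh\xi-\cosh\theta}}d\theta,
\]
substitute $x=\cosh\xi$, and swap the order of integration; the integrand is positive after taking absolute values, so Tonelli applies. The inner integral $\int_\theta^\infty \sinh\xi\,d\xi/\big((1+\cosh\xi)\sqrt{\cosh\xi-\cosh\theta}\big)$ is elementary and equals $\pi/(\sqrt2\cosh(\theta/2))$. This leaves $\int_0^\infty\cos(s\theta)/\cosh(\theta/2)\,d\theta=\pi/\cosh(\pi s)$, the standard Fourier transform of $\operatorname{sech}$. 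Alternatively, the transform can be cited from tables, e.g.\ \cite{Glaeske-Prudnikov-Skornik06}.
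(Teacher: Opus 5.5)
Your proposal is correct and follows the paper's proof. For $\mathfrak p_{n,M}$ you use the same three steps: replace the Legendre factor at the residues by $\sin(\pi i t)=i\sinh(\pi t)$, apply the reflection identity for the Pochhammer product, and substitute $z=it$. For $\mathfrak q_{n,M}$ you insert the $\tanh$-representation into the Stieltjes transform and use $\int_1^\infty P_{is-\frac12}(x)(1+x)^{-1}\,dx=\pi/\cosh(\pi s)$. Your only additions are the explicit Fubini justification and a self-contained derivation of that integral via the Mehler--Dirichlet formula, where the paper simply cites the NIST handbook.
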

\begin{proof}
 We first focus on $\mathfrak p_{n,M}$, see \eqref{pq-def}. Taking residues in \eqref{eq:pj-definition} before evaluating at $x=-1$ allows us to replace the Legendre factor by $i\sinh(\pi t)$, since \cite[18.6.1]{Boisvert-Clark-Lozier-Olver10}
\[
P_k(-1)=(-1)^k
=\left.i\sinh(\pi t)\right|_{t=-i(k + \frac{1}{2})}.
\]
The identity
\begin{equation}
 (\frac{1}{2} - it)_n (\frac{1}{2} + it)_n
 =\frac{\cosh(\pi t)}{\pi}
 \Gamma(n+{\textstyle\frac{1}{2}}-it)\Gamma(n+{\textstyle\frac{1}{2}}+it),
 \label{eq:Aj-gamma-product}
\end{equation}
and the change of variables $t\mapsto -iz$ then prove \eqref{eq:pi-explicit}. For $\mathfrak q_{n,M}$, see \eqref{pq-def}, use \eqref{eq:qj-definition} and
\begin{equation}\label{eq:resolvent-Mehler-Fock}
\int_1^\infty\frac{P_{is-1/2}(x)}{1+x}\,dx
=\frac{\pi}{\cosh(\pi s)},\qquad s\in\R,
\end{equation}
which is \cite[Eq. 14.20.13]{Boisvert-Clark-Lozier-Olver10} with $x=1$. Together with
\eqref{eq:Aj-gamma-product}, this proves \eqref{eq:chi-explicit}.
\end{proof}

Next, we prove an expansion for $\mathfrak q_{n,M}$. Throughout what follows, $L=M+1$.

\begin{prop}\label{qfactor}
Uniformly for $1\le L\le n$ as $n\to\infty$,
\begin{equation}\label{qexp}
\mathfrak q_{n,M}
=\frac n{2L}-c_0(L)+\frac{c_{-1}(L)}{n}+O\!\left(\frac{L^2}{n^2}\right),
\end{equation}
where
\begin{equation}\label{coefficients}
\begin{aligned}
c_0(L)=\frac1{24}-\frac1{6L^3},\quad
c_{-1}(L)=\frac{7L}{960}+\frac1{24L^2}-\frac1{5L^4}+\frac1{6L^5}.
\end{aligned}
\end{equation}
\end{prop}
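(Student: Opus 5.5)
The starting point is the explicit representation \eqref{eq:chi-explicit},
\[
\mathfrak q_{n,M}=\int_0^\infty s\,e^{-\frac{M}{n}s^2}R_2(s)\tanh(\pi s)\,ds,
\]
and I would expand the integrand for large $n$ on the relevant scale $s\asymp\sqrt n$. By the reflection-type identity $\Gamma(n+\frac12-iv)\Gamma(n+\frac12+iv)=|\Gamma(n+\frac12+iv)|^2$ and Stirling's series, I would write $\log R_2(s)=-\sum_{k\ge0}\bigl[\log(1+\frac{s^2}{(k+n+\frac12)^2})\bigr]$-type expressions. In practice it is cleaner to use
\[
\log R_2(s)=-\frac{s^2}{n}+\frac{a_1 s^2+a_2 s^4}{n^2}+\frac{a_3s^2+a_4s^4+a_5s^6}{n^3}+\cdots,
\]
which comes from the Stirling expansion of $\log\Gamma(n+\frac12\pm is)$ with explicit rational $a_k$ (e.g.\ the $s^4/n^3$ coefficient equals $\frac16$ times the appropriate constant). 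After the substitution $s=\sqrt{n}\,u$ the Gaussian factor combines into $e^{-Lu^2}$, since $-\frac{M}{n}s^2-\frac{s^2}{n}=-Lu^2$. This is exactly where $L=M+1$ enters.

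The key steps, in order, are as follows.

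\textbf{Step 1.} Split the integral at $s=n^{1/2+\delta}$ (or use $s\le cn$). The tail beyond this point is exponentially small, uniformly in $1\le L\le n$, because $R_2(s)\le1$ decays like $e^{-s^2/n}$ on $s\le n$ and like $e^{-c s}$ beyond.

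\textbf{Step 2.} Replace $\tanh(\pi s)$ by $1-(1-\tanh\pi s)$. The correction $\int_0^\infty s(1-\tanh \pi s)e^{-Ls^2/n}(\cdots)ds$ lives at $s=O(1)$. Expanding $e^{-Ls^2/n}R_2(s)$ in powers of $s^2/n$ there, and using $\int_0^\infty s^{2k+1}(1-\tanh\pi s)\,ds$ (explicit Bernoulli-type constants, e.g.\ $\int_0^\infty s(1-\tanh\pi s)ds=\frac1{24}$), produces the constants $-\frac1{24}$ and $\frac{7L}{960}\cdot\frac1n$ together with their $L$-dependent companions. The error is $O(L^2/n^2)$, since each order gains a factor $L/n$.

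\textbf{Step 3.} For the main part $\int_0^\infty s\,e^{-Ms^2/n}R_2(s)\,ds$, I would avoid brute Stirling expansion and instead evaluate it more exactly. Writing $e^{-Ms^2/n}$ against $R_2$, I would expand $R_2(\sqrt n u)=e^{-u^2}\bigl(1+\frac{b_1u^2+b_2u^4}{n}+\frac{\cdots}{n^2}\bigr)$ and integrate termwise against $u e^{-Mu^2}$. This gives
\[
\frac{n}{2L}+\frac{\beta_1}{L^2}+\frac{\beta_2}{L^3}+\frac1n\Bigl(\frac{\cdot}{L^3}+\frac{\cdot}{L^4}+\frac{\cdot}{L^5}\Bigr)+\cdots,
\]
and matching with \eqref{coefficients} should yield $\frac{1}{6L^3}$ at order $n^0$ (the $1/L^2$ term must cancel) and $\frac1{24L^2}-\frac1{5L^4}+\frac1{6L^5}$ at order $n^{-1}$. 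For this I need the Stirling coefficients of $\log|\Gamma(n+\frac12+is)|^2-2\log\Gamma(n+\frac12)$ through the order producing $s^6/n^3$.

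The main obstacle is \textbf{uniformity} in $1\le L\le n$, i.e.\ controlling the Stirling remainder. Termwise integration produces moments $\int u^{2k+1}e^{-Lu^2}du\sim L^{-k-1}$, which helps when $L$ is large. When $L$ is comparable to $n$, however, the effective $s$-scale is $O(1)$, so the two regions of Steps 2 and 3 merge and the error must still be $O(L^2/n^2)$. I would handle this by a single uniform bound. For $|s|\le n^{1/2+\delta}$ the remainder after the kept terms in $\log R_2$ is $O((1+s^2)^{4}/n^4)$-type, and $\int s(1+s^2)^k e^{-Ls^2/n}ds\lesssim (n/L)^{k+1}$ then gives relative errors of size $O(L^2/n^2)$ after bookkeeping. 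I would first verify this bookkeeping at the two extremes $L=1$ and $L=n$ to make sure the remainder order chosen suffices.
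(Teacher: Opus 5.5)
Your route is the paper's. You split $\tanh(\pi s)=1-(1-\tanh(\pi s))$, expand $\int_0^\infty s\,e^{-Ms^2/n}R_2(s)\,ds$ around a Gaussian after rescaling (the paper uses $s=\sqrt{n/L}\,v$; your $s=\sqrt n\,u$ is equivalent), and Taylor-expand $e^{-Ls^2/n}$ in the $\tanh$-correction using the moments $\frac1{24}$ and $\frac7{960}$. Two things are missing or wrong, though.

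First, you match the coefficients to the target instead of computing them. Second, the error control you single out as the key step fails. A remainder of size $(1+s^2)^4/n^4$ in $\log R_2$ is of order $L^{-4}$ on the relevant scale $s\asymp\sqrt{n/L}$. By your own moment bound it contributes of order $n/L^5$ to $\mathfrak q_{n,M}$, i.e.\ $O(n)$ at $L=1$.

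The correct structure is that the $s^{2k}$-coefficient of $\log R_2$ is of order $n^{-(2k-1)}$. From $\log R_2(s)=-s^2\psi'(n+\frac12)+\frac{s^4}{12}\psi'''(n+\frac12)-\frac{s^6}{360}\psi^{(5)}(n+\frac12)+\cdots$ and $\psi'(n+\frac12)=\frac1n-\frac1{12n^3}+O(n^{-5})$, one gets
\[
\log R_2(s)=-\frac{s^2}{n}+\frac{s^2}{12n^3}+\frac{s^4}{6n^3}-\frac{s^6}{15n^5}+\cdots .
\]
So in your ansatz $a_1=a_2=a_5=0$. The absence of a $1/L^2$ term at order $n^0$ is simply $a_1=0$, not something to be matched. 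The $s^6$ term responsible for $-\frac1{5L^4}$ sits at order $n^{-5}$, not $n^{-3}$, so truncating your $s$-organized series at $n^{-3}$, as you propose, would lose it.

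Do the bookkeeping in the scaled variable instead. Take $0\le u\le n^{\delta}$ with $\delta<\frac14$; the tail is handled by your Step~1. The display above then gives, uniformly in $L\ge1$,
\[
e^{-Mu^2}R_2(\sqrt n\,u)=e^{-Lu^2}\Bigl[1+\frac{u^4}{6n}+\frac1{n^2}\Bigl(\frac{u^2}{12}-\frac{u^6}{15}+\frac{u^8}{72}\Bigr)+O\Bigl(\frac{1+u^{12}}{n^3}\Bigr)\Bigr].
\]
This is the paper's expansion written in $v=\sqrt L\,u$. Using $\int_0^\infty u^{2j+1}e^{-Lu^2}\,du=\frac{j!}{2L^{j+1}}$, the main part becomes $\frac n{2L}+\frac1{6L^3}+\frac1n\bigl(\frac1{24L^2}-\frac1{5L^4}+\frac1{6L^5}\bigr)+O\bigl(\frac1{Ln^2}\bigr)$.

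Uniformity in $L$ is then not an obstacle. Larger $L$ only shrinks the moments, and for $L\asymp n$ the claimed error is $O(1)$ anyway. The only $L$-sensitive error is in the $\tanh$-part. There, replacing $e^{-Ms^2/n}R_2(s)$ by $e^{-Ls^2/n}$ costs $O((s^2+s^4)/n^3)$, since both exponents are nonpositive. Then $0\le e^{-x}-1+x\le x^2/2$ with $x=Ls^2/n$ gives the $O(L^2/n^2)$ directly.
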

\begin{proof}
Decompose \eqref{eq:chi-explicit} as $\mathfrak q_{n,M}=\mathfrak q_{n,M,1}-\mathfrak q_{n,M,2}$ with
\[
\mathfrak q_{n,M,1}=\int_0^\infty s e^{-\frac{M}{n}s^2}R_2(s)ds,\quad \mathfrak q_{n,M,2}=\int_0^\infty s e^{-\frac{M}{n}s^2}R_2(s)
(1-\tanh(\pi s))ds.
\]
We will expand $\mathfrak q_{n,M,1}$ via a steepest descent analysis and then provide an appropriate estimate for $\mathfrak q_{n,M,2}$. Stirling's formula, see \cite[Eq.~5.11.8]{Boisvert-Clark-Lozier-Olver10}, gives
\[
\log\Gamma(w+\tfrac12)
=w\log w-w+\frac12\log(2\pi)
-\frac1{24w}+\bigO(|w|^{-3}),
\]
uniformly for $|\arg w|\le\pi/2$. Applying this with
$w=n(1\pm ix)$ and $w=n$, we obtain
\begin{equation} \label{eq:qfactor-proof-expansion}
e^{-Mn x^2}R_2(nx)
=\exp\left(
-n\phi_M(x)+\frac{x^2}{12n(1+x^2)}+\bigO(n^{-3}),
\right),
\end{equation}
uniformly for $x\ge0$, where
\[
\phi_M(x)=Mx^2+2x\arctan x-\log(1+x^2).
\]
In particular, 
\[
\phi_M(x)\ge
\begin{cases}
Lx^2/2,&0\le x\le1,\\
x/2,&x\ge1,
\end{cases}
\]
so that the integral representation also extends to $M=0$ by dominated convergence. Since
\[
\phi_M'(x)=2Mx+2\arctan x,\quad
\phi_M(0)=\phi_M'(0)=0,\quad
\phi_M''(0)=2L,
\]
the origin is a saddle point and the positive real axis is a steepest descent path. It will be convenient to perform the change of variables $s\mapsto \sqrt{n/L}v$, which yields
\[
\mathfrak q_{n,M,1}
=\frac nL\int_0^\infty
v e^{-\frac ML v^2}
R_2\left(\sqrt{\frac nL}\,v\right)dv.
\]
The preceding bounds show that the contribution from $v\ge n^{1/12}$ is exponentially small, uniformly for $1\le L\le n$, hence we may restrict the previous integral to its complement. 
Near the saddle,
\[
\phi_M(x)=Lx^2-\frac{x^4}{6}+\frac{x^6}{15}+\bigO(x^8).
\]
Consequently, by Taylor's theorem,
\begin{align*}
e^{-\frac ML v^2}
R_2\left(\sqrt{\frac nL}\,v\right)
=e^{-v^2}\left[
1+\frac{v^4}{6nL^2}
+\frac1{n^2}\left(
\frac{v^2}{12L}-\frac{v^6}{15L^3}
+\frac{v^8}{72L^4}
\right)
+O\left(\frac{1+v^{12}}{n^3}\right)
\right],
\end{align*}
uniformly for $0\le v\le n^{1/12}$ and $1\le L\le n$. The integrated remainder is certainly $\bigO(n^{-2})$. The integrals corresponding to the other terms can be extended again to $[0,\infty)$ at the cost of an exponentially small remainder. Using
\begin{equation} \label{eq:qfactor-proof-Gaussian-mom}
\int_0^\infty v^{2j+1}e^{-v^2}\,dv=\frac{j!}{2},
\end{equation}
we can therefore conclude that
\[
\mathfrak q_{n,M,1}
=\frac n{2L}+\frac1{6L^3}
+\frac1n\left(
\frac1{24L^2}-\frac1{5L^4}+\frac1{6L^5}
\right)+\bigO(n^{-2}).
\]
Evidently, $n^{-2}\le L^2/n^2$. It remains to estimate $\mathfrak q_{n,M,2}$. The uniform expansion \eqref{eq:qfactor-proof-expansion} and the mean value theorem imply that
\[
\left|
e^{-\frac{M}{n}s^2}R_2(s)-e^{-\frac Ln s^2}
\right|
\le\frac{C}{n^3}(1+s^2+s^4),
\qquad s\ge0.
\]
because the difference between the two phase exponents is bounded by $s^4/(6n^3)$ and the remaining exponential factor differs from one by at most
$C(1+s^2)/n^3$. Since $1-\tanh(\pi s)\le2e^{-2\pi s}$, it then follows that
\[
\mathfrak q_{n,M,2}
=\int_0^\infty s e^{-\frac Ln s^2}
(1-\tanh(\pi s))ds+\bigO(n^{-3}).
\]
Finally,
\[
\left|e^{-\frac Ln s^2}-1+\frac Ln s^2\right|
\le\frac{L^2s^4}{2n^2},
\]
and
\begin{equation} \label{eq:qfactor-proof-tanh-mom}
\int_0^\infty s(1-\tanh(\pi s))ds=\frac1{24},
\quad
\int_0^\infty s^3(1-\tanh(\pi s))ds=\frac7{960}. 
\end{equation}
Hence,
\[
\mathfrak q_{n,M,2}
=\frac1{24}-\frac{7L}{960n}
+O\left(\frac{L^2}{n^2}\right).
\]
Combining the expansions of $\mathfrak q_{n,M,1}$ and $\mathfrak q_{n,M,2}$ then yields \ref{qexp}.
\end{proof}

Next, we prove an expansion for $\mathfrak p_{n,M}$. Up to the sign of $c_{0}(L)$ (and the factor $2$), it is the same as for $\mathfrak q_{n,M}$

\begin{prop}\label{pfactor}
Uniformly for $1\le L\le n$ as $n\to\infty$,
\begin{equation}\label{pexp}
\frac{1}{2}\mathfrak p_{n,M}=\frac{n}{2L}+c_{0}(L)+\frac{c_{-1}(L)}{n}+O\!\left(\frac{L^2}{n^2}\right),
\end{equation}
where $c_{0}(L)$ and $c_{-1}(L)$ are as in \eqref{coefficients}.
\end{prop}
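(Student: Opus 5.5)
Starting from \eqref{eq:pi-explicit}, the plan is to mirror the proof of Proposition~\ref{qfactor} by turning the contour integral into a real-line integral of the same shape as \eqref{eq:chi-explicit}.

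\textbf{Step 1: deform the contour.} The integrand $z e^{-\frac{M}{n}z^2}R_1(z)\tan(\pi z)$ has poles at the half-integers $k+\frac12$ only where $\tan$ is singular. The function $R_1$ vanishes at $z=\pm(n+\frac12+m)$, $m\ge0$, which cancels the poles beyond the $n$-th. So the poles enclosed by $i\contC_-$ are exactly the nonzero-residue poles on the positive axis. Using the oddness of $z\tan(\pi z)$ times even factors, the integrand is even in $z$. I would therefore open $i\contC_-$ into the two vertical lines $\pm\epsilon+i\R$, or equivalently into the imaginary axis traversed on both sides, and discard the arcs at infinity. Along $\Re z\to+\infty$ the factor $R_1$ decays super-exponentially past $n$, and $e^{-\frac Mn z^2}$ decays in the sectors near the real axis; the arcs should be chosen inside $|\arg z|<\pi/4$ beyond $|z|\sim n$.

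\textbf{Step 2: reduce to a real integral.} Parametrizing by $z=is$ gives $\tan(\pi z)=i\tanh(\pi s)$ and $R_1(is)=1/R_2(s)$. I expect a representation of the form
\[
\tfrac12\mathfrak p_{n,M}=\int_0^\infty s\,e^{\frac Mn s^2}R_2(s)^{-1}\tanh(\pi s)\,ds \quad\text{(interpreted after rotation)},
\]
but this integral diverges. Instead I would rotate to a contour where $e^{-\frac Mn z^2}$ decays, namely the real axis itself, treated in principal value with half-residues. An alternative is to split $\tan(\pi z)=i-\frac{2i}{1+e^{-2\pi i z}}$ in the upper half plane, and analogously in the lower half plane. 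The ``$i$'' part then gives a Gaussian-type integral $\int s\,e^{-\frac Mn s^2}R_1(s)\,ds$ along a line. Because $R_1(nx)\approx e^{+n(\ldots)}$ mirrors $R_2$, this has saddle at $0$ with phase $\phi$ obtained by $x\mapsto ix$, namely $Lx^2+\frac{x^4}6+\frac{x^6}{15}+\dots$. The correction $-\frac{1}{1+e^{\mp2\pi iz}}$ part decays exponentially away from the real axis.

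\textbf{Step 3: expand.} The main saddle part is handled exactly as $\mathfrak q_{n,M,1}$, with the sign of the quartic term reversed, using Stirling's formula. The Taylor expansion mirroring the one in the proof of Proposition~\ref{qfactor} should reproduce $\frac n{2L}-\frac1{6L^3}+\frac1n(\ldots)$ with flipped $L^{-3}$ sign. The Fermi-factor part is approximated by $e^{-\frac Ln s^2}$ plus an error, as in the estimate of $\mathfrak q_{n,M,2}$. It yields $\pm\frac1{24}$ and the $\frac{7L}{960n}$ term through the moments \eqref{eq:qfactor-proof-tanh-mom}, with signs arranged so that $c_0$ flips while $c_{-1}$ is unchanged.

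\textbf{Main obstacle.} The hardest step is the contour deformation in Step 1, specifically justifying that the arcs vanish uniformly in $1\le L\le n$. The interplay of $e^{-\frac Mn z^2}$ (growing along the imaginary direction) with $R_1$ (decaying only past the zeros) is delicate. I would first try placing the vertical segments at $\Re z=n$, where $R_1$ and $\tan$ are controlled by Stirling's formula, and bounding the leftover strip contributions as exponentially small. Tracking signs so that $c_0$ flips while $c_{-1}$ does not is the second delicate point, and I would verify it against the small case $n=1$.
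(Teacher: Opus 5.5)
\textbf{Overall assessment.} Your overall architecture is the paper's. You split $\tan(\pi z)=i+(\tan(\pi z)-i)$ in the upper half-plane, send the $i$-part to the real Laplace integral $\int s\,e^{-\frac Mn s^2}R_1(s)\,ds$ with phase $\psi_M(x)=Mx^2+(1+x)\log(1+x)+(1-x)\log(1-x)=Lx^2+\frac{x^4}6+\frac{x^6}{15}+\dots$, and extract $\tanh$-moments from the remainder. But the proposal never places this split on a contour where it is legitimate, and that is a genuine gap, not a technicality.

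\textbf{First gap: the contour.} Step 1 fails as written. The strip between $\pm\epsilon+i\R$ contains none of the poles, and along any contour running to infinity in the imaginary direction the integrand grows, as you notice in Step 2. Arcs in $|\arg z|<\pi/4$ cannot be discarded uniformly either. By reflection, $R_1(z)=\Gamma(n+\frac12)^2\,\frac{\sin(\pi(n+\frac12-z))}{\pi}\,\frac{\Gamma(z-n+\frac12)}{\Gamma(z+n+\frac12)}$, which for $\Re z>n+\frac12$ decays only polynomially along the real axis and grows like $e^{\pi|\Im z|}$ off it. So $R_1$ does not decay super-exponentially past $n$. Moreover, at $L=1$, i.e.\ $M=0$, which lies in the claimed uniform range, there is no Gaussian factor at all.

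The missing idea is to truncate \emph{before} deforming.
\begin{itemize}
\item The Euler product gives $e^{-\frac Mn s^2}R_1(s)\le e^{-\frac{2L}{3n}s^2}$ for $0\le s<n+\frac12$. Hence the residues at $k+\frac12\ge\lfloor n/2\rfloor$ are exponentially small uniformly in $L\ge1$.
\item So $i\contC_-$ may be replaced by the boundary of the bounded triangle with vertices $0$ and $\lfloor n/2\rfloor(1\pm i/\sqrt3)$.
\item The opening angle $\pi/6$ keeps $\Re z^2\ge0$ and gives $\Re\psi_M(x+iy)\ge Lx^2-\frac43Ly^2\ge\frac59Lx^2$. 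This makes the vertical side negligible.
\item Conjugation symmetry then reduces everything to $\frac12\mathfrak p_{n,M}=\Im\int_\gamma ze^{-\frac Mn z^2}R_1(z)\tan(\pi z)\,dz$ over the upper edge $\gamma$. That is where the split is performed.
\item The $i$-part has an entire integrand and is deformed to $[0,\lfloor n/2\rfloor]$.
\end{itemize}
Your fallback of cutting at $\Re z=n$ points in this direction, but without such a quantitative bound it is not yet an argument.

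\textbf{Second gap: the $(\tan(\pi z)-i)$-part.} This part cannot be handled ``as in the estimate of $\mathfrak q_{n,M,2}$''. It lives on $\gamma$, since on the real axis $\tan(\pi z)-i$ still has the poles. If you rotate it to $i\R_{\ge0}$ directly, the Gaussian approximant $e^{-\frac Ln z^2}$ becomes $e^{+\frac Ln s^2}$, which is not integrable against $1-\tanh(\pi s)$.

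What works is the following order of operations.
\begin{itemize}
\item On $\gamma$, replace $e^{-\frac Mn z^2}R_1(z)$ by $e^{-\frac Ln z^2}$ and then by $1-\frac Ln z^2$. The bound $|e^{-w}-1+w|\le\frac12|w|^2$ needs $\Re w\ge0$, again supplied by the angle $\pi/6$.
\item Extend to the ray $e^{i\pi/6}\R_{\ge0}$.
\item Only then rotate with $z=is$. The substitution $z^2=-s^2$ turns $1-\frac Ln z^2$ into $1+\frac Ln s^2$ and yields $\frac1{24}+\frac{7L}{960n}$.
\end{itemize}
This substitution is exactly what fixes the sign of the $\frac{7L}{960n}$ term. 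A literal transplant of the $\mathfrak q_{n,M,2}$ computation would give $\frac1{24}-\frac{7L}{960n}$ and hence the wrong $c_{-1}(L)$. Saying the signs are ``arranged so that $c_0$ flips while $c_{-1}$ is unchanged'' reads them off the statement rather than deriving them. A check at $n=1$ cannot decide the signs of asymptotic coefficients.

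(Minor: $z\tan(\pi z)$ is even, not odd, though your conclusion that the integrand is even is correct.)
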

\begin{proof}
We first note that the contour representation \eqref{eq:pi-explicit} extends to $M=0$ via the underlying residues. The Euler product representation of the gamma function gives the elementary bound
\[
e^{-\frac{M}{n}s^2}R_1(s)
\le e^{-\frac{2L}{3n}s^2},\qquad 0\leq s<n+\tfrac{1}{2},
\]
hence the contribution of the residues that correspond to the poles $k - \frac{1}{2}\ge \lfloor n/2\rfloor$ is exponentially small. We may therefore replace the contour $i\contC_-$ in \eqref{eq:pi-explicit} by the counterclockwise boundary of the triangle with vertices $0$ and $\lfloor n/2\rfloor(1\pm i/\sqrt3)$. Similarly as in the proof of Proposition \ref{qfactor}, Stirling's formula gives
\begin{equation}\label{eq:pfactor-proof-expansion}
e^{-Mn w^2}R_1(nw)
=\exp\left(
-n\psi_M(w)+\frac{w^2}{12n(1-w^2)}+\bigO(n^{-3})
\right),
\end{equation}
uniformly for $|\Re w|\le1/2$ and $|\Im w|\le1/2$, where
\[
\psi_M(w)
=Mw^2+(1+w)\log(1+w)+(1-w)\log(1-w).
\]
Since $\psi_0(x)\ge x^2$ for real $|x|\le1/2$ and $\psi_0''(w)=2/(1-w^2)$, Taylor's theorem in the
vertical direction gives
\[
\Re\psi_M(x+iy)
\ge Lx^2-\frac43Ly^2.
\]
In particular, throughout the triangle,
\[
\left|e^{-\frac{M}{n}z^2}R_1(z)\right|
\le C e^{-\frac{5L}{9n}(\Re z)^2}.
\]
On its vertical side, we can use $\tan(\pi(\lfloor n/2\rfloor+iy))=i\tanh(\pi y)$, to show that the contribution from this side is exponentially small. Let $\gamma$ be the segment from $0$ to
$\lfloor n/2\rfloor(1+i/\sqrt3)$, oriented away from the origin.
By conjugation symmetry,
\[
\frac12\mathfrak p_{n,M}
=\Im\int_\gamma
z e^{-\frac{M}{n}z^2}R_1(z)\tan(\pi z)\,dz
+\bigO(n^{-2}).
\]
Since $z\mapsto z e^{-Mz^2/n}R_1(z)$ is entire, its integral over $\gamma$ can be deformed to $[0,\lfloor n/2\rfloor]$ and a vertical segment whose contribution is exponentially small. This leads to the decomposition
\[
\frac12\mathfrak p_{n,M}
=\mathfrak p_{n,M,1}+\mathfrak p_{n,M,2}+\bigO(n^{-2}),
\]
where
\begin{align*}
\mathfrak p_{n,M,1}
=\int_0^{\lfloor n/2\rfloor} s e^{-\frac{M}{n}s^2}R_1(s)ds,\quad
\mathfrak p_{n,M,2}
=\Im\int_\gamma
z e^{-\frac{M}{n}z^2}R_1(z)
(\tan(\pi z)-i)dz.
\end{align*}

The asymptotic expansion of $\mathfrak p_{n,M,1}$ can be derived via a steepest descent analysis, similarly as for $\mathfrak q_{n,M,1}$ in the proof of Proposition \eqref{qfactor}. Since for $0<x<1$,
\[
\psi_M'(x)=2Mx+\log\frac{1+x}{1-x}>0,
\quad
\psi_M(0)=\psi_M'(0)=0,\quad
\psi_M''(0)=2L,
\]
the origin is a saddle point and the positive real axis is a steepest descent path. The change of variables $s\mapsto \sqrt{n/L} v$ gives
\[
\mathfrak p_{n,M,1}
=\frac nL\int_0^{\lfloor n/2\rfloor \sqrt{L/n}}
v e^{-\frac ML v^2}
R_1\left(\sqrt{\frac nL}\,v\right)\,dv.
\]
The preceding bounds show that the contribution from $v\ge n^{1/12}$ is exponentially small, uniformly for $1\le L\le n$. Near the saddle,
\[
\psi_M(x)
=Lx^2+\frac{x^4}{6}+\frac{x^6}{15}+\bigO(x^8).
\]
Consequently, by
Taylor's theorem,
\begin{align*}
e^{-\frac ML v^2}
R_1\left(\sqrt{\frac nL}\,v\right)
=e^{-v^2}\left[
1-\frac{v^4}{6nL^2}
+\frac1{n^2}\left(
\frac{v^2}{12L}-\frac{v^6}{15L^3}
+\frac{v^8}{72L^4}
\right)
+O\left(\frac{1+v^{12}}{n^3}\right)
\right],
\end{align*}
uniformly for $0\le v\le n^{1/12}$ and $1\le L\le n$. The integrated remainder is certainly $\bigO(n^{-2})$. The integrals corresponding to the other terms can be extended to $[0,\infty)$ at the cost of an exponentially small remainder. Using \eqref{eq:qfactor-proof-Gaussian-mom}, we can therefore conclude that
\[
\mathfrak p_{n,M,1}
=\frac n{2L}-\frac1{6L^3}
+\frac1n\left(
\frac1{24L^2}-\frac1{5L^4}+\frac1{6L^5}
\right)+\bigO(n^{-2}).
\]

It remains to estimate $\mathfrak p_{n,M,2}$. We will do so similarly as for $\mathfrak q_{n,M,2}$ in the proof of Proposition \eqref{qfactor}. The uniform expansion \eqref{eq:pfactor-proof-expansion} and the mean value theorem imply that
\[
\left|
e^{-\frac{M}{n}z^2}R_1(z)-e^{-\frac Ln z^2}
\right|
\le\frac{C}{n^3}(1+|z|^2+|z|^4),
\qquad z\in\gamma,
\]
because $\psi_M(w)-Lw^2=\bigO(w^4)$ and the real parts of $-n\psi_M(z/n)$ and $-Lz^2/n$ are non-positive for $z\in\gamma$. Since $|\tan(\pi z)-i|\le Ce^{-c|z|}$, replacing $e^{-\frac{M}{n}z^2}R_1(z)$ by $e^{-\frac Ln z^2}$ in the integral introduces an $\bigO(n^{-3})$ error. Since $\Re z^2\ge0$ on $\gamma$, by Taylor's theorem,
\[
\left|e^{-\frac Ln z^2}-1+\frac Ln z^2\right|
\le\frac{L^2|z|^4}{2n^2}.
\]
The resulting integrated remainder is $\bigO(L^2/n^2)$. After replacing $e^{-\frac Ln z^2}$ by $1-\frac Ln z^2$, we may extend $\gamma$ to the full ray $e^{i\pi/6}\R_{\geq 0}$ at the cost of an exponentially small remainder. Consequently,
\[
\mathfrak p_{n,M,2}
=\Im\int_0^{e^{i\pi/6}\infty}
z\left(1-\frac Ln z^2\right)
(\tan(\pi z)-i)\,dz
+O\left(\frac{L^2}{n^2}\right).
\]
Rotating the ray to the positive imaginary axis and performing the change of variables $z\mapsto is$ then gives
\begin{align*}
\mathfrak p_{n,M,2}
&=\int_0^\infty
s\left(1+\frac Ln s^2\right)
(1-\tanh(\pi s))\,ds
+O\left(\frac{L^2}{n^2}\right).
\end{align*}
Hence, by \eqref{eq:qfactor-proof-tanh-mom}, 
\begin{align*}
\mathfrak p_{n,M,2}
=\frac1{24}+\frac{7L}{960n}
+O\left(\frac{L^2}{n^2}\right),
\end{align*}
Combining the expansions of $\mathfrak p_{n,M,1}$ and $\mathfrak p_{n,M,2}$ then proves \eqref{pexp}.
\end{proof}

We can use the previous expansions to obtain an expansion of the product $\mathfrak p_{n,M}\mathfrak q_{n,M}$ as well. This immediately yields an expansion of the average conductance evolution $\partial_u\E_{n,u}[G]$ and the second conductance moment $\E_{n,u}[G^2]$.

\begin{prop}\label{product}
Uniformly for $0\le M\le n-1$ as $n\to\infty$,
\begin{equation}\label{productexp}
\mathfrak p_{n,M}\mathfrak q_{n,M}
 =\frac{n^2}{2L^2}+\frac1{90}+\frac1{9L^3}-\frac2{5L^5}+\frac5{18L^6}+O\!\left(\frac Ln\right).
\end{equation}
\end{prop}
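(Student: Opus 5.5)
The plan is to multiply the expansions of Propositions~\ref{qfactor} and \ref{pfactor}. Both hold uniformly for $1\le L\le n$, and the range $0\le M\le n-1$ in the statement is exactly $1\le L=M+1\le n$, so both may be used throughout. Write
\[
\mathfrak p_{n,M}=2\Big(\frac{n}{2L}+c_0+\frac{c_{-1}}{n}+E_1\Big),\qquad
\mathfrak q_{n,M}=\frac{n}{2L}-c_0+\frac{c_{-1}}{n}+E_2,
\]
with $E_1,E_2=\bigO(L^2/n^2)$ uniformly, and $c_0=c_0(L)$, $c_{-1}=c_{-1}(L)$ as in \eqref{coefficients}.

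\textbf{Step 1: main terms.} Expanding the product and collecting the terms of order at least $1$ gives
\[
\mathfrak p_{n,M}\mathfrak q_{n,M}=2\Big(\frac{n^2}{4L^2}-c_0^2+\frac{c_{-1}}{L}\Big)+\text{remainder}.
\]
The $c_0$ terms cancel at order $n/L$ because they enter the two expansions with opposite signs. The cross term $\frac{n}{2L}\cdot\frac{c_{-1}}{n}$ appears twice, which produces $c_{-1}/L$.

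\textbf{Step 2: the remainder is $\bigO(L/n)$ uniformly.} I would use the bounds $|c_0|\le 1$ and $|c_{-1}|\le CL$, both valid uniformly for $L\ge1$, and check each leftover term.
\begin{itemize}
\item The terms $\frac{n}{L}E_j$ are $\bigO(L/n)$.
\item The terms $c_0c_{-1}/n$ are $\bigO(L/n)$.
\item The term $c_{-1}^2/n^2$ is $\bigO(L^2/n^2)$, which is $\bigO(L/n)$ because $L\le n$.
\item The terms $c_0E_j$ and $c_{-1}E_j/n$ are $\bigO(L^2/n^2)$ or smaller, hence also $\bigO(L/n)$.
\item The term $E_1E_2$ is $\bigO(L^4/n^4)$, which is again $\bigO(L/n)$ since $L\le n$.
\end{itemize}

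\textbf{Step 3: explicit constants.} From \eqref{coefficients},
\[
\frac{2c_{-1}}{L}=\frac{7}{480}+\frac{1}{12L^3}-\frac{2}{5L^5}+\frac{1}{3L^6},
\qquad
2c_0^2=\frac{1}{288}-\frac{1}{36L^3}+\frac{1}{18L^6}.
\]
Subtracting gives the four coefficients in \eqref{productexp}:
\begin{itemize}
\item constant term: $\frac{7}{480}-\frac{1}{288}=\frac{1}{90}$;
\item $L^{-3}$ term: $\frac{1}{12}+\frac{1}{36}=\frac19$;
\item $L^{-5}$ term: $-\frac25$;
\item $L^{-6}$ term: $\frac13-\frac1{18}=\frac{5}{18}$.
\end{itemize}
Together with the leading term $\frac{n^2}{2L^2}$ from Step 1, this proves \eqref{productexp}.

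The argument is elementary bookkeeping. The only point that needs care is Step 2: the bound on $c_{-1}$ grows linearly in $L$, so one must use $L\le n$ to keep every leftover term within $\bigO(L/n)$ uniformly over the whole range.
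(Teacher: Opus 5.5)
Your proposal is correct and follows the paper's proof: multiply the expansions of Propositions~\ref{qfactor} and \ref{pfactor}, use $c_0(L)=\bigO(1)$, $c_{-1}(L)=\bigO(L)$ and $L\le n$ to absorb every leftover term (including $c_{-1}^2/n^2$) into $\bigO(L/n)$, and then evaluate $2c_{-1}(L)/L-2c_0(L)^2$ explicitly, as you did. One minor refinement: the $c_0c_{-1}/n$ cross terms cancel exactly between the two expansions, so they need no bound at all.
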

\begin{proof}
Since $1\le L\le n$, \eqref{coefficients} gives $c_0(L)=\bigO(1)$ and $c_{-1}(L)=\bigO(L)$. Thus the terms in \eqref{pexp} and \eqref{qexp} are $\bigO(n/L)$, and their remainders contribute
$\bigO(L/n)$ to the product. Multiplying these expansions therefore yields
\begin{align*}
\mathfrak p_{n,M}\mathfrak q_{n,M}
=\frac{n^2}{2L^2}
+\frac{2c_{-1}(L)}L-2c_0(L)^2
+O\left(\frac Ln\right),
\end{align*}
where we used that $c_{-1}(L)^2/n^2=\bigO(L^2/n^2)=\bigO(L/n)$. Finally, \eqref{coefficients} yields
\[
\frac{2c_{-1}(L)}L-2c_0(L)^2
=\frac1{90}+\frac1{9L^3}
-\frac2{5L^5}+\frac5{18L^6},
\]
which proves \eqref{productexp}.
\end{proof}

We are now ready to prove the main result.

\begin{proof}[Proof of Theorem \ref{thm:cond-mean-var}]
We first retain one further term in the expansion of $\mathbb E_{n,M}[G]$. By \eqref{momentsdef} and \eqref{productexp}, after the change of variables $u\mapsto v-1$,
\begin{align*}
\E_{n,M}[G]
&=\frac nL-\frac2n\int_1^L
\left(
\frac1{90}+\frac1{9v^3}
-\frac2{5v^5}+\frac5{18v^6}
\right)dv
+O\left(\frac{L^2-1}{n^2}\right)\\
&=\frac nL-\frac1n\left(
\frac L{45}-\frac1{9L^2}
+\frac1{5L^4}-\frac1{9L^5}
\right)
+O\left(\frac{L^2-1}{n^2}\right).
\end{align*}
Since $1\le L\le n$, this yields \eqref{meanexp}. Squaring the preceding expansion gives
\[
\E_{n,M}[G]^2
=\frac{n^2}{L^2}-\frac2{45}
+\frac2{9L^3}-\frac2{5L^5}+\frac2{9L^6}
+O\left(\frac Ln\right).
\]
Together with \eqref{momentsdef} and \eqref{productexp}, this implies that
\begin{align*}
\operatorname{Var}_{n,M}G
=\frac1{15}-\frac2{5L^5}+\frac1{3L^6}
+O\left(\frac Ln\right),
\end{align*}
which is precisely \eqref{varianceexp} after writing $L=M+1$. This finishes the proof.
\end{proof}

\section{Local statistics} \label{sec:LS}

For notational convenience, in this section we will denote
\begin{equation} \label{eq:Khat}
  \Khat_{n}(\xi, \eta) = \sqrt{\sinh \xi \sinh \eta} K_{n,M_n}(\cosh \xi, \cosh \eta) = - \int_{\Sigma} \frac{ds}{\pi i} \oint_{\contC} \frac{dt}{2\pi i} \frac{q_n(s; \eta) p_n(t; \xi)}{s-t},
\end{equation}
where
\begin{align} 
  q_n(s; \eta) = {}& \sqrt{\sinh \eta} e^{-\frac{M_n}{n} s^2} ({\textstyle\frac{1}{2}}-is)_{n}({\textstyle\frac{1}{2}}+is)_{n} Q_{is-\frac{1}{2}}(\cosh \eta), \\
  p_n(t; \xi) = {}& \sqrt{\sinh \xi} \frac{t e^{\frac{M_n}{n} t^2}}{(\frac{1}{2}-it)_{n}(\frac{1}{2}+it)_{n}} P_{it-\frac{1}{2}}(\cosh \xi),
\end{align}
and $M_n>0$ is chosen appropriately according to the specific regime that is being considered. In terms of the kernel $\mathbb{K}_{n,M}$ in Section \ref{sec:MR}, we have 
\[
\mathbb{K}_{n,M}(\xiscaled,\etascaled) = M \Khat_{n}(M\xiscaled,M\etascaled).
\]

To facilitate the asymptotic analysis in this section, we require a sufficient understanding of the asymptotics of the Pochhammer symbols $(\frac{1}{2} - is)_n (\frac{1}{2} + is)_n$ and $(\frac{1}{2} - it)_n (\frac{1}{2} + it)_n$, and the Legendre functions $Q_{is-\frac{1}{2}}(\cosh \eta)$ and $P_{it-\frac{1}{2}}(\cosh \xi)$. The former will often be derived with the help of the reflection formula for the gamma function, which gives
\begin{subequations} \label{eq:Pochhammer_asy}
  \begin{align}
    ({\textstyle\frac{1}{2}}-is)_{n}({\textstyle\frac{1}{2}}+is)_{n} = {}& \frac{\Gamma(\frac{1}{2} + n - is) \Gamma(\frac{1}{2} + n + is)}{\Gamma({\textstyle\frac{1}{2}} - is) \Gamma(\frac{1}{2} + is)}
    \label{eq:reflection_Gamma} \\
    = {}& (-1)^n \frac{\Gamma(n + \frac{1}{2} + is)}{\Gamma(-n + \frac{1}{2} + is)}, \label{eq:reflection_Gamma_alt}
\end{align}
\end{subequations}
and Stirling's formula. The required asymptotic properties of the Legendre functions are collected in the Appendix. The symmetry
\begin{equation} \label{eq:symmetry_P_Q}
  P_{-it - \frac{1}{2}}(z) = P_{it - \frac{1}{2}}(z), \quad p_n(t; \xi) = -p_n(-t; \xi),
\end{equation}
see \cite[Eq. (1.30) \& (1.31)]{Nemes-Olde_Daalhuis20}, will often be convenient to use as well.
\medbreak

We will treat the four regimes of the DMPK ensemble in the order of insulating, crossover II, crossover I and metallic regime.

\subsection{Insulating regime} \label{subsec:insulator_regime}

We assume that $M=M_n=\alpha_n n$ with all $\alpha_n>0$ and $\alpha_n\to\infty$ as $n \to \infty$. The method of proof is similar to that in \cite[Sect. 2.1.1]{Liu-Wang-Wang18}.

\subsubsection{Normal distribution around classical positions} \label{subsubsec:insulator_normal}

For each $m \in \{ 1, \dotsc, n \}$, recall the classical positions $\zetascaled_{n, m}$ and variables $\xiscaled$ and $\etascaled$ depending on real parameters $\tau, \sigma$ defined in Theorem \ref{thm:insulator}. We take
\begin{align} \label{eq:xi_eta_x_y_insulator}
  \zeta_{n, m} = {}& M \zetascaled, & \xi = {}& \zeta_{n, m} + \sqrt{2} \tau \alpha_n^{\frac{1}{2}} = M \xiscaled, & \eta = {}& \zeta_{n, m} + \sqrt{2} \sigma \alpha_n^{\frac{1}{2}} = M \etascaled.
\end{align}
Note that $\xi > \alpha_n/2 $ and $\eta > \alpha_n/2$ for sufficiently large $n$. We deform the contour $\contC$ into $\contC^{(\insulator, m)}_{\crit} \cup \contC^{(\insulator, m)}_{\crit\Upper} \cup \contC^{(\insulator, m)}_{\Lower} \cup \contC^{(\insulator, m)}_{\Middle} \cup \contC^{(\insulator, m)}_{\Upper}$, where for $m \in \{ 2, \dotsc, n - 1 \}$, we let
\begin{align}
  \contC^{(\insulator, m)}_{\crit} = {}& \{ -i(m - \tfrac{1}{2}) + \alpha_n^{-\frac{1}{2}} e^{i\theta} : \theta \in [0, 2\pi] \}, \notag \\
  \contC^{(\insulator, m)}_{\crit\Upper} = {}& \{ z : \bar{z} \in \contC^{(\insulator, m)}_{\crit} \}, \notag \\
  \contC^{(\insulator, m)}_{\Lower} = {}& \text{polygonal contour with vertices $-i(m + \tfrac{1}{2}) + \alpha_n^{-\frac{1}{2}} i$, $-i(m + \tfrac{1}{2}) + \alpha_n^{-\frac{1}{2}} i + \tfrac{1}{2} e^{-\frac{\pi i}{3}}$,} \notag \\
                     & \text{$-i(m + \tfrac{1}{2}) + \alpha_n^{-\frac{1}{2}} i + \tfrac{1}{2} e^{-\frac{2\pi i}{3}}$, $-i(n - \tfrac{1}{2}) + \tfrac{1}{2} e^{-\frac{\pi i}{3}}$ and $-i(n - \tfrac{1}{2}) + \tfrac{1}{2} e^{-\frac{2\pi i}{3}}$}, \label{eq:defn_C_-} \\
  \contC^{(\insulator, m)}_{\Upper} = {}& \{ z : \bar{z} \in \contC^{(\insulator, m)}_{\Lower} \}, \label{eq:defn_C_+} \\
  \contC^{(\insulator, m)}_{\Middle} = {}& \text{polygonal contour with vertices $-i(m - \tfrac{3}{2}) - \alpha_n^{-\frac{1}{2}}i$, $-i(m - \tfrac{3}{2}) - \alpha_n^{-\frac{1}{2}}i + \tfrac{1}{2} e^{\frac{\pi i}{3}}$,} \notag \\
                     & \text{$-i(m - \tfrac{3}{2}) - \alpha_n^{-\frac{1}{2}}i + \tfrac{1}{2} e^{\frac{2\pi i}{3}}$, $i(m - \tfrac{3}{2}) + \alpha_n^{-\frac{1}{2}}i$, $i(m - \tfrac{3}{2}) + \alpha_n^{-\frac{1}{2}}i + \tfrac{1}{2} e^{-\frac{\pi i}{3}}$,} \notag \\
                     & \text{$i(m - \tfrac{3}{2}) + \alpha_n^{-\frac{1}{2}}i + \tfrac{1}{2} e^{\frac{2\pi i}{3}}$}, \label{eq:defn_C_m}
\end{align}
with positive orientation. If $m = 1$, we take $\contC^{(\insulator, 1)}_{\Middle} = \emptyset$, and the other contours defined as above; if $m = n$, we take $\contC^{(\insulator, n)}_{\Lower} = \contC^{(\insulator, n)}_{\Upper} = \emptyset$, and the other contours defined as above. In Section \ref{subsubsec:voidness}, we extend the definition of $\contC^{(\insulator, m)}_{\Lower}$ and $\contC^{(\insulator, m)}_{\Upper}$ to $m = 0$. We deform the contour $\Sigma$ to $\Sigma^{(m - 1/2 + 2 \alpha_n^{-1/2})}_0 \cup \Sigma^{(m - 1/2 + 2 \alpha_n^{-1/2})}_+ \cup \Sigma^{(m - 1/2 + 2 \alpha_n^{-1/2})}_-$, where
\begin{align}
  \Sigma^{(R)}_0 = {}& \{ x - iR : x \in [-R, R] \}, \label{eq:defn_Sigma_0} \\
  \Sigma^{(R)}_+ = {}& \{ -iR + R + x e^{-\frac{\pi i}{8}} : x \in [0, \infty) \}, \\
  \Sigma^{(R)}_- = {}& \{ -iR - R + x e^{-\frac{7\pi i}{8}} : x \in [0,\infty) \}, \label{eq:defn_Sigma_-}
\end{align}
and orient it rightward. Here and later in some cases, $\Sigma$ is taken not to be a horizontal line, because we want to use some asymptotic results available only in certain regions. See Figure \ref{fig:insulator_normal} for an overview of the contours.
\medbreak

\begin{figure}[htb]
  \centering
  \includegraphics[angle=-90]{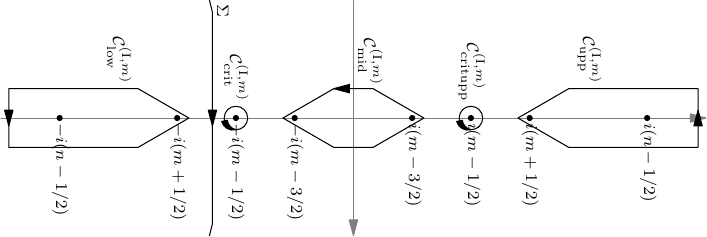}
  \caption{Contours in Section \ref{subsubsec:insulator_normal}. Figure rotated $90^{\circ}$ clockwise.}
  \label{fig:insulator_normal}
\end{figure}
\medbreak

Below we fix $m \in \{ 1, \dotsc, n \}$ and write
\begin{equation}
  \Khat_{n}(\xi, \eta) = I_{\crit} + I_{\crit\Upper} + I_{\Lower} + I_{\Middle} + I_{\Upper},
\end{equation}
where
\begin{equation}
  I_{\star} = \int_{\Sigma} \frac{-ds}{\pi i} \oint_{\contC^{(\insulator, m)}_{\star}} \frac{dt}{2\pi i} \frac{q_n(s; \eta) p_n(t; \xi)}{s-t}, \quad \star = \text{$\crit$, $\crit\Upper$, $\Lower$, $\Middle$ or $\Upper$}.
\end{equation}
We have, by \eqref{eq:asy_Q_small_s} if $\lvert s \rvert$ is small and by \eqref{eq:asy_Q_is} if $\vert s \rvert$ is large, that uniformly for $s \in \Sigma$,
\begin{equation} \label{eq:asy_Q_insulator}
  \sqrt{\sinh \eta} e^{-\frac{M}{n} s^2} Q_{is - \frac{1}{2}}(\cosh \eta) = \sqrt{\pi/2} \, e^{-\alpha_n s^2-is\eta} \frac{\Gamma(is + \frac{1}{2})}{\Gamma(is + 1)} (1 + \bigO(e^{-\eta})).
\end{equation}
We have, by \eqref{eq:asy_P_small_t}, uniformly for $t \in \contC$ with $\Im t \in [-3, 3]$,
\begin{multline} \label{eq:asy_P_insulator_between}
  \sqrt{\sinh \xi} e^{\frac{M}{n} t^2} P_{it - \frac{1}{2}}(\cosh \xi) = \\
  \frac{1}{\sqrt{2\pi}} e^{\alpha_n t^2 + it\xi} \frac{\Gamma(it)}{\Gamma(it + \frac{1}{2})} (1 + \bigO(e^{-\xi})) + \frac{1}{\sqrt{2\pi}} e^{\alpha_n t^2 - it\xi} \frac{\Gamma(-it)}{\Gamma(-it + \frac{1}{2})} (1 + \bigO(e^{-\xi})),
\end{multline}
by \eqref{eq:asy_P_it}, uniformly for $t \in \contC$ with $\Im t \leq -3$, 
\begin{equation} \label{eq:asy_P_insulator}
  \sqrt{\sinh \xi} e^{\frac{M}{n} t^2} P_{it - \frac{1}{2}}(\cosh \xi) = \frac{1}{\sqrt{2\pi}} e^{\alpha_n t^2 + it\xi} \frac{\Gamma(it)}{\Gamma(it + \frac{1}{2})} (1 + \bigO(e^{-\xi})),
\end{equation}
and, by \eqref{eq:asy_P_it} and \eqref{eq:symmetry_P_Q}, uniformly for $t \in \contC$ with $\Im t \geq 3$,
\begin{equation} \label{eq:asy_P_insulator_reflection}
  \sqrt{\sinh \xi} e^{\frac{M}{n} t^2} P_{it - \frac{1}{2}}(\cosh \xi) = \frac{1}{\sqrt{2\pi}} e^{\alpha_n t^2 - it\xi} \frac{\Gamma(-it)}{\Gamma(-it + \frac{1}{2})} (1 + \bigO(e^{-\xi})).
\end{equation}
Note that, by \eqref{eq:asy_P_insulator_between}, \eqref{eq:asy_P_insulator} also holds for $t \in \contC^{(\insulator, m)}_{\crit}$, even if $m = 1, 2, 3$ and $\Im t > -3$.

By \eqref{eq:reflection_Gamma} and Stirling's formula, we have that for all $t \in \contC$ with $\Im t \leq 0$,
\begin{equation} \label{eq:Stirling_t}
  (\tfrac{1}{2}-it)_{n}(\tfrac{1}{2}+it)_{n} = \frac{\cosh(\pi t)}{\sqrt{\pi/2}} \Gamma(\tfrac{1}{2} + n - it) e^{(n + it)(\log(n + it) - 1)} (1 + \bigO(n^{-1})),
\end{equation}
and upon substituting $t$ by $s$, \eqref{eq:Stirling_t} is also valid for $s \in \Sigma$. For $s \in \Sigma$ such that $\lvert s + in \rvert > \epsilon n$, we have
\begin{equation} \label{eq:Stirling_s}
  (\tfrac{1}{2}-is)_{n}(\tfrac{1}{2}+is)_{n} = 2\cosh(\pi s) e^{(n - is)(\log(n - is) - 1) + (n + is)(\log(n + is) - 1)} (1 + \bigO(n^{-1})),
\end{equation}
and upon substituting $s$ by $t$, \eqref{eq:Stirling_s} is also valid for all $t \in \contC$ with $\lvert \Im t \rvert \leq (1 - \epsilon/2) n$. For $t \in \contC$ with $\Im t \leq 0$, we can simply use the symmetry $t\mapsto -t$ of $(\frac{1}{2} - it)_n (\frac{1}{2} - it)_n$. In particular, if $u, v$ are in a bounded subset of $\compC$, and
\begin{align} \label{eq:change_of_variables_crossover}
  s = -im + u, \qquad t = -im + v, 
\end{align}
then for $m \in \{ 1, \dotsc, n \}$ with $m < (1 - \epsilon)n$ and sufficiently small $\epsilon \in (0, 1)$,
\begin{equation} \label{eq:Stirling_bulk}
  \frac{(\frac{1}{2}-is)_{n}(\frac{1}{2}+is)_{n}}{(\frac{1}{2}-it)_{n}(\frac{1}{2}+it)_{n}} = \frac{\cosh(\pi u)}{\cosh(\pi v)} \times
  \begin{cases}

        \frac{\Gamma(\frac{1}{2} + n - m - iu)}{\Gamma(\frac{1}{2} + n - m - iv)} \frac{\exp(iu \log(n + m)}{\exp(iv \log(n + m))} (1 + \bigO(n^{-1})), & \text{all $m = 1, \dotsc, n$}, \\
                \frac{\exp(iu \log(\frac{n + m}{n - m}))}{\exp(iv \log(\frac{n + m}{n - m}))} (1 + \bigO(n^{-1})), & \frac{m}{n} < 1 - \epsilon.
  \end{cases}
\end{equation}

Let $\zeta \in (0,\infty)$ and consider the following equation for $z$ on $i\R_{\leq 0}$:
\begin{equation} \label{eq:saddle_insulator}
    \frac{d}{dz} F(z; \zeta) = 0, \quad F(z; \zeta) = 
        \alpha_n z^2 + iz\zeta - \log \Gamma(\tfrac{1}{2} + n - iz) 
        -(n + iz)(\log(n + iz) - 1).
\end{equation}
It is easy to see that as a function of $y \in (0, n + \frac{1}{2})$, the function $i F'(-iy; \zeta)$ is real valued and increasing. We have that if $\zeta = \zeta_{n, m}$ then $z = -i(m - \frac{1}{2})$ is a solution of \eqref{eq:saddle_insulator}. Hence, by \eqref{eq:asy_Q_insulator}, \eqref{eq:asy_P_insulator_between}, \eqref{eq:asy_P_insulator} and \eqref{eq:Stirling_s}, \eqref{eq:Stirling_t}, we find that $-i(m - \frac{1}{2})$ is the saddle point for both $q_n(s; \zeta_{n, m})$ and $p_n(t; \zeta_{n, m})$. From the estimates \eqref{eq:asy_Q_insulator}--\eqref{eq:Stirling_t}, we can estimate the value of $q_n(s; \eta)$ for $s \in \Sigma$ and $p_n(t; \xi)$ for $t \in \contC^{(\insulator, m)}_{\Lower} \cup \contC^{(\insulator, m)}_{\Middle} \cup \contC^{(\insulator, m)}_{\Upper}$. As $s \in \Sigma$ moves away from $z = -i(m - \frac{1}{2})$, $\lvert q_n(s; \eta) \rvert$ vanishes exponentially fast, and as $t \in \contC \cap \compC_-$ moves away from $z = -i(m - \frac{1}{2})$, $\lvert p_n(s; \xi) \rvert$ vanishes exponentially fast. From the estimate, if $\xi$ and $\eta$ are as in \eqref{eq:xi_eta_x_y_insulator}, we have that for $s \in \Sigma$ and $t \in \contC^{(\insulator, m)}_{\Lower} \cup \contC^{(\insulator, m)}_{\Middle} \cup \contC^{(\insulator, m)}_{\Upper}$,
\begin{equation} \label{eq:est_p_n_q_n}
  \lvert q_n(s; \eta) p_n(t; \xi) \rvert = \frac{e^{(m - \frac{1}{2}) \sqrt{2} \tau \alpha_n^{\frac{1}{2}}}}{e^{(m - \frac{1}{2}) \sqrt{2} \sigma \alpha_n^{\frac{1}{2}}}} o(e^{-\epsilon \alpha_n}),
\end{equation}
where $\epsilon > 0$ is sufficiently small and independent of $m$, $n$ or $\alpha_n$ (e.g., $\epsilon = \frac{1}{10}$). Also, we have that $\lvert q_n(s; \eta) \rvert$ vanishes exponentially fast as $s$ moves away from $-i(m - \frac{1}{2})$, and $\lvert p_n(t; \xi) \rvert$ vanishes exponentially fast as $s$ moves away from $\pm (m - \frac{1}{2})$. Hence, we can conclude that
\begin{equation} \label{eq:est_I_+_I_-}
  \lvert I_{\Lower} + I_{\Middle} + I_{\Upper} \rvert = \frac{e^{(m - \frac{1}{2}) \sqrt{2} \tau \alpha_n^{\frac{1}{2}}}}{e^{(m - \frac{1}{2}) \sqrt{2} \sigma \alpha_n^{\frac{1}{2}}}} o(e^{-\epsilon \alpha_n}).
\end{equation}
It remains to evaluate the integrals $I_{\crit}$ and $I_{\crit\Upper}$. Since for  $s\in \Sigma$, $\lvert q_n(s; \eta) \rvert$ vanishes exponentially fast as $s$ moves away from $-i(m - \frac{1}{2})$, it suffices to consider the integral for $s \in \Sigma^{(m - 1/2 + 2 \alpha_n^{-1/2})}_0$. After the change of variables
\begin{align} \label{eq:chqnge_of_variable_insulator}
  s \mapsto -i(m - \tfrac{1}{2}) + \alpha_n^{-\frac{1}{2}} u,\qquad t \mapsto  -i(m - \tfrac{1}{2}) + \alpha_n^{-\frac{1}{2}} v, 
\end{align}
we have
\begin{align} \label{eq:from_Sigma_to_Sigma_0}
  \alpha_n^{\frac{1}{2}} I_{\crit} = & - \int_{\substack{\Im u = -2,\ \lvert \Re u \rvert \\ \leq (m - \frac{1}{2})M^{1/2}_n + 2}} \frac{du}{\pi i} \oint_{\lvert v \rvert = 1} \frac{dv}{2\pi i} \frac{q_n(s; \eta) p_n(t; \xi)}{u - v}
  + \frac{e^{(m - \frac{1}{2}) \sqrt{2} \tau \alpha_n^{\frac{1}{2}}}}{e^{(m - \frac{1}{2}) \sqrt{2} \sigma \alpha_n^{\frac{1}{2}}}} o(e^{-\epsilon \alpha_n}).
\end{align}
where the second term represents the contribution from $\Sigma^{(m - 1/2 + 2 \alpha_n^{-1/2})}_+ \cup \Sigma^{(m - 1/2 + 2 \alpha_n^{-1/2})}_-$, with $\epsilon > 0$ being a constant independent of $m$, $n$ or $\alpha_n$. Next, using \eqref{eq:asy_Q_insulator} and \eqref{eq:asy_P_insulator} for the Legendre functions and \eqref{eq:Stirling_bulk} for the Pochhammer symbols, after the change of variables \eqref{eq:chqnge_of_variable_insulator}, we have
\begin{equation}
  \frac{e^{(m - \frac{1}{2}) \sqrt{2} \sigma \alpha_n^{\frac{1}{2}}}}{e^{(m - \frac{1}{2}) \sqrt{2} \tau \alpha_n^{\frac{1}{2}}}} q_n(s; \eta) p_n(t; \xi) = \frac{e^{v^2 + i\sqrt{2} \tau v}}{e^{u^2 + i\sqrt{2} \sigma u}} \frac{u}{v} + \bigO(\alpha_n^{-\frac{1}{2}}), 
\end{equation}
uniformly for $u$ and $v$ in a compact subset of $\compC$. Hence, \eqref{eq:from_Sigma_to_Sigma_0} implies that
\begin{equation} \label{eq:est_I_0}
  \begin{split}
   \frac{e^{(m - \frac{1}{2}) \sqrt{2} \sigma \alpha_n^{\frac{1}{2}}}}{e^{(m - \frac{1}{2}) \sqrt{2} \tau \alpha_n^{\frac{1}{2}}}} \alpha_n^{\frac{1}{2}} I_{\crit} = {}& i \int_{\substack{\Im u = -2,\ \lvert \Re u \rvert \\ \leq (m - \frac{1}{2})M^{1/2}_n + 2}} \frac{du}{2\pi i} \oint_{\lvert v \rvert = 1} \frac{dv}{2\pi i} \frac{u}{v} \frac{e^{v^2 + i \sqrt{2} \tau v}}{e^{u^2 + i \sqrt{2} \sigma u}} \frac{1}{u - v} + \bigO(\alpha_n^{-\frac{1}{2}}) \\
    = {}& i \int^{\infty - 2i}_{-\infty - 2i} \frac{du}{2\pi i} \frac{1}{e^{u^2 + i \sqrt{2} \sigma u}} + \bigO(\alpha_n^{-\frac{1}{2}}) \\
    = {}& \frac{e^{-\frac{\sigma^2}{2}}}{2\sqrt{\pi}} + \bigO(\alpha_n^{-\frac{1}{2}}).
  \end{split}
\end{equation}
The evaluation of $I_{\crit\Upper}$ can be done similarly. Using  \eqref{eq:symmetry_P_Q} and \eqref{eq:chqnge_of_variable_insulator}, we have
\begin{align} \label{eq:est_I_0*}
     \alpha_n^{\frac{1}{2}} I_{\crit\Upper} 
    &=  \int_{\substack{\Im u = -2,\ \lvert \Re u \rvert \leq \\ (m - \frac{1}{2})M^{1/2}_n + 2}} \frac{du}{\pi i} \oint_{\lvert v \rvert = 1} \frac{dv}{2\pi i} \frac{q_n(s; \eta) p_n(t; \xi)}{-2i (m - \frac{1}{2}) \alpha_n^{\frac{1}{2}} + u + v} + \frac{e^{(m - \frac{1}{2}) \sqrt{2} \tau \alpha_n^{\frac{1}{2}}}}{e^{(m - \frac{1}{2}) \sqrt{2} \sigma \alpha_n^{\frac{1}{2}}}} o(e^{-\epsilon \alpha_n}) \notag \\ 
   & =  \frac{e^{(m - \frac{1}{2}) \sqrt{2} \tau \alpha_n^{\frac{1}{2}}}}{e^{(m - \frac{1}{2}) \sqrt{2} \sigma \alpha_n^{\frac{1}{2}}}} \bigO(\alpha_n^{-\frac{1}{2}}).
\end{align}
By \eqref{eq:est_I_+_I_-}, \eqref{eq:est_I_0} and \eqref{eq:est_I_0*}, we obtain that as $n \to \infty$,
\begin{equation}
  \frac{e^{(m - \frac{1}{2}) \sqrt{2} \sigma \alpha_n^{\frac{1}{2}}}}{e^{(m - \frac{1}{2}) \sqrt{2} \tau \alpha_n^{\frac{1}{2}}}} \sqrt{2} \alpha_n^{\frac{1}{2}} \Khat_{n}(\xi, \eta) \to \frac{e^{-\frac{\sigma^2}{2}}}{\sqrt{2 \pi}}.
\end{equation}
Since $\Khat_{n}(\xi, \eta) = (n \alpha_n)^{-1} \mathbb{K}_n(\xiscaled, \etascaled)$, we obtain part \ref{enu:thm:insulator_1} of Theorem \ref{thm:insulator} with the cocycle 
\[
\frac{e^{(m - \frac{1}{2}) \sqrt{2} \sigma \alpha_n^{\frac{1}{2}}}}{e^{(m - \frac{1}{2}) \sqrt{2} \tau \alpha_n^{\frac{1}{2}}}}.
\]

\subsubsection{Emptiness between classical positions} \label{subsubsec:voidness}

Recall the particles $\{ \lambdascaled_j \}^n_{j = 1}$ considered in Part \ref{enu:thm:insulator_2} of Theorem \ref{thm:insulator}. Let $\lambdascaledM_j = M\lambdascaled_j$. To prove part \ref{enu:thm:insulator_2} of Theorem \ref{thm:insulator}, it suffices to show that for any $\epsilon > 0$, there exists $L = L(\epsilon)>0$, such that
\begin{equation} \label{eq:gap_est}
  \mathbb{P}\{\mathrm{some}\ \lambdascaledM_j \in I_{\star}\}  \leq \E[\# \text{of $\lambdascaledM_j$'s in $I_{\star}$}] = \int_{I_{\star}} \Khat_{n}(\zeta, \zeta) d\zeta < \epsilon,
\end{equation}
for 
$I_{\star} = (0, \zeta_{n, 1} - L \alpha_n^{\frac{1}{2}})$, $I_{\star}=(\zeta_{n, m - 1} + L \alpha_n^{\frac{1}{2}}, \zeta_{n, m} - L \alpha_n^{\frac{1}{2}})$  with $m \in \{ 2, \dotsc, n \}$, or $I_{\star}=(\zeta_{n, n} + L \alpha_n^{\frac{1}{2}},\infty)$. This idea will be the same as in the proof of \cite[Thm. 2.1]{Liu-Wang-Wang18}.
\medbreak

We first consider $I_{\star} = (\zeta_{n, m - 1} + L \alpha_n^{\frac{1}{2}}, \zeta_{n, m} - L \alpha_n^{\frac{1}{2}})$. Let $\zeta \in (\zeta_{n, m - 1} + 4 \alpha_n^{\frac{1}{2}}, \zeta_{n, m} - 4 \alpha_n^{\frac{1}{2}})$, and denote
\begin{equation}
  \gap = \frac{\alpha_n^{-1}}{2} \min \{ \zeta_{n, m} - \zeta, \zeta - \zeta_{n, m - 1} \},
\end{equation}
so that $\gap \in (2 \alpha_n^{-\frac{1}{2}}, \frac{1}{2}]$. Let $z\in\C$ be the purely imaginary number between $-i(m - \frac{3}{2})$ and $-i(m - \frac{1}{2})$ that satisfies equation \eqref{eq:saddle_insulator}. Unique existence of such $z$ is due to the increasing property of $i F'(-iy; \zeta)$ stated below \eqref{eq:saddle_insulator}. We deform the contour $\contC$ into $\contC^{(\insulator, m - 1)}_{\Lower} \cup \contC^{(\insulator, m)}_{\Middle} \cup \contC^{(\insulator, m - 1)}_{\Upper}$, defined in \eqref{eq:defn_C_-}--\eqref{eq:defn_C_m}, and we deform the contour $\Sigma$ into $\Sigma^{(iz)}_0 \cup \Sigma^{(iz)}_+ \cup \Sigma^{(iz)}_-$, defined in \eqref{eq:defn_Sigma_0}--\eqref{eq:defn_Sigma_-}. See Figure \ref{fig:insulator_void} for an overview of the contours. 

\begin{figure}[htb]
  \centering
  \includegraphics[angle=-90]{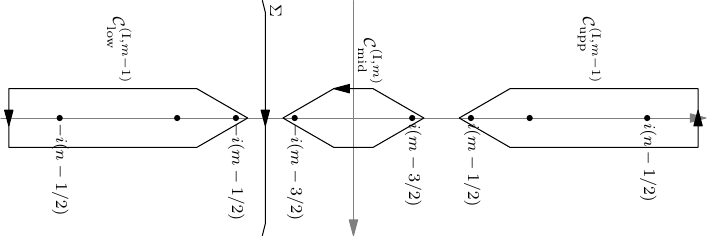}
    \caption{Contours in Section \ref{subsubsec:voidness}. Figure rotated $90^{\circ}$ clockwise.}
    \label{fig:insulator_void}
\end{figure}

We note that $z$ is the saddle point for both $q_n(s; \zeta)$ and $p_n(t; \zeta)$.  Comparing to the contour deformation in Section \ref{subsubsec:insulator_normal}, although the contour shapes are different, we still have that for all $s \in \Sigma$, the uniform estimate in \eqref{eq:asy_Q_insulator} holds with $\eta$ replaced by $\zeta$, and that for all $t \in \contC$, the uniform estimates \eqref{eq:asy_P_insulator_between}--\eqref{eq:asy_P_insulator_reflection} hold with $\xi$ replaced by $\zeta$. These estimates, together with the estimates of $(\frac{1}{2}-is)_{n}(\frac{1}{2}+is)_{n}$ for $s \in \Sigma$ and $(\frac{1}{2}-it)_{n}(\frac{1}{2}+it)_{n}$ for $t \in \contC$ by \eqref{eq:Pochhammer_asy} and Stirling's formula, we have that for some $C, \epsilon > 0$, for sufficiently large $n$,
\begin{equation} \label{eq:est_pq_gap}
  \lvert q_n(s; \zeta) p_n(t; \zeta) \rvert \leq C e^{-\epsilon \alpha_n (\lvert s - z \rvert^2 + \lvert t - z \rvert^2)}, \quad s \in \Sigma \text{ and } t \in \contC \cap \compC_-.
\end{equation}
Furthermore, by \eqref{eq:symmetry_P_Q}, we also have the estimate
\begin{equation} \label{eq:est_pq_gap_up}
  \lvert q_n(s; \zeta) p_n(t; \zeta) \rvert \leq C e^{-\epsilon \alpha_n (\lvert s - z \rvert^2 + \lvert t + z \rvert^2)}, \quad s \in \Sigma \text{ and } t \in \contC \cap \compC_+.
\end{equation}
Plugging in \eqref{eq:est_pq_gap} and \eqref{eq:est_pq_gap_up} into \eqref{eq:Khat}, we have that for some $C', \epsilon' > 0$, for sufficiently large $n$,
\begin{equation} \label{eq:gap_prob_middle}
  \left\lvert \Khat_n(\zeta, \zeta) \right\rvert \leq \frac{C'}{\gap} \alpha_n^{-1} e^{-\epsilon' \alpha_n \gap^2}.
\end{equation}
This estimate shows that if $L = L(\epsilon)$ is sufficiently large, then for sufficiently large $n$, \eqref{eq:gap_est} also holds for $I_{\star} = (\zeta_{n, m - 1} + L \alpha_n^{\frac{1}{2}}, \zeta_{n, m} - L \alpha_n^{\frac{1}{2}})$.

Similarly, let $\zeta \in (0, \zeta_{n, 1} - 4 \alpha_n^{\frac{1}{2}})$, and denote
\begin{equation}
  \gap' = \frac{\alpha_n^{-1}}{2} (\zeta_{n, 1} - \zeta).
\end{equation}
Let $z$ be the imaginary number between $0$ and $-\frac{i}{2}$ that satisfies \eqref{eq:saddle_insulator}. The unique existence of $z$ is due to the increasing property of $i F'(-iy; \zeta)$ stated below \eqref{eq:saddle_insulator}. We deform the contour $\contC$ into $\contC^{(\insulator, 0)}_{\Lower} \cup \contC^{(\insulator, 0)}_{\Upper}$, defined  in \eqref{eq:defn_C_-} and \eqref{eq:defn_C_+}, and deform the contour $\Sigma$ into $\Sigma^{(iz)}_0 \cup \Sigma^{(iz)}_+ \cup \Sigma^{(iz)}_-$. We have that \eqref{eq:est_pq_gap} and \eqref{eq:est_pq_gap_up} hold, and for some $C', \epsilon' > 0$, for sufficiently large $n$,
\begin{equation} \label{eq:gap_prob_leftmost}
  \left\lvert \Khat_n(\zeta, \zeta) \right\rvert \leq \frac{C'}{\gap'} \alpha_n^{-1} e^{-\epsilon' \alpha_n (\gap')^2}.
\end{equation}
This confirms that if $L = L(\epsilon)$ is sufficiently large, then for sufficiently large $n$, \eqref{eq:gap_est} holds for $I_{\star} = (0, \zeta_{n, 1} - L \alpha_n^{\frac{1}{2}})$. We note that if $\zeta$ is very close to $0$, we need to estimate $p_n(t; \zeta)$ and $q_n(s; \zeta)$ by \eqref{eq:P_nu(x)_hard}--\eqref{eq:K0_asy}, and \eqref{eq:gap_prob_leftmost} still holds.

Finally, let $\zeta \in (\zeta_{n, n} + 4 \alpha_n^{\frac{1}{2}},\infty)$, and denote
\begin{equation}
  \gap'' = \frac{\alpha_n^{-1}}{2} (\zeta - \zeta_{n, n}).
\end{equation}
Let $z$ be the imaginary number between $-i(n - \frac{1}{2})$ and $-i(n + \frac{1}{2})$ that satisfies \eqref{eq:saddle_insulator} if $\gap'' < \frac{1}{2}$, and take $z = -in$ if $\gap'' \geq \frac{1}{2}$. Here we note that $z$ is the saddle point for both $q_n(s; \zeta)$ and $p_n(t; \zeta)$ if $\gap'' < \frac{1}{2}$ while $z$ is not a saddle point if $\gap'' \geq \frac{1}{2}$. We deform the contour $\contC$ into $\contC^{(\insulator, 0)}_{\Lower} \cup \contC^{(\insulator, 0)}_{\Upper}$ and deform the contour $\Sigma$ into $\Sigma^{(iz)}_0 \cup \Sigma^{(iz)}_+ \cup \Sigma^{(iz)}_-$, formally the same as in last case, but now $\Sigma$ is below $\contC$, rather than through $\contC$. We have analogous to \eqref{eq:est_pq_gap} and \eqref{eq:est_pq_gap_up}, for some $C, \epsilon > 0$, for sufficiently large $n$,
\begin{equation}
  \lvert q_n(s; \zeta) p_n(t; \zeta) \rvert \leq
  \begin{cases}
    C e^{-\epsilon \alpha_n (\lvert s - z \rvert^2 + \lvert t \pm z \rvert^2)}, & s \in \Sigma, t \in \contC \cap \compC_{\pm} \text{ and } S'' < \frac{1}{2}, \\
    C e^{-\epsilon \alpha_n S'' (\lvert s - z \rvert + \lvert t \pm z \rvert)}, & s \in \Sigma, t \in \contC \cap \compC_{\pm} \text{ and } S'' \geq \frac{1}{2}, 
  \end{cases}
\end{equation}
and for some $C', \epsilon' > 0$, for sufficiently large $n$,
\begin{equation} \label{eq:gap_prob_rightmost}
  \left\lvert \Khat_n(\zeta, \zeta) \right\rvert \leq \frac{C'}{\gap''} \alpha_n^{-1} 
  \begin{cases}
        e^{-\epsilon' \alpha_n (\gap'')^2}, & \gap'' < \frac{1}{2}, \\
        e^{-\epsilon' \alpha_n \gap''}, & \gap'' \geq \frac{1}{2}.
  \end{cases}
\end{equation}
This confirms that if $L = L(\epsilon)$ is sufficiently large, then for sufficiently large $n$, \eqref{eq:gap_est} holds for $I_{\star} = (\zeta_{n, n} + L \alpha_n^{\frac{1}{2}},\infty)$.

Part \ref{enu:thm:insulator_2} of Theorem \ref{thm:insulator} is therefore proven.

\subsection{Crossover II} \label{subsec:crossoverII}

We assume that $M = M_n = n \alpha$ with fixed $\alpha > 0$. Unlike in the insulating regime, the local statistics in the crossover regime have different features in the bulk (for the particles in the middle), at the soft edge (for the rightmost particles) and at the hard edge (for the leftmost particles).

\subsubsection{Bulk} \label{subsubsec:crossover_bulk}

Let $\eps>0$ be sufficiently small and let $m$ be an integer such that $\epsilon n < m < (1 - \epsilon) n$ as in part \ref{enu:thm:crossover_1} of Theorem \ref{thm:crossover}. We take
\begin{align} \label{eq:xi_eta_crossover}
  \zeta_m = {}& 2m \alpha, &\xi = {}& \zeta_m + \tilde{\tau}, & \eta = {}& \zeta_m + \tilde{\sigma}, &\tilde{\tau} = {}& \tau + \log\frac{n + m}{n - m}, &\tilde{\sigma} = {}& \sigma + \log\frac{n + m}{n - m}.
\end{align}
Note that $\xi$ and $\eta$ are larger than $\epsilon \alpha n$. We deform the contour $\contC$ into $\contC^{(\crossover, m)}_{\Lower} \cup \contC^{(\crossover, m)}_{\Middle} \cup \contC^{(\crossover, m)}_{\Upper}$, where for $1 \leq m \leq n - 1$,
\begin{align}
  \contC^{(\crossover, m)}_{\Lower} = {}& \text{rectangular contour with vertices $-i(m + \tfrac{1}{2}) + \tfrac{1}{2} e^{\frac{\pi i}{4}}$, $-i(m + \tfrac{1}{2}) + \tfrac{1}{2} e^{\frac{3\pi i}{4}}$}, \notag \\
                                        & \text{$-i(n - \tfrac{1}{2}) + \tfrac{1}{2} e^{-\frac{\pi i}{4}}$ and $-i(n - \tfrac{1}{2}) + \tfrac{1}{2} e^{-\frac{3\pi i}{4}}$}, \\
  \contC^{(\crossover, m)}_{\Upper} = {}& \{ z : \bar{z} \in \contC^{(\crossover, m)}_{\Lower} \}, \\
  \contC^{(\crossover, m)}_{\Middle} = {}& \text{rectangular contour with vertices $i(m - \tfrac{1}{2}) + \tfrac{1}{2} e^{\frac{\pi i}{4}}$, $i(m - \tfrac{1}{2}) + \tfrac{1}{2} e^{\frac{3\pi i}{4}}$}, \notag \\
                                        & \text{$-i(m - \tfrac{1}{2}) + \tfrac{1}{2} e^{-\frac{\pi i}{4}}$ and $-i(m - \tfrac{1}{2}) + \tfrac{1}{2} e^{-\frac{3\pi i}{4}}$},
\end{align}
with positive orientation. In Section \ref{subsubsec:crossover_hard}, we will extend the definition of $\contC^{(\crossover, m)}_{\Lower}$ and $\contC^{(\crossover, m)}_{\Upper}$ to $m = 0$, and in Section \ref{subsubsec:crossover_soft}, we will extend the definition of $\contC^{(\crossover, m)}_{\Middle}$ to $m = n$. We deform the contour $\Sigma$ into  $\Sigma^{(m)}_0 \cup \Sigma^{(m)}_+ \cup \Sigma^{(m)}_-$, defined as in \eqref{eq:defn_Sigma_0}--\eqref{eq:defn_Sigma_-}. See Figure \ref{fig:crossover_bulk} for an overview of the contours.

\begin{figure}[htb]
  \centering
    \includegraphics[angle=-90]{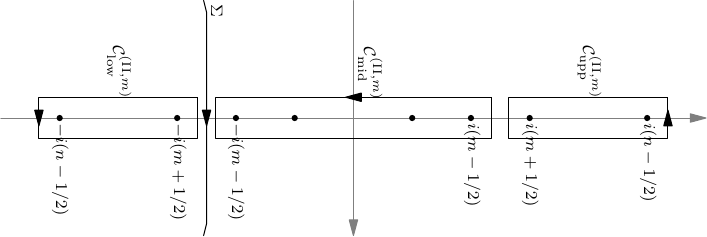}
    \caption{Contours in Section \ref{subsubsec:crossover_bulk}. Figure rotated $90^{\circ}$ clockwise.}
    \label{fig:crossover_bulk}
\end{figure}
\medbreak

Analogous to \eqref{eq:asy_Q_insulator}, we have, by \eqref{eq:asy_Q_small_s} if $\lvert s \rvert$ is small and by \eqref{eq:asy_Q_is} if $\vert s \rvert$ is large, that uniformly for $s \in \Sigma$,
\begin{equation} \label{eq:asy_Q_crossover}
  \sqrt{\sinh \eta} e^{-\frac{M}{n} s^2} Q_{is - \frac{1}{2}}(\cosh \eta) = \sqrt{\frac{\pi}{2}} \, e^{-\alpha s^2-is\eta} \frac{\Gamma(is + \frac{1}{2})}{\Gamma(is + 1)} (1 + \bigO(e^{-\eta})).
\end{equation}
Analogous to \eqref{eq:asy_P_insulator_between}--\eqref{eq:asy_P_insulator_reflection}, by \eqref{eq:asy_P_small_t}, we have, uniformly for $t \in \contC$ with $\Im t \in [-3, 3]$, 
\begin{multline} \label{eq:asy_P_crossover_between}
  \sqrt{\sinh \xi} e^{\frac{M}{n} t^2} P_{it - \frac{1}{2}}(\cosh \xi) = \\
  \frac{1}{\sqrt{2\pi}} e^{\alpha t^2 + it\xi} \frac{\Gamma(it)}{\Gamma(it + \frac{1}{2})} (1 + \bigO(e^{-\xi})) + \frac{1}{\sqrt{2\pi}} e^{\alpha t^2 - it\xi} \frac{\Gamma(-it)}{\Gamma(-it + \frac{1}{2})} (1 + \bigO(e^{-\xi})),
\end{multline}
by \eqref{eq:asy_P_it}, uniformly for $t \in \contC$ with $\Im t \leq -3$, 
\begin{equation} \label{eq:asy_P_crossover}
  \sqrt{\sinh \xi} e^{\frac{M}{n} t^2} P_{it - \frac{1}{2}}(\cosh \xi) = \frac{1}{\sqrt{2\pi}} e^{\alpha t^2 + it\xi} \frac{\Gamma(it)}{\Gamma(it + \frac{1}{2})} (1 + \bigO(e^{-\xi})),
\end{equation}
and, by \eqref{eq:asy_P_it} and \eqref{eq:symmetry_P_Q}, uniformly for $t \in \contC$ with $\Im t \geq 3$,
\begin{equation} \label{eq:asy_P_crossover_reflection}
  \sqrt{\sinh \xi} e^{\frac{M}{n} t^2} P_{it - \frac{1}{2}}(\cosh \xi) = \frac{1}{\sqrt{2\pi}} e^{\alpha t^2 - it\xi} \frac{\Gamma(-it)}{\Gamma(-it + \frac{1}{2})} (1 + \bigO(e^{-\xi})).
\end{equation}
With the same change of variables $s \mapsto -im + u$ and $t \mapsto -im + v$ as in \eqref{eq:change_of_variables_crossover}, we have then have uniformly for $u$ and $v$ in a compact subset of $\C$,
\begin{equation}
  t \sqrt{\sinh \eta} e^{-\frac{M}{n} s^2} Q_{is-\frac{1}{2}}(\cosh \eta) \sqrt{\sinh \xi} e^{\frac{M}{n} t^2} P_{it-\frac{1}{2}}(\cosh) = \frac{e^{\tilde{\tau} m}}{e^{\tilde{\sigma} m}} \frac{1}{2i} \frac{e^{\alpha v^2 + i \tilde{\tau} v}}{e^{\alpha u^2 + i \tilde{\sigma} u}} (1 + \bigO(n^{-1}).
\end{equation}
Moreover, together with the estimates of the Pochhammer symbols in \eqref{eq:Stirling_t} and \eqref{eq:Stirling_s}, we find that $q_n(s; \xi)$ vanishes exponentially fast as $s \in \Sigma$ moves away from $-im$, and, using the symmetry $t\mapsto-t$ that $p_n(t; \xi)$ vanishes exponentially fast as $t \in \contC$ moves away from $\pm im$. Hence, the integral in \eqref{eq:Khat} concentrates around $-im$ for $s$ and $\pm im$ for $t$. On the other hand, the $(s - t)^{-1}$ factor in the integral \eqref{eq:Khat} becomes larger if both $s$ and $t$ are around $-im$ and smaller if $s$ is around $-im$ and $t$ is around $irn$, so the main contribution to the integral in \eqref{eq:Khat} occurs when both $s$ and $t$ are around $-im$. Using the estimates of the Pochhammer symbols in \eqref{eq:Stirling_bulk}, we can therefore conclude that
\begin{align} \label{eq:crossover_bulk_result}
  \Khat_{n}(\xi, \eta) = &\frac{e^{\tilde{\tau} m}}{e^{\tilde{\sigma} m}} i \int^{\infty}_{-\infty} \frac{du}{2\pi i} \oint_{C^{\crossover, \infty}_{\Lower} \cup C^{\crossover, \infty}_{\Upper}} \frac{dv}{2\pi i} \frac{\cosh(\pi u)}{\cosh(\pi v)} \frac{e^{-\alpha u^2 + i \left( -\tilde{\sigma} + \log \left( \frac{n + m}{n - m} \right) \right) u}}{e^{-\alpha v^2 + i \left( -\tilde{\tau} + \log \left( \frac{n + m}{n - m} \right) \right) v}} \frac{1}{u - v} \notag \\
  & \times (1 + \bigO(n^{-1})),
\end{align}
where the contours are
\begin{align}
  C^{(\crossover, \infty)}_{\Lower} = {}& \{ -\tfrac{i}{2} + \tfrac{1}{2} e^{\frac{\pi i}{4}} + iy : y \in (-\infty, 0] \} \cup \{ -\tfrac{i(1 - \sqrt{2}/2)}{2} - x : x \in [-\tfrac{\sqrt{2}}{4}, \tfrac{\sqrt{2}}{4}] \} \notag \\
                                      & \cup \{ -\tfrac{i}{2} + \tfrac{1}{2} e^{\frac{3\pi i}{4}} - iy : y \in [0, \infty) \}, \\
  C^{(\crossover, \infty)}_{\Upper} = {}& \{ \tfrac{i}{2} + \tfrac{1}{2} e^{-\frac{3\pi i}{4}} - iy : y \in (-\infty, 0] \} \cup \{ \tfrac{i(1 - \sqrt{2}/2)}{2} + x : x \in [-\tfrac{\sqrt{2}}{4}, \tfrac{\sqrt{2}}{4}] \} \notag \\
                                      & \cup \{ -\tfrac{i}{2} + \tfrac{1}{2} e^{-\frac{\pi i}{4}} + iy : y \in [0, \infty) \}.
\end{align}
Part \ref{enu:thm:crossover_1} of Theorem \ref{thm:crossover} then follows after the change of variables $u \to -iu$ and $v \to -iv$ with cocycle $e^{\tilde{\sigma} m}/e^{\tilde{\tau} m}$.

\subsubsection{Hard edge} \label{subsubsec:crossover_hard}

We take $\xi=\tau$ and  $\eta=\sigma$ in a compact subset of $[0,\infty)$. We deform the contour $\contC$ into $\contC^{(\crossover, 0)}_{\Lower} \cup \contC^{(\crossover, 0)}_{\Upper}$, and let $\Sigma$ be the real line. We use \eqref{eq:P_nu(x)_hard} and \eqref{eq:I0_asy} to estimate $\sqrt{\sinh \xi} e^{\frac{M}{n} t^2} P_{it - \frac{1}{2}}(\cosh \xi)$ for $t \in \contC$ and use \eqref{eq:Q_nu(x)_hard} and \eqref{eq:K0_asy} to estimate $\sqrt{\sinh \eta} e^{-\frac{M}{n} s^2} Q_{is - \frac{1}{2}}(\cosh \eta)$ for $s \in \Sigma$. In addition, we estimate $(\frac{1}{2}-is)_{n}(\frac{1}{2}+is)_{n}$ for $s \in \Sigma$ and $(\frac{1}{2}-it)_{n}(\frac{1}{2}+it)_{n}$ for $t \in \contC$ in the same way as in Section \ref{subsubsec:crossover_bulk}. Then we find that $p_n(s; \xi)$ and $q_n(t; \eta)$ vanish exponentially fast as $s$ and $t$ move away from $0$. Hence, the integral in \eqref{eq:Khat} concentrates around $0$. 
\medbreak

Using the estimate \eqref{eq:Stirling_bulk} with $m = 0$, the contour integral in \eqref{eq:Khat} becomes
\begin{align*}
   \Khat_{n}(\xi, \eta) = &-2 \sqrt{\sinh \xi \sinh \eta} \int^{\infty}_{-\infty} \frac{ds}{2\pi i} \oint_{\contC^{(\crossover, 0)}_{\Lower} \cup \contC^{(\crossover, 0)}_{\Upper}} \frac{dt}{2\pi i} \frac{\cosh(\pi s)}{\cosh(\pi t)} \frac{e^{\alpha t^2}}{e^{\alpha s^2}} \frac{tP_{it - \frac{1}{2}}(\cosh\xi) Q_{is - \frac{1}{2}}(\cosh\eta)}{s - t} \\
   & + \bigO(n^{-1}),
\end{align*}
The contour $\contC^{(\crossover, 0)}_{\Lower} \cup \contC^{(\crossover, 0)}_{\Upper}$ can be replaced by $C^{(\crossover, \infty)}_{\Lower} \cup C^{(\crossover, \infty)}_{\Upper}$ without changing the estimate. After the change of variables $s \mapsto -iu$ and $t \mapsto -iv$, this proves part \ref{enu:thm:crossover_2} of Theorem \ref{thm:crossover}.

\subsubsection{Soft edge} \label{subsubsec:crossover_soft}

The proof in this section is similar to that of \cite[Theorem 1.2]{Liu-Wang-Wang18}, and the limiting result obtained is the same.

We let $\tilde{\tau}, \tilde{\sigma}$ be in a compact subset of $\realR$ and take
\begin{align}
  \xi = {}& 2n \alpha + \log(2n) + \tilde{\tau}, & \eta = {}& 2n \alpha + \log(2n) + \tilde{\sigma}, &\tilde{\tau} = {}& \tau - \alpha, &\tilde{\sigma} = {}& \sigma - \alpha.
\end{align}
We deform the contour $\contC$ into $\contC^{(\crossover, n)}_{\Middle}$, and $\Sigma$ into $\Sigma^{(n)}_0 \cup \Sigma^{(n)}_+ \cup \Sigma^{(n)}_-$. We use \eqref{eq:asy_P_crossover_between}--\eqref{eq:asy_P_crossover_reflection} to estimate $\sqrt{\sinh \xi} e^{\frac{M}{n} t^2} P_{it - \frac{1}{2}}(\cosh \xi)$ for $t \in \contC$ and \eqref{eq:asy_Q_crossover} to estimate $\sqrt{\sinh \eta} e^{-\frac{M}{n} s^2} Q_{is - \frac{1}{2}}(\cosh \eta)$ for $s \in \Sigma$. In addition, we estimate $(\frac{1}{2}-is)_{n}(\frac{1}{2}+is)_{n}$ for $s \in \Sigma$ with $\lvert \Re s \rvert \geq (\epsilon/2)n$ and $(\frac{1}{2}-it)_{n}(\frac{1}{2}+it)_{n}$ for $t \in \contC$ with $\lvert \Im t \rvert \leq (1 - \epsilon/2) n$ using \eqref{eq:Stirling_s}, and estimate $(\frac{1}{2}-is)_{n}(\frac{1}{2}+is)_{n}$ for $s \in \Sigma$ with $\lvert \Re s \rvert > (\epsilon/2)n$ and $(\frac{1}{2}-it)_{n}(\frac{1}{2}+it)_{n}$ for $t \in \contC$ with $\lvert \Im t \rvert > (1 - \epsilon/2) n$ using \eqref{eq:Stirling_t}. Then we find that as $s \in \Sigma$ moves away from $-in$, $p_n(s; \xi)$ vanishes exponentially fast, and that as $t \in \contC \cap \compC_-$ moves away from $-in$, $q_n(t; \eta)$ vanishes exponentially fast. For $t \in \contC \cap \compC_+$, we have the same property due to the symmetry $q_n(-t; \eta) = q_n(t; \eta)$. Hence, the major contribution of the integral in \eqref{eq:Khat} is from two regions: the region where both $s$ and $t$ are around $-in$, and the region where $s$ is around $-in$ and $t$ is around $in$. We note that due to the symmetry of $p_n(t; \xi)$, the factor $q_n(s; \eta) p_n(t; \xi)$ has equal contribution in the two regions, but the factor $(s - t)^{-1}$ in the integrand of \eqref{eq:Khat} is much larger in the former region than in the latter. Hence, we only need to consider the integral in the region where both $s$ and $t$ around $-in$ for the leading order estimates of $\Khat_{n}(\xi, \eta)$.
\medbreak

For $s$ around $-in$, we have the estimates
\begin{equation} \label{eq:crossover_Q_soft}
  \begin{split}
    e^{-\frac{M}{n} s^2} Q_{is - \frac{1}{2}}(y) = {}& \frac{\sqrt{\pi}}{\sqrt{2is \sinh \eta}} e^{-\alpha s^2-is\eta} (1 + \bigO(s^{-1}) + \bigO(e^{-\eta} s^{-\frac{1}{2}})) \\
    = {}& e^{-n^2 \alpha} \frac{\sqrt{\pi}}{\sqrt{2is \sinh \eta}} e^{-\alpha (s + in)^2 - i(\tilde{\sigma} + \log(2n))s} (1 + \bigO(s^{-1}) + \bigO(e^{-\eta} s^{-\frac{1}{2}})),
  \end{split}
\end{equation}
and
\begin{equation} \label{eq:crossover_P_soft}
  \begin{split}
    e^{\frac{M}{n} t^2} P_{it - \frac{1}{2}}(x) = {}& \frac{1}{\sqrt{2\pi it \sinh \xi}} e^{\alpha t^2 + it\xi} (1 + \bigO(t^{-1}) + \bigO(e^{-\xi} t^{-\frac{1}{2}})) \\
    = {}& e^{n^2 \alpha} \frac{1}{\sqrt{2\pi it \sinh \xi}} e^{\alpha (t + in)^2 + i(\tilde{\tau} + \log(2n))t} (1 + \bigO(t^{-1}) + \bigO(e^{-\xi} t^{-\frac{1}{2}})).
  \end{split}
\end{equation}
After performing the change of variables
\begin{align*}
  s \mapsto -in + u,\qquad t \mapsto -in + v.
\end{align*}
in \eqref{eq:Khat}, and using \eqref{eq:crossover_Q_soft}, \eqref{eq:crossover_P_soft} and \eqref{eq:Stirling_t}, we have
\begin{equation} \label{eq:crossover_softedge}
  \begin{split}
   \Khat_{n}(\xi, \eta) = \frac{e^{ \tilde{\tau} n}}{e^{\tilde{\sigma} n}} i \int^{\infty}_{-\infty} \frac{du}{2\pi i} \oint_{\contC^{(\crossover, \infty)}_{\Upper}} \frac{dv}{2\pi i} \frac{\cosh(\pi u) \Gamma(\frac{1}{2} -iu)}{\cosh(\pi v) \Gamma(\frac{1}{2} -iv)} \frac{e^{\alpha v^2 + i \tilde{\tau} v}}{e^{\alpha u^2 + i \tilde{\sigma} u}} \frac{1}{u - v} (1 + \bigO(n^{-1})),
  \end{split}
\end{equation}
where we deform the shape of the contours in the region $\lvert u \rvert$ or $\lvert v \rvert$ is large, and it does not affect the leading order of the contour integral. The change of variables $u \to -i(u - \frac{1}{2})$ and $v \to -i(v - \frac{1}{2})$ and the reflection formula of the gamma function, then gives part \ref{enu:thm:crossover_3} of Theorem \ref{thm:crossover} with cocycle $e^{\tilde{\sigma} (n - \frac{1}{2})}/e^{ \tilde{\tau} (n - \frac{1}{2})}$.

\subsection{Crossover I} \label{subsec:ballisitic_regime}

We assume that $M = \alpha > 0$ is fixed. We first prove a technical lemma that will be used in this subsection and Section \ref{subsec:diffusive_regime}. We define
\begin{equation} \label{eq:value_of_r_soft_edge}
  r_{\max} = \frac{1}{2\alpha} \left( \sqrt{(2\alpha + 1)^2 - 1} + \arcosh(2\alpha + 1) \right) =  \sqrt{1 + \alpha^{-1}} + \frac{1}{2\alpha} \log \frac{\sqrt{1 + \alpha^{-1}} + 1}{\sqrt{1 + \alpha^{-1}} - 1}.
\end{equation}
We defined the functions $f$ and $F$ for $z \in \compC \setminus \{iy : y \in \realR \}$, such that
\begin{align}
  f(z; r) = {}& 2\alpha z + i2r \alpha + i \log \frac{1 - iz}{1 + iz}
                \begin{cases}
                  -\pi, & \Re z > 0, \\
                  +\pi, & \Re z < 0, 
                \end{cases} \label{eq:defn_f_ballistic} \\
  F(z; r) = {}& \alpha z^2 + i2\alpha r z - (1 - iz)\log(1 - iz) - (1 + iz)\log(1 + iz)
                \begin{cases}
                  - \pi z, & \Re z > 0, \\
                  + \pi z, & \Re z < 0, 
                \end{cases} \label{eq:defn_F(z;r)}
\end{align}
where the $\log$ function takes the principal branch, $z\mapsto f(z; r)$ is analytically extended to $\{ iy : y \in (-\infty, -1) \cup (1,\infty) \} \}$. On $\{ iy: y \in (-1, 1) \}$, we have
\begin{equation}
  \lim_{\epsilon \to 0^+} f(\epsilon + iy) = \lim_{\epsilon \to 0^+} f(-\epsilon + iy) - 2\pi, \quad y \in (-1, 1).
\end{equation}
Similarly,
\begin{equation}
  \lim_{\epsilon \to 0^+} F(\epsilon + iy) = \lim_{\epsilon \to 0^+} F(-\epsilon + iy)
  \begin{cases}
    - 2\pi z, & y \in (-1, 1), \\
    + 2\pi i, & y \in (-\infty, -1), \\
    - 2\pi i, & y \in (1, \infty),
  \end{cases}
\end{equation}
and $z\mapsto e^{n F(z; r)}$ is analytic on $\compC \setminus \{ iy : y \in [-1, 1] \}$. Then we have the following technical result.

\begin{lem} \label{lem:ballistic}
  Fix $\alpha > 0$. Let $r \in (0, r_{\max})$ where $r_{\max}$ is defined in \eqref{eq:value_of_r_soft_edge}.
  \begin{enumerate}
  \item \label{enu:lem:ballistic_1}
    The equation for $z$ in the fourth quadrant of the complex plane,
    \begin{equation} \label{eq:saddle_point_eq}
      f(z; r) = 0, 
    \end{equation}
    has a unique solution $z = z(r)$, where $f(z; r)$ is defined in \eqref{eq:defn_f_ballistic}. Moreover, as $r$ runs from $0$ to $r_{\max}$, $z(r)$ moves along the curve
    \begin{equation} \label{eq:curve_C}
      C_{\alpha} = \{ x - i\sqrt{g(x)} : x \in (0, z(0)) \}, \quad \text{where} \quad g(x) = 2x \frac{\cos(2\alpha x)}{\sin(2\alpha x)} - x^2 + 1,
    \end{equation}
    from the end point $z(0) \in (0, \pi/(2\alpha))$ that is the solution of
    \begin{equation} \label{eq:defn_x_max}
      \tan(\alpha x) = \frac{1}{x},
    \end{equation}
    to the other end point
    \begin{equation}
      z(r_{\max}) = -i(1 + \alpha^{-1})^{\frac{1}{2}},
    \end{equation}
    such that $\Re z(r)$ is decreasing.
  \item \label{enu:lem:ballistic_2}
    For $r \in (0, r_{\max})$, the function for $z$ in the fourth quadrant of the complex plane,
    satisfies
    \begin{align}
      F'(z(r); r) = {}& 0, & \Re F''(z(r); r) > {}& 0,
    \end{align}
    and $\Re F(z; r)$ attains its maximum at $z(r)$ as $z \in C_{\alpha}$. Moreover, $\Re F(z; 0)$ attains its maximum at $z(0)$ as $z \in C_{\alpha}$, and $\Re F(z; 0)$ attains its maximum at $z(0)$ as $z \in C_{\alpha}$; $\Re F(z; r_{\max})$ attains its maximum at $z(r_{\max})$ as $z \in C_{\alpha}$.
  \item \label{enu:lem:ballistic_3}
    For $r \in (0, r_{\max})$, as $z$ runs on the ray $\{ x + i\Im z(r): x \in (0, \infty) \}$ from $i\Im z(r)$ to $\infty$, $\Re F(z; r)$ decreases first, attains its minimum at $z(r)$, and then increases. Moreover, as $z$ runs over the positive real line, $\Re F(z; 0)$ decreases first, attains its minimum at $z(r)$, and then increases; as $z$ on the ray $\{ x + i\Im z(r_{\max}): x \in (0, \infty) \}$ from $i\Im z(r_{\max})$ to $\infty$, $\Re F(z; r_{\max})$, attains its minimum at $z(r_{\max}) = i\Im z(r_{\max})$, and then increases.
  \end{enumerate}
\end{lem}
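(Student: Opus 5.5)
The plan is to reduce everything to explicit real-variable calculus by writing $z=x-iy$ with $x>0$, $y>0$, and separating real and imaginary parts of $f$. In the fourth quadrant the principal logarithm gives
\[
i\log\frac{1-iz}{1+iz}=-\arg\frac{1-iz}{1+iz}+i\log\Bigl|\frac{1-iz}{1+iz}\Bigr|,
\]
with $1-iz=1-y-ix$ and $1+iz=1+y+ix$. The equation $f(z;r)=0$ therefore splits into two real equations. The real part reads $2\alpha x-\pi-\arg\frac{1-iz}{1+iz}=0$. After identifying $\arg\frac{1-iz}{1+iz}=-2\arctan\frac{\Re\text{-type ratio}}{\cdots}$, which is an explicit function $-\theta(x,y)$, this becomes $\tan(\cdots)=\ldots$. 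Solving it for $y^2$ should give exactly $y^2=g(x)=2x\cot(2\alpha x)-x^2+1$, so the zero set of $\Re f$ in the quadrant is the curve $C_\alpha$. The imaginary part, $2\alpha(r-y)+\log|1-iz|/|1+iz|$, then gives $r$ as an explicit function of the point on $C_\alpha$, namely
\[
r=y+\frac{1}{2\alpha}\log\frac{|1+y+ix|}{|1-y-ix|}.
\]

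To get part \ref{enu:lem:ballistic_1}, I will show that this map $x\mapsto r$ is a strictly decreasing bijection from $(0,z(0))$ onto $(0,r_{\max})$. This gives existence and uniqueness of $z(r)$ and shows that $\Re z(r)$ is decreasing. The endpoints come out directly. The value $z(0)$ is where $g(x)=0$, i.e. $x^2-1=2x\cot(2\alpha x)$; by the double-angle formula this is equivalent to $\tan(\alpha x)=1/x$, which is \eqref{eq:defn_x_max}. As $x\to0^+$ we have $g(x)\to 1+1/\alpha$, so $y\to\sqrt{1+\alpha^{-1}}$, and substituting into the formula for $r$ recovers \eqref{eq:value_of_r_soft_edge}. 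The monotonicity of $r$ along $C_\alpha$ is the step I expect to be the main obstacle. My first attempt is to observe that $\frac{dr}{dx}$ along the curve equals $\Im$ of $f'$ times the tangent, and that $\Re f=0$ along the curve. Then $dr/dx$ has the sign of $-\Re\bigl(f'(z)\,\dot z\bigr)/(2\alpha)$ with $f'(z)=2\alpha+\frac{2}{1+z^2}$. If that fails, I will fall back on checking that $g$ is decreasing and positive on $(0,z(0))$ and that the right-hand side of the formula for $r$ is monotone in $y$ for $y>0$.

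For part \ref{enu:lem:ballistic_2}, note that $F'(z;r)=-i f(z;r)$ up to a constant factor, because differentiating $-(1\mp iz)\log(1\mp iz)$ produces $\pm i\log(1\mp iz)$. Hence $F'(z(r);r)=0$ follows from part \ref{enu:lem:ballistic_1}. Next, $F''=\alpha\,\cdot\,2+\frac{2}{1+z^2}$, and I will verify $\Re F''>0$ at points of $C_\alpha$ using $|z|<\sqrt{1+\alpha^{-1}}$. For the maximum along $C_\alpha$, I parametrize by $x$ and compute $\frac{d}{dx}\Re F(z(x);r)=\Re\bigl(F'(z(x);r)\,\dot z\bigr)$. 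Using $F'=-if$ and $\Re f=0$ on $C_\alpha$, this equals $\Im f\cdot\Re\dot z$ up to sign, and $\Im f=2\alpha(r(x)-r)$. Since $r(x)$ is monotone by part \ref{enu:lem:ballistic_1}, the derivative changes sign exactly at $z(r)$, which gives a maximum there. The endpoint cases $r=0$ and $r=r_{\max}$ follow the same way, with a one-sided sign.

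For part \ref{enu:lem:ballistic_3}, I take the horizontal ray $z=x+i\Im z(r)$. Along it, $\frac{d}{dx}\Re F=\Re F'=\Im f(z;r)$ up to sign. So it suffices to show that $x\mapsto\Im f(x-iy;r)$ is increasing in $x$ and vanishes at $\Re z(r)$. Here $\Im f=2\alpha(r-y)+\log|1-y-ix|-\log|1+y+ix|$, and its $x$-derivative is $\frac{x}{(1-y)^2+x^2}-\frac{x}{(1+y)^2+x^2}$, which is positive for $x,y>0$. This settles the claim directly, including the limiting cases $y=0$ and $y=\sqrt{1+\alpha^{-1}}$.
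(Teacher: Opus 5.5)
Your part 1 follows the paper's route: split $f$ into real and imaginary parts, obtain $(\Im z)^2=g(x)$, view $r$ as a function along $C_\alpha$, and compute the endpoints. Parts 2 and 3, however, rest on a wrong identity. In fact $F'(z;r)=f(z;r)$ exactly, not $-if(z;r)$. The constants $\pm i$ from differentiating $-(1\mp iz)\log(1\mp iz)$ cancel, and in the fourth quadrant $\log(1-iz)-\log(1+iz)$ is the principal $\log\frac{1-iz}{1+iz}$. Your own formula $F''=2\alpha+\frac{2}{1+z^2}=f'$ is consistent only with $F'=f$. A factor $-i$ swaps real and imaginary parts, so ``up to sign'' cannot absorb it. In part 3 the correct derivative is $\frac{d}{dx}\Re F(x+i\Im z(r);r)=\Re f=2\alpha x-\pi-\arg\frac{1-iz}{1+iz}$. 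Your monotonicity of $\Im f$ in $x$ controls $\Im F$, not $\Re F$. Your limiting case $y=0$ exposes this: for $r=0$ on the real axis $\Im f(x;0)=\log\bigl|\frac{1-ix}{1+ix}\bigr|\equiv0$, so your argument would make $\Re F(\cdot\,;0)$ constant, whereas $\Re F'(x;0)=2\alpha x-\pi+2\arctan x$. What is needed is the paper's argument:
\begin{itemize}
\item $\Re f$ does not depend on $r$, and it is positive for $x\ge\pi/(2\alpha)$ because $-\arg\frac{1-iz}{1+iz}\in(0,\pi)$.
\item For $0<x<\pi/(2\alpha)$, $\Re f$ vanishes iff $(\Im z)^2=g(x)$.
\item Since $g$ is strictly decreasing, the horizontal line meets $C_\alpha$ only at $z(r)$, so $\Re F'$ has a single zero on the ray.
\item The direction of the sign change then comes from $\partial_x\Re f=\Re F''(z(r);r)>0$.
\end{itemize}

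In part 2 the same error reverses the conclusion. On $C_\alpha$ one has $\Re f\equiv0$, hence $f=2i\alpha\bigl(r-r(x)\bigr)$ with $r(x)$ your function from part 1. Therefore $\frac{d}{dx}\Re F(z(x);r)=\Re(f\dot z)=-2\alpha\bigl(r-r(x)\bigr)\Im\dot z$. Your expression $\Im f\cdot\Re\dot z=2\alpha\bigl(r-r(x)\bigr)$ has the opposite sign and, taken literally, gives a minimum; the sign you left open is exactly what decides maximum versus minimum. With the correct formula you also need $\Im\dot z>0$, i.e.\ $g'<0$, which must be proved: $g'(x)=\frac{\sin(4\alpha x)-4\alpha x}{\sin^2(2\alpha x)}-2x<0$. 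The paper instead parametrizes by $r$ and uses $\frac{d}{dr}\Re F(z(r);r_0)=4\alpha^2(r_0-r)\Re\frac{1}{F''(z(r);r)}$. There are also two smaller slips.
\begin{itemize}
\item In part 1 the correct derivative is $r'(x)=-\Im(f'\dot z)/(2\alpha)$, since $\Re(f'\dot z)\equiv0$ along $C_\alpha$. With $\Re\dot z=1$ this gives $\Im(f'\dot z)=|f'|^2/\Im f'>0$, because $\Im f'(z)=-2\Im(z^2)/|1+z^2|^2>0$. So $r'<0$, matching the paper's $d\Re z/dr<0$.
\item $\Re F''>0$ does not follow from $|z|<\sqrt{1+\alpha^{-1}}$: on the imaginary axis with $1<|z|^2<1+\alpha^{-1}$ one has $F''=2\alpha+2/(1-|z|^2)<0$. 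You must use the curve. For example, differentiating $\Re f\equiv0$ along $C_\alpha$ gives $\Re F''=\Im f'\cdot\Im\dot z>0$ once $g'<0$ is known.
\end{itemize}
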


\begin{proof}
  First we prove part \ref{enu:lem:ballistic_1}. Let $z = x + iy$ be a possible solution of \eqref{eq:saddle_point_eq} with $r$ unspecified, where $x  > 0$ and $y < 0$, and let $w = (1 - iz)/(1 + iz)$ such that $\arg w \in [-\pi, 0]$. Let $s = \ln \lvert w \rvert$ and $t = \arg w$, and we have the relation between $s, t$ and $x, y$ that
  \begin{align} \label{eq:s,t_in_x,y}
      e^{s}\bigl[(1-y) \cos t - x\sin t \bigr] &= 1+y, &
      e^{s}\bigl[(1-y) \sin t + x\cos t \bigr] &= -x.
  \end{align}
  Then real and imaginary parts of \eqref{eq:saddle_point_eq} become a pair of real-valued equations
  \begin{align} \label{eq:real_and_imag_F'}
    2\alpha x-\pi-t = {}& 0, & 2\alpha y+2r \alpha + s = {}& 0.
  \end{align}
  Since $t \in (-\pi, 0)$, we have that $x \in (0, \pi/(2\alpha))$. Expressing $s, t$ into $x, y$ by \eqref{eq:s,t_in_x,y}, we have that \eqref{eq:real_and_imag_F'} becomes
  \begin{align}
    e^{-2r \alpha} = {}& \frac{(1 + y) e^{2\alpha y}}{-(1 - y) \cos(2\alpha x) + x \sin(2\alpha x)}, \label{eq:r_in_xy_1} \\
    e^{-2r \alpha} = {}& \frac{x e^{2\alpha y}}{(1 - y) \sin(2\alpha x) + x \cos(2\alpha x)}. \label{eq:r_in_xy}
  \end{align}
  Then by identifying the right-hand sides of \eqref{eq:r_in_xy_1} and \eqref{eq:r_in_xy}, we deduce that, with $g(x)$ defined in \eqref{eq:curve_C}
  \begin{equation} \label{eq:relation_without_r}
    y^2 = g(x).
  \end{equation}
  We note that \eqref{eq:relation_without_r} does not depend on $r$, and for any positive $r$, the solution $z = x + iy$ of \eqref{eq:saddle_point_eq} satisfies equation \eqref{eq:relation_without_r} and lies in curve $C_{\alpha}$ defined in \eqref{eq:curve_C}.
  
  By computing the derivative, we find that $g(x)$ is a decreasing function on $(0, \pi/(2\alpha))$, and we can verify that $g(z(0)) = 0$ where $z(0) \in (0, \pi/(2\alpha))$ is defined by \eqref{eq:defn_x_max}. Hence, $g(x) > 0$ for $x \in (0, z(0))$ and for all $r > 0$, possible solutions of \eqref{eq:saddle_point_eq} in the fourth quadrant are in the curve $C_{\alpha}$.

  Suppose $z = x + iy$ in $C_{\alpha}$ is a solution of \eqref{eq:saddle_point_eq} associated with $r > 0$, from \eqref{eq:r_in_xy} and \eqref{eq:relation_without_r}, we have that $r$ is a function of $x$, such that
  \begin{equation} \label{eq:r_in_x}
    r = h(x), \qquad h(x) = -y - \frac{1}{2\alpha} \ln \frac{x}{(1 - y) \sin(2\alpha x) + x \cos(2\alpha x)} \qquad y = -\sqrt{g(x)}.
  \end{equation}
  Below we show that $r$ in \eqref{eq:r_in_x} depends on $x$ monotonically. Suppose $z_0 = x_0 + iy_0 \in C_{\alpha}$ and $r_0 = h(x_0)$. Then by the implicit function theorem, for real-valued $r$ that is in a neighbourhood of $r_0$, \eqref{eq:saddle_point_eq} defines an analytic function $\tilde{z}(r) = \tilde{x}(r) + i \tilde{y}(r)$ such that $f(\tilde{z}(r); r) = 0$ and $\tilde{z}(r_0) = z_0$, with
  \begin{equation}
    \frac{d}{dr} \tilde{z}(r) = -i \frac{\alpha(1 + \tilde{z}(r)^2)}{\alpha(1 + \tilde{z}(r)^2) + 1} \neq 0,
  \end{equation}
  since $\tilde{z}(t)$ is in the fourth quadrant. Moreover,
  \begin{equation}
    \frac{d}{dr} \tilde{x}(r) = \Im \frac{\alpha(1 + \tilde{z}(r)^2)}{\alpha(1 + \tilde{z}(r)^2) + 1} = -\Im \frac{1}{\alpha} \frac{1}{1 + \alpha^{-1} + \tilde{z}(t)^2} < 0,
  \end{equation}
  we have that $\tilde{x}(r)$ is a strictly decreasing function of $r$, and $r$ is a strictly decreasing function of $\tilde{x}$. Since all solutions of \eqref{eq:saddle_point_eq} are on $C_{\alpha}$, $r = h(x)$ is the inverse function of $\tilde{x}(r)$ globally, and then it is a strictly decreasing for $x \in (0, z(0))$. Moreover, as $x \to 0^+$, $h(x) \to r_{\max}$, and as $x \to (z(0))_-$, $h(x) \to 0$. We conclude that for $r \in (0, r_{\max})$, \eqref{eq:saddle_point_eq} has a unique solution $z(r)$ on $C_{\alpha}$, and complete the proof of part \ref{enu:lem:ballistic_1}.

  Next we prove part \ref{enu:lem:ballistic_2}. We note that $F'(z; r) = f(z; r)$, so at $z(r)$, $F'(z(r); r) = f(z(r); r) = 0$, and
  \begin{equation} \label{eq:formula_F''}
    F''(z(r); r) = f'(z(r); r) = 2\alpha \frac{1 + \alpha^{-1} + z(r)^2}{1 + z(r)^2} \neq 0.
  \end{equation}
  Moreover, $\Re F''(z(r); r) > 0$ for all $r \in (0, r_{\max})$. To see it, we write $z(r) = x + iy$, and use \eqref{eq:relation_without_r} to derive that
  \begin{equation} \label{eq:2nd_derivative_F}
    \begin{split}
      \Re F''(z(r); r) = {}& 2\alpha + \frac{2(1 + x^2 - y^2)}{(1 + x^2 - y^2)^2 + (2xy)^2} \\
      = {}& 2\alpha \left( 1 - \frac{\sin(2\alpha x) \cos(2\alpha x)}{2\alpha x} \right) + \sin(2\alpha x)^2 \\
      > {}& 0.
    \end{split}
  \end{equation}
  Below we show that for a fixed $r_0 \in (0, r_{\max})$, the value of $g(r) = \Re F(z(r); r_0)$ attains its maximum at $r = r_0$ for $r \in (0, r_{\max})$. We first have that
  \begin{equation}
    g'(r) = \Re (F'(z(r); r_0) z'(r)).
  \end{equation}
  Noting that
  \begin{equation}
    F'(z(r); r_0) - F'(z(r); r) = i2\alpha(r_0 - r),
  \end{equation}
  and $F'(z(r); r) = 0$, we have $F'(z(r); r_0) = i2\alpha(r_0 - r)$. On the other hand,
  \begin{equation}
    z'(r) = -i \frac{\alpha(1 + z(r)^2)}{\alpha(1 + z(r)^2) + 1} = \frac{-i2\alpha}{F''(z(r); r)}.
  \end{equation}
  Hence,
  \begin{equation}
    g'(r) = 4\alpha^2 (r_0 - r) \Re \frac{1}{F''(z(r); r)} = 4\alpha^2 (r_0 - r) \frac{\Re F''(z(r); r)}{\lvert F''(z(r); r) \rvert^2}
    \begin{cases}
      > 0, & r \in (0, r_0), \\
      < 0, & r \in (r_0, r_{\max}).
    \end{cases}
  \end{equation}
  Hence we prove $g(r)$ attains its unique global maximum at $r = r_0$ and finish the proof of part \ref{enu:lem:ballistic_2}.

  Finally we prove part \ref{enu:lem:ballistic_3}. We denote $y(r) = \Im z(r)$ and
  \begin{equation}
    \phi(x) = \Re F(x + iy(r); r), \quad x > 0,
  \end{equation}
  and have that
  \begin{equation}
    \phi'(x) = \Re F'(x + iy(r); r) = 2\alpha x - \pi - \arg w, \quad w = \frac{1 - i(x + iy(r))}{1 + i(x + iy(r))}.
  \end{equation}
  We have that
  \begin{equation}
    \arg w = -\arccot \frac{1 - x^2 - y(r)^2}{2x} \in (-\pi, 0).
  \end{equation}
  Hence, $\phi'(x) = 0$ is equivalent to 
  \begin{equation}
    y(r) = -\sqrt{1 - x^2 + 2x \cot(2\alpha x)}, \quad 0 < x < \frac{\pi}{2\alpha}.
  \end{equation}
  This implies that $\phi'(x) = 0$ if and only if $z = x + iy(r)$ is on the intersection of $C$ and the horizontal ray $\{ x + iy(r) : x > 0 \}$, and it can only be $z(r)$. Equivalently, we have that on the domain $(0,\infty)$ $\phi'(x)$ only changes sign once at $x = \Re z(r)$. Using the second derivative inequality \eqref{eq:2nd_derivative_F} of $\Re F''(z(r); r)$, we find that $\phi'(x)$ is negative on $(0, \Re z(r))$ and positive on $(\Re z(r),\infty)$. We prove the global minimum property.
\end{proof}

For $r \in (0, r_{\max})$, let $z(r) = x(r) + iy(r)$ be defined as in Lemma \ref{lem:ballistic}. Denote $z_+ = n z(r) = n(x(r) + iy(r))$ and $z_- = n(-x(r) + iy(r))$. We introduce the contours
\begin{align*}
    \contC^{(\ballistic)}_{\Lower, \Right} = \{ nz : z \in C_{\alpha} \},\qquad
    \contC^{(\ballistic)}_{\Upper} = \{ x + iy \in \compC_- : x - iy \in \contC^{(\ballistic)}_{\Lower} \},
\end{align*}
and define $\contC^{(\ballistic)}_{\Lower, \Left}$ to be the reflection of $\contC^{(\ballistic)}_{\Lower, \Right}$ about the imaginary axis, $\contC^{(\ballistic)}_{\Lower} = \contC^{(\ballistic)}_{\Lower, \Left} \cup \contC^{(\ballistic)}_{\Lower, \Right}$ and $\contC^{(\ballistic)}_{\Outer} = \contC^{(\ballistic)}_{\Lower} \cup \contC^{(\ballistic)}_{\Upper}$. We note that as $r$ moves from $0$ to $r_{\max}$, $z_+$ moves along the curve $\contC^{(\ballistic)}_{\Lower, \Right}$ and $z_-$ moves along the curve $\contC^{(\ballistic)}_{\Lower, \Left}$. Let $\epsilon > 0$ be sufficiently small. For any $r \in (0, r_{\max})$, we let
\begin{align}
  z^{(\epsilon, \pm)}_+ = {}& z^{(\epsilon, \pm)}_+(r) = \text{the point on $\contC^{(\ballistic)}_{\Lower, \Right}$ whose imaginary part is $(\Im z_{\pm}) \pm \epsilon$}, \\
  z^{(\epsilon, \pm)}_- = {}& z^{(\epsilon, \pm)}_-(r) = \text{the point on $\contC^{(\ballistic)}_{\Lower, \Left}$ whose imaginary part is $(\Im z_{\pm}) \pm \epsilon$}.
\end{align}
We introduce the contours
\begin{align} 
\contC^{(\ballistic, r, \epsilon)}_{\Outer} = {}& \contC^{(\ballistic, r, \epsilon)}_{\Upper} \cup \contC^{(\ballistic, r, \epsilon)}_{\Lower}, \label{eq:CI-cont-1} \\
  \contC^{(\ballistic, r, \epsilon)}_{\Upper} = {}& \text{the part of $\contC^{(\ballistic)}_{\Outer}$ above $z^{(\epsilon, +)}_{\pm}$}, \\
  \contC^{(\ballistic, r, \epsilon)}_{\Lower} = {}& \text{the part of $\contC^{(\ballistic)}_{\Outer}$ below $z^{(\epsilon, -)}_{\pm}$}, \\
  \contC^{(\ballistic, r, \epsilon)}_{\Inner} = {}& \text{the line segments from $z^{(\epsilon, +)}_-$ to $z^{(\epsilon, +)}_+$ and from $z^{(\epsilon, -)}_+$ to $z^{(\epsilon, -)}_-$}. \label{eq:CI-cont-4}
\end{align}

\subsubsection{Bulk} \label{subsubsec:ballistic_bulk}
Let $r \in \left( 0, r_{\max} \right)$ and take
\begin{align}
  \zeta = {}& 2r \alpha, &\xi = {}& \zeta + \tilde{\tau} n^{-1}, &\eta = {}& \zeta + \tilde{\sigma} n^{-1}, &\tilde{\tau} = {}& \frac{\pi}{\Re z(r)} \tau, &\tilde{\sigma} = {}& \frac{\pi}{\Re z(r)} \sigma.
\end{align}
We deform the contour $\contC$ into $\contC^{(\ballistic, r, \epsilon)}_{\Outer} \cup \contC^{(\ballistic, r, \epsilon)}_{\Inner}$, see \eqref{eq:CI-cont-1}--\eqref{eq:CI-cont-4}, so that $\contC$ consists of two positively oriented closed contours. We deform the contour $\Sigma$ into the horizontal line through $z_{\pm}$. See Figure \ref{fig:ballistic_bulk} for an overview of the contours.

\begin{figure}[htb]
  \centering
  \includegraphics[angle=-90]{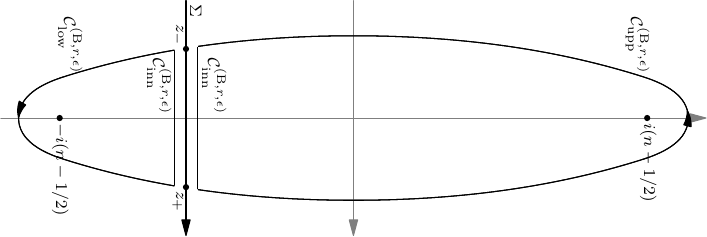}
  \caption{Contours in Section \ref{subsubsec:ballistic_bulk}. Figure rotated $90^{\circ}$ clockwise.}
  \label{fig:ballistic_bulk}
\end{figure}

We write
\begin{equation} \label{eq:K_Outer_Inner}
  \Khat_{n}(\xi, \eta) = I_{\Outer} + I_{\Inner}, 
\end{equation}
where
\begin{equation}
  I_{\star} = - \int_{\Sigma} \frac{ds}{\pi i} \oint_{\contC^{(\ballistic, r, \epsilon)}_{\star}} \frac{dt}{2\pi i} \frac{q_n(\tilde{\sigma}; \eta) p_n(t; \xi)}{s-t}, \qquad \star = \text{$\Outer$ or $\Inner$}.
\end{equation}
By \eqref{eq:Q_nu(x)_hard} and \eqref{eq:K0_asy}, uniformly for $s \in \Sigma$,
\begin{equation} \label{eq:asy_Q_ballistic}
  \begin{split}
    \sqrt{\sinh \eta} e^{-\frac{M}{n} s^2} Q_{is - \frac{1}{2}}(y) = {}& \sqrt{\eta} e^{-\frac{\alpha}{n} s^2} K_0(is \eta) (1 + \bigO(s^{-1})) \\
    = {}& \frac{\sqrt{\pi}}{\sqrt{2is}} e^{-n(\alpha (\frac{s}{n})^2 + i2\alpha r \frac{s}{n})} e^{-i\tilde{\sigma} \frac{s}{n}} (1 + \bigO(s^{-1})).
  \end{split}
\end{equation}
Let $\epsilon' > 0$ be sufficiently small. By \eqref{eq:P_nu(x)_hard} and \eqref{eq:I0_asy}, uniformly for $t \in \contC^{(\ballistic)}$ with $\Im t \leq -\epsilon' n$,
  \begin{equation} \label{eq:asy_P_ballistic}
    \begin{split}
    \sqrt{\sinh \xi} e^{\frac{M}{n} t^2} P_{it - \frac{1}{2}}(x) = {}& \sqrt{\eta} e^{\frac{\alpha}{n} t^2} I_0(it \xi) (1 + \bigO(t^{-1})) \\
    = {}& \frac{1}{\sqrt{2\pi it}} e^{n(\alpha (\frac{t}{n})^2 + i2\alpha r \frac{t}{n})} e^{i\tilde{\tau} \frac{t}{n}} (1 + \bigO(t^{-1})).
    \end{split}
  \end{equation}
We note that the first identity of \eqref{eq:asy_P_ballistic} holds for all $t \in \contC^{(\ballistic)}$ with $\Im t \leq 0$. For $t \in \contC^{(\ballistic)}$ with $\Im t > 0$, the symmetry \eqref{eq:symmetry_P_Q} together with \eqref{eq:P_nu(x)_hard} and \eqref{eq:I0_asy} yields a similar estimate. By \eqref{eq:reflection_Gamma_alt} and Stirling's formula, for $s \in \contC^{(\ballistic)}_{\Outer}$ or $s \in \Sigma$ with $\lvert \Re s \rvert > \epsilon' n$,
\begin{align} \label{eq:gamma_ballistic}
(\tfrac{1}{2}-is)_{n}(\tfrac{1}{2}+is)_{n} = &e^{-2n} \exp \left( \pi s + (n - is) \log (n - is) + (n + is) \log (n + is) \right) \notag \\
  &\times (1 + \bigO(n^{-1})),
\end{align}
and by \eqref{eq:reflection_Gamma} and Stirling's formula, for $s \in \Sigma$ with $\lvert \Re s \rvert \leq \epsilon' n$,
\begin{align} \label{eq:gamma_ballistic_inner}
  (\tfrac{1}{2}-is)_{n}(\tfrac{1}{2}+is)_{n} = &\frac{2\cosh(\pi s)}{e^{2n}} \exp \left( + (n - is) \log (n - is) + (n + is) \log (n + is) \right) \notag \\
  &\times (1 + \bigO(n^{-1})).
\end{align}
We find that for $s \in \Sigma$ with $\lvert \Re s \rvert > \epsilon' n$,
\begin{equation}
  q_n(s; \eta) = e^{-2n(\log n - 1)} \frac{\sqrt{\pi}}{\sqrt{2is}} e^{-n F(\frac{s}{n}; r)} e^{-i\tilde{\sigma} \frac{s}{n}} (1 + \bigO(n^{-1})),
\end{equation}
By part \ref{enu:lem:ballistic_3} of Lemma \ref{lem:ballistic}, we have that on the part of $\Sigma$ with $\Re s > \epsilon' n$, $q_n(s)$ vanishes exponentially fast as $s$ moves away from $z_+$, and the by symmetry, on the part of $\Sigma$ with $\Re s < -\epsilon' n$, $q_n(s)$ vanishes exponentially fast as $s$ moves away from $z_-$. For $t \in \contC^{(\ballistic)}_{\Lower} \cap \{ \Im t \leq -\epsilon' n \}$, we find that
\begin{equation}
  p_n(t; \xi) = e^{2n(\log n - 1)} \frac{\sqrt{t}}{\sqrt{2\pi i}} e^{n F(\frac{t}{n}; r)} e^{i\tilde{\tau} \frac{t}{n}} (1 + \bigO(n^{-1})).
\end{equation}
By part \ref{enu:lem:ballistic_2} of Lemma \ref{lem:ballistic}, we have that on the part of $\contC^{(\ballistic)}_{\Lower, \Right}$ with $\Im t \leq -\epsilon' n$, $p_n(t; \xi)$ vanishes exponentially fast as $t$ moves away from $z_+$, and on the part of $\contC^{(\ballistic)}_{\Lower, \Left}$ with $\Im t \leq -\epsilon' n$, $p_n(t; \xi)$ vanishes exponentially fast as $t$ moves away from $z_-$. We also have the estimate of $q_n(s; \eta)$ for $s \in \Sigma$ with $\lvert \Re s \rvert \leq \epsilon' n$ and the estimate of $p_n(t; \xi)$ for $t \in \contC^{(\ballistic)}_{\Lower}$ with $\Im t \geq -\epsilon' n$. We also find that the vanishing property remains on the extended domain $s \in \Sigma$ and $t \in \contC^{(\ballistic)}_{\Lower}$. Moreover, the asymptotic results and estimates of $p_n(t; \xi)$ can be extended to the whole $\contC^{(\ballistic)}_{\Outer}$ by symmetry. We find that the integral $I_{\Outer}$ concentrates at $s = z_{\pm} + \bigO(n^{\frac{1}{2}})$ and $t = \pm z_{\pm} + \bigO(n^{\frac{1}{2}})$. Below we estimate the part of $I_{\Outer}$ as $s$ and $t$ are concentrated there. 
\medbreak

We denote
\begin{align} \label{eq:defn_Sigma_loc_and_C_loc}
  \Sigma^{z_{\pm}}_{\Local} = {}& \Sigma \cap D_{n^{2/3}}(z_{\pm}), & \contC^{\pm z_{\pm}}_{\Local} = {}& \contC^{(\diffusive, \zeta, \epsilon)}_{\Outer} \cap D_{n^{2/3}}(\pm z_{\pm}),
\end{align}
and 
\begin{equation} \label{eq:local_diffusive}
  I^{\star, \dagger}_{\Outer} = \int_{\Sigma^{\star}_{\Local}} \frac{-ds}{\pi i} \oint_{\contC^{\dagger}_{\Local}} \frac{dt}{2\pi i} \frac{q_n(\tilde{\sigma}; \eta) p_n(t; \xi)}{s-t}, \qquad \star = z_{\pm}, \quad \dagger = \pm z_{\pm}.
\end{equation}
After the change of variables
\begin{align}
  s - z_+ \mapsto u,\qquad t - z_+ \mapsto v,
\end{align}
we have, for $s = z_+ + \bigO(n^{\frac{1}{2}})$ and $t = z_+ + \bigO(n^{\frac{1}{2}})$,
\begin{equation}
  q_n(s; \eta) p_n(t; \xi) = \frac{1}{2i} \frac{e^{i \tilde{\tau} n^{-1} z_+}}{e^{i \tilde{\sigma} n^{-1} z_+}} \frac{e^{n^{-1}(F''(z(r); r) v^2/2 + i\tilde{\tau} v)}}{e^{n^{-1}(F''(z(r); r) u^2/2 + i\tilde{\sigma} u)}} (1 + \bigO(n^{-\frac{1}{2}})),
\end{equation}
where $F''(z(r); r)$ is given in \eqref{eq:formula_F''}. Hence, as $\epsilon \to 0^+$,
\begin{equation} \label{eq:estimate_diffusive_outer}
  \begin{split}
    \lim_{\epsilon \to 0^+} I^{z_+, z_+}_{\Outer} = {}& \PV \int_{v + z_+ \in \contC^{\dagger}_{\Local}} \frac{dv}{2\pi i} \int^{n^{2/3}}_{-n^{2/3}} \frac{-du}{\pi i} \frac{1}{2i} \frac{e^{i \tilde{\tau} n^{-1} z_+}}{e^{i \tilde{\sigma} n^{-1} z_+}} \frac{e^{n^{-1}(F''(z(r); r) v^2/2 + i\tilde{\tau} v)}}{e^{n^{-1}(F''(z(r); r) u^2/2 + i\tilde{\sigma} u)}} \frac{1}{u - v} \\
    {}& \times (1 + \bigO(n^{-\frac{1}{2}})) \\
    = {}& \frac{e^{i \tilde{\tau} n^{-1} z_+}}{e^{i \tilde{\sigma} n^{-1} z_+}} \bigO(n^{\frac{1}{2}}) \\
    = {}& \exp ((\eta - \xi) \Im z_{\pm}) \bigO(n^{\frac{1}{2}}).
  \end{split}
\end{equation}
Similarly, 
\begin{equation} \label{eq:estimate_diffusive_outer_more}
  \lim_{\epsilon \to 0^+} I^{z_-, z_-}_{\Outer} = \exp ((\eta - \xi) \Im z_{\pm}) \bigO(n^{\frac{1}{2}}), 
\end{equation}
and more generally,
\begin{equation*}
  \lim_{\epsilon \to 0^+} I^{\star, \dagger}_{\Outer} = \exp ((\eta - \xi) \Im z_{\pm}) \bigO(n^{-\frac{1}{2}}),
\end{equation*}
for $(\star, \dagger) = (z_+, z_-), (z_-, z_+)$, or $\star \in \{ z_+, z_- \}$ and $\dagger \in \{ -z_+, -z_- \}$. Hence,
\begin{equation} \label{eq:ballistic_bulk_outer}
  \lim_{\epsilon \to 0^+} I_{\Outer} = \exp ((\eta - \xi) \Im z_{\pm}) \bigO(n^{\frac{1}{2}}).
\end{equation}
On the other hand, 
\begin{equation} \label{eq:sine_kernel_ballistic}
  \lim_{\epsilon \to 0^+} I_{\Inner} = - \sqrt{\sinh \xi \sinh \eta} \int^{z_+}_{z_-} s Q_{is - \frac{1}{2}}(\cosh \eta) P_{is - \frac{1}{2}}(\cosh \xi) \frac{ds}{\pi i}.
\end{equation}
By \eqref{eq:asy_Q_ballistic} and \eqref{eq:asy_P_ballistic}, we have that, for $s$ between $z_-$ and $z_+$,
\begin{equation}
    \sqrt{\sinh \xi \sinh \eta} s Q_{is - \frac{1}{2}}(\cosh \eta) P_{is - \frac{1}{2}}(\cosh \xi) = \frac{1}{2i} e^{i(\tilde{\tau} - \tilde{\sigma}) \frac{s}{n}} (1 + \bigO(n^{-1})), 
\end{equation}
and thus
\begin{equation} \label{eq:ballistic_bulk_inner}
  \begin{split}
   \lim_{\epsilon \to 0^+} I_{\Inner} = {}& - \int^{z_+}_{z_-} \frac{1}{2i s} e^{i(\tilde{\tau} - \tilde{\sigma}) \frac{s}{n}} (1 + \bigO(n^{-1})) \frac{ds}{\pi i} \\
    = {}& \exp ((\eta - \xi) \Im z_{\pm}) n \left( \frac{\sin(\Re z(r)(\tilde{\tau} - \tilde{\sigma}))}{\pi(\tilde{\tau} - \tilde{\sigma})} + \bigO(n^{-1}) \right),
  \end{split}
\end{equation}

Summing up \eqref{eq:ballistic_bulk_outer} and \eqref{eq:ballistic_bulk_inner}, we obtain part \ref{enu:thm:ballistic_1} of Theorem \ref{thm:ballistic} with cocycle  $\exp ((\xi - \eta) \Im z_{\pm})$.

\begin{rem} \label{rem:comparison}
  The value of $r_{\max}$ is related to the value of $b$, the right-end point of the support of equilibrium measure in the biorthogonal model in \cite[Eq. (1.28)]{Wang23}, by $r_{\max} \alpha = \sqrt{b}$ (after taking $\Mconst = M = \alpha$). The curve $C_{\alpha}$ is related to the curve $\gamma_+$ in \cite[Lem. 2.2]{Wang23} as follows. We denote $u = u(r) = iz(r)$ and find that $u$ satisfies the identity
  \begin{equation}
    \log \frac{u + 1}{u - 1} + 2\alpha u = 2r\alpha,
  \end{equation}
  or equivalently \cite[Eq. (2.2)]{Wang23}
  \begin{equation}
    \mathbf{J}_{2\alpha}(u^2) = r^2 \alpha^2, \quad \text{where} \quad \mathbf{J}_x(s) = \frac{1}{4} \left( \arcosh \frac{s + 1}{s - 1} + x \sqrt{s} \right)^2.
  \end{equation}
  Hence, we derive that $u(r)^2$ is the same as the $\mathbf{I}_+(r^2 \alpha^2)$ with the parameter $2\alpha$ in \cite[Eq. (2.9)]{Wang23}.

\end{rem}

\subsubsection{Hard edge} \label{subsubsec:ballistic_hard}

Recall that $z(0) \in (0, \frac{\pi}{2\alpha})$ is defined by \eqref{eq:defn_x_max}. We take
\begin{align} \label{eq:hard_edge_ballistic}
  \xi = {}& \frac{1}{z(0) n} \tilde{\tau}, & \eta = {}& \frac{1}{z(0) n} \tilde{\sigma}, &\tilde{\tau} = {}&\pi \tau, &\tilde{\sigma} = {}&\pi \sigma.
\end{align}
with $\tilde{\tau},\tilde{\sigma}$ in a compact subset of $[0,\infty)$. We deform the contour $\contC$ into $\contC^{(\ballistic, 0, \epsilon)}_{\Outer} \cup \contC^{(\ballistic, 0, \epsilon)}_{\Inner}$ with $\contC^{(\ballistic, 0, \epsilon)}_{\Outer} = \contC^{(\ballistic, 0, \epsilon)}_{\Upper} \cup \contC^{(\ballistic, 0, \epsilon)}_{\Lower}$ and $\contC^{(\ballistic, 0, \epsilon)}_{\Inner}$ as in \eqref{eq:CI-cont-1}--\eqref{eq:CI-cont-4}. Now, decompose $\contC^{(\ballistic, 0, \epsilon)}_{\Lower} = \contC^{(\ballistic, 0, \epsilon)}_{\Lower, \Right} \cup \contC^{(\ballistic, 0, \epsilon)}_{\Lower, \Left}$ into
\begin{equation}
  \contC^{(\ballistic, 0, \epsilon)}_{\Lower, \Right} = \{ z \in \compC : n^{-1}z \in C_{\alpha} \text{ and } \Im z < -\epsilon \},
\end{equation}
and its reflection $\contC^{(\ballistic, 0, \epsilon)}_{\Lower, \Left}$ about the imaginary axis, and $\contC^{(\ballistic, 0, \epsilon)}_{\Upper}$ is the reflection of $\contC^{(\ballistic, 0, \epsilon)}_{\Lower}$ about the real axis.  We take $\Sigma=\R$. See Figure \ref{fig:ballistic_hard} for an overview of the contours.

\begin{figure}[htb]
  \centering
  \includegraphics[angle=-90]{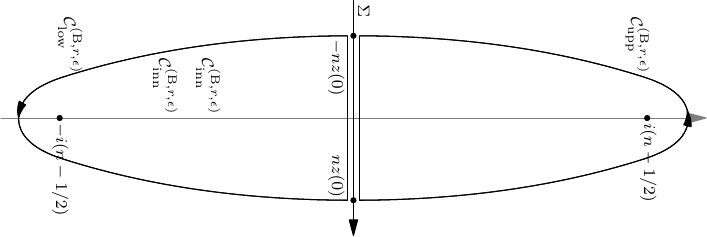}
  \caption{Contours in Section \ref{subsubsec:ballistic_hard}. Figure rotated $90^{\circ}$ clockwise.}
  \label{fig:ballistic_hard}
\end{figure}

Similarly as in \eqref{eq:K_Outer_Inner}, we have
\begin{equation} \label{eq:K_Outer_Inner_hard}
  \Khat_{n}(\xi, \eta) = I_{\Outer} + I_{\Inner}, 
\end{equation}
where
\begin{equation}
  I_{\star} = - \int_{\Sigma} \frac{ds}{\pi i} \oint_{\contC^{(\ballistic, 0, \epsilon)}_{\star}} \frac{dt}{2\pi i} \frac{q_n(s; \eta) p_n(t; \xi)}{s-t}, \qquad \star = \text{$\Outer$ or $\Inner$}.
\end{equation}

For $s \in \Sigma$, we have the estimate \eqref{eq:Q_nu(x)_hard} for $Q_{is - \frac{1}{2}}(\cosh \eta)$ and for $t \in \contC^{(\ballistic, 0, \epsilon)}_{\Outer}$, we have the estimate \eqref{eq:P_nu(x)_hard} for $P_{it - \frac{1}{2}}(\cosh \xi)$. On the other hand, for $t \in \contC^{(\ballistic, 0, \epsilon)}_{\Outer, \Lower}$ or $s \in \Sigma \cap \{ \Re s \geq \epsilon' n \}$ where $\epsilon' > 0$ is a small constant, we have that \eqref{eq:gamma_ballistic} holds, and for $s \in \Sigma \cap \{ 0 \leq \Re s \leq \epsilon' n \}$, we have that \eqref{eq:gamma_ballistic_inner} holds. Hence, we have that for $s \in \Sigma \cap \{ \Re s \geq \epsilon' n \}$, 
\begin{equation}
  q_n(s; \eta) = e^{-2n(\log n - 1)} \eta^{\frac{1}{2}} K_0 \left( i \frac{\tilde{\sigma} s}{z(0) n} \right) e^{-n F(\frac{s}{n}; 0)} (1 + \bigO(n^{-1})),
\end{equation}
We have that on this part of $\Sigma$, $q_n(s; \eta)$ vanishes exponentially fast as $s$ moves away from $n z(0)$. We also have the estimate of $q_n(s; \eta)$ for $s \in \Sigma \cap \{ 0 \leq \Re s \leq \epsilon' n \}$ which we do not write down explicitly here. This estimate shows that the value of $\lvert q_n(s; \eta) \rvert$ is much smaller than $\lvert q_n(n z(0); \eta) \rvert$, and we find that the vanishing property remains if the contours are extended to $s \in \Sigma \cap \{ \Re s \geq 0 \}$. For $t \in \contC^{(\ballistic, 0, \epsilon)}_{\Lower, \Right}$,
\begin{equation}
  p_n(t; \xi) = e^{2n(\log n - 1)} t \xi^{\frac{1}{2}} I_0 \left( i \frac{\tilde{\tau} t}{z(0) n} \right) e^{n F(\frac{t}{n}; 0)} (1 + \bigO(n^{-1})),
\end{equation}
and we have that on $\contC^{(\ballistic)}_{\Lower, \Right}$, $p_n(t; \xi)$ vanishes exponentially fast as $t$ moves away from $n z(0)$. The asymptotic results and estimates of $p_n(t; \xi)$ and $p_n(t; \xi)$ can be extended to the whole $\Sigma$ and $\contC^{(\ballistic)}_{\Outer}$ by symmetry, and we find that the integral $I_{\Outer}$ concentrates at $s = \pm n z(0) + \bigO(n^{1/2})$ and $t = \pm n z(0) + \bigO(n^{1/2})$. By computation similar to \eqref{eq:estimate_diffusive_outer} and \eqref{eq:estimate_diffusive_outer_more}, we have that
\begin{equation} \label{eq:ballistic_outer_est}
  \lim_{\epsilon \to 0^+} I_{\Outer} = \bigO(n^{\frac{1}{2}}).
\end{equation}

Now we consider $I_{\Inner}$. We have, with the line segment connecting $-n z(0)$ and $n z(0)$ being denoted as $\Sigma_{\Inner}$, and after the change of variables $s = z(0) n u$, 
\begin{equation} \label{eq:I_inner_ballistic_hard}
  \begin{split}
    \lim_{\epsilon \to 0^+} I_{\Inner} = {}& - \sqrt{\sinh \xi \sinh \eta} \int_{\Sigma_{\Inner}} \frac{ds}{\pi i} s Q_{is - \frac{1}{2}}(\cosh \eta) P_{is - \frac{1}{2}}(\cosh \xi) \\
    = {}& - \sqrt{\xi \eta} \int^{-n z(0)}_{n z(0)} \frac{ds}{\pi i} s I_0(is\xi) K_0(is\eta) (1 + \bigO(n^{-1})).
  \end{split}
\end{equation}
The second equality follows from \eqref{eq:Q_nu(x)_hard} and \eqref{eq:P_nu(x)_hard}. Since we have the freedom to deform the integral contour from $-n z(0)$ to $n z(0)$, we assume that the contour is away from $0$, and the $\bigO(s^{-1})$ error term is uniformly $\bigO(n^{-1})$.
% Noting that \cite[10.27.26 and 10.27.28]{Boisvert-Clark-Lozier-Olver10} if $\arg z \in (-\pi, 0)$, then for $\nu = 0, 1$,
% \begin{align} \label{eq:I_K_in_J_H}
%   I_{\nu}(iz) =  e^{\frac{\nu \pi i}{2}} J_{\nu}(z), \qquad K_0(iz) = -\frac{\pi i}{2} e^{-\frac{\nu \pi i}{2}} H^{(2)}_0(z), 
% \end{align}
% and \cite[10.6.3]{Boisvert-Clark-Lozier-Olver10}
% \begin{align}
%   J'_0(z) = J_1(z), \qquad (H^{(2)}_0)'(z) = H^{(2)}_1(z),
% \end{align}
Then we have that
\begin{equation} \label{eq:Bessel_integral}
  \begin{split}
    -\int^{n z(0)}_{-n z(0)} \frac{ds}{\pi i}s I_0(is \xi) K_0(is \eta) = {}& (n z(0))^2 \int^1_{-1} \frac{ds}{\pi i}s I_0(is \tilde{\tau}) K_0(is \tilde{\sigma}) \\
    = {}& (n z(0))^2 \int^1_0 \frac{ds}{\pi i}s I_0(is \tilde{\tau}) (K_0(is \tilde{\sigma}) - K_0(-is \tilde{\sigma})) \\
    = {}& (n z(0))^2 \int^1_0 s J_0(s \tilde{\tau}) J_0(s \tilde{\sigma}) ds = 2 (n z(0))^2 K_{\Bes, 0}(\tilde{\tau}^2, \tilde{\sigma}^2),
  \end{split}
\end{equation}
where we use the identities $I_0(is \tilde{\tau}) = J_0(s \tilde{\tau})$ and $K_0(is \tilde{\sigma}) - K_0(-is \tilde{\sigma}) = -\pi i J_0(s \tilde{\sigma})$ from \cite[10.27.6 and 10.27.9]{Boisvert-Clark-Lozier-Olver10}, and the integral representation of $K_{\Bes, 0}$ in \eqref{eq:defn_K_Bes}.

Summing up \eqref{eq:I_inner_ballistic_hard} and \eqref{eq:ballistic_outer_est}, we obtain part \ref{enu:thm:ballistic_2} of Theorem \ref{thm:ballistic}.

\subsubsection{Soft edge} \label{subsubsec:ballistic_soft}

The proof in this subsubsection is similar to that of \cite[Thm. 1.3, part 2]{Liu-Wang-Wang18}. The limiting result obtained is the same, and the deformation of the contours around the saddle point is different.

We consider the function $F(z; r)$ in \eqref{eq:defn_F(z;r)}, take $r = r_{\max}$ as in \eqref{eq:value_of_r_soft_edge} and set $z_*=z(r_{\max}) = -i(1 + \alpha^{-1})^{\frac{1}{2}}$. 
We take
\begin{align}
  \xi = {}& 2r_{\max} \alpha + c^{-1}_{\alpha} \tau n^{-\frac{2}{3}}, & \eta = {}& 2r_{\max} \alpha + c^{-1}_{\alpha} \sigma n^{-\frac{2}{3}}, & c_{\alpha} = {}& 2^{-\frac{1}{3}} \alpha^{-\frac{1}{2}} (\alpha + 1)^{-\frac{1}{6}}.
\end{align}
Recall that
\begin{align} \label{eq:Taylor_F_3}
  F'(z_*; r_{\max}) = {}& F''(z_*; r_{\max}) = 0, & F'''(z_*; r_{\max}) = {}& 4i \alpha^2 \sqrt{1 + \alpha^{-1}},
\end{align}
and
\begin{equation} \label{eq:Taylor_F_4}
  F^{(4)}(z_*; r_{\max}) = 2 \left( \frac{1}{(\sqrt{1 + \alpha^{-1}} - 1)^3} - \frac{1}{(\sqrt{1 + \alpha^{-1}} + 1)^3} \right) > 0.
\end{equation}
Furthermore, $z_\ast$ is the lowest endpoint of the contour $C_{\alpha}$ defined in \eqref{eq:curve_C}. Hence, for sufficiently small $\delta > 0$, the circle $\{ z \in \compC : \lvert z -z_\ast \rvert = \delta \}$ intersects with $C_{\alpha}$ at a unique point, which we denote by $\hat{z}(\delta)$. We also denote $\theta(\delta) = \arg(\hat{z}(\delta) -z_\ast) \in (0, \frac{\pi}{6})$, which has  $\theta(\delta) = (4\alpha + 3)/(6\sqrt{1 + \alpha^{-1}}) \delta + \bigO(\delta^2)$ as $\delta\to0$.

From similar arguments as in the proof of part \ref{enu:lem:ballistic_2} of Lemma \ref{lem:ballistic}, we can conclude that $F(z; r_{\max})$ attains its unique maximum at $z_*$ on $C_{\alpha}$. Moreover, using arguments from the proof of part \ref{enu:lem:ballistic_3} of Lemma \ref{lem:ballistic}, we have that $F(z; r_{\max})$ attains its minimum at $z_*$ on the ray $\{ x + i\Im z_*: x \in (0,\infty) \}$. On the other hand, we use the Taylor expansion of $F(z; r_{\max})$ at $z = z_*$ up to the fourth order, see \eqref{eq:Taylor_F_3} and \eqref{eq:Taylor_F_4}, to approximate $F(z; r_{\max})$ on $\lvert z - z_* \rvert \leq \delta$. If $\delta$ is sufficiently small, by direct calculation, we have that $F(z; r_{\max})$ attains its maximum at $z_*$ on the union of the line segment connecting $z_*$ and $z_* + \delta e^{\frac{\pi i}{6}}$ and the arc $\{ z_* + \delta e^{\theta i} : \theta(\delta) \leq \theta \leq \frac{\pi}{6} \}$, and $F(z; r_{\max})$ attains its minimum at $z_*$ on the union of the line segment from $z_*$ to $z_* + \delta e^{-\frac{\pi i}{6}}$ and the arc $\{ z_* + \delta e^{\theta i} : -\frac{\pi}{6} \leq \theta \leq 0 \}$. 
\medbreak

We deform the contour $\contC$ into $\contC^{(\ballistic, \Ai)}_{\Lower} \cup \contC^{(\ballistic, \Ai)}_{\Upper}$. Both parts consist of two components: $\contC^{(\ballistic, \Ai)}_{\Lower} =\contC^{(\ballistic, \Ai)}_{\Lower, \Right} \cup \contC^{(\ballistic, \Ai)}_{\Lower, \Left}$ and $\contC^{(\ballistic, \Ai)}_{\Upper} = \contC^{(\ballistic, \Ai)}_{\Upper, \Right} \cup \contC^{(\ballistic, \Ai)}_{\Upper, \Left}$. The components are given by 
\begin{align*}
  \contC^{(\ballistic, \Ai)}_{\Lower, \Right} &= \contC^{(\ballistic, \Ai)}_{\Local, \Right} \cup \{ n z_* + n \delta e^{\theta i} : \theta(\delta) \leq \theta \leq \tfrac{\pi}{6} \} \cup \{ nz : z \in C_{\alpha},\Im z \geq n \Im \hat{z}(\delta) \}, \\
  \contC^{(\ballistic, \Ai)}_{\Local, \Right} &= \text{union of line segments connecting $n z_* + c_{\alpha} n^{\frac{2}{3}} \tfrac{i}{2}$, $n z_* + c_{\alpha} n^{\frac{2}{3}} e^{\frac{\pi i}{6}}$ and $n z_* + n \delta e^{\frac{\pi i}{6}}$}, \\
  \contC^{(\ballistic, \Ai)}_{\Lower, \Left} &= \text{the reflection of $\contC^{(\ballistic, \Ai)}_{\Lower, \Right}$ about $i\R$}, \\
  \contC^{(\ballistic, \Ai)}_{\Upper, \Right} &= \text{the reflection of $\contC^{(\ballistic, \Ai)}_{\Lower, \Right}$ about $\R$}, \\
  \contC^{(\ballistic, \Ai)}_{\Upper, \Left} &= \text{the reflection of $\contC^{(\ballistic, \Ai)}_{\Lower, \Left}$ about $\R$}, 
\end{align*}
all oriented positively. We deform the contour $\Sigma$ into $\Sigma^{(\ballistic, \Ai)}_{\Right} \cup \Sigma^{(\ballistic, \Ai)}_{\Left}$, where
\begin{align*}
  \Sigma^{(\ballistic, \Ai)}_{\Right} &= \Sigma^{(\ballistic, \Ai)}_{\Local, \Right} \cup \{ n z_* + n \delta e^{\theta i} : -\tfrac{\pi}{6} \leq \theta \leq 0 \} \cup (n z_* + n \delta,\infty), \\
  \Sigma^{(\ballistic, \Ai)}_{\Local, \Right} &= \text{union of line segments connecting $n z_* - c_{\alpha} n^{\frac{2}{3}} \tfrac{i}{2}$, $n z_* + c_{\alpha} n^{\frac{2}{3}} e^{-\frac{\pi i}{6}}$ and  $n z_* + n \delta e^{-\frac{\pi i}{6}}$}, \\
  \Sigma^{(\ballistic, \Ai)}_{\Left} &= \text{the reflection of $\Sigma^{(\ballistic, \Ai)}_{\Right}$ about $i\R$},
\end{align*}
oriented from left to right. See Figure \ref{fig:ballistic_soft} for an overview of the contours locally around $n z_*$.

\begin{figure}
   \begin{minipage}[b]{5cm}
        \includegraphics{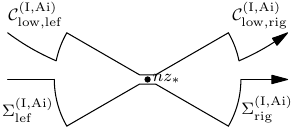}
        \caption{Contours in Section \ref{subsubsec:ballistic_soft} locally around $n z_*$.}
         \label{fig:ballistic_soft}
    \end{minipage}
    \hspace{\stretch{1}}   
    \begin{minipage}[b]{5cm}
        \includegraphics{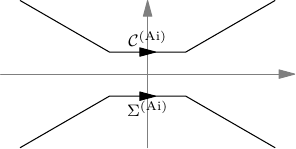}
        \caption{Shape of $\contC^{(\Ai)}$ and $\Sigma^{(\Ai)}$.}
         \label{fig:Airy_contour}
    \end{minipage}
\end{figure}

For $s \in \Sigma$ or $s \in \contC^{(\ballistic, \Ai)}_{\Lower}$, \eqref{eq:gamma_ballistic} holds uniformly; for $s \in \Sigma$, \eqref{eq:asy_Q_ballistic} holds uniformly; for $t \in \contC^{(\ballistic, \Ai)}_{\Lower}$ with $\Im t \leq -\epsilon' n$, and the first identity of \eqref{eq:asy_P_ballistic} holds uniformly for all $t \in \contC^{(\ballistic, \Ai)}_{\Lower}$. Hence, we find that, for $s \in \Sigma$, 
\begin{equation} \label{eq:est_q_ballistic_Ai}
  q_n(s; \eta) = e^{-2n(\log n - 1)} \frac{\sqrt{\pi}}{\sqrt{2is}} e^{-n F(\frac{s}{n}; r_{\max})} e^{-in^{-2/3} c^{-1}_{\alpha} \sigma s} (1 + \bigO(n^{-1})).
\end{equation}
Thus $q_n(s; \eta)$ vanishes exponentially fast as $s$ moves away from $n z_*$ along $s \in \Sigma \cap \{ \Re s \geq 0 \}$. On the other hand, for $t \in \contC^{(\ballistic, \Ai)}_{\Lower}$ with $\Im t \leq -\epsilon' n$,
\begin{equation} \label{eq:est_p_ballistic_Ai}
  p_n(t; \xi) = e^{2n(\log n - 1)} \frac{\sqrt{t}}{\sqrt{2\pi i}} e^{n F(\frac{t}{n}; r_{\max})} e^{i n^{-2/3} c^{-1}_{\alpha} \tau t} (1 + \bigO(n^{-1})).
\end{equation}
Thus, $p_n(t; \xi)$ vanishes exponentially fast as $t$ moves away from $n z_*$ along  $\contC^{(\ballistic, \Ai)}_{\Lower}$ with $\Im t \leq -\epsilon' n$. By the symmetry $t\mapsto -t$, this property also holds along $t \in \contC^{(\ballistic, \Ai)}_{\Lower} \cap \{ \Im t \leq 0 \}$. We can treat the other parts of the contour similarly, and we find that the main contribution to the integral \eqref{eq:Khat}, with $\Sigma$ and $\mathcal{C}$ deformed as above, occurs at $s = n z_* + \bigO(n^{2/3})$ and $t = \pm n z_* + \bigO(n^{2/3})$.
\medbreak

We first consider the contribution along $s \in \Sigma^{(\ballistic, \Ai)}_{\Local}$ and $t \in \contC^{(\ballistic, \Ai)}_{\Local}$, where
\begin{align*}
  \star^{(\ballistic, \Ai)}_{\Local} = \star^{(\ballistic, \Ai)}_{\Local, \Right} \cup \star^{(\ballistic, \Ai)}_{\Local, \Left}, \qquad
  \star^{(\ballistic, \Ai)}_{\Local, \Left} = \text{the reflection of } \star^{(\ballistic, \Ai)}_{\Local, \Right} \text{ about } i\R,
\end{align*}
for $\star = \Sigma \text{ or } \contC$. The previous estimates and the change of variables
\begin{align*}
  c^{-1}_{\alpha} n^{-\frac{2}{3}} (s - n z_*) \mapsto u, \qquad c^{-1}_{\alpha} n^{-\frac{2}{3}} (t - n z_*) \mapsto v,
\end{align*}
then give
\begin{subequations} \label{eq:saddle_pt_ballistic_Ai}
  \begin{align}
    & -c^{-1}_{\alpha} n^{-\frac{2}{3}} \frac{e^{in z_* \eta}}{e^{in z_* \xi}} \int_{\Sigma^{(\ballistic, \Ai)}_{\Local}} \frac{ds}{\pi i} \int_{\contC^{(\ballistic, \Ai)}_{\Local}} \frac{dt}{2\pi i} \frac{q_n(s; \eta) p_n(t; \xi)}{s-t} \label{eq:saddle_pt_ballistic_Ai_1} \\
    = {}& c^{-1}_{\alpha} n^{-\frac{2}{3}} i \int_{\Sigma^{(\ballistic, \Ai)}_{\Local}} \frac{ds}{2\pi i} \int_{\contC^{(\ballistic, \Ai)}_{\Local}} \frac{dt}{2\pi i} \frac{e^{n F(\frac{t}{n}; r_{\max}) + in^{-2/3} c^{-1}_{\alpha} \tau(t - n z_*)}}{e^{n F(\frac{s}{n}; r_{\max}) + in^{-2/3} c^{-1}_{\alpha} \sigma(s - n z_*)}} \frac{\sqrt{t/s} + \bigO(n^{-1})}{s-t} \\
    = {}& i \int_{\Sigma^{(\Ai)}}\frac{du}{2\pi i} \int_{\contC^{(\Ai)}} \frac{dv}{2\pi i} \frac{1}{u - v} \frac{e^{\frac{i}{3} v^3 + i\tau v}}{e^{\frac{i}{3} u^3 + i\sigma u}} + \bigO(n^{-\frac{1}{3}}) = K_{\Ai}(\sigma, \tau) + \bigO(n^{-\frac{1}{3}}),
  \end{align}
\end{subequations}
where the contours $\Sigma^{(\Ai)}$ and $\contC^{(\Ai)}$ are the same as in \eqref{eq:defn_K_Ai}, and are further specified as (see Figure \ref{fig:Airy_contour})
\begin{align}
  \Sigma^{(\Ai)} = {}& \text{polygonal contour connecting $e^{-\frac{5\pi i}{6}}\infty$, $e^{-\frac{5\pi i}{6}}$, $e^{-\frac{\pi i}{6}}$ and $e^{-\frac{\pi i}{6}}\infty$}, \label{eq:defn_Sigma_Ai} \\
  \contC^{(\Ai)} = {}& \text{polygonal contour connecting $e^{\frac{5\pi i}{6}}\infty$, $e^{\frac{5\pi i}{6}}$, $e^{\frac{\pi i}{6}}$ and $e^{\frac{\pi i}{6}}\infty$}. \label{eq:defn_contC_Ai}
\end{align}

Next, we consider the contribution along $s \in \Sigma^{(\ballistic, \Ai)}_{\Local}$ and $t \in -\contC^{(\ballistic, \Ai)}_{\Local}$, which is \eqref{eq:saddle_pt_ballistic_Ai_1} after replacing $t$ by $-t$. By symmetry, the latter doesn't change $q_n(s; \eta) p_n(t; \xi)$, while $(s - t)^{-1}$ becomes $(s + t)^{-1}$. However, as $s + t = \bigO(n)$, we have
\begin{equation} \label{eq:saddle_pt_ballistic_Ai_opposite}
  -2^{\frac{1}{3}} \alpha^{\frac{1}{2}} (\alpha + 1)^{\frac{1}{6}} n^{-\frac{2}{3}} \frac{e^{in z_* \eta}}{e^{in z_* \xi}} \int_{\Sigma^{(\ballistic, \Ai)}_{\Local}} \frac{ds}{\pi i} \oint_{\contC^{(\ballistic, \Ai)}_{\Local}} \frac{dt}{2\pi i} \frac{q_n(s; \eta) p_n(t; \xi)}{s + t} = \bigO(n^{-\frac{1}{3}}).
\end{equation}
We therefore obtain part \ref{enu:thm:ballistic_3} of Theorem \ref{thm:ballistic} with cocycle $\exp ((\eta - \xi) i n z_*)$.

\subsection{Metallic regime} \label{subsec:diffusive_regime}

We assume that $M_n=\alpha_n n$ with all $\alpha_n>0$, and $M_n\to\infty$ and $\alpha_n\to 0$ as $n\to\infty$. The analysis in this section is parallel to the one in Section \ref{subsec:ballisitic_regime}.

\subsubsection{Bulk and hard edge} \label{subsubsec:diffusive_bulk}

We define $\zeta_{\max}$ depending on $M_n$ like $r_{\max}$ depending on $\alpha$ in \eqref{eq:value_of_r_soft_edge}, such that
\begin{equation} \label{eq:defn_zeta_max}
  \begin{split}
    \zeta_{\max} = {}& 2 M_n(1 + M^{-1}_n)^{\frac{1}{2}} + \log \frac{(1 + M^{-1}_n)^{\frac{1}{2}} + 1}{(1 + M^{-1}_n)^{\frac{1}{2}} - 1} \\
    = {}& 2M_n + \log M_n + 2\log 2 + 1 + \bigO(M^{-1}_n).
  \end{split}
\end{equation}
For $\zeta \in (0, \zeta_{\max})$, by Lemma \ref{lem:ballistic}, the relevant equation is
\begin{equation} \label{eq:defn_eq_z(zeta)}
  f(z; \zeta) = 0, \qquad f(z; \zeta) = 2\alpha_n z + i\zeta + i\log \frac{n - iz}{n + iz}
  \begin{cases}
    - \pi, & \Re z > 0, \\
    + \pi, & \Re z < 0.
  \end{cases}
\end{equation}
Indeed, if $(z, \alpha, r)$ in \eqref{eq:saddle_point_eq} is substituted by $(n^{-1} z, M_n, \frac{1}{2} M^{-1}_n \zeta)$, \eqref{eq:saddle_point_eq} becomes \eqref{eq:defn_eq_z(zeta)}. By part \ref{enu:lem:ballistic_1} of Lemma \ref{lem:ballistic}, \eqref{eq:defn_eq_z(zeta)} has exactly one solution in the fourth quadrant, denoted by $z_+(\zeta)$, and one solution in the third quadrant, denoted by $z_-(\zeta)$. Under the same substitution, the function $F(z; r)$ in \eqref{eq:defn_F(z;r)} becomes $n^{-1} F(z; \zeta) + 2\log n$ with $F(z; \zeta)$ defined by
\begin{equation} \label{eq:defn_F_diffusive}
  F(z; \zeta) = \alpha_n z^2 + i\zeta z - (n - iz)\log(n - iz) - (n + iz)\log(n + iz)
  \begin{cases}
    - \pi z, & \Im z > 0, \\
    + \pi z, & \Im z < 0,
  \end{cases}
\end{equation}

  If $\zeta \in (0, 2(1 - \epsilon)M_n)$ for some $\epsilon > 0$, then
  \begin{align} \label{eq:est_z_+_diffusive_bulk}
    z_{\pm}(\zeta) = \frac{1}{2\alpha_n} \left( \pm \pi - i\zeta + i \log \frac{2M_n + \zeta}{2M_n - \zeta} + \bigO(M^{-1}_n) \right),
  \end{align}
  and as $\zeta \to 0^+$, $z_{\pm}(\zeta)$ approaches $z_{\pm}(0)$, where $z_+(0) = \alpha^{-1}_n (\frac{\pi}{2} + \bigO(M^{-1}_n)) \in (0,\infty)$, $z_-(0) = -z_+(0)$, and $z_{\pm}(0)$ are the solution of the equation
  \begin{equation}
    z = \frac{1}{2\alpha_n} \pi - \alpha_n \arctan\Big(\frac{z}{n}\Big).
  \end{equation}
  If $\zeta = 2M_n + \log M_n + x$ with $x$ in a compact subset of $(-\infty, 2\log 2 + 1)$, then $z_{\pm}(\zeta) = -in \pm \alpha_n w$ with $w = \bigO(1)$ and $\arg w \in (-\frac{\pi}{2}, \frac{\pi}{2})$, we have that $w$ satisfies
  \begin{equation}
    G(w) = \bigO(M^{-1}_n), \quad G(w) = 2w + i\log w - \frac{\pi}{2} - i\log 2 + xi.
  \end{equation}
  I $x < 2\log 2 + 1$, then the equation $G(w) = 0$ has a unique solution in the right-half complex plane, and as $x \to -\infty$, the real part of the solution approaches $\frac{\pi}{2}$. Moreover, as $\Re \zeta_+(\xi)$ decreases, $\xi$ runs over $(0, \zeta_{\max})$. In particular, if $\zeta \to \zeta_{\max}$ both $z_+(\zeta)$ and $z_-(\zeta)$ approach
  \begin{equation} \label{eq:defn_z(zeta_max)}
    z(\zeta_{\max}) = -in(1 + M^{-1}_n)^{\frac{1}{2}}.
  \end{equation}

We introduce the contour $\contC^{(\diffusive)}_{\Outer} = \contC^{(\diffusive)}_{\Lower} \cup \contC^{(\diffusive)}_{\Upper}$ where $\contC^{(\diffusive)}_{\dagger} = \contC^{(\diffusive)}_{\Upper, \dagger} \cup \contC^{(\diffusive)}_{\Lower, \dagger}$, for $\star = \Lower \text{ or } \Upper$, and
\begin{align}
  \contC^{(\diffusive)}_{\Lower, \Right} = {}& \{ z_+(\zeta) : \zeta \in [0, \zeta_{\max}] \}, & \contC^{(\diffusive)}_{\Upper,\Left} = {}& \{ z : -z \in \contC^{(\diffusive)}_{\Lower, \Right} \}, \\
  \contC^{(\diffusive)}_{\Lower, \Left} = {}& \{ z_-(\zeta) : \zeta \in [0, \zeta_{\max}] \}, & \contC^{(\diffusive)}_{\Upper,\Right} = {}& \{ z : -z \in \contC^{(\diffusive)}_{\Lower, \Left} \}.
\end{align}
We also introduce $\contC^{(\diffusive)}_{\star} =  \contC^{(\diffusive)}_{\Upper, \star} \cup \contC^{(\diffusive)}_{\Lower, \star}$, for $\star = \Left \text{ or } \Right$. 
\medbreak

Let $\epsilon > 0$ be sufficiently small. For any $\zeta \in (0, \zeta_{\max})$, we let
\begin{align}
  z^{(\epsilon, \pm)}_+ = {}& z^{(\epsilon, \pm)}_+(\zeta) = \text{the point on $\contC^{(\diffusive)}_{\Right}$ whose imaginary part is $(\Im z_{\pm}) \pm \epsilon$}, \\
    z^{(\epsilon, \pm)}_- = {}& z^{(\epsilon, \pm)}_-(\zeta) = \text{the point on $\contC^{(\diffusive)}_{\Left}$ whose imaginary part is $(\Im z_{\pm}) \pm \epsilon$}.
\end{align}
We deform the contour $\contC$ into $\contC^{(\diffusive, \zeta, \epsilon)}_{\Outer} \cup \contC^{(\diffusive, \zeta, \epsilon)}_{\Inner}$. Both parts consist of two components: $\contC^{(\diffusive, \zeta, \epsilon)}_{\Outer} = \contC^{(\diffusive, \zeta, \epsilon)}_{\Upper} \cup \contC^{(\diffusive, \zeta, \epsilon)}_{\Lower}$ and $\contC^{(\diffusive, \zeta, \epsilon)}_{\Inner} = \contC^{(\diffusive, \zeta, \epsilon)}_{\Inner, +} \cup \contC^{(\diffusive, \zeta, \epsilon)}_{\Inner, -}$. The components are given by
\begin{align}
  \contC^{(\diffusive, \zeta, \epsilon)}_{\Upper} = {}&  \text{the part of $\contC^{(\diffusive)}_{\Outer}$ above $z^{(\epsilon, +_)}_{\pm}$}, \label{eq:defn_contC_upp} \\
  \contC^{(\diffusive, \zeta, \epsilon)}_{\Lower} = {}& \text{the part of $\contC^{(\diffusive)}_{\Outer}$ below $z^{(\epsilon, -)}_{\pm}$}, \\
  \contC^{(\diffusive, \zeta, \epsilon)}_{\Inner, \pm} = {}& \text{line segments between $z^{(\epsilon, \pm)}_-$ to $z^{(\epsilon, \pm)}_+$}. \label{eq:defn_contC_inn}
\end{align}

We deform the contour $\Sigma$ into  $\{ z : \Im z = \Im z_{\pm}(\zeta) \}$.
\medbreak

In this section, we consider both the hard edge and bulk limits. At the hard edge, we take $\zeta=0$ and consider
\begin{align} \label{eq:hard_edge_metallic}
  \xi = {}& \frac{\pi}{z_+(0)} \tilde{\tau}, & \eta = {}& \frac{\pi}{z_+(0)} \tilde{\sigma}, &\tilde{\tau} = {}& \frac{2}{\pi} \frac{\Re z_+(\zeta)}{\alpha_n} \tau, &\tilde{\sigma} = {}& \frac{2}{\pi} \frac{\Re z_+(\zeta)}{\alpha_n} \sigma,
\end{align}
for $\tilde{\tau}, \tilde{\sigma}$ in a compact subset of $[0,\infty)$. In the bulk, we let $\eps>0$, take $\zeta \in [2\epsilon M_n, 2(1 - \epsilon) M_n]$ and consider
\begin{align} \label{eq:bulk_metallic}
  \xi = {}& \zeta + \frac{\pi}{\Re z_+(\zeta)} \tilde{\tau}, & \eta = {}& \zeta + \frac{\pi}{\Re z_+(\zeta)} \tilde{\sigma}, &\tilde{\tau} = {}& \frac{2}{\pi} \frac{\Re z_+(0)}{\alpha_n} \tau, &\tilde{\sigma} = {}& \frac{2}{\pi} \frac{\Re z_+(0)}{\alpha_n} \sigma. 
\end{align}
for $\tilde{\tau}, \tilde{\sigma}$ in a compact subset of $[0,\infty)$. In both cases, analogously to \eqref{eq:K_Outer_Inner} and \eqref{eq:K_Outer_Inner_hard}, we have
\begin{equation} \label{eq:K=outer+inner}
  \Khat_{n}(\xi, \eta) = I_{\Outer} + I_{\Inner}, 
\end{equation}
where
\begin{equation} \label{eq:defn_I_Outer_Inner}
  I_{\star} = -\int_{\Sigma} \frac{ds}{\pi i} \oint_{\contC^{(\diffusive, \zeta, \epsilon)}_{\star}} \frac{dt}{2\pi i} \frac{q_n(s; \eta) p_n(t; \xi)}{s-t}, \qquad \star = \text{$\Outer$ or $\Inner$}.
\end{equation}
Similarly as in \eqref{eq:I_inner_ballistic_hard}, we find
\begin{equation} \label{eq:sine_kernel_metallic}
  \begin{split}
    \lim_{\epsilon \to 0^+} I_{\Inner} = {}& -\sqrt{\sinh \xi \sinh \eta} \int^{z_+(\zeta)}_{z_-(\zeta)} \frac{ds}{\pi i} s Q_{is - \frac{1}{2}}(\cosh \eta) P_{is - \frac{1}{2}}(\cosh \xi) \\
    = {}& -\sqrt{\xi \eta} \int^{z_+(\zeta)}_{z_-(\zeta)} \frac{ds}{\pi i} I_0(is \xi) K_0(is \eta) (1 + \bigO(s^{-1})).
  \end{split}
\end{equation}

We first estimate $I_{\Inner}$ as $\epsilon \to 0$. If $\xi, \eta$ are at the hard edge \eqref{eq:hard_edge_metallic}, we have the freedom to deform the integral contour from $z_+(0)$ to $z_-(0)$, so that the contour is away from $0$, and the $\bigO(s^{-1})$ error term is uniformly $\bigO(\alpha_n)$. One possible deformation is as follows: We take $\delta'>0$ sufficiently small (e.g. $\delta'=\frac{1}{10}$) and deform the contour $\Sigma$ such that the line segment connecting $z_+(0)$ and $z_-(0)$ is replaced by two line segments, one connecting $z_-(0)$ to $-\delta' n^{\alpha} i$, the other connecting $-\delta' n^{\alpha} i$ to $z_+(0)$. In this way, the estimate \eqref{eq:Q_nu(x)_hard} holds uniformly for $s \in \Sigma$. Then, similarly as in \eqref{eq:I_inner_ballistic_hard} and \eqref{eq:Bessel_integral}, we can conclude that
\begin{equation} \label{eq:limit_diffusive_inner}
  \begin{split}
    \lim_{\epsilon \to 0^+} I_{\Inner} %= {}& \frac{z(0) \sqrt{\tilde{\tau} \tilde{\sigma}}}{2\pi (\tilde{\sigma}^2 - \tilde{\tau}^2)} \left[s(\tilde{\tau} J_1(s\tilde{\tau}) H^{(2)}_0(s\tilde{\sigma}) - \sigma J_0(s\tilde{\tau}) H^{(2)}_1(s\tilde{\sigma})) \right]^{s = 1}_{s = -1} + \bigO(1) \\
    = {}& 2\pi z_+(0) \left( \sqrt{\tilde{\tau} \tilde{\sigma}} K_{\Bes, 0}((\pi \tilde{\tau})^2, (\pi \tilde{\sigma})^2) + \bigO(\alpha_n) \right),
  \end{split}
\end{equation}

If $\xi, \eta$ are in the bulk \eqref{eq:bulk_metallic}, Using % \eqref{eq:I_K_in_J_H} and
the estimates \eqref{eq:I0_asy} and \eqref{eq:K0_asy}, we obtain

\begin{equation}
  \begin{split}
    \lim_{\epsilon \to 0^+} I_{\Inner} = {}& \frac{1}{2\pi i (\xi - \eta)} \left( e^{i(\xi - \eta) z_+(\zeta)} - e^{i(\xi - \eta) z_-(\zeta)} \right) + \bigO(\exp ((\eta - \xi) \Im z_{\pm}(\zeta))) \\
    = {}& \exp ((\eta - \xi) \Im z_{\pm}(\zeta)) \alpha_n \left( \frac{\sin((\tilde{\tau} - \tilde{\sigma}) \pi)}{\pi(\tilde{\tau} - \tilde{\sigma})} + \bigO(\alpha_n) \right).
  \end{split}
\end{equation}

Second, we estimate $I_{\Outer}$ as $\epsilon \to 0^+$. The estimate \eqref{eq:P_nu(x)_hard} can be applied for $t \in \contC^{(\diffusive, \zeta, \epsilon)}_{\Outer}$, and the estimate \eqref{eq:Q_nu(x)_hard} can be applied for $s \in \Sigma$ if $\lvert s \rvert$ is large. This is satisfied if $\zeta > 0$ and $\xi,\eta$ are in the bulk \eqref{eq:bulk_metallic}. If $\zeta = 0$ and $\xi,\eta$ are at the hard edge \eqref{eq:hard_edge_metallic}, we can deform contour $\Sigma$ as we do in the evaluation of \eqref{eq:sine_kernel_metallic}. In this way, the estimate \eqref{eq:Q_nu(x)_hard} holds uniformly for $s \in \Sigma$. Afterwards, we can use the estimates \eqref{eq:I0_asy} and \eqref{eq:K0_asy}. On the other hand, estimates for $(\frac{1}{2}-is)_{n}(\frac{1}{2}+is)_{n}$ for $s \in \Sigma$ and $(\frac{1}{2}-it)_{n}(\frac{1}{2}+it)_{n}$ for $t \in \contC^{(\diffusive, \zeta, \epsilon)}_{\Outer}$ can be obtained by \eqref{eq:Pochhammer_asy} and Stirling's formula. 
\medbreak

Using the previous estimates, we find that, if $\xi, \eta$ are the hard edge  \eqref{eq:hard_edge_metallic}, $q_n(s; \xi)$ vanishes exponentially fast as $s \in \Sigma$ moves away from $z_{\pm}(0)$, and $p_n(t; \xi)$ vanishes exponentially fast as $t \in \contC^{(\diffusive, 0, \epsilon)}_{\Outer}$ moves away from $z_{\pm}(0)$. Hence, the main contribution to the integral occurs at $s = z_{\pm}(0) + \bigO(\alpha_n^{-\frac{1}{2}})$ and $t = z_{\pm}(0) + \bigO(\alpha_n^{-\frac{1}{2}})$. Around these points, with the help of \eqref{eq:Stirling_s}, we obtain
\begin{equation} \label{eq:hard_edge_mettalic}
  \begin{split}
    q_n(s; \eta) p_n(t; \xi)
    = {}& t (I_0(it\xi) + \bigO(\alpha_n)) (K_0(is\eta) + \bigO(\alpha_n)) \sqrt{\xi \eta} \\
    & \times \frac{e^{-\alpha_n s^2} \cosh(\pi s) e^{(n - is)(\log(n - is) - 1) + (n + is)(\log(n + is) - 1)}}{e^{-\alpha_n t^2} \cosh(\pi t) e^{(n - it)(\log(n - it) - 1) + (n + it)(\log(n + it) - 1)}} (1 + \bigO(n^{-1})).
  \end{split}
\end{equation}
The modified Bessel functions remain bounded and
\begin{equation}
  \frac{e^{-\alpha_n s^2} \cosh(\pi s) e^{(n - is)(\log(n - is) - 1) + (n + is)(\log(n + is) - 1)}}{e^{-\alpha_n t^2} \cosh(\pi t) e^{(n - it)(\log(n - it) - 1) + (n + it)(\log(n + it) - 1)}} = \frac{e^{-\alpha_n (s - z_{\pm}(0))^2}}{e^{-\alpha_n (t - z_{\pm}(0))^2}} (1 + \bigO(\alpha_n^{\frac{1}{2}})),
\end{equation}
as $s,t=z_{\pm}(0) + \bigO(\alpha_n^{-\frac{1}{2}})$. It then follows that
\begin{equation} \label{eq:diffusive_outer_curve}
  \lim_{\epsilon \to 0^+} I_{\Outer} = \bigO(\alpha_n^{\frac{1}{2}}).
\end{equation}
In the case where $\xi, \eta$ are in the bulk \eqref{eq:bulk_metallic}, we can apply \eqref{eq:I0_asy} and \eqref{eq:K0_asy}, to obtain that uniformly for $s \in \Sigma$,
\begin{equation} \label{eq:asy_Q_it}
  \sqrt{\sinh \eta} Q_{is - \frac{1}{2}}(\cosh \eta) = \sqrt{\frac{\pi}{2is}} e^{-is\eta} (1  + \bigO(s^{-1})),
\end{equation}
and uniformly for $t \in \contC^{(\diffusive, \delta, \epsilon)}_{\Outer}$ with $\lvert \Im t \rvert > \delta \alpha_n$,
\begin{equation} \label{eq:asy_P_is}
  \sqrt{\sinh \xi} P_{it - \frac{1}{2}}(\cosh \xi) = \frac{1}{\sqrt{2\pi (\pm it)}} e^{\pm it\xi} (1 + \bigO(t^{-1})),
\end{equation}
where $\pm$ becomes $+$ if $t \in \compC_-$ and $-$ if $t \in \compC_+$. For all $t \in \contC^{(\diffusive, \delta, \epsilon)}_{\Outer}$ with $\lvert \Im t \rvert \leq \delta \alpha_n$ and sufficiently large $n$, we have the estimate
\begin{equation} \label{eq:asy_P_diffusive_between}
  \left\lvert \sqrt{\sinh \xi} P_{it - \frac{1}{2}}(\cosh \xi) \right\rvert \leq 
  \sqrt{\frac{2}{\pi |t|}} \big( e^{\Re(it\xi)} + e^{\Re(-it\xi)} \big).
\end{equation}

We can therefore conclude that uniformly for $s \in \Sigma$ and for $t \in \contC^{(\diffusive, \zeta, \epsilon)}_{\Outer}$ with $\Im t < -\delta \alpha_n$,
\begin{align}
  q_n(s; \eta) = {}& \sqrt{\frac{\pi}{2is}} e^{-F(s; \zeta) - 2n} e^{-i2s \tilde{\sigma} \alpha_n} (1 + \bigO(\alpha_n)), \\
  p_n(t; \xi) = {}& \frac{\sqrt{t}}{\sqrt{2\pi i}} e^{F(t; \zeta) + 2n} e^{i2t \tilde{\tau} \alpha_n} (1 + \bigO(\alpha_n)), \label{eq:est_p_n_metallic}
\end{align}
where $F(z; \zeta)$ is defined in \eqref{eq:defn_F_diffusive}. For $t \in \contC^{(\diffusive, \zeta, \epsilon)}_{\Outer}$ with $\Im t > \delta \alpha_n$, the estimate of $p_n(t; \xi)$ is given by \eqref{eq:est_p_n_metallic} and \eqref{eq:symmetry_P_Q}, and for $t \in \contC^{(\diffusive, \zeta, \epsilon)}_{\Outer}$ with $\lvert \Im t \rvert \leq \delta \alpha_n$, we may use \eqref{eq:asy_P_diffusive_between} instead of \eqref{eq:asy_P_is}.
\medbreak

From the previous estimates, we find that $q_n(s; \xi)$ vanishes exponentially fast as $s \in \Sigma$ moves away from $z_{\pm}(\zeta)$, and that $p_n(t; \xi)$ vanishes exponentially fast as $t \in \contC^{(\diffusive, \zeta, \epsilon)}_{\Outer}$ moves away from $\pm z_{\pm}(\zeta)$. For $\epsilon \to 0^+$, the integral $I_{\Outer}$ therefore concentrates at $s = z_{\pm}(\zeta) + \bigO(\alpha_n^{\frac{1}{2}})$ and $t = \pm z_{\pm}(\zeta) + \bigO(\alpha_n^{\frac{1}{2}})$. Together with the exponential decay of the integrand of $I_{\Outer}$ in \eqref{eq:defn_I_Outer_Inner} away from $z_{\pm}$ for $s$ and $\pm z_{\pm}$ for $t$, we have that
\begin{equation} \label{eq:diffusive_outer}
  \lim_{\epsilon \to 0^+} I_{\Outer} = \exp ((\eta - \xi) \Im z_{\pm}) \bigO(\alpha_n^{\frac{1}{2}}).
\end{equation}

Summing up \eqref{eq:diffusive_outer} and \eqref{eq:sine_kernel_metallic}, we obtain part \ref{enu:thm:metallic_1} of Theorem \ref{thm:metallic} with cocycle $\exp ((\xi - \eta) \Im z_{\pm})$. Similarly, summing up \eqref{eq:limit_diffusive_inner} and \eqref{eq:diffusive_outer_curve}, we obtain part \ref{enu:thm:metallic_2} of Theorem \ref{thm:metallic} with cocycle $\exp ((\xi - \eta) \Im z_{\pm})$.

\subsubsection{Soft edge}

Take $\zeta_{\max}$ as in \eqref{eq:defn_zeta_max} and 
\begin{align}
  \xi = {}& \zeta_{\max} + 2^{\frac{1}{3}} \alpha_n^{\frac{2}{3}} \tau, &
  \eta = {}& \zeta_{\max} + 2^{\frac{1}{3}} \alpha_n^{\frac{2}{3}} \sigma,
\end{align}
with $\tau, \sigma$ are in a compact subset of $\realR$. We consider the function  $F(z; \zeta)$ in \eqref{eq:defn_F_diffusive} and take $z_*=z(\zeta_{\max})$ as in \eqref{eq:defn_z(zeta_max)}. Note that $F(z; \zeta)$ is not defined for $z \in \{ iy : y \in \realR \}$, but $F'(z; \zeta)$ and $e^{F(z; \zeta)}$ can be defined $z \in \{ iy : \lvert y \rvert > n \}$ by analytic continuation. We have, 
\begin{align} 
  F'(z_*; \zeta_{\max}) = {}& F''(z_*; \zeta_{\max}) = 0, & F'''(z_*; \zeta_{\max}) = {}& 4i \alpha_n^2 \sqrt{1 + M^{-1}_n},
\end{align}
\begin{equation} 
  F^{(4)}(z_*; \zeta_{\max}) = 2n^{-3} \Bigg( \frac{1}{(\sqrt{1 + M^{-1}_n} - 1)^3} - \frac{1}{(\sqrt{1 + M^{-1}_n} + 1)^3} \Bigg) = 16 \alpha_n^3 (1 + \bigO(M^{-1}_n)).
\end{equation}
We denote
\begin{equation}
  c_{\alpha} = (-i F'''(z_*; \zeta_{\max})/2)^{-\frac{1}{3}} = 2^{-\frac{1}{3}} \alpha_n^{-\frac{2}{3}} (1 + M^{-1}_n)^{-\frac{1}{6}}.
\end{equation}
For sufficiently small $\delta > 0$, the circle $\{ z \in \compC : \lvert z - z_*\rvert = \delta \alpha^{-1}_n \}$ intersects with $\contC_{\Outer}$ at two points; one in the fourth quadrant, denoted by $\hat{z}_+(\delta)$, and one the the third quadrant, denoted by $\hat{z}_-(\delta)$. We also denote $\theta(\delta) = \arg(\hat{z}_+(\delta) - z_*) \in (0, \frac{\pi}{6})$, which has $\theta(\delta) = \frac{2}{3} \delta + \bigO(\delta^2)$ as $\delta\to0$.
\medbreak

We deform the contour $\contC$ into $\contC^{(\diffusive, \Ai)} = \contC^{(\diffusive, \Ai)}_{\Lower} \cup \contC^{(\diffusive, \Ai)}_{\Upper}$. Both parts of consist of two components:  $\contC^{(\diffusive, \Ai)}_{\Lower} =\contC^{(\diffusive, \Ai)}_{\Lower, \Right} \cup \contC^{(\diffusive, \Ai)}_{\Lower, \Left}$ and $\contC^{(\diffusive, \Ai)}_{\Upper} = \contC^{(\diffusive, \Ai)}_{\Upper, \Right} \cup \contC^{(\diffusive, \Ai)}_{\Upper, \Left}$. The components are given by
\begin{align}
  \contC^{(\diffusive, \Ai)}_{\Lower, \Right} &= \contC^{(\diffusive, \Ai)}_{\Local, \Right} \cup \{ z_* + \alpha_n \delta e^{\theta i} : \theta(\delta) \leq \theta \leq \tfrac{\pi}{6} \} \cup \{ z : z \in \contC^{(\diffusive)}_{\Lower, \Right} \text{ and } \Im z \geq n \Im \hat{z}(\delta) \}, \\
   \contC^{(\diffusive, \Ai)}_{\Local, \Right} &= \text{union of line segments connecting $z_* + c_{\alpha} \tfrac{i}{2}$, $z_* + c_{\alpha} e^{\frac{\pi i}{6}}$ and $z_* + \alpha_n \delta e^{\frac{\pi i}{6}}$}, \\
   \contC^{(\diffusive, \Ai)}_{\Lower, \Left} &= \text{the reflection of $\contC^{(\diffusive, \Ai)}_{\Lower, \Right}$ about $i\R$}, \\
   \contC^{(\diffusive, \Ai)}_{\Upper, \star} &= \text{the reflection of $\contC^{(\diffusive, \Ai)}_{\Lower, \star}$ about $\R$},\qquad \text{$\star = \Left$ or $\Right$},
\end{align}
oriented so that the closed contour $\contC$ is oriented positively. We deform the contour $\Sigma$ into $\Sigma^{(\diffusive, \Ai)} = \Sigma^{(\diffusive, \Ai)}_{\Right} \cup \Sigma^{(\diffusive, \Ai)}_{\Left}$, where
\begin{align}
  \Sigma^{(\diffusive, \Ai)}_{\Right} &= \Sigma^{(\diffusive, \Ai)}_{\Local, \Right} \cup \{ z_* + n \delta e^{\theta i} : -\tfrac{\pi}{6} \leq \theta \leq 0 \} \cup (z_* + \alpha_n \delta, \infty), \\
  \Sigma^{(\diffusive, \Ai)}_{\Local, \Right} &= \text{line segments connecting $z_* - c_{\alpha} \tfrac{i}{2}$, $z_* + c_{\alpha} e^{-\frac{\pi i}{6}}$ and  $z_* + \alpha_n \delta e^{-\frac{\pi i}{6}}$}, \\
  \Sigma^{(\diffusive, \Ai)}_{\Left} &= \text{the reflection of $\Sigma^{(\diffusive, \Ai)}_{\Right}$ about $i\R$},
\end{align}
oriented so that $\Sigma$ is oriented from left to right. The shape of the contours around $z_*$ is similar to the shape of the contours around $z_*$ (defined separately in Section \ref{subsubsec:ballistic_soft}) in Section \ref{subsubsec:ballistic_soft} for crossover I regime, and the shape is schematically shown in Figure \ref{fig:ballistic_soft} (with $\ballistic$ replaced by $\diffusive$). 
\medbreak

For $s \in \Sigma$, we have the estimate \eqref{eq:Q_nu(x)_hard} for $Q_{is - \frac{1}{2}}(\cosh \eta)$. Furthermore, we can use \eqref{eq:reflection_Gamma_alt} and Stirling's formula to obtain an estimate for $(\frac{1}{2}-is)_{n}(\frac{1}{2}+is)_{n}$ like \eqref{eq:gamma_ballistic}. We then find that $q_n(s; \eta)$ vanishes exponentially fast as $s \in (z_* + \alpha_n \delta,\infty) \cup (-\infty, z_* - \alpha_n \delta) \subset \Sigma$ moves away from $z_*$. For $t \in \contC$, we have the estimate \eqref{eq:P_nu(x)_hard} for $P_{it - \frac{1}{2}}(\cosh \xi)$. Moreover, we can use \eqref{eq:Pochhammer_asy} and Stirling's formula to obtain an estimate for $(\frac{1}{2}-it)_{n}(\frac{1}{2}+it)_{n}$ like \eqref{eq:gamma_ballistic} and \eqref{eq:gamma_ballistic_inner}. We then find that $p_n(t; \xi)$ vanishes exponentially fast as $t \in \{ z : z \in \contC^{(\diffusive)}_{\Lower},\, \Im z \geq \Im \hat{z}(\delta) \} \subset \contC$ moves away from $z_*$, and as $t \in \{ z : z \in \contC^{(\diffusive)}_{\Upper} \text{ and } \Im z \leq -\Im \hat{z}(\delta) \} \subset \contC$ moves away from $-z_*$. We can treat the other parts of the contour similarly as in  Section \ref{subsubsec:ballistic_soft}. We then find that the main contribution to the integral \eqref{eq:Khat}, with $\Sigma$ and $\mathcal{C}$ deformed as above, occurs at $s = z_* + \bigO(\alpha^{\frac{2}{3}}_n)$ and $t = \pm z_* + \bigO(n^{\frac{2}{3}})$.

We first consider the integral of \eqref{eq:Khat} for $s \in \Sigma^{(\diffusive, \Ai)}_{\Local}$ and $t \in \contC^{(\diffusive, \Ai)}_{\Local}$, where
\begin{equation}
  \star^{(\diffusive, \Ai)}_{\Local} = \star^{(\diffusive, \Ai)}_{\Local, \Right} \cup \text{the reflection of } \star^{(\diffusive, \Ai)}_{\Local, \Right} \text{ about } i\R, \qquad \star = \Sigma \text{ or } \contC.
\end{equation}

For $s \in \Sigma^{(\diffusive, \Ai)}_{\Local}$, we have, like \eqref{eq:est_q_ballistic_Ai},
\begin{equation}
  q_n(s; \eta) = \sqrt{\frac{\pi}{2 i s}} e^{-F(s; \zeta_{\max}) - 2n} e^{-is 2^{1/3} \alpha_n^{2/3} \sigma} (1 + \bigO(\alpha_n)).
\end{equation}
For $t \in \contC^{(\diffusive, \Ai)}_{\Local}$, we have, like \eqref{eq:est_p_ballistic_Ai},
\begin{equation}
  p_n(t; \xi) = \sqrt{\frac{\sqrt{t}}{2\pi i}} e^{F(t; \zeta_{\max}) + 2n} e^{it 2^{1/3} \alpha_n^{2/3} \tau} (1 + \bigO(\alpha_n)),
\end{equation}

The previous estimates and the change of variables 
\begin{align}
  2^{\frac{1}{3}} \alpha_n^{\frac{2}{3}}(s - z^*) \mapsto u,\qquad 2^{\frac{1}{3}} \alpha_n^{\frac{2}{3}}(t - z^*) \mapsto v,
\end{align}
then give, analogously to \eqref{eq:saddle_pt_ballistic_Ai}, that
\begin{align} \label{eq:Ai_limit_diffusive}
  &-2^{\frac{1}{3}} \alpha_n^{\frac{2}{3}} \frac{e^{iz^* \eta}}{e^{iz^* \xi}} \int_{\Sigma^{(\diffusive, \Ai)}_{\Local}} \frac{ds}{\pi i} \int_{\contC^{(\diffusive, \Ai)}_{\Local}} \frac{dt}{2\pi i} \frac{q_n(s; \eta) p_n(t; \xi)}{s-t} \\
  & \qquad =
  -\int_{\Sigma^{(\Ai)}}\frac{du}{\pi i} \int_{\contC^{(\Ai)}} \frac{dv}{2\pi i} \frac{1}{2i(u - v)} \frac{e^{\frac{i}{3} v^3 + i\tau v}}{e^{\frac{i}{3} u^3 + i\sigma u}} + \bigO(\alpha_n^{\frac{1}{3}}) + \bigO(M^{-1}_n),
\end{align}
where $\Sigma^{(\Ai)}$ and $\contC^{(\Ai)}$ are the piecewise linear contours defined in \eqref{eq:defn_Sigma_Ai} and \eqref{eq:defn_contC_Ai}. Furthermore, analogously to \eqref{eq:saddle_pt_ballistic_Ai_opposite}, we have that the integral of \eqref{eq:Khat} for $s \in \Sigma^{(\diffusive, \Ai)}_{\Local}$ and $t \in -\contC^{(\diffusive, \Ai)}_{\Local}$ satisfies
\begin{equation}
  -2^{\frac{1}{3}} \alpha_n^{\frac{2}{3}} \frac{e^{iz^* \eta}}{e^{iz^* \xi}} \int_{\Sigma^{(\diffusive, \Ai)}_{\Local}} \frac{ds}{\pi i} \oint_{\contC^{(\diffusive, \Ai)}_{\Local}} \frac{dt}{2\pi i} \frac{q_n(s; \eta) p_n(t; \xi)}{s+t} = \bigO(n^{-1} \alpha_n^{-\frac{2}{3}}) + \bigO(n^{-1} M^{-1}_n).
\end{equation}

We therefore obtain part \ref{enu:thm:metallic_3} of Theorem \ref{thm:metallic} with cocycle $e^{iz^* \eta}/e^{iz^* \xi}$.

\appendix

\section{Appendix}

In what follows, we will collect some helpful estimates and asymptotic properties of the Legendre functions. 
\medbreak

In the first part, we review some asymptotic expansions of the Legendre functions $P_{it - 1/2}(\cosh \xi)$ for $t \in \compC$ and $Q_{is - 1/2}(\cosh \eta)$ for $s \in \{z\in\C : \Im z \leq 0\}$ that are used in Section \ref{sec:LS}. First, we consider the regime where $t, s \to \infty$. By \cite[Eq. (39) \& (40)]{Jones01}, with
\begin{equation}
  a(\zeta) = \frac{1}{8} \left( \coth \zeta - \frac{1}{\zeta} \right), \quad \zeta \in (0, \infty),
\end{equation}
we have that uniformly for $\xi \in (0,\infty)$, as $t \to \infty$ in $\compC$,
\begin{equation} \label{eq:P_nu(x)_hard}
  P_{it - \frac{1}{2}}(\cosh \xi) = \left( \frac{\xi}{\sinh \xi} \right)^{\frac{1}{2}} \left[ I_0(it\xi) (1 + \bigO(t^{-2})) + \frac{I_1(it\xi)}{it} (a(\xi) + \bigO(t^{-2})) \right],
\end{equation}
and uniformly for $\eta \in (0,\infty)$, as $s \to \infty$ in $\{ \arg s \in [-\pi, 0] \}$,
\begin{equation} \label{eq:Q_nu(x)_hard}
  Q_{is - \frac{1}{2}}(\cosh \eta) = \left( \frac{\eta}{\sinh \eta} \right)^{\frac{1}{2}} \left[ K_0(is\eta) (1 + \bigO(s^{-2})) - \frac{K_1(is\eta)}{is} (a(\eta) + \bigO(s^{-2})) \right].
\end{equation}
We can further specify this using \cite[Eq. 10.40.1, 10.40.2 \& 10.40.5]{Boisvert-Clark-Lozier-Olver10}, which say that as $z \to \infty$,
\begin{align}
  I_{\nu}(z) = {}&
                   \begin{cases}
                     \frac{e^z}{\sqrt{2\pi z}} (1 + \bigO(z^{-1})), & \arg z \in (-\frac{\pi}{2} + \epsilon, \frac{\pi}{2} - \epsilon), \\
                     \frac{e^z}{\sqrt{2\pi z}} (1 + \bigO(z^{-1})) + i e^{\nu \pi i} \frac{e^{-z}}{\sqrt{2\pi z}} (1 + \bigO(z^{-1})), & \arg z \in (-\epsilon, \epsilon),
                   \end{cases}\label{eq:I0_asy} \\
  K_{\nu}(z) = {}& \left( \frac{\pi}{2z} \right)^{1/2} e^{-z} (1 + \bigO(z^{-1})), \quad \arg z \in (-\frac{3\pi}{2} + \epsilon, \frac{3\pi}{2} - \epsilon). \label{eq:K0_asy}
\end{align}
Furthermore, $I_n(-z) = (-1)^n I_n(z)$, see \cite[Eq. 10.25.2]{Boisvert-Clark-Lozier-Olver10}.

Next, we consider the regime where $t, s$ are in a compact subset of $\compC$ and $\xi, \eta \to \infty$. We have the following expressions of $P_{it - 1/2}(\cosh \xi)$ and $Q_{is - 1/2}(\cosh \eta)$ as hypergeometric functions
\begin{multline} \label{eq:P_in_hyper}
  P_{it - \frac{1}{2}}(\cosh \xi) = \frac{\Gamma(it)}{\sqrt{\pi} \Gamma(it + \frac{1}{2})} (2\cosh \xi)^{it - \frac{1}{2}} \hyperF{2}{1}{\frac{3}{4} - \frac{it}{2}, \frac{1}{4} - \frac{it}{2}}{1 - it}{\frac{1}{\cosh^2 \xi}} \\
  + \frac{\Gamma(-it)}{\sqrt{\pi} \Gamma(-it + \frac{1}{2})} (2\cosh \xi)^{-it - \frac{1}{2}} \hyperF{2}{1}{\frac{3}{4} + \frac{it}{2}, \frac{1}{4} + \frac{it}{2}}{1 + it}{\frac{1}{\cosh^2 \xi}},
\end{multline}
\begin{equation}
  Q_{is - \frac{1}{2}}(\cosh \eta) = \frac{\sqrt{\pi} \Gamma(is + \frac{1}{2})}{\Gamma(is + 1) (2\cosh \eta)^{is + 1/2}} \hyperF{2}{1}{\frac{3}{4} + \frac{is}{2}, \frac{1}{4} + \frac{is}{2}}{1 + is}{\frac{1}{\cosh^2 \eta}},
\end{equation}
see \cite[Eq. (7.6.1) \& (7.6.3)]{Lebedev65}. By the expansion formula of hypergeometric functions, if $s$ is in a compact subset of $\compC$ that excludes $i\Z_{\geq 1}$, then uniformly for $\eta \to \infty$,
\begin{equation} \label{eq:asy_Q_small_s}
  Q_{is - \frac{1}{2}}(\cosh \eta) = \frac{\sqrt{\pi} \Gamma(is + \frac{1}{2})}{\Gamma(is + 1) (2\cosh \eta)^{is + 1/2}} \left( 1 + \bigO \left( \frac{1}{\cosh^2 \eta} \right) \right).
\end{equation}
Similarly, if $t$ is in a compact subset of $\compC$ that excludes $i\Z_{\neq 0}$, then uniformly for $\xi \to \infty$,
\begin{multline} \label{eq:asy_P_small_t}
  P_{it - \frac{1}{2}}(\cosh \xi) = \frac{\Gamma(it)}{\sqrt{\pi} \Gamma(it + \frac{1}{2})} (2\cosh \xi)^{it - \frac{1}{2}} \left( 1 + \bigO \left( \frac{1}{\cosh^2 \xi} \right) \right)  \\
  + \frac{\Gamma(-it)}{\sqrt{\pi} \Gamma(-it + \frac{1}{2})} (2\cosh \xi)^{-it - \frac{1}{2}} \left( 1 + \bigO \left( \frac{1}{\cosh^2 \xi} \right) \right).
\end{multline}

We also need the following error bounds. For $t \in \compC_+$ with $\Im t > 2$ and $\xi \in [\epsilon, \infty)$, the asymptotics of $P_{it - 1/2}(\cosh \xi)$ can be obtained by \eqref{eq:asy_P_it} together with the symmetry \eqref{eq:symmetry_P_Q}.

\begin{lem}[Thm. 3.1 \& Eq. (1.10) in \cite{Nemes-Olde_Daalhuis20}]
Let $\epsilon > 0$. Then for all $\xi \in [\epsilon, \infty)$ and $\Im t < -2$, there exists $C = C(\epsilon) > 0$ such that
\begin{equation} \label{eq:asy_P_it}
 \left\lvert \frac{\sqrt{2\pi \sinh \xi}}{e^{it\xi}} P_{it - \frac{1}{2}}(\cosh \xi) - \frac{\Gamma(it)}{\Gamma(it + \frac{1}{2})} \right\rvert \leq C e^{-\xi} \frac{\Gamma(-\Im t - 1)}{\lvert \Gamma(it + \frac{1}{2}) \rvert}.
\end{equation}
\end{lem}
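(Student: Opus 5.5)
This estimate will be derived from a uniform large-parameter expansion of $P_{it-\frac12}(\cosh\xi)$ in the regime $\Im t<-2$, where the exponential $e^{it\xi}$ dominates (since $\Re(it\xi)=-\xi\Im t>0$). The starting point is the hypergeometric representation \eqref{eq:P_in_hyper}. Its first term carries $(2\cosh\xi)^{it-\frac12}$, and the second term is exponentially smaller relative to it by a factor $(2\cosh\xi)^{-2it}$. I will rewrite everything in terms of $e^{\pm\xi}$ rather than $\cosh\xi$. Concretely, I will use the alternative representation (Whipple / \cite[Eq.~14.12]{Boisvert-Clark-Lozier-Olver10})
\[
P_{it-\frac12}(\cosh\xi)=\frac{\Gamma(it)}{\sqrt{\pi}\,\Gamma(it+\frac12)}\frac{e^{(it-\frac12)\xi}}{(1-e^{-2\xi})^{1/2}}\,{}_2F_1\Bigl(\tfrac12,\tfrac12-it;1-it;\tfrac{1}{1-e^{2\xi}}\Bigr)+(t\mapsto -t).
\]
Multiplying by $\sqrt{2\pi\sinh\xi}\,e^{-it\xi}=\sqrt{\pi}\,e^{\xi/2}(1-e^{-2\xi})^{1/2}e^{-it\xi}$ shows that the leading quantity is exactly $\Gamma(it)/\Gamma(it+\frac12)$ times the ${}_2F_1$ factor. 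The $(t\mapsto-t)$ term is multiplied by $e^{-2it\xi}$, which has modulus $e^{2\xi\Im t}\le e^{-4\xi}$.

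The key steps are as follows.
\begin{enumerate}
\item Bound ${}_2F_1(\ldots)-1$. Its argument has modulus $\le e^{-2\xi}/(1-e^{-2\xi})$, which is $\lesssim e^{-2\xi}$ for $\xi\ge\epsilon$. Term by term, the coefficients $(\frac12)_k(\frac12-it)_k/((1-it)_k k!)$ are bounded uniformly in $k$ and in $\Im t<-2$, because $|(\frac12-it)_k/(1-it)_k|\le1$ when $\Re(-it)>0$. This gives an error $O(e^{-2\xi})\,|\Gamma(it)/\Gamma(it+\frac12)|$.
\item Bound the reflected term. In $\Gamma(-it)/\Gamma(-it+\frac12)$ the argument $-it$ has negative real part $\Im t<-2$. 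Using the reflection formula, I convert it into a ratio whose size is governed by $\Gamma(-\Im t-1)/|\Gamma(it+\frac12)|$ up to factors such as $|\sin|$ ratios. Combined with $e^{-2\xi}$ from the exponential, this yields the stated $Ce^{-\xi}\Gamma(-\Im t-1)/|\Gamma(it+\frac12)|$.
\item Compare the error from step 1 with the target form. The estimate $|\Gamma(it)|\le\Gamma(\Re(it))=\Gamma(-\Im t)$ is cruder than $\Gamma(-\Im t-1)$ only by a polynomial factor, and this factor is absorbed by the spare $e^{-\xi}$ factor together with the definition of $C(\epsilon)$.
\end{enumerate}

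The main obstacle I expect is step 2. Near the poles of $\Gamma(-it)$, that is where $-it$ approaches a nonpositive integer, the cancellation must be tracked carefully. The sum $P$ is entire in $t$, so the poles of the two terms cancel. I will handle this by working with Olver-type error bounds in the form of \cite[Thm.~3.1]{Nemes-Olde_Daalhuis20}, which the statement cites; in effect this step reduces to quoting that theorem together with the value \cite[Eq.~(1.10)]{Nemes-Olde_Daalhuis20} of its leading coefficient.
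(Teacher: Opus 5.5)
The paper gives no argument for this lemma: it is quoted from \cite[Thm.~3.1 \& Eq.~(1.10)]{Nemes-Olde_Daalhuis20}. Your last sentence falls back on the same citation, so to that extent you agree with the paper. However, the derivation you build around it does not work, and the reason is more basic than the one you flag.

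\textbf{The sign error in step 1.} In the region $\Im t<-2$ one has $\Re(-it)=\Im t<-2$, as you say yourself in step 2. Step 1 instead assumes $\Re(-it)>0$. So it is the \emph{dominant} term whose hypergeometric factor has lower parameter $1-it$ with $\Re(1-it)<-1$. At $t=-im$ with $m\ge 3$, one has $(1-it)_k=(1-m)_k=0$ for $k\ge m$. Hence the dominant term itself has poles at $t=-3i,-4i,\dots$, and these are cancelled only by the poles of $\Gamma(-it)$ in the reflected term. The uniform coefficient bound of step 1 is therefore false, and no separate estimate of the two pieces can be uniform in $t$. What you call the main obstacle is the entire content of the lemma.

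\textbf{Step 3 fails independently.} Since $\Gamma(-\Im t)=(-\Im t-1)\Gamma(-\Im t-1)$, your bound exceeds the target by the factor $e^{-\xi}(-\Im t-1)$. This factor is unbounded as $\Im t\to-\infty$ with $\xi=\epsilon$, so it cannot be absorbed into a constant $C(\epsilon)$. The factor must come out of the series itself. Your second upper parameter should be $\frac12$, not $\frac12-it$; as written the identity is false (test it at $it=\frac12$). With the correct Pfaff form ${}_2F_1(\frac12,\frac12;1-it;\frac{1}{1-e^{2\xi}})$, the first correction is $\frac{1}{4(1-it)(1-e^{2\xi})}$, which converts $\Gamma(it)$ into $\Gamma(it-1)$. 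But again this works only away from the zeros of $(1-it)_k$. In addition, $|1/(1-e^{2\xi})|>1$ for $\xi<\frac12\log 2$, so termwise summation is not even available for small $\epsilon$.

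\textbf{A route that avoids the cancellation.} Use a representation in which the remainder is an absolutely convergent integral; this inverse-factorial structure is what the cited theorem exploits. By \eqref{eq:symmetry_P_Q} and \eqref{eq:Leg-int-repr},
\[
P_{it-\frac12}(x)=\frac1\pi\int_0^\pi\bigl(x+\sqrt{x^2-1}\cos\theta\bigr)^{-it-\frac12}d\theta .
\]
Insert $y^{-it-\frac12}=\Gamma(it+\frac12)^{-1}\int_0^\infty e^{-uy}u^{it-\frac12}du$, which is valid since $\Re(it)>2$, and integrate in $\theta$. This gives
\[
\Gamma(it+\tfrac12)\,P_{it-\frac12}(\cosh\xi)=\int_0^\infty e^{-u\cosh\xi}\,I_0(u\sinh\xi)\,u^{it-\frac12}\,du .
\]
Write $I_0(y)=\frac{e^y}{\sqrt{2\pi y}}(1+r(y))$. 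Then $|r(y)|\le C/y$ for all $y>0$, by \eqref{eq:I0_asy} and the boundedness of $\sqrt{y}\,e^{-y}I_0(y)$ near $0$. Now use $\cosh\xi-\sinh\xi=e^{-\xi}$.
\begin{itemize}
\item The main part equals $\Gamma(it)e^{it\xi}/\sqrt{2\pi\sinh\xi}$.
\item The remainder has modulus at most $(2\pi\sinh\xi)^{-1/2}\frac{C}{\sinh\xi}\int_0^\infty e^{-ue^{-\xi}}u^{-\Im t-2}du$, and this integral equals $\Gamma(-\Im t-1)e^{-(\Im t+1)\xi}$.
\end{itemize}
Multiplying by $\sqrt{2\pi\sinh\xi}\,e^{-it\xi}/\Gamma(it+\frac12)$ gives the stated bound with $C(\epsilon)=C/\sinh\epsilon$. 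The argument in fact works for all $\Im t<-1$, and there are no poles to track.
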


\begin{lem}[Thm. 3.3 \& 3.4, Eq. (1.12) in \cite{{Nemes-Olde_Daalhuis20}}]
Let $\epsilon, \delta > 0$ and $M > 0$. Then for all $\eta \in [\epsilon,\infty)$ and $s \in D_{\delta, M}$ where $D_{\delta, M} = \{ z \in \compC : \arg z \in [-\pi + \delta, -\delta] \text{ and } \lvert z \rvert \geq M \}$, there exists $C = C(\epsilon, \delta, M) > 0$ such that
  \begin{equation} \label{eq:asy_Q_is}
    \left\lvert \sqrt{\frac{2 \sinh \eta}{\pi}} e^{is\eta} Q_{is - \frac{1}{2}}(\cosh \eta) - \frac{\Gamma(is + \frac{1}{2})}{\Gamma(is + 1)} \right\rvert \leq C e^{-\eta} \lvert s \rvert^{-\frac{3}{2}}.
  \end{equation}    
\end{lem}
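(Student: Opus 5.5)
The plan is to reduce the estimate to an elementary bound on a Gauss hypergeometric function with argument of size $\bigO(e^{-2\eta})$. I would not use the $1/\cosh^2\eta$ representation from the appendix, since it gives no uniformity for $\eta$ near $\epsilon$. Instead, with $\nu=is-\frac12$, I would start from the classical representation (cf. \cite[Eq.~14.12.4 and Ch.~15]{Boisvert-Clark-Lozier-Olver10})
\[
Q_\nu(\cosh\eta)=\sqrt{\pi}\,\frac{\Gamma(\nu+1)}{\Gamma(\nu+\frac32)}\,e^{-(\nu+1)\eta}\,
\hyperF{2}{1}{\frac12,\ \nu+1}{\nu+\frac32}{e^{-2\eta}} .
\]
Since $\nu+1=is+\frac12$, $\nu+\frac32=is+1$ and $\sqrt{2\sinh\eta}=e^{\eta/2}(1-e^{-2\eta})^{1/2}$, this becomes
\[
\sqrt{\frac{2\sinh\eta}{\pi}}\,e^{is\eta}Q_{is-\frac12}(\cosh\eta)
=\frac{\Gamma(is+\frac12)}{\Gamma(is+1)}\,(1-x)^{\frac12}\,
\hyperF{2}{1}{\frac12,\ is+\frac12}{is+1}{x},\qquad x=e^{-2\eta}.
\]
It therefore suffices to show that $|(1-x)^{1/2}F-1|\le C\,e^{-2\eta}|s|^{-1}$. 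Indeed, $|\Gamma(is+\frac12)/\Gamma(is+1)|\asymp|s|^{-1/2}$ on $D_{\delta,M}$ by Stirling's formula, so this gives the stated bound $Ce^{-\eta}|s|^{-3/2}$, even with room to spare.

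Next I would apply Pfaff's transformation. Because $c-b=\frac12$ with $c=is+1$, it gives
\[
(1-x)^{\frac12}\,\hyperF{2}{1}{\frac12,\ is+\frac12}{is+1}{x}=\hyperF{2}{1}{\frac12,\ \frac12}{is+1}{z},\qquad z=\frac{x}{x-1}\in(-\infty,0].
\]
Since $\eta\ge\epsilon$, we have $|z|\le e^{-2\eta}/(1-e^{-2\epsilon})=C_\epsilon e^{-2\eta}$. However, $|z|$ need not be smaller than $1$, so I would avoid the power series. Instead I would use Euler's integral, which is valid because $\Re c=1-\Im s>\frac12$:
\[
F(z)-1=\frac{\Gamma(c)}{\Gamma(\frac12)\Gamma(c-\frac12)}\int_0^1 t^{-\frac12}(1-t)^{c-\frac32}\Big[(1-zt)^{-\frac12}-1\Big]dt .
\]
Because $z\le0$, the elementary bound $|(1-zt)^{-1/2}-1|\le\frac12|z|t$ holds.

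The remaining step is the uniform bound on the gamma and beta factors, and this is the part that uses the sector condition. For $s\in D_{\delta,M}$ we have $-\Im s\ge|s|\sin\delta$, hence $\Re c\ge1+|s|\sin\delta$ and $|(1-t)^{c-3/2}|=(1-t)^{\Re c-3/2}$. The integral is then bounded by $\frac12|z|\,B(\frac32,\Re c-\frac12)\lesssim|z|(\Re c)^{-3/2}\lesssim|z|\,|s|^{-3/2}$. Stirling's formula in the sector $|\arg c|\le\pi-\delta'$ gives $|\Gamma(c)/\Gamma(c-\frac12)|\lesssim|s|^{1/2}$, uniformly for $|s|\ge M$. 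Combining the two yields $|F(z)-1|\le C|z|/|s|\le C' e^{-2\eta}|s|^{-1}$, with $C'$ depending only on $(\epsilon,\delta,M)$.

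I expect the main obstacle to be justifying the starting representation with the correct normalization and branch for complex $\nu$ in this sector, including the Pfaff step. The estimates themselves are routine once that identity is secured. I would verify the identity for real $s$ and large $\eta$ against \eqref{eq:asy_Q_small_s}, and then extend it by analytic continuation in $s$ (away from the poles at $i(k+\frac12)$, which lie outside the sector) and in $\eta>0$.
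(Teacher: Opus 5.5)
Your proof is correct, and it takes a genuinely different route from the paper. The paper gives no argument of its own: it imports the bound from \cite{Nemes-Olde_Daalhuis20}, where it is the first-order case of a general large-parameter expansion of $Q^{\mu}_{\nu}(\cosh\eta)$ with rigorous remainder bounds.

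You instead proceed in three steps:
\begin{itemize}
\item you start from the exact representation in $x=e^{-2\eta}$;
\item you apply Pfaff's transformation, which (because $c-b=\frac12$) leaves $s$ only in the lower parameter and moves the argument to $z=x/(x-1)\le 0$;
\item you use Euler's integral together with $|(1-zt)^{-1/2}-1|\le\frac12|z|t$.
\end{itemize}
The sector condition enters only through $\Re c\ge 1+|s|\sin\delta$, which controls the beta integral.

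The citation gives higher-order terms, general order $\mu$, and the companion estimate \eqref{eq:asy_P_it} for $P$ from the same source. Your argument gives self-containedness and a sharper conclusion, $Ce^{-2\eta}|s|^{-3/2}$ instead of $Ce^{-\eta}|s|^{-3/2}$. This rate is the true one: the first term in the expansion of $(1-x)^{1/2}F-1$ is $-x/(4(1+is))$.

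On the step you flagged: matching only the leading term of \eqref{eq:asy_Q_small_s} for real $s$ would not by itself prove the identity. You would need something extra, such as an ODE uniqueness argument. It is simpler to derive the identity directly from \eqref{eq:Leg-Q-int-repr}. With $x=\cosh\eta$, substitute $\cosh\eta+\sinh\eta\cosh u=e^{\eta}/w$. Then $du=dw/\bigl(w\sqrt{(1-w)(1-e^{-2\eta}w)}\bigr)$, hence $Q_\nu(\cosh\eta)=e^{-(\nu+1)\eta}\int_0^1 w^{\nu}(1-w)^{-1/2}(1-e^{-2\eta}w)^{-1/2}\,dw$, and Euler's integral turns this into exactly your formula. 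This holds for $\Re\nu>-1$, hence on all of $D_{\delta,M}$. The bases are real and positive throughout, so no analytic continuation or branch discussion is needed.
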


In the second part, we collect some estimates that are used in Section \ref{sec:IPUM}. All the propositions below can be derived from estimates \eqref{eq:P_nu(x)_hard}--\eqref{eq:K0_asy}. They are also direct consequences of the integral representations
\begin{equation} \label{eq:Leg-int-repr}
    P_{is-\frac12}(x)
    =\frac1\pi\int_0^\pi
    \left(x+\sqrt{x^2-1}\cos\theta\right)^{is-\frac12}
    \,d\theta,
    \qquad x\geq1,\quad s\in\C,
\end{equation}
\begin{equation}\label{eq:Leg-Q-int-repr}
Q_\nu(x)=\int_0^\infty
\left(x+\sqrt{x^2-1}\cosh u\right)^{-\nu-1}du,
\qquad x>1,\qquad \Re\nu>-1,
\end{equation} 
see \cite[Eqs.~14.12.7, 14.12.6 and 14.3.10]{Boisvert-Clark-Lozier-Olver10}.

\begin{prop}\label{prop:Leg-geom-est}
Let $A\geq 0$. There exists $C_A > 0$ such that, for all $x\geq1$ and $s\in\C$ with $|\Im s|\leq A$, 
\[
|P_{is-\frac12}(x)|\leq C_A x^{A+\frac12}.
\]
\end{prop}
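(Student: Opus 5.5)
The plan is to work directly from the integral representation \eqref{eq:Leg-int-repr}. Fix $x\geq1$ and $s\in\C$ with $|\Im s|\leq A$, and write $s=a+ib$ with $a,b\in\R$ and $|b|\leq A$. The base
\[
u(\theta)=x+\sqrt{x^2-1}\cos\theta
\]
is a positive real number for every $\theta\in[0,\pi]$, because $\sqrt{x^2-1}<x$. Consequently
\[
|u^{is-\frac12}|=u^{\Re(is-\frac12)}=u^{-b-\frac12}.
\]
The oscillatory factor $u^{ia}$ has modulus one, so it drops out entirely. This gives
\[
|P_{is-\frac12}(x)|\leq\frac1\pi\int_0^\pi u(\theta)^{-b-\frac12}\,d\theta .
\]

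Next I bound $u(\theta)$ from both sides:
\[
\frac{1}{2x}\leq x-\sqrt{x^2-1}\leq u(\theta)\leq x+\sqrt{x^2-1}\leq 2x .
\]
The first inequality holds because $x-\sqrt{x^2-1}=1/(x+\sqrt{x^2-1})$. I then split on the sign of the exponent.

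\emph{Case $-b-\frac12\geq0$.} Use the upper bound to get $u^{-b-\frac12}\leq(2x)^{-b-\frac12}\leq(2x)^{A+\frac12}$, since $2x\geq1$ and $-b\leq A$.

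\emph{Case $-b-\frac12<0$.} Use the lower bound to get $u^{-b-\frac12}\leq(2x)^{b+\frac12}\leq(2x)^{A+\frac12}$.

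In both cases the integrand is at most $(2x)^{A+\frac12}$, uniformly in $\theta$. Integrating over $[0,\pi]$ and dividing by $\pi$ yields the claim with $C_A=2^{A+\frac12}$.

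I do not expect a genuine obstacle here. The only point needing care is to justify \eqref{eq:Leg-int-repr} for all complex $s$. The right-hand side is entire in $s$, because the integrand is $e^{(is-\frac12)\log u(\theta)}$ with $\log u$ real and bounded on $[0,\pi]$. The left-hand side is entire by \eqref{eq:Leg}. Both sides agree on the real line by the cited DLMF formula, so they agree everywhere. At $x=1$ the integrand is identically $1$, which matches $P_{is-\frac12}(1)=1$.
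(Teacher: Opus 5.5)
Your proof is correct and follows the route the paper indicates: bound the Laplace integral \eqref{eq:Leg-int-repr} pointwise via $(2x)^{-1}\le x+\sqrt{x^2-1}\cos\theta\le 2x$. The resulting constant $C_A=2^{A+\frac12}$ is exactly the one the paper uses later, e.g.\ in part $iii)$ of the proof of Proposition~\ref{prop:admissable-weight-prop}; your analytic-continuation remark is harmless but unnecessary, since the Laplace integral already holds for every complex degree.
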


\begin{prop}\label{prop:Leg-der-s-est}
Let $m\in\N$ and $R>1$. There exists $C_{m,R}>0$ such that,
for all $s\in\R$,
\[
    \sup_{1\leq x\leq R}
    |P_{is-\frac12}^{(m)}(x)|
    \leq C_{m,R}(1+|s|)^{2m}.
\]
\end{prop}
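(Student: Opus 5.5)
The derivative bound follows from the integral representation \eqref{eq:Leg-int-repr}, differentiating in $x$ under the integral sign. Write
\[
  h(x,\theta)=x+\sqrt{x^2-1}\cos\theta,\qquad P_{is-\frac12}(x)=\frac1\pi\int_0^\pi h(x,\theta)^{is-\frac12}\,d\theta .
\]
For real $s$ we have $|h^{is-\frac12}|=h^{-\frac12}$. Moreover $h(x,\theta)\geq x-\sqrt{x^2-1}=1/(x+\sqrt{x^2-1})\geq 1/(2R)$ on $[1,R]$, so the base stays in a compact subset of $(0,\infty)$ and the powers are harmless. The difficulty is the factor $\sqrt{x^2-1}$, whose $x$-derivatives blow up at $x=1$. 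A naive chain rule on the representation above therefore fails near $x=1$, and I expect this endpoint to be the main obstacle.

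To circumvent it, I will use a representation that is smooth at $x=1$. Substitute $x=\cosh\xi$ and use the Mehler--Dirichlet/Laplace form symmetrized, or more simply expand $\cos\theta$ so that only even powers of $\sqrt{x^2-1}$ survive. Concretely, the function
\[
  \frac1\pi\int_0^\pi\bigl(x+y\cos\theta\bigr)^{\nu}\,d\theta,\qquad \nu=is-\tfrac12,
\]
is even in $y$, hence it is a smooth function $G(x,y^2)$. Writing it as $G(x,u)$ with $u=x^2-1$, the map $x\mapsto (x,x^2-1)$ is polynomial, so it suffices to bound mixed partials $\partial_x^a\partial_u^b G$ with $a+b\leq m$ on a compact set.

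Alternatively, and this is the route I will actually commit to, I will avoid the square root entirely by using the defining hypergeometric series \eqref{eq:Leg}. Set
\[
  P_{is-\frac12}(x)=F\Bigl(\tfrac12-is,\tfrac12+is;1;\tfrac{1-x}{2}\Bigr),
\]
whose coefficients are $\prod_{l<k}\bigl(s^2+(l+\frac12)^2\bigr)/(k!)^2$. Differentiating $m$ times gives
\[
  P^{(m)}_{is-\frac12}(x)=(-\tfrac12)^m\,\frac{(\tfrac12-is)_m(\tfrac12+is)_m}{(m!)^2}\,F\Bigl(\tfrac12-is+m,\tfrac12+is+m;1+m;\tfrac{1-x}{2}\Bigr).
\]
The prefactor is $O\bigl((1+|s|)^{2m}\bigr)$. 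This series, however, converges only for $|1-x|<2$, i.e.\ for $x<3$. So I combine the two tools: on $[1,2]$ I use the series bound, and on $[2,R]$ I use the integral representation, where $\sqrt{x^2-1}$ is smooth.

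On $[2,R]$, each $x$-derivative of $h^{\nu}$ yields a factor at most $|\nu|\cdot|\partial_x h|/h$, plus lower-order terms. By Faà di Bruno, the $m$-th derivative is a sum of terms $\nu(\nu-1)\cdots(\nu-j+1)\,h^{\nu-j}$ times products of derivatives of $h$, with $j\leq m$. Since $h$ and its derivatives are bounded above and below there, this gives $C(1+|s|)^m\leq C(1+|s|)^{2m}$.

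On $[1,2]$, the remaining task is a uniform bound for the shifted ${}_2F_1$ with argument $z=(1-x)/2\in[-\frac12,0]$. For this I will use the Pfaff transformation to the argument $z/(z-1)\in[0,\frac13]$:
\[
  F(a,b;c;z)=(1-z)^{-a}F\Bigl(a,c-b;c;\tfrac{z}{z-1}\Bigr),\qquad a=\tfrac12-is+m,\; c-b=\tfrac12-is.
\]
The prefactor $(1-z)^{-a}$ has modulus at most $C$, since $\Re a$ is fixed. The resulting series has terms $(a)_k(c-b)_k w^k/((c)_k k!)$. These grow like $|s|^{2k}$, so a termwise bound is not uniform in $s$. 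This is the hard step. Instead of bounding termwise, I will go back to an Euler-type integral. For real $s$, the shifted function is itself, up to constants, an associated Legendre function $P^{-m}_{is-\frac12}$-type object. Equivalently, I will just differentiate the integral representation in the variable $\xi$ with $x=\cosh\xi$. Derivatives in $\xi$ of $(\cosh\xi+\sinh\xi\cos\theta)^{\nu}$ are polynomial in $\nu$ of degree $\leq m$ with bounded coefficients. Converting $\partial_x^m$ to $\partial_\xi$ uses $\partial_x=(\sinh\xi)^{-1}\partial_\xi$, and the apparent singularity at $\xi=0$ is removable by evenness in $\xi$: each application of $(\sinh\xi)^{-1}\partial_\xi$ to an even smooth function gives an even smooth function, with bounds controlled by two more $\xi$-derivatives, via Taylor's formula with integral remainder. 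Each such step costs one factor $(1+|s|)^2$, which yields exactly the exponent $2m$ and closes the argument.
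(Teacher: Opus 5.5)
Your final argument is correct and follows the route the paper points to, namely deducing the bound from the integral representation \eqref{eq:Leg-int-repr}. After $x=\cosh\xi$ the integrand satisfies $|\partial_\xi^k h^{is-\frac12}|\leq C_k(1+|s|)^k$, and your evenness lemma $\|(\sinh\xi)^{-1}f'\|_{C^k}\leq C_k\|f\|_{C^{k+2}}$ for even $f$ handles the endpoint $x=1$, which the paper's one-line justification leaves implicit. That argument works verbatim on all of $[-\arcosh R,\arcosh R]$, so you can drop the hypergeometric detour and the split at $x=2$; the detour also has a harmless slip, since differentiating $F(a,b;1;z)$ produces $(1)_m=m!$, not $(m!)^2$, in the denominator.
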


\begin{prop}\label{prop:Leg-Q-geom-est}
Let $A>0$. There exists $C_{A}>0$ such that, for all $x\geq 2$, 
\[
|Q_{A+it-\frac12}(x)|
\leq C_Ax^{-A-\frac12},
\quad t\in\R.
\]
\end{prop}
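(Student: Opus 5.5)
The plan is to read the bound off the integral representation \eqref{eq:Leg-Q-int-repr} with $\nu=A+it-\frac12$. Since $A>0$, we have $\Re\nu=A-\frac12>-1$, so the representation applies for every $t\in\R$. The imaginary part of the exponent only contributes a unimodular factor, because the base $x+\sqrt{x^2-1}\cosh u$ is real and positive. Taking absolute values inside the integral therefore gives, uniformly in $t\in\R$,
\[
|Q_{A+it-\frac12}(x)|\leq \int_0^\infty \bigl(x+\sqrt{x^2-1}\cosh u\bigr)^{-A-\frac12}\,du = Q_{A-\frac12}(x).
\]
This reduces the claim to a one-variable estimate for the real-order function $Q_{A-\frac12}$ on $x\geq 2$, with no $t$-dependence left.

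To bound this integral for $x\geq2$, use $\sqrt{x^2-1}\geq \frac{\sqrt3}{2}x$ and $x+\sqrt{x^2-1}\cosh u\geq \frac{\sqrt3}{2}x(1+\cosh u)$. Write $\beta=A+\frac12>\frac12$. Then
\[
\int_0^\infty \bigl(x+\sqrt{x^2-1}\cosh u\bigr)^{-\beta}du \leq \Bigl(\tfrac{\sqrt3}{2}x\Bigr)^{-\beta}\int_0^\infty (1+\cosh u)^{-\beta}\,du .
\]
The last integral is finite because $(1+\cosh u)^{-\beta}\leq 2^{\beta}e^{-\beta u}$ and $\beta>0$. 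Setting $C_A=(2/\sqrt3)^{A+\frac12}\int_0^\infty(1+\cosh u)^{-A-\frac12}du$ then yields $|Q_{A+it-\frac12}(x)|\leq C_Ax^{-A-\frac12}$.

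The only point needing care is confirming that \eqref{eq:Leg-Q-int-repr} is valid for complex $\nu$ with $\Re\nu>-1$, which is how the paper states it (DLMF 14.12.6). Beyond that there is no real obstacle. Alternatively, one could use the hypergeometric expression for $Q$ in the appendix, but its prefactor $\Gamma(is+\frac12)/\Gamma(is+1)$ would require separate uniform control in $t$. The integral route avoids this.
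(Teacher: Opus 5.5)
Your argument is correct and takes the route the paper indicates. The paper asserts this proposition as a direct consequence of the integral representation \eqref{eq:Leg-Q-int-repr}, and you supply the missing details: taking absolute values removes the $t$-dependence, and $\sqrt{x^2-1}\geq\frac{\sqrt3}{2}x$ for $x\geq 2$ gives the decay rate.
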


\begin{prop}\label{prop:Leg-Q-cont}
    Let $a>1$ and $s\in\R$. Then $Q_{is-\frac12}\in C^{0}(\R_{\geq a})$ and 
    $$ \|Q_{is-\frac12}\|_{\infty}
    % < Q_{-\frac12}(a)
    <\infty.$$
\end{prop}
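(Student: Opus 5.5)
The plan is to work directly with the integral representation \eqref{eq:Leg-Q-int-repr} with $\nu=is-\frac12$. For $s\in\R$ this $\nu$ satisfies $\Re\nu=-\frac12>-1$, so the representation applies and gives
\[
Q_{is-\frac12}(x)=\int_0^\infty\bigl(x+\sqrt{x^2-1}\cosh u\bigr)^{-is-\frac12}\,du,\qquad x>1.
\]
For $x>1$ the base $b(x,u)=x+\sqrt{x^2-1}\cosh u$ is a positive real number, so the power is defined via the real logarithm. Since $s$ is real, the factor $b^{-is}$ has modulus one, and hence
\[
\bigl|b(x,u)^{-is-\frac12}\bigr|=b(x,u)^{-\frac12}.
\]

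The key step is a dominating function that is uniform in $x\in\R_{\geq a}$ (and also in $s\in\R$). For $x\geq a>1$ we have $\sqrt{x^2-1}\geq\sqrt{a^2-1}$ and $\cosh u\geq e^u/2$. Therefore $b(x,u)\geq \frac12\sqrt{a^2-1}\,e^{u}$, which gives
\[
\bigl|b(x,u)^{-is-\frac12}\bigr|\leq \Bigl(\tfrac{2}{\sqrt{a^2-1}}\Bigr)^{\frac12}e^{-u/2}=:g_a(u).
\]
The function $g_a$ is integrable on $(0,\infty)$ with $\int_0^\infty g_a=2\bigl(2/\sqrt{a^2-1}\bigr)^{1/2}$. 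This immediately yields
\[
\sup_{x\geq a}|Q_{is-\frac12}(x)|\leq 2\Bigl(\tfrac{2}{\sqrt{a^2-1}}\Bigr)^{\frac12}<\infty,
\]
and the bound is uniform in $s\in\R$. This uniformity is exactly what is used in the dominated convergence arguments in Proposition~\ref{q_int} and Proposition~\ref{prop:kernel-evolution}.

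For continuity, fix $u\geq0$. The map $x\mapsto b(x,u)^{-is-\frac12}=\exp\bigl((-is-\frac12)\log b(x,u)\bigr)$ is continuous on $\R_{\geq a}$, since $b(\cdot,u)$ is continuous and positive there. The same $x$-independent majorant $g_a$ then allows dominated convergence: for $x_k\to x$ in $\R_{\geq a}$, the integrals converge, so $Q_{is-\frac12}\in C^0(\R_{\geq a})$.

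There is no real obstacle. The only points needing care are checking that $\Re\nu>-1$, so that \eqref{eq:Leg-Q-int-repr} is valid, and taking $a>1$ strictly, so that $\sqrt{x^2-1}$ stays bounded away from $0$ and the majorant decays exponentially in $u$.
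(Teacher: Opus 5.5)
Your argument is correct and follows the paper's route. The paper gives no separate proof and records the proposition as a direct consequence of the integral representation \eqref{eq:Leg-Q-int-repr}, which is exactly what you use, via $|b(x,u)^{-is}|=1$ for $s\in\R$ and an $x$-independent integrable majorant, uniform in $s\in\R$ as Propositions~\ref{q_int} and~\ref{prop:kernel-evolution} require.
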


\section*{Acknowledgments}

T.~W.~was supported by the grant KAW 2023.0216 from the Knut and Alice Wallenberg Foundation. D.~W.~was partially supported by the National Natural Science Foundation of China under grant number 12271502, and the University of Chinese Academy of Sciences start-up grant 118900M043. We would like to thank the MATRIX Institute for hosting the research program \textit{Log-gases in Caeli Australi} in Creswick (Australia) during the first half of August 2025, where this project was started. We are also grateful to Sergey Berezin for several helpful discussions regarding the local statistics obtained at crossover II, and to Maurice Duits for exploring a potential connection to his work \cite{Breuer-Duits13}. We acknowledge the use of GPT-6 Astra in the final stages of writing the manuscript as a tool to resolve some technicalities involving the set $\mathcal{W}(\R_{\geq 1})$ in the proofs of Propositions \ref{prop:admissable-weight-prop}, \ref{prop:transform-multiplicativity-C} and \ref{prop:gaussian-weights-admissible}, and to obtain appropriate remainder estimates in the proofs of Propositions \ref{qfactor} and \ref{pfactor}. The AI was used under the authors' strict mathematical guidance. All arguments were rigorously reviewed, verified and substantially revised.

\def\cydot{\leavevmode\raise.4ex\hbox{.}}
  \newcommand{\ssf}[1]{\mathsf{#1}}\renewcommand{\Bbb}[1]{\mathbb{#1}}

% \bibliographystyle{plain}
% \bibliography{bibliography.bib}

\begin{thebibliography}{10}

\bibitem{Ahn23}
Andrew Ahn.
\newblock Extremal singular values of random matrix products and {B}rownian
  motion on {$\ssf{GL} (N,\Bbb {C})$}.
\newblock {\em Probab. Theory Related Fields}, 187(3-4):949--997, 2023.

\bibitem{Akemann-Burda-Kieburg18}
Gernot Akemann, Zdzislaw Burda, and Mario Kieburg.
\newblock From integrable to chaotic systems: Universal local statistics of
  {L}yapunov exponents.
\newblock {\em Europhys. Lett. {EPL}}, 126(4):40001, 7pp, 2019.

\bibitem{Akemann-Burda-Kieburg20}
Gernot Akemann, Zdzislaw Burda, and Mario Kieburg.
\newblock Universality of local spectral statistics of products of random
  matrices.
\newblock {\em Phys. Rev. E}, 102(5):052134, 27, 2020.

\bibitem{Akemann-Ipsen15}
Gernot Akemann and Jesper~R. Ipsen.
\newblock Recent exact and asymptotic results for products of independent
  random matrices.
\newblock {\em Acta Phys. Polon. B}, 46(9):1747--1784, 2015.

\bibitem{Akemann-Kieburg-Wei13}
Gernot Akemann, Mario Kieburg, and Lu~Wei.
\newblock Singular value correlation functions for products of {W}ishart random
  matrices.
\newblock {\em J. Phys. A}, 46(27):275205, 22, 2013.

\bibitem{Anker-Orsted05}
Jean-Philippe Anker and Bent Orsted, editors.
\newblock {\em Lie theory}, volume 229 of {\em Progress in Mathematics}.
\newblock Birkh\"auser Boston, Inc., Boston, MA, 2005.
\newblock Unitary representations and compactifications of symmetric spaces.

\bibitem{Assiotis23}
Theodoros Assiotis.
\newblock Exact solution of interacting particle systems related to random
  matrices.
\newblock {\em Comm. Math. Phys.}, 402(3):2641--2690, 2023.

\bibitem{Bachmann-Butz-De_Roeck12}
Sven Bachmann, Maximilian Butz, and Wojciech De~Roeck.
\newblock Disordered quantum wires: microscopic origins of the {DMPK} theory
  and {O}hm's {L}aw.
\newblock {\em J. Stat. Phys.}, 148(1):164--189, 2012.

\bibitem{Baik-Ben_Arous-Peche05}
Jinho Baik, G{\'e}rard Ben~Arous, and Sandrine P{\'e}ch{\'e}.
\newblock Phase transition of the largest eigenvalue for nonnull complex sample
  covariance matrices.
\newblock {\em Ann. Probab.}, 33(5):1643--1697, 2005.

\bibitem{Beenakker97}
C.~W.~J. Beenakker.
\newblock Random-matrix theory of quantum transport.
\newblock {\em Rev. Mod. Phys.}, 69:731--808, Jul 1997.

\bibitem{Beenakker-Rejaei93}
C.~W.~J. Beenakker and B.~Rejaei.
\newblock Nonlogarithmic repulsion of transmission eigenvalues in a disordered
  wire.
\newblock {\em Phys. Rev. Lett.}, 71:3689--3692, Nov 1993.

\bibitem{Beenakker-Rejaei94}
C.~W.~J. Beenakker and B.~Rejaei.
\newblock Exact solution for the distribution of transmission eigenvalues in a
  disordered wire and comparison with random-matrix theory.
\newblock {\em Phys. Rev. B}, 49(11):7499, 1994.

\bibitem{Berezin-Strahov22}
Sergey Berezin and Eugene Strahov.
\newblock Gap probability for products of random matrices in the critical
  regime.
\newblock {\em J. Approx. Theory}, 274:Paper No. 105687, 29, 2022.

\bibitem{Breuer-Duits13}
Jonathan Breuer and Maurice Duits.
\newblock Central limit theorems for biorthogonal ensembles and asymptotics of
  recurrence coefficients.
\newblock {\em J. Amer. Math. Soc.}, 30(1):27--66, 2017.

\bibitem{Cepa-Lepingle97}
Emmanuel C\'epa and Dominique L\'epingle.
\newblock Diffusing particles with electrostatic repulsion.
\newblock {\em Probab. Theory Related Fields}, 107(4):429--449, 1997.

\bibitem{Claeys-Zhang26}
Tom Claeys and Jiyuan Zhang.
\newblock Biorthogonal ensembles of derivative type, 2026.
\newblock arXiv:2601.18427.

\bibitem{Daems-Kuijlaars-Veys08}
Evi Daems, Arno B.~J. Kuijlaars, and Willem Veys.
\newblock Asymptotics of non-intersecting {B}rownian motions and a {$4\times
  4$} {R}iemann-{H}ilbert problem.
\newblock {\em J. Approx. Theory}, 153(2):225--256, 2008.

\bibitem{Delvaux-Kuijlaars-Roman-Zhang12}
S.~Delvaux, A.~B.~J. Kuijlaars, P.~Rom\'an, and L.~Zhang.
\newblock Non-intersecting squared {B}essel paths with one positive starting
  and ending point.
\newblock {\em J. Anal. Math.}, 118(1):105--159, 2012.

\bibitem{Delvaux-Kuijlaars-Zhang11}
Steven Delvaux, Arno B.~J. Kuijlaars, and Lun Zhang.
\newblock Critical behavior of nonintersecting {B}rownian motions at a tacnode.
\newblock {\em Comm. Pure Appl. Math.}, 64(10):1305--1383, 2011.

\bibitem{Dorokhov82}
O.N. Dorokhov.
\newblock Transmission coefficient and the localization length of an electron
  in $n$ bound disordered chains.
\newblock {\em JETP Lett.}, 36:318, 1982.

\bibitem{Duits18}
Maurice Duits.
\newblock On global fluctuations for non-colliding processes.
\newblock {\em Ann. Probab.}, 46(3):1279--1350, 2018.

\bibitem{Dumaz-Labbe20}
Laure Dumaz and Cyril Labb\'e.
\newblock Localization of the continuous {A}nderson {H}amiltonian in 1-{D}.
\newblock {\em Probab. Theory Related Fields}, 176(1-2):353--419, 2020.

\bibitem{Dumaz-Labbe23}
Laure Dumaz and Cyril Labb\'e.
\newblock The delocalized phase of the {A}nderson {H}amiltonian in 1-{D}.
\newblock {\em Ann. Probab.}, 51(3):805--839, 2023.

\bibitem{Dumaz-Labbe24}
Laure Dumaz and Cyril Labb\'e.
\newblock Localization crossover for the continuous {A}nderson {H}amiltonian in
  1-d.
\newblock {\em Invent. Math.}, 235(2):345--440, 2024.

\bibitem{Edelman-Sungwoo23}
Alan Edelman and Sungwoo Jeong.
\newblock Fifty three matrix factorizations: a systematic approach.
\newblock {\em SIAM J. Matrix Anal. Appl.}, 44(2):415--480, 2023.

\bibitem{Forster-Kieburg-Kosters21}
Yanik-Pascal F\"orster, Mario Kieburg, and Holger K\"osters.
\newblock Polynomial ensembles and {P}\'olya frequency functions.
\newblock {\em J. Theoret. Probab.}, 34(4):1917--1950, 2021.

\bibitem{Frahm95}
K.~Frahm.
\newblock Equivalence of the {F}okker-{P}lanck approach and the nonlinear
  $\ensuremath{\sigma}$ model for disordered wires in the unitary symmetry
  class.
\newblock {\em Phys. Rev. Lett.}, 74:4706--4709, Jun 1995.

\bibitem{Glaeske-Prudnikov-Skornik06}
H.-J. Glaeske, A.~P. Prudnikov, and K.~A. Sk\`ornik.
\newblock {\em Operational calculus and related topics}, volume~10 of {\em
  Analytical Methods and Special Functions}.
\newblock Chapman \& Hall/CRC, Boca Raton, FL, 2006.

\bibitem{Graczyk-Malecki14}
Piotr Graczyk and Jacek Ma{\l}ecki.
\newblock Strong solutions of non-colliding particle systems.
\newblock {\em Electron. J. Probab.}, 19:no. 119, 21, 2014.

\bibitem{Helgason00}
Sigurdur Helgason.
\newblock {\em Groups and geometric analysis}, volume~83 of {\em Mathematical
  Surveys and Monographs}.
\newblock American Mathematical Society, Providence, RI, 2000.
\newblock Integral geometry, invariant differential operators, and spherical
  functions, Corrected reprint of the 1984 original.

\bibitem{Helgason01}
Sigurdur Helgason.
\newblock {\em Differential geometry, {L}ie groups, and symmetric spaces},
  volume~34 of {\em Graduate Studies in Mathematics}.
\newblock American Mathematical Society, Providence, RI, 2001.
\newblock Corrected reprint of the 1978 original.

\bibitem{Hoogenboom82}
Bob Hoogenboom.
\newblock Spherical functions and invariant differential operators on complex
  {G}rassmann manifolds.
\newblock {\em Ark. Mat.}, 20(1):69--85, 1982.

\bibitem{Johansson01a}
Kurt Johansson.
\newblock Universality of the local spacing distribution in certain ensembles
  of {H}ermitian {W}igner matrices.
\newblock {\em Comm. Math. Phys.}, 215(3):683--705, 2001.

\bibitem{Johansson03}
Kurt Johansson.
\newblock Discrete polynuclear growth and determinantal processes.
\newblock {\em Comm. Math. Phys.}, 242(1-2):277--329, 2003.

\bibitem{Johansson06}
Kurt Johansson.
\newblock Random matrices and determinantal processes.
\newblock In {\em Mathematical statistical physics}, pages 1--55. Elsevier B.
  V., Amsterdam, 2006.

\bibitem{Johansson13}
Kurt Johansson.
\newblock Non-colliding {B}rownian motions and the extended tacnode process.
\newblock {\em Comm. Math. Phys.}, 319(1):231--267, 2013.

\bibitem{Jones01}
D.~S. Jones.
\newblock Asymptotics of the hypergeometric function.
\newblock volume~24, pages 369--389. 2001.
\newblock Applied mathematical analysis in the last century.

\bibitem{Kieburg-Kosters16}
Mario Kieburg and Holger K\"osters.
\newblock Exact relation between singular value and eigenvalue statistics.
\newblock {\em Random Matrices Theory Appl.}, 5(4):1650015, 57, 2016.

\bibitem{Kieburg-Kosters19}
Mario Kieburg and Holger K\"osters.
\newblock Products of random matrices from polynomial ensembles.
\newblock {\em Ann. Inst. Henri Poincar\'e{} Probab. Stat.}, 55(1):98--126,
  2019.

\bibitem{Kieburg-Li-Zhang-Forrester23}
Mario Kieburg, Shi-Hao Li, Jiyuan Zhang, and Peter~J. Forrester.
\newblock Cyclic {P}\'olya ensembles on the unitary matrices and their spectral
  statistics.
\newblock {\em Constr. Approx.}, 57(3):1063--1108, 2023.

\bibitem{Kuijlaars-Martinez_Finkelshtein-Wielonsky09}
A.~B.~J. Kuijlaars, A.~Mart{\'{\i}}nez-Finkelshtein, and F.~Wielonsky.
\newblock Non-intersecting squared {B}essel paths and multiple orthogonal
  polynomials for modified {B}essel weights.
\newblock {\em Comm. Math. Phys.}, 286(1):217--275, 2009.

\bibitem{Kuijlaars-Martinez_Finkelshtein-Wielonsky11}
A.~B.~J. Kuijlaars, A.~Mart{\'{\i}}nez-Finkelshtein, and F.~Wielonsky.
\newblock Non-intersecting squared {B}essel paths: critical time and double
  scaling limit.
\newblock {\em Comm. Math. Phys.}, 308(1):227--279, 2011.

\bibitem{Kuijlaars-Roman19}
Arno B.~J. Kuijlaars and Pablo Rom\'an.
\newblock Spherical functions approach to sums of random {H}ermitian matrices.
\newblock {\em Int. Math. Res. Not. IMRN}, (4):1005--1029, 2019.

\bibitem{Kuijlaars-Zhang14}
Arno B.~J. Kuijlaars and Lun Zhang.
\newblock Singular values of products of {G}inibre random matrices, multiple
  orthogonal polynomials and hard edge scaling limits.
\newblock {\em Comm. Math. Phys.}, 332(2):759--781, 2014.

\bibitem{Lebedev65}
N.~N. Lebedev.
\newblock {\em Special functions and their applications}.
\newblock Prentice-Hall, Inc., Englewood Cliffs, NJ, english edition, 1965.

\bibitem{Liu-Wang-Wang18}
Dang-Zheng Liu, Dong Wang, and Yanhui Wang.
\newblock Lyapunov exponent, universality and phase transition for products of
  random matrices.
\newblock {\em Comm. Math. Phys.}, 399(3):1811--1855, 2023.

\bibitem{Liu-Wang-Zhang14}
Dang-Zheng Liu, Dong Wang, and Lun Zhang.
\newblock Bulk and soft-edge universality for singular values of products of
  {G}inibre random matrices.
\newblock {\em Ann. Inst. Henri Poincar\'e Probab. Stat.}, 52(4):1734--1762,
  2016.

\bibitem{Macedo94a}
A.~M.~S. Mac{\^e}do.
\newblock Universal parametric correlations in the transmission eigenvalue
  spectra of disordered conductors.
\newblock {\em Phys. Rev. B}, 49:16841(R)--16844(R), Jun 1994.

\bibitem{Mehta04}
Madan~Lal Mehta.
\newblock {\em Random matrices}, volume 142 of {\em Pure and Applied
  Mathematics (Amsterdam)}.
\newblock Elsevier/Academic Press, Amsterdam, third edition, 2004.

\bibitem{Mello-Pereyra-Kumar88}
PA~Mello, P~Pereyra, and N~Kumar.
\newblock Macroscopic approach to multichannel disordered conductors.
\newblock {\em Annals of Physics}, 181(2):290--317, 1988.

\bibitem{Nemes-Olde_Daalhuis20}
Gerg\H{o} Nemes and Adri~B. Olde~Daalhuis.
\newblock Large-parameter asymptotic expansions for the {L}egendre and allied
  functions.
\newblock {\em SIAM J. Math. Anal.}, 52(1):437--470, 2020.

\bibitem{Neuschel-Venker24}
Thorsten Neuschel and Martin Venker.
\newblock Boundary asymptotics of non-intersecting {B}rownian motions:
  {P}earcey, {A}iry and a transition.
\newblock {\em Nonlinearity}, 37(3):Paper No. 035008, 47, 2024.

\bibitem{Boisvert-Clark-Lozier-Olver10}
Frank W.~J. Olver, Daniel~W. Lozier, Ronald~F. Boisvert, and Charles~W. Clark,
  editors.
\newblock {\em N{IST} handbook of mathematical functions}.
\newblock U.S. Department of Commerce, National Institute of Standards and
  Technology, Washington, DC; Cambridge University Press, Cambridge, 2010.
\newblock With 1 CD-ROM (Windows, Macintosh and UNIX).

\bibitem{Stanton-Tomas79}
Robert~J. Stanton and Peter~A. Tomas.
\newblock Pointwise inversion of the spherical transform on {$L\sp{p}(G/K)$},
  {$1\leq p<2$}.
\newblock {\em Proc. Amer. Math. Soc.}, 73(3):398--404, 1979.

\bibitem{Tracy-Widom94}
Craig~A. Tracy and Harold Widom.
\newblock Level-spacing distributions and the {A}iry kernel.
\newblock {\em Comm. Math. Phys.}, 159(1):151--174, 1994.

\bibitem{Tracy-Widom94b}
Craig~A. Tracy and Harold Widom.
\newblock Level spacing distributions and the {B}essel kernel.
\newblock {\em Comm. Math. Phys.}, 161(2):289--309, 1994.

\bibitem{Wang23}
Dong Wang and Dong Yao.
\newblock Biorthogonal polynomials related to quantum transport theory of
  disordered wires.
\newblock {\em Comm. Math. Phys.}, 407(2):Paper No. 24, 79, 2026.

\end{thebibliography}

\end{document}